\DeclareMathOperator{\Deriv}{\mathscr{D}\text{\kern -3pt {\calligra\large er}}\,}
\DeclareMathOperator{\Endo}{\mathscr{E}\text{\kern -3pt {\calligra\large nd}}\,}
\DeclareMathOperator{\Hom}{Hom \text{\kern -3pt }\,}
\DeclareMathOperator{\Der}{Der \text{\kern -3pt }\,}
\DeclareMathOperator{\End}{End \text{\kern -3pt }\,}
\DeclareMathOperator{\Coh}{Coh \text{\kern -2pt }\,}
\DeclareMathOperator{\Gal}{Gal \text{\kern -2pt }\,}
\DeclareMathOperator{\dev}{dev \text{\kern -2pt }\,}
\DeclareMathOperator{\Kdim}{Kdim \text{\kern -2pt }\,}
\DeclareMathOperator{\GKdim}{GKdim \text{\kern -2pt }\,}
\DeclareMathOperator{\Supp}{Supp \text{\kern -2pt }\,}
\DeclareMathOperator{\Ext}{Ext \text{\kern -2pt }\,}
\DeclareMathOperator{\QCoh}{QCoh \text{\kern -2pt }\,}
\DeclareMathOperator{\WCoh}{WCoh \text{\kern -2pt }\,}
\DeclareMathOperator{\Spec}{Spec \text{\kern -2pt }\,}
\DeclareMathOperator{\Frac}{Frac \text{\kern -2pt }\,}
\DeclareMathOperator{\res}{res \text{\kern -1pt }\,}
\DeclareMathOperator{\act}{act \text{\kern -1pt }\,}
\DeclareMathOperator{\Loc}{Loc \text{\kern -2pt }\,}
\DeclareMathOperator{\Lie}{Lie \text{\kern -2pt }\,}
\DeclareMathOperator{\loc}{loc \text{\kern -2pt }\,}
\DeclareMathOperator{\ob}{ob \text{\kern -2pt }\,}
\DeclareMathOperator{\op}{op \text{\kern -2pt }\,}
\DeclareMathOperator{\ad}{ad \text{\kern -2pt }\,}
\DeclareMathOperator{\fg}{fg \text{\kern -2pt }\,}
\DeclareMathOperator{\Mod}{Mod \text{\kern -2pt }\,}
\DeclareMathOperator{\Ann}{Ann \text{\kern -2pt }\,}
\DeclareMathOperator{\Sym}{Sym \text{\kern -2pt }\,}
\DeclareMathOperator{\Dif}{Dif \text{\kern -2pt }\,}
\DeclareMathOperator{\co}{co \text{\kern -2pt }\,}
\DeclareMathOperator{\Ad}{Ad \text{\kern -2pt }\,}
\DeclareMathOperator{\gr}{gr \text{\kern -2pt }\,}
\DeclareMathOperator{\Gr}{Gr \text{\kern -2pt }\,}
\DeclareMathOperator{\im}{im \text{\kern -2pt }\,}
\DeclareMathOperator{\Image}{Image \text{\kern -2pt }\,}
\DeclareMathOperator{\coim}{coim \text{\kern -2pt }\,}
\DeclareMathOperator{\id}{id \text{\kern -2pt }\,}
\DeclareMathOperator{\bimod}{bimod \text{\kern -2pt }\,}
\DeclareMathOperator{\height}{ht \text{\kern -2pt }\,}
\DeclareMathOperator{\tensor}{tensor \text{\kern -2pt }\,}
\DeclareMathOperator{\coker}{coker \text{\kern -2pt }\,}
\newcommand\reallywidehat[1]{%
\savestack{\tmpbox}{\stretchto{%
  \scaleto{%
    \scalerel*[\widthof{\ensuremath{#1}}]{\kern-.6pt\bigwedge\kern-.6pt}%
    {\rule[-\textheight/2]{1ex}{\textheight}}
  }{\textheight}%
}{0.5ex}}%
\stackon[1pt]{#1}{\tmpbox}%
}
\newcommand{\invlim}{\lim\limits_{\longleftarrow}}
\newcommand{\U}[1]{U(\fr{#1})}
\newcommand{\hugnK}{\widehat{\U{g}_{n,K}}}
\newcommand{\hugnKl}{\reallywidehat{\U{g}_{n,K}^{\lambda}}}
\newcommand{\huggnK}{\widehat{\U{g \times g}_{n,K}}}
\newcommand{\hugn}{\widehat{\U{g}_n}}
\newcommand{\huggn}{\widehat{\U{g \times g}_n}}
\newcommand{\fr}[1]{\mathfrak{{#1}}}
\newcommand{\uset}[1]{\underset{#1}{\otimes}{}}
\newcommand{\hugnL}{\widehat{\U{g'}_{en,L}}}
\newcommand{\hugnKm}{\reallywidehat{\U{g}_{n,K}^{\mu}}}
\newcommand{\hugnKlst}{\reallywidehat{\U{g}_{n,K}^{\lambda^*}}}
\newcommand{\huggnKlstl}{\reallywidehat{\U{g \times g}_{n,K}^{\lambda^*,\lambda}}}
\newcommand{\huggnKlm}{\reallywidehat{\U{g \times g}_{n,K}^{\lambda,\mu}}}
\newtheorem{theorem}{Theorem}[section]
\newtheorem{lemma}[theorem]{Lemma}
\newtheorem{proposition}[theorem]{Proposition}
\newtheorem{corollary}[theorem]{Corollary}
\newtheorem{question}[theorem]{Question}
\newtheorem{definition}[theorem]{Definition}
\newtheorem{remark}[theorem]{Remark}
\newtheorem{assumption}[theorem]{Assumption}
\title{Primitive ideals in the affinoid enveloping algebras of semisimple Lie algebras}
\author{Ioan Stanciu}
\begin{document}
\maketitle

\begin{abstract}
For a semisimple Lie algebra defined over a discrete valuation ring with field of fractions $K$, we prove that any primitive ideal with rational central character in the affinoid enveloping algebra, $\widehat{U(\mathfrak{g})_{K}},$ is the annihilator of an affinoid highest weight module.
\end{abstract}

\section{Introduction}

This is the final paper of the series \cite{Sta1}, \cite{Sta2} that aims to answer Question A from \cite{Munster} regarding the classification of primitive ideals of the affinoid enveloping algebra of a semisimple Lie algebra defined over a discrete valuation ring. We begin by reviewing the classical results.

\subsection{Classical Duflo's Theorem}
Let $K$ be a field of characteristic $0$ and $G$ be a connected, smooth, split-semisimple, affine algebraic group over $K$ with Lie algebra $\mathfrak{g}_K= \Lie(G)$. Fix $\mathfrak{g}_K:= \mathfrak{n}_K^{-} \oplus \mathfrak{h}_K \oplus \mathfrak{n}_K^+$ a Cartan decomposition and let $\mathfrak{b}_K=\mathfrak{h}_K \oplus \mathfrak{n}_K^+$. In a seminal paper \cite{BGG}, the authors define the BGG category $\mathcal{O}$ of representations for the algebra $U(\mathfrak{g}_K)$. The building blocks are given by Verma modules $M(\lambda)=U(\mathfrak{g}_K) \uset{U(\mathfrak{b}_K)}  K_{\lambda}$ for $\lambda \in \mathfrak{h}_K^*.$ These are highest weight modules with unique maximal submodule $N(\lambda)$ and unique simple quotient $L(\lambda)$. Moreover, this category $\mathcal{O}$ is Artinian and the set of simple objects is characterised exactly by $L(\lambda)$ for $\lambda \in \mathfrak{h}_K^*$. An excellent exposition of category $\mathcal{O}$ can be found in \cite{Hu1}. The importance of category $\mathcal{O}$ in the representation theory of the ring $U(\mathfrak{g}_K)$ can be seen in the following theorem:

\begin{theorem} \cite[Theorem 4.3]{Du}
\label{ClassicalDuflo}

Let $I$ be a primitive/prime ideal with $K$-rational central character. Then 
$$I=\Ann(L(\lambda)) \text{ for some } \lambda \in \mathfrak{h}_K^*. $$

\end{theorem}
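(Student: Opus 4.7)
The plan is to follow Duflo's original strategy, which combines the Harish-Chandra isomorphism with the Bernstein--Gelfand theory of Harish-Chandra bimodules. First, by Dixmier's theorem the primitive ideal $I$ is realised as $\Ann V$ for some simple $U(\mathfrak{g}_K)$-module $V$. By Quillen's lemma, $V$ admits a central character, and combined with the Harish-Chandra isomorphism $Z(\mathfrak{g}_K) \cong S(\mathfrak{h}_K)^W$, the $K$-rationality hypothesis guarantees that this character equals $\chi_\lambda$ for some $\lambda \in \mathfrak{h}_K^*$ rather than merely in $\mathfrak{h}_{\overline{K}}^*$. For the prime case, one first reduces to the primitive case via Goldie-theoretic arguments applied to $U(\mathfrak{g}_K)/P$.

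Next I would construct a candidate highest weight module. Fixing a dominant regular $\mu \in \mathfrak{h}_K^*$, I would form the Harish-Chandra bimodule of $\mathfrak{g}$-finite homomorphisms from the Verma module $M(\mu)$ into $V$. By the Bernstein--Gelfand equivalence, the category of Harish-Chandra bimodules with generalised central character $\chi_\mu$ on the right is equivalent to a block of category $\mathcal{O}$; transporting the bimodule through this equivalence yields an object of $\mathcal{O}$ whose simple subquotients are of the form $L(w \cdot \lambda)$ for various $w \in W$. Functoriality of the construction gives the inclusion $I \subseteq \Ann L(w \cdot \lambda)$ for a suitable $w$.

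The reverse inclusion $\Ann L(w \cdot \lambda) \subseteq I$ is the main obstacle and constitutes the deep content of Duflo's theorem. Establishing it requires Joseph's theory of Goldie rank polynomials together with the Duflo involution, which force the assignment $\lambda \mapsto \Ann L(\lambda)$ to surject onto the set of primitive ideals with given central character. A useful alternative is to argue via Borho--Jantzen's translation principle, which allows one to reduce to the case where $\lambda$ is integral and dominant regular, and then appeal to the Kazhdan--Lusztig combinatorics governing the structure of $\mathcal{O}$. Throughout, the rationality hypothesis ensures that every intermediate construction descends from $\overline{K}$ to $K$, so the resulting weight indeed lies in $\mathfrak{h}_K^*$ as required.
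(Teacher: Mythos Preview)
The paper does not give its own proof of this classical statement; it merely cites it and then indicates (in the discussion around Proposition~\ref{classicalJosfun}) the approach it will generalise to the affinoid setting: namely Joseph's argument that the map $I \mapsto I M(\lambda)$ from two-sided ideals of $U(\mathfrak{g}_K)^\lambda$ to submodules of $M(\lambda)$ is injective. From that injectivity one deduces $I = \Ann\bigl(M(\lambda)/I M(\lambda)\bigr)$ for every two-sided ideal $I$, and then the conclusion follows in two lines: the quotient $M(\lambda)/I M(\lambda)$ has a finite composition series with factors $L(\mu_j)$, the product $\prod_j \Ann L(\mu_j)$ annihilates the whole quotient and hence lies in $I$, and primality of $I$ forces $\Ann L(\mu_j) \subseteq I$ for some $j$; the reverse inclusion is automatic since $I$ already annihilates $M(\lambda)/I M(\lambda)$ and hence each of its subquotients. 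This is exactly the shape of the paper's affinoid proof in Theorem~\ref{d3affinoidduflotheorem}.

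Your proposal follows a genuinely different route via Harish-Chandra bimodules and the Bernstein--Gelfand equivalence, and the first half (obtaining $I \subseteq \Ann L(w\cdot\lambda)$ for some $w$) is reasonable in outline. The real gap is your treatment of the reverse inclusion. You invoke Goldie rank polynomials, the Duflo involution, and Kazhdan--Lusztig combinatorics, but these tools are all developed \emph{after} Duflo's theorem and in large part rely on it; appealing to them here is circular. Moreover, none of them is needed: once one knows $I = \Ann\bigl(M(\lambda)/I M(\lambda)\bigr)$, the reverse inclusion is a one-line consequence of primality, as above. The substantive content you are missing is precisely the injectivity of $I \mapsto I M(\lambda)$ (Proposition~\ref{classicalJosfun}), which is what Joseph, Borho--Brylinski, and this paper isolate as the crux of the argument.
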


Duflo's original statement requires the ground field to be $\mathbb{C}$. In their paper \cite{BG}, Bernstein and Gelfand extend this result to all algebraically closed fields of characteristic $0$ which in turn can be extended to the generality stated by a base change argument. Another purely algebraic proof of Duflo's theorem can be found \cite{Jos} and for a categorical proof see \cite{Gin}. One should note that if $K$ is algebraically closed, all primitive ideals have $K$-rational central character, so the theorem gives a full classification of the primitive spectrum.

\subsection{Classical Beilinson-Bernstein Localisation Theorem}

Fix $B$ a Borel subgroup of $G$ and let $X=G/B$ denote the flag variety of $G$. For a $K$-linear map $\lambda: \mathfrak{h}_K \to K$, we let $\chi_\lambda:Z(U(\mathfrak{g}_K)) \to K$ denote the corresponding central character. Furthermore, let $U(\mathfrak{g}_K)^{\lambda}$ be the quotient of $U(\mathfrak{g}_K)$ by the two-sided ideal generated by $\ker \chi_\lambda$, and let $\mathcal{D}^{\lambda}_{X}$ denote the sheaf of $\lambda$-twisted differential operators on $X$ as defined in \cite{BB}.

Let $L$ be a closed subgroup of $G$. In $\cite{BB}$, the authors define the notion of $L$-equivariant $\mathfrak{g}$ and $\mathcal{D}$-modules. A more detailed definition can be found in \cite[Section 11.5]{HTT}. 

Let $\mathbf{\Phi}$ be the root system of $\mathfrak{g}$ and $\mathbf{\Phi}^{+}$ the subset of positive roots. For a root $\alpha$, we use $\alpha^{\vee}$ to denote the corresponding coroot. We say that a weight $\lambda \in \mathfrak{h}_K^*$ is dominant if $(\lambda+\rho)(\alpha^{\vee}) \in \mathbb{Z}^{\leq -1}$ for all $\alpha \in \mathbf{\Phi}^+$. We say that $\lambda$ is regular if $(\lambda+\rho)(\alpha^{\vee}) \neq 0$ for all $\lambda \in \mathbf{\Phi}^{+}$.

\begin{theorem}[Equivariant Beilinson-Bernstein localisation, \cite{BB}] 
\label{classicalequivariantBeilinson-Bernstein}

Let $\lambda:\mathfrak{h}_K \to K$ be a $K$-linear dominant weight. Consider the functors:

\begin{equation}
\begin{split}
&\Loc: \Mod_{\fg}(U(\mathfrak{g}_K)^{\lambda},L) \to \Coh(\mathcal{D}^\lambda_{X},L), \quad \Loc(M):= \mathcal{D}^\lambda_{X} \uset{U(\mathfrak{g}_K)^{\lambda}} M,  \\
&\Gamma:  \Coh(\mathcal{D}^\lambda_{X},L) \to \Mod_{\fg}(U(\mathfrak{g}_K)^{\lambda},L), \qquad \Gamma(\mathcal{M}):=\mathcal{M}(X).
\end{split}
\end{equation}

Then $\Loc$ and $\Gamma$ induce quasi-inverse equivalences of categories between\\
$\Mod_{\fg}(U(\mathfrak{g}_K)^{\lambda},L)$ and the quotient category $\Coh(\mathcal{D}^\lambda_{X},L)/\ker \Gamma$. In case $\lambda$ is also regular, we have $\ker \Gamma=0$.

\end{theorem}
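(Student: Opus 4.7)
The plan is to first establish the non-equivariant Beilinson-Bernstein equivalence, then upgrade it to the equivariant setting, relying on three non-equivariant inputs: the identification of global sections of $\mathcal{D}_X^\lambda$, exactness of $\Gamma$ for dominant $\lambda$, and faithfulness of $\Gamma$ for regular dominant $\lambda$. First I would establish
\[
\Gamma(X, \mathcal{D}_X^\lambda) \;=\; U(\mathfrak{g}_K)^\lambda.
\]
The infinitesimal $G$-action on $X$ yields a natural algebra map $U(\mathfrak{g}_K) \to \Gamma(X, \mathcal{D}_X^\lambda)$; the centre $Z(U(\mathfrak{g}_K))$ is sent to scalars by $\chi_\lambda$, so the map factors through $U(\mathfrak{g}_K)^\lambda$. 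Equipping $\mathcal{D}_X^\lambda$ with its order filtration, whose associated graded is the push-forward of $\mathcal{O}_{T^*X}$ along the cotangent projection, the claim reduces to computing global sections of the symmetric powers of $\mathcal{T}_X$ via Borel-Weil-Bott.

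Next I would prove the two cohomological inputs. For dominant $\lambda$, exactness of $\Gamma$ on $\Coh(\mathcal{D}_X^\lambda)$ amounts to $H^i(X, \mathcal{M}) = 0$ for all $i > 0$ and every coherent $\mathcal{D}_X^\lambda$-module $\mathcal{M}$. Via a good filtration on $\mathcal{M}$, this reduces to a vanishing statement for coherent sheaves on $T^*X$, which follows from Grauert-Riemenschneider-type vanishing applied to the Springer resolution. For regular dominant $\lambda$, I would show that $\Gamma$ is faithful on $\Coh(\mathcal{D}_X^\lambda)$: if $\Gamma(\mathcal{M}) = 0$, a filtration argument combined with surjectivity of the counit $\Loc \Gamma(\mathcal{M}) \to \mathcal{M}$ forces $\mathcal{M} = 0$.

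With these in hand, the unit $M \to \Gamma \Loc(M)$ is an isomorphism for all finitely generated $U(\mathfrak{g}_K)^\lambda$-modules, and in the regular case the counit $\Loc \Gamma(\mathcal{M}) \to \mathcal{M}$ is also an isomorphism; otherwise the counit becomes an isomorphism only after quotienting by $\ker \Gamma$. To upgrade to the equivariant setting, I would observe that $\mathcal{D}_X^\lambda$ is canonically $G$-equivariant and hence $L$-equivariant, that $\Loc$ and $\Gamma$ preserve $L$-equivariant structures, and that the adjunction unit and counit are morphisms of $L$-equivariant objects. The non-equivariant equivalence therefore restricts to the one stated.

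The principal obstacle is the vanishing $H^i(X, \mathcal{M}) = 0$ for arbitrary coherent $\mathcal{D}_X^\lambda$-modules at dominant $\lambda$: passing from line-bundle vanishing to arbitrary coherent modules requires a careful good-filtration and spectral-sequence argument, and in the split-semisimple (rather than algebraically closed) setting one must additionally verify that each step descends correctly to the ground field $K$.
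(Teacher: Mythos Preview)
The paper does not provide its own proof of this statement; it is cited as a classical result from Beilinson--Bernstein in the introduction, so there is no proof in the paper to compare against. Your proposal is a correct outline of the standard argument.

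For context, when the paper later proves its affinoid analogue (Theorem~\ref{d3affinoidequivariantBB}), it follows exactly the two-step pattern you propose: it takes the non-equivariant affinoid Beilinson--Bernstein theorem (Theorem~\ref{d3affinoidnonequivariantBB}) as a black box and then simply verifies that $\Loc^\lambda$ and $\Gamma$ preserve $L$-equivariance via Proposition~\ref{d3equivariantaffinoidlocalisationmechanism}. The paper does not re-derive the global-section identification, the exactness of $\Gamma$, or the vanishing of $\ker\Gamma$ in the regular case; all of that is imported from the non-equivariant result. So your strategy of reducing the equivariant statement to the non-equivariant one plus a check that the adjunction respects equivariant structures matches precisely how the paper handles the parallel affinoid situation, and is more detailed than anything the paper offers for the classical case.
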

\subsection{Classical approach to Duflo's theorem}
In his paper \cite{Jos}, Joseph proves Duflo's theorem by proving:

\begin{proposition}
\label{classicalJosfun}
Let $\lambda:\mathfrak{h}_K  \to K$ be a $K$-linear dominant weight. Consider the function $J$:\{two-sided ideals in $U(\mathfrak{g}_K)^{\lambda} \} \to$ \{submodules of $M(\lambda)$\} defined by
             $$J(I):= I M(\lambda).$$ 
             
Then $J$ is injective.             
\end{proposition}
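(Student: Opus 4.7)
The plan is to prove the stronger identity $I = \Ann_{U(\mathfrak{g}_K)^\lambda}\bigl(M(\lambda)/IM(\lambda)\bigr)$ for every two-sided ideal $I$, from which injectivity of $J$ follows at once: if $I_1 M(\lambda) = I_2 M(\lambda)$, the quotient modules coincide and hence so do their annihilators, forcing $I_1 = I_2$.

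First I would exploit the cyclic presentation $M(\lambda) \cong U(\mathfrak{g}_K)^\lambda / I_\lambda$, where $I_\lambda$ is the annihilator of $v_\lambda$ --- the left ideal generated by $\mathfrak{n}_K^+$ together with $\{h - \lambda(h) : h \in \mathfrak{h}_K\}$. Two-sidedness of $I$ yields $U(\mathfrak{g}_K)^\lambda \cdot I = I$, so $IM(\lambda) = I \cdot v_\lambda$, corresponding to the subspace $(I + I_\lambda)/I_\lambda$ of $M(\lambda)$. The inclusion $I \subseteq \Ann(M(\lambda)/IM(\lambda))$ is automatic. For the reverse inclusion, I would take $u$ with $uM(\lambda) \subseteq IM(\lambda)$, use $uv_\lambda \in Iv_\lambda$ to write $uv_\lambda = wv_\lambda$ for some $w \in I$, and replace $u$ by $u-w$. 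This reduces the task to the following: any $u \in I_\lambda$ satisfying $u \cdot U(\mathfrak{n}_K^-) \subseteq I + I_\lambda$ must in fact lie in $I$.

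To establish this reduced claim I would combine three ingredients: the PBW decomposition $U(\mathfrak{g}_K)^\lambda = U(\mathfrak{n}_K^-) \oplus I_\lambda$ as $K$-vector spaces; the $\ad(\mathfrak{g}_K)$-stability of $I$, together with Kostant's theorem yielding a semisimple decomposition of $I$ into finite-dimensional $\mathfrak{g}_K$-submodules under the adjoint action; and the classical Duflo--Conze faithfulness statement $\Ann_{U(\mathfrak{g}_K)^\lambda}(M(\lambda)) = 0$ under the dominance hypothesis on $\lambda$.

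The main obstacle is precisely this final step. Although $uv_\lambda = 0$, the commutators $[u,n]v_\lambda$ for $n \in U(\mathfrak{n}_K^-)$ generally do not vanish, and must all lie in $Iv_\lambda$; these produce a whole family of constraints on $u$ in the bimodule structure of $U(\mathfrak{g}_K)^\lambda$. The hard part will be to show, using the ad-semisimple decomposition and the faithfulness of $M(\lambda)$ in tandem, that these constraints pin $u$ itself --- and not merely its action on $M(\lambda)$ --- inside $I$; this is the pressure point where Joseph's original argument deploys its main subtlety.
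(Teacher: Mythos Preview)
The paper does not itself prove this classical proposition; it is cited in the introduction as due to Joseph, with geometric proofs via Beilinson--Bernstein given by Borho--Brylinski (for $\lambda=0$) and in the author's companion paper (for general dominant $\lambda$). The method the paper series uses --- carried out in full for the affinoid analogue in Corollary~\ref{d3IannihilatesIMl} --- is geometric: one localises a two-sided ideal $I$ to a $G$-equivariant $\mathcal{D}^{\lambda^*,\lambda}$-module on $X\times X$, applies the Borho--Brylinski equivalence $i_l^{\#}$ to obtain a $B$-equivariant $\mathcal{D}^{\lambda^*}$-module on $X$, and takes global sections. The resulting functor $\mathscr{F}$ satisfies $\mathscr{F}(I)\cong T(\lambda)I$, which the Chevalley involution converts to $\sigma(I)M(\lambda)$; injectivity of $J$ then follows from the exactness and faithfulness supplied by Beilinson--Bernstein (Corollary~\ref{d3Fpreservesstrictinclusionofideals} and Lemma~\ref{d3F(M)iszeroMiszero} in the affinoid setting).

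Your approach is algebraic, in the spirit of Joseph's original, and is genuinely different from this geometric route. Your target identity $I=\Ann\bigl(M(\lambda)/IM(\lambda)\bigr)$ is the right one, and your reduction to the case $u\in I_\lambda$ with $u\cdot U(\mathfrak{n}_K^-)\subseteq I+I_\lambda$ is fine. But there is a genuine gap: you stop precisely at the hard step, and the three ingredients you list --- PBW, ad-semisimplicity via Kostant, and Duflo--Conze faithfulness (the $I=0$ case) --- do not assemble into a proof on their own. Knowing that $M(\lambda)$ is faithful over $U(\mathfrak{g}_K)^\lambda$ gives no direct handle on faithfulness over $U(\mathfrak{g}_K)^\lambda/I$, and the ad-decomposition of $I$ does not by itself control which elements of $I_\lambda$ left-multiply $U(\mathfrak{n}_K^-)$ into $I+I_\lambda$. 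Joseph's actual argument requires substantially more machinery --- translation functors, or equivalently the Bernstein--Gelfand functor between Harish--Chandra bimodules and category~$\mathcal{O}$ --- none of which you have invoked. Your own final sentence effectively concedes this. As written, the proposal is an outline that halts at the decisive point.
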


This approach to prove Duflo's theorem was suggested by Dixmier, see \cite[Problem 30]{Dix}. In \cite{BoBr}, Borho and Brylinski prove Proposition \ref{classicalJosfun} using the Beilinson-Bernstein localisation in the case $\lambda=0$. The key step is proving that the category of coherent $G$-equivariant $\mathcal{D}_{X \times X}$-modules is equivalent to the category of coherent $B$-equivariant $\mathcal{D}_{X}$-modules. In \cite{Sta2}, we obtained a geometric proof of this result for an arbitrary dominant $\lambda$.

\subsection{Affinoid enveloping algebras}

Let $R$ be a mixed characteristic $(0,p)$ complete discrete valuation ring with uniformiser $\pi$, field of fractions $K$ and residue field $k$; further let $G$ denote a connected, simply connected, split semisimple, smooth affine algebraic group scheme over $\Spec R$ with Lie algebra $\mathfrak{g}:=\Lie(G)$. We define $\widehat{U(\mathfrak{g})}= \invlim U(\mathfrak{g})/\pi^i U(\mathfrak{g})$ to be the $\pi$-adic completion of $U(\mathfrak{g})$. We call $\widehat{U(\mathfrak{g})_K}:= \widehat{U(\mathfrak{g})} \uset{R} K$ the affinoid enveloping algebra of $\mathfrak{g}$. In \cite{Munster}, the authors ask the following question:

\begin{question}\cite[Question A]{Munster}
Is it the case that every primitive ideal of $\widehat{U(\mathfrak{g})_K}$ with $K$-rational infinitesimal central character is the annihilator of a simple affinoid highest weight module?
\end{question}

We should outline the strategy of answering this question. Let $\mathfrak{h}$ be a Cartan subalgebra of $\mathfrak{g}$; for $\lambda \in \mathfrak{h}^*$, we may define the affinoid Verma module of weight $\lambda$, $\widehat{M(\lambda)}$, see Definition \ref{d3affinoidvermadef}.

\begin{theorem}[Theorem \ref{d3affinoidclassicvermacorrespondenceprop}]
Let $\lambda \in \mathfrak{h}^*$. There is a one to one correspondence between submodules of $\widehat{M(\lambda)}$ and submodules of $M(\lambda)$. In particular, $\widehat{M(\lambda)}$ has a unique simple quotient $\widehat{L(\lambda)}$.

\end{theorem}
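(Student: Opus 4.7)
The plan is to exploit the $\mathfrak{h}$-weight space decomposition, which is essentially preserved under $\pi$-adic completion. The classical Verma module splits as $M(\lambda) = \bigoplus_{\mu} M(\lambda)_\mu$ into finite-dimensional weight spaces, and every $U(\mathfrak{g}_K)$-submodule is automatically graded by these weights. My first step is to show that the weight spaces of $\widehat{M(\lambda)}$ coincide with those of $M(\lambda)$. Realising $\widehat{M(\lambda)}$ as the base change to $K$ of the $\pi$-adic completion of an integral form $M(\lambda)_R$, each weight component $M(\lambda)_{R,\mu}$ is finitely generated over $R$ and hence already $\pi$-adically complete, so $\widehat{M(\lambda)}_\mu$ agrees canonically with $M(\lambda)_\mu$. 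Elements of $\widehat{M(\lambda)}$ are then certain $\pi$-adically convergent sums $\sum_\mu v_\mu$ with $v_\mu \in M(\lambda)_\mu$.

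The heart of the argument is to establish that every $\widehat{U(\mathfrak{g})_K}$-submodule $\widehat{N}$ of $\widehat{M(\lambda)}$ is weight-graded, meaning that $\widehat{N}$ is the $\pi$-adic closure of $\bigoplus_\mu (\widehat{N} \cap M(\lambda)_\mu)$. The core input is a Vandermonde argument: for a finite sum $v = \sum_{i=1}^k v_{\mu_i}$ of weight vectors in $\widehat{N}$, repeated application of a suitable element $H \in \mathfrak{h}$ combined with $K$-linear combinations isolates each $v_{\mu_i}$ inside $\widehat{N}$. To extend this to arbitrary $v \in \widehat{N}$, which may be genuinely infinite convergent sums of weight vectors, I would truncate modulo $\pi^n$, apply the finite-dimensional separation, and pass to the limit using that $\widehat{N}$ is closed in the $\pi$-adic topology; closedness itself follows from the Noetherianity of $\widehat{U(\mathfrak{g})_K}$ applied to the cyclic module $\widehat{M(\lambda)}$.

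Once weight-gradedness is established, the bijection is defined as follows. A submodule $N \subseteq M(\lambda)$ maps to $\iota(N)$ defined as the closure of $\widehat{U(\mathfrak{g})_K} \cdot N$ inside $\widehat{M(\lambda)}$, while a submodule $\widehat{N} \subseteq \widehat{M(\lambda)}$ maps to $\rho(\widehat{N}) := \bigoplus_\mu (\widehat{N} \cap M(\lambda)_\mu)$, which is automatically $U(\mathfrak{g}_K)$-stable. Mutual inverseness follows because both sides are determined by their weight components, and these sit inside the same finite-dimensional spaces $M(\lambda)_\mu$. The existence of the unique simple quotient $\widehat{L(\lambda)}$ is then a formal corollary: the unique maximal proper submodule $N(\lambda) \subset M(\lambda)$ corresponds under the bijection to the unique maximal proper submodule $\iota(N(\lambda)) \subset \widehat{M(\lambda)}$, and one sets $\widehat{L(\lambda)} := \widehat{M(\lambda)}/\iota(N(\lambda))$.

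The principal obstacle, as signalled above, is the weight-graded structure of submodules of $\widehat{M(\lambda)}$: one must handle elements that are truly infinite $\pi$-adic limits of weight vectors, for which the finite-dimensional Vandermonde separation must be combined with a closure argument. Once this is in place, the remaining claims reduce to verifying compatibilities that follow formally from the identification of weight spaces on both sides.
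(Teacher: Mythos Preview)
Your overall architecture --- identify weight spaces, show every submodule of $\widehat{M(\lambda)}$ is the closure of its weight components, then read off the bijection --- is exactly the paper's strategy. One direction of the bijection is even easier than what the paper does: the paper proves $\overline{N}\cap M(\lambda)=N$ for $N\subset M(\lambda)$ via an Artin--Rees and faithful-flatness argument (Propositions~\ref{d3ArtinReesConRob}--\ref{d3tensorproductU(g)Kisnon-zero}), whereas your remark that ``both sides are determined by their weight components'' can be made rigorous directly, using that the projection $\pi_\mu:\widehat{M(\lambda)}\to M(\lambda)_\mu$ is continuous and $N_\mu\subset M(\lambda)_\mu$ is a finite-dimensional, hence closed, $K$-subspace. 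So on that half your sketch is not only correct but more efficient than the paper.

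The genuine gap is in the step you flag yourself: showing that an arbitrary $\widehat{N}\subset\widehat{M(\lambda)}$ is the closure of $\bigoplus_\mu \widehat{N}_\mu$. Your proposed ``truncate modulo $\pi^n$, apply Vandermonde, pass to the limit'' does not work as stated. If $v=\sum_\nu v_\nu\in\widehat{N}$ and you truncate to a finite sum $v^{(n)}$ with $v-v^{(n)}$ small, then $v^{(n)}$ need not lie in $\widehat{N}$, so you cannot apply the module-internal Vandermonde to it. If instead you apply a Vandermonde-type projector $P_n\in U(\mathfrak{h}_K)$ to $v$ itself, the problem is that the denominators in $P_n$ are products of differences $(\mu-\nu)(H)$, and as the set of weights $\nu$ grows these products are not $p$-adically bounded (differences of weights on a coroot are integers, possibly highly divisible by $p$). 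Thus $P_n(v-v^{(n)})$ is a product of something small and something large, with no control.

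The paper's resolution (Proposition~\ref{d3submodulesclos}) is to avoid Vandermonde entirely and instead grade by the single integer $\height$, using the element $\delta=\rho^\vee\in\mathfrak{h}$ on which all relevant eigenvalues are integers. One first isolates a single height $L$ using a finite operator, and then kills heights $>L$ via the operators $\epsilon_{i,L}=\binom{\delta-\Lambda+i+L}{i}\in U(\mathfrak{h}_K)$. The point is that $\epsilon_{i,L}$ acts on $f^B v_\lambda$ by the integer $\binom{i+L-\height(f^B)}{i}$, which has norm $\le 1$; hence $\epsilon_{i,L}u\to Pv_\lambda$ in norm as $i\to\infty$, and closedness of $\widehat{N}$ finishes. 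In short, the missing idea is a $p$-adically \emph{bounded} family of separating operators, and this is exactly what the binomial-coefficient trick supplies.
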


We can now state the most important result of this article.

\begin{theorem}[Theorem \ref{d3affinoidduflotheorem} and Corollary \ref{d3corollarymunsterduflo}]

Let $R$ be a mixed characteristic $(0,p)$ complete discrete valuation ring and let $G$ be a connected, simply-connected, split semisimple, smooth affine algebraic group scheme over $\Spec R$. Denote $\mathfrak{g}:=\Lie(G)$ the Lie algebra of $G$.

Any primitive ideal in the affinoid enveloping algebra $\widehat{U(\mathfrak{g})_K}$ with $K$-rational infinitesimal central character is the annihilator of some $\widehat{L(\lambda)}$.

\end{theorem}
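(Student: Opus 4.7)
The plan is to transplant Joseph's strategy \cite{Jos} into the affinoid setting. The first step is to reduce the theorem to the affinoid analogue of Proposition \ref{classicalJosfun}: for a dominant weight $\lambda$, the map
\[
\widehat{J}\colon\ \{\text{two-sided ideals of }\widehat{U(\mathfrak{g})_K}^{\lambda}\}\ \longrightarrow\ \{\text{submodules of }\widehat{M(\lambda)}\}, \qquad \widehat{J}(I)=I\widehat{M(\lambda)},
\]
is injective. Granting this, the theorem follows by the now-standard argument: a primitive ideal $I$ with $K$-rational central character $\chi_{\lambda}$ contains $\ker\chi_{\lambda}$ and so can be viewed inside $\widehat{U(\mathfrak{g})_K}^{\lambda}$ (after moving $\lambda$ into the dominant chamber by the Weyl group action); since $\widehat{M(\lambda)}$ has finite length by Theorem \ref{d3affinoidclassicvermacorrespondenceprop}, some composition factor $\widehat{L(\mu)}$ of $\widehat{M(\lambda)}/\widehat{J}(I)$ satisfies $I\subseteq \Ann\widehat{L(\mu)}$, and injectivity of $\widehat{J}$ forces equality.

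To prove injectivity of $\widehat{J}$, I would reinterpret both sides geometrically on the flag variety $X=G/B$ using the affinoid Beilinson-Bernstein localisation developed in \cite{Sta1}. Two-sided ideals of $\widehat{U(\mathfrak{g})_K}^{\lambda}$ are exactly the $\mathfrak{g}\times\mathfrak{g}$-subbimodules of the diagonal bimodule, so under bimodule localisation on $X\times X$ with twist $(\lambda^*,\lambda)$ they correspond to coherent $G$-equivariant modules over the affinoid twisted differential operators $\widehat{\mathcal{D}}_{X\times X}^{\lambda^*,\lambda}$; submodules of $\widehat{M(\lambda)}$ correspond, via the same machinery applied to a single copy of $X$, to coherent $B$-equivariant modules over $\widehat{\mathcal{D}}_X^{\lambda}$.

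The central geometric input is then an equivalence of categories
\[
\Coh(\widehat{\mathcal{D}}_{X\times X}^{\lambda^*,\lambda},G)\ \simeq\ \Coh(\widehat{\mathcal{D}}_X^{\lambda},B),
\]
implemented by pull-back and push-forward along the closed embedding $G/B\hookrightarrow (G\times G)/(B\times B)$ of the diagonal $G$-orbit. Classically this is Borho-Brylinski for $\lambda=0$, extended to arbitrary dominant $\lambda$ in \cite{Sta2}; the task is to lift that argument across the $\pi$-adic completion, with the affinoid equivariant Beilinson-Bernstein theorem serving as the Kashiwara-style input that handles the closed orbit.

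The main obstacle, I expect, is precisely this last geometric step. Defining $L$-equivariance for coherent sheaves over the affinoid twisted differential operators, proving an affinoid equivariant Beilinson-Bernstein equivalence for arbitrary closed subgroups $L\le G$, and checking that the push-pull equivalence along the diagonal embedding survives $\pi$-adic completion all require delicate flatness and deformation-quantisation arguments over $R$. Once these ingredients are in place, the matching of $G$-equivariant submodules on $X\times X$ with $B$-equivariant submodules on $X$ yields injectivity of $\widehat{J}$, and with it the affinoid Duflo theorem.
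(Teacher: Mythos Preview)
Your plan is essentially the paper's: reduce to injectivity of $I\mapsto I\widehat{M(\lambda)}$, establish this via an affinoid equivariant Beilinson--Bernstein theorem combined with an affinoid Borho--Brylinski equivalence, and then run the composition-series argument using primeness of $I$.

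One step you treat as automatic, but which in the paper occupies all of Section~5.3, is the claim that a two-sided ideal $I\subset\widehat{U(\mathfrak{g})_K}^{\lambda}$ is $G$-equivariant in the sense required to feed it into the equivariant localisation on $X\times X$. Classically this is free because the adjoint action on $U(\mathfrak{g}_K)$ is locally finite; in the affinoid setting, equivariance is defined through an inverse system of $\mathcal{O}(G)$-comodules, and a priori one only knows from \cite{Munster} that the abstract group $G(R)$ preserves $I$. The paper upgrades this to an honest completed comodule structure $\hat{\rho}(I)\subset\widehat{\mathcal{O}(G)}\,\hat{\otimes}\,I$ by exploiting the Jacobson property of the Tate algebra $\widehat{K(G)}$ and evaluating at $G(\mathcal{O}_L)$-points for all finite extensions $L/K$ (Theorem~\ref{d3affinoidcomodulestructure}, Corollary~\ref{d3equivarianceoftwosidedideals}). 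Without this, $\Loc^{\lambda^*,\lambda}(I)$ does not land in $\Coh(\widehat{\mathcal{D}^{\lambda^*,\lambda}_{n,K}},G)$ and the Borho--Brylinski equivalence cannot be invoked.

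A minor geometric correction: the embedding used is $i_l\colon X\to X\times X$, $x\mapsto(x,eB)$, i.e.\ the inclusion of a fibre of the second projection, not the diagonal $G$-orbit.
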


In fact, we prove a more general result classifying all the primitive ideals with $K$-rational infinitesimal central character in $\hugnK:=(\invlim U(\mathfrak{g})/ \pi^{i} U(\mathfrak{\pi^n g})) \uset{R} K$. The $K$-rationality condition for central characters occurs because $K$ is not algebraically closed, so there are maximal ideals of the centre of $\hugnK$ that do not correspond to points in $K$. Fortunately, using the affinoid version of Quillen's Lemma \cite[Corollary 8.6]{Annals} by Ardakov and Wadsley which is enhanced in \cite[Theorem 6.4.6]{StaPhd}, we classify  in Theorem \ref{d3alltheprimitiveideals} \emph{all} the primitive ideals in $\hugnK$. Further, we are able to prove in Theorem \ref{d3finaltheorem} that a large class of two-sided ideals in $\hugnK$ is controlled by ideals in the classical enveloping algebra.

Let $\lambda$ be a regular dominant weight. Using Theorem $\ref{d3affinoidduflotheorem}$, we prove in Theorem \ref{d3controllertheorem} that any two-sided ideal in $\hugnK$ with $\chi_{\lambda}$ central character is controlled by a two-sided ideal in the classical enveloping algebra $U(\fr{g}_K)$.

In order to prove Theorem \ref{d3affinoidduflotheorem}, we enhance the affinoid Beilinson-Bernstein localisation \cite[Theorem C]{Annals} developed by Ardakov and Wadsley to the equivariant setting, prove an affinoid version of the Borho-Brylinski equivalence, and prove an affinoid version of Proposition \ref{classicalJosfun}.

We should also remark that our initial approach was to try to adapt one of the classical proofs in \cite{Du}, \cite{BGG}, \cite{Gin}, \cite{Jos} to the affinoid setting. Unfortunately, these approaches failed to produce results for $\mathfrak{g} \neq \mathfrak{sl}_2$. It boils down to the fact that the weight spaces of the $\ad$-action of the Cartan subalgebra on the affinoid enveloping algebra are not finite dimensional. This is in contrast to what happens in Theorem \ref{d3affinoidclassicvermacorrespondenceprop}, where we can adapt classical machinery to obtain a correspondence between the lattices of submodules of $\widehat{M(\lambda)}$ and $M(\lambda)$, respectively.

\subsection{Connection to the Iwasawa algebras}

Assume that $K$ is a finite extension of $\mathbb{Q}_p$ and let $\mathfrak{G}$ be a compact open subgroup of $G(R)$. Let

  $$K \mathfrak{G}:= (\varprojlim_{\mathfrak{N} \unlhd_{o} \mathfrak{G}} R[\mathfrak{G}/\mathfrak{N}]) \uset{R} K$$
  
denote the Iwasawa algebra of $\mathfrak{G}$. It is known that there is an equivalence between continuous $K$-representations of $\mathfrak{G}$ and finitely generated $K\mathfrak{G}$-modules. Following Jacobson, we aim to characterise simple $K\mathfrak{G}$-modules by classifying the primitive ideals in $K\mathfrak{G}$. It is conjectured that all the non-zero primitive ideals in $K\mathfrak{G}$ arise as annihilators of finite dimensional simple modules. Ardakov and Wadsley claim in \cite{Munster} that our theorem \ref{d3affinoidduflotheorem} implies that the conjecture is true provided one can prove that every affinoid highest weight module that is not finite-dimensional over $K$ is faithful as a $K\mathfrak{G}$-module.

\textbf{Structure of the paper}

The paper is organised as follows: in Section \ref{sectionBackground}, we review the main constructions and results in the previous papers of the series: \cite{Sta1} and \cite{Sta2}. Next, in Section \ref{d3sectionaffinoidenveloping}, we introduce affinoid enveloping algebras and affinoid Verma modules. We prove that for any weight $\lambda$ of the Cartan subalgebra, there is an explicit one-to-one correspondence between submodules of affinoid Verma module of weight $\lambda$ and the classical Verma module of weight $\lambda$.

In \cite{Sta2}, we have proven that there is an equivalence of categories between $G$-equivariant $(\lambda,\mu)$-twisted $\mathcal{D}$-modules on the double flag variety and $B$-equivariant $\lambda$-twisted $\mathcal{D}$-modules on the flag variety for any all weights $\lambda,\mu$. In Section \ref{d3sectionBoBr}, we prove an affinoid version of this equivalence.

Next, we enhance the affinoid Beilinson-Bernstein equivalence proven by Ardakov and Wadsley in \cite{Annals} to the equivariant setting. We further prove that any two-sided ideal in the affinoid enveloping algebra is $G$-equivariant when viewed as a bimodule over the affinoid enveloping algebra.

In Section \ref{d3sectionglobalsections}, we compute global sections under the affinoid pullback functor defined in Section \ref{d3sectionBoBr}. Finally, in Section \ref{d3sectionduflothm}, we prove an affinoid version of Duflo's theorem and \ref{d3sectioncontrollertheorem} we prove that certain two-sided ideals in the affinoid enveloping algebra are controlled by two-sided ideals in the classical enveloping algebra.

\textbf{Conventions}

Throughout this document, except otherwise stated, $R$ will denote a mixed characteristic $(0,p)$ complete discrete valuation ring with uniformiser $\pi$ and field of fractions $K$. We use $|| \cdot ||$ to denote the norm of an element in $R$ or $K$. 

Given an $R$-module $M$, we define $M_K:=M \uset{R} K$. For any $R$-algebra $A$ and for $\mathfrak{g}$ a $R$-Lie algebra, we define $\mathfrak{g}_A:=\mathfrak{g} \uset{R} A$; if $M$ is an $R$-module, we denote $M_A:=M \uset{R} A$. If $\mathcal{M}$ is a sheaf of $R$-modules on a topological space $Y$, we define a sheaf of $K$ vector spaces on $Y$, $\mathcal{M}_K$, by $\mathcal{M}_K(U):=\mathcal{M}(U) \uset{R} K$ for any $U \subset Y$ open.

Following \cite[definition 2.7]{Annals}, an $R$-module/sheaf of $R$-modules  $M$/$\mathcal{M}$ of a $K$-vector space/sheaf of $K$-vector spaces $V/\mathcal{V}$ will be called a \emph{lattice} if $$M \uset{R} K \cong V/ \mathcal{M} \uset{R} K \cong \mathcal{V} \text { and } \cap_{n \in \mathbb{N}^*} \pi^n M=0/ \cap_{n \in \mathbb{N}^*} \pi^n \mathcal{M}=0.$$

We will use $\hat{\otimes}$ to denote the completed tensor product, see \cite[definition 2.7]{StackProject} for definition and basic properties. We will assume that all the filtrations appearing are exhaustive. Given a filtered $A$ with filtration $F_i A, i \in \mathbb{Z}$, we will use $ \gr A$ to denote the associated graded ring with respect to the filtration. Further, for any ring $A$, $Z(A)$ will denote its centre. We will use the notation $(V_i)$ to denote a set of objects indexed by the non-negative natural numbers.

Lastly, given $f:X \to Y$ a map of schemes, we will use $f^*$ to denote the pullback in the category of $\mathcal{O}$ and $\mathcal{D}$-modules and $f^{-1},f_*$ to denote the inverse/direct image sheaf.

\textbf{Acknowledgements}. This paper is going to be part of the author's D.Phil thesis, which is being produced in Oxford under the supervision of Prof. Konstantin Ardakov. We are grateful for the mathematical ideas, talks, directions of research, encouragements given by him. We also thank Andreas Bode, Joshua Ciappara, Adam Jones, and Richard Mathers for many fruitful mathematical conversations. Lastly, we thank Diana Kessler for correcting lots of typos. The author's D.Phil is supported by an EPSRC scholarship.

\section{Background}
\label{sectionBackground}

We recall the main results and construction from the first two papers in the series \cite{Sta1}, \cite{Sta2} that we will use throughout this document. For now, let $R$ be a commutative Noetherian ring.

\subsection{Deformations}
\label{subsectiondeformations}

\begin{definition}
Let $A$ be a positively $\mathbb{Z}$-filtered $R$-algebra such that $F_0 A$ is an $R$-subalgebra of $A$. We call $A$ a \emph{deformable} $R$-algebra if $\gr A$ is a flat $R$-module. A morphism of deformable $R$-algebras is an $R$-linear filtered ring homomorphism.
\end{definition}

\begin{definition}
Let $A$ be a deformable $R$-algebra and let $r \in R$ a regular element. The $r$-th deformation of $A$ is the following $R$-submodule of $A$:

 $$A_r:= \sum_{i=0}^{\infty} r^i F_i A.$$

\end{definition}

By construction, one can see that $A_r$ is a $R$-subalgebra of $R$. Further, the definition is functorial, and the following lemma states that we have a family of endofunctors $A \mapsto A_r$.

\begin{lemma}
\label{associatedgradedofdeformations}

Let $A$ be a deformable $R$-algebra and $r \in R$ a regular element. Then $A_r$ is also a deformable $R$-algebra and there is a natural isomorphism $\gr A \cong \gr A_r$.

\end{lemma}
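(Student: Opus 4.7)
The plan is to introduce an explicit filtration on $A_r$ and then build a natural graded isomorphism $\gr A \cong \gr A_r$ piece by piece, from which the deformability of $A_r$ follows immediately.

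First, I would equip $A_r$ with the natural filtration $F_i A_r := \sum_{j=0}^{i} r^j F_j A$. That this is an exhaustive positive $\mathbb{Z}$-filtration with $F_i A_r \cdot F_j A_r \subseteq F_{i+j} A_r$ is immediate from the definitions and the fact that $r \in R$ is central.

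Next, for each $i \geq 0$, I would define $\phi_i : F_i A \to F_i A_r / F_{i-1} A_r$ by $a \mapsto \overline{r^i a}$. This is $R$-linear, and it vanishes on $F_{i-1} A$ because $r^i \cdot F_{i-1} A = r \cdot r^{i-1} F_{i-1} A \subseteq F_{i-1} A_r$, so it descends to a map $\overline{\phi_i} : F_i A / F_{i-1} A \to F_i A_r / F_{i-1} A_r$. Surjectivity is immediate from $F_i A_r = r^i F_i A + F_{i-1} A_r$.

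The main (and essentially only) technical step is the injectivity of $\overline{\phi_i}$, which is exactly where the deformability hypothesis on $A$ enters. Suppose $a \in F_i A$ with $\overline{r^i a} = 0$; then $r^i a \in F_{i-1} A_r \subseteq F_{i-1} A$, so the class $\sigma_i(a) \in F_i A / F_{i-1} A$ satisfies $r^i \sigma_i(a) = 0$ inside $\gr A$. Because $\gr A$ is flat over $R$ and $r$ is a regular element of $R$, multiplication by $r^i$ is injective on $\gr A$, so $\sigma_i(a) = 0$ and thus $a \in F_{i-1} A$.

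Assembling the $\overline{\phi_i}$ yields a graded $R$-linear bijection $\gr A \to \gr A_r$. Multiplicativity is a one-line check using that $r$ is central: $\overline{r^i a} \cdot \overline{r^j b} = \overline{r^{i+j} a b}$. Flatness of $\gr A_r$ over $R$ then follows by transport of structure, proving that $A_r$ is deformable. Naturality is clear because any morphism $f : A \to B$ of deformable $R$-algebras preserves filtrations and commutes with multiplication by $r$, so $f(A_r) \subseteq B_r$ and the induced map on associated gradeds intertwines the isomorphisms constructed above.
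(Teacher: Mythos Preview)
Your proof is correct and follows essentially the same approach as the paper: both define the filtration on $A_r$ as $F_i A_r = \sum_{j=0}^{i} r^j F_j A$ (the paper arrives at this via the subspace filtration and a flatness argument), build the graded map $a + F_{i-1}A \mapsto r^i a + F_{i-1}A_r$, and use flatness of $\gr A$ to deduce injectivity from $r^i a \in F_{i-1}A$. You are simply more explicit than the paper about multiplicativity, transport of flatness, and naturality.
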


\begin{proof}
We give $A_r$ the subspace filtration $F_i A_r= F_i A \cap A_r$. As $\gr A$ is flat over $R$, we have $F_i A_r= \sum_{j=0}^i r^j F_j A$. For $i \geq 1$ define a $R$-linear map 
$$f:F_i A/ F_{i-1} A \to F_i A_r/F_{i-1} A_r, \quad f(x+F_{i-1} A)=r^i x +F_{i-1} A_r.$$

To finish the proof, it is enough to check that $f$ is bijective. First, we prove that $f$ is injective. Assume that $r^i x \in F_{i-1} A_r $, so $r^i x \in F_{i-1} A$ which implies that $x \in F_{i-1} A$ since $\gr A$ is flat, so in particular $R$-torsion free. It is straightforward to see that $f$ is also surjective. 
\end{proof}

\subsection{Deformed twisted differential operators}
For the rest of the section, we let $r \in R$ be a regular element.

\begin{definition}
We call an $R$-scheme $X$ that is smooth, separated and locally of finite type an \emph{$R$-variety}.
\end{definition}

Throughout this subsection, fix $X$ an $R$-variety. We write $\mathcal{T}_X$ for the sheaf of sections of the tangent bundle $TX$.

\begin{definition}\cite[Definition 4.2]{Annals}
\label{crystallinedifferentialoperatorsdefinition}

Let $X$ be an $R$-variety. The sheaf of crystalline differential operators is defined to be the enveloping algebra $\mathcal{D}_X$ of the Lie algebroid $\mathcal{T}_X$.
\end{definition}

We can view $\mathcal{D}_X$  as a sheaf of ring generated by $\mathcal{O}_X$ and $\mathcal{T}_X$ modulo the relations:

\begin{itemize}
\item{$f \partial =f \cdot \partial$;}
\item{$ \partial f - f \partial= \partial (f)$;}
\item{$ \partial \partial' - \partial' \partial=[\partial,\partial'],$}
\end{itemize}

for all $f \in \mathcal{O}_X$ and $\partial,\partial' \in \mathcal{T}_X$. The sheaf $\mathcal{D}_X$ comes equipped with a natural PBW filtration:

 $$0 \subset F_0 (\mathcal{D}_X) \subset  F_1 (\mathcal{D}_X) \subset \ldots $$

consisting of coherent $\mathcal{O}_X$-modules such that

$$F_0(\mathcal{D}_X)= \mathcal{O}_X, \enskip F_1( \mathcal{D}_X)= \mathcal{O}_X \oplus \mathcal{T}_X, \enskip F_m(\mathcal{D}_X) = F_1 (\mathcal{D}_X) \cdot F_{m-1}( \mathcal{D}_X)  \text{ for } m >1.                   $$

Since $X$ is smooth, the tangent sheaf $\mathcal{T}_X$ is locally free and the associated graded algebra of $\mathcal{D}_X$ is isomorphic to the symmetric algebra of $\mathcal{T}_X$:

\begin{equation}
\label{gradingcrysdifop}
 \gr(\mathcal{D}_X)= \bigoplus_{m=1}^{\infty} \frac{F_m(\mathcal{D}_X)}{F_{m-1}(\mathcal{D}_X)} \cong \Sym_{\mathcal{O}_X} \mathcal{T}_X.
\end{equation}

If $q:T^*X \to X$ is the cotangent bundle of $X$ defined by the locally free sheaf $\mathcal{T}_X$, then we can also identify $\gr(\mathcal{D}_X)$ with $q_{*}\mathcal{O}_{T^*X}$.

Let $X$ be an $R$-variety and let $U=\Spec(A) \subset X$ be open affine. Further, we consider $\mathcal{M}$ a sheaf of $\mathcal{O}_X$-bimodules quasi-coherent with respect to the left action. We define a filtration $F_{\bullet}M$ via

\begin{itemize}
\item{$F_{-1}(M)=0,$}
\item{$F_n(M)= \{ m \in M | \ad(a_0)\ad(a_1)...\ad(a_n)(m)=0 \text{ for any } a_0,a_1, \ldots a_n \in A\}$} for $n \geq 0$.

\end{itemize} 

We say that $M$ is differential if $M= \cup F_n(M)$ and we call $\mathcal{M}$ a differential $\mathcal{O}_X$-bimodule if there is an affine open cover $(U_i)_{i \in I}$ such that $\mathcal{M}(U_i)$ is a differential bimodule for all $i \in I$.

Let $\mathcal{M},\mathcal{N}$ be two quasi-coherent $\mathcal{O}_X$-modules. Then for any affine open $U \subset X$ the set $\Hom_{R}(\mathcal{M}(U),\mathcal{N}(U))$ has the structure of a $\mathcal{O}_X(U)$-bimodule. Let $\mathcal{F} \in \Hom_{R}(\mathcal{M}, \mathcal{N}$); we say that $\mathcal{F}$ is a differential operator of degree $\leq n$ if for any affine open $U$, $\mathcal{F}(U) \in F_n(\Hom_{R}(\mathcal{M}(U),\mathcal{N}(U)).$

\begin{definition}
Let $\mathcal{A}$ be a $\mathcal{O}_X$-algebra. We say that $\mathcal{A}$ is a differential algebra if $\mathcal{A}$ is a flat $R$-module and multiplication makes $\mathcal{A}$ a differential $\mathcal{O}_X$-bimodule. The filtration $F_{\bullet}(A)$ becomes a ring filtration and with respect to this filtration, $\gr A$ is commutative.

\end{definition}


\begin{definition}
\label{tdodefinition}
An algebra of $r$-deformed twisted differential operators(tdo) is an $\mathcal{O}_X$-differential algebra $\mathcal{D}$ such that:
\begin{enumerate}[label=\roman*)]
\item{ The natural map $\mathcal{O}_X \to F_0(\mathcal{D})$ is an isomorphism.}
\item{ The morphism $\gr_{1} \mathcal{D} \to \mathcal{T}_X=\Der_R(\mathcal{O}_X,\mathcal{O}_X)$ defined by $\psi \mapsto \ad_{\psi}$ for $\psi \in F_1(\mathcal{D})$ induces an isomorphism $\gr_{1} \mathcal{D} \to r \mathcal{T}_X.$}
\item{The morphism of $\mathcal{O}_X$-algebras $\Sym_{\mathcal{O}_X}(\gr_1 \mathcal{D}) \to \gr \mathcal{D}$ is an isomorphism.}
\end{enumerate}

\end{definition}

In it easy to see by construction that if the base ring $R$ is Noetherian, any $r$-deformed tdo is a sheaf of Noetherian rings.

%
%

\subsection{Equivariant \texorpdfstring{$\mathcal{O}$}{O}-modules}
Let $G$ be an affine algebraic group scheme over $\Spec R$ acting on a  scheme $X$; denote the action by $\sigma_X: G \times X \to X$. Furthermore, we denote $p_X:G \times X \to X$ and $p_{2X}:G \times G \times X \to X$ the projections on the $X$ factor, $p_{23X}:G \times G \times X \to G \times X$ the projection onto the second and third factor and $m:G \times G \to G$ the multiplication of the group $G$.

\begin{definition}

Let $G$ an algebraic group scheme acting on a scheme $X$. A $G$-equivariant $\mathcal{O}_X$-module is a pair $(\mathcal{M},\alpha)$ where $\mathcal{M}$ is a quasi-coherent $\mathcal{O}_X$-module and $\alpha:\sigma_X^*\mathcal{M} \to p_X^*\mathcal{M}$ is an isomorphism of $\mathcal{O}_{G \times X}$-modules such that the diagram

\begin{center}
\begin{tikzcd}

&(1_G \times \sigma_X)^*p_X^*\mathcal{M} \arrow[r," p_{23X}^* \alpha"]   &p_{2X}^*\mathcal{M} \\
&(1_G \times \sigma_X)^* \sigma_X^* \mathcal{M} \arrow [u,"(1_G \times \sigma_X)^* \alpha "] \arrow[r,leftrightarrow,"id "] &(m \times 1_X)^*\sigma_X^* \mathcal{M} \arrow[u, " (m \times 1_X)^* \alpha"]
\end{tikzcd}
\end{center}

of $\mathcal{O}_{G \times G \times X}$-modules commutes (the cocycle condition) and the pullback 
                        $$(e \times 1_X)^* \alpha: \mathcal{M} \to \mathcal{M}$$
is the identity map.
\end{definition}

\begin{lemma}\cite[Lemma 2.2]{Sta1}
\label{Oequivpreservefunctor}

Let $G$ be an affine algebraic group acting on schemes $X$ and $Y$ and let $f:Y \to X$ be a $G$-equivariant morphism. Then the pullback functor $f^*$ given by $$(\mathcal{M},\alpha) \mapsto (f^*\mathcal{M},(1_G \times f)^* \alpha)$$ defines a functor from $G$-equivariant $\mathcal{O}_X$-modules to $G$-equivariant $\mathcal{O}_Y$-modules.

\end{lemma}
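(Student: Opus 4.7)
The plan is to use the three conditions of $G$-equivariance of $f$, namely that $f$ commutes with the action in the sense of the relations
\[
\sigma_X \circ (1_G \times f) = f \circ \sigma_Y, \qquad p_X \circ (1_G \times f) = f \circ p_Y,
\]
together with the analogous identities on $G \times G \times Y$ and $G \times G \times X$, to transport each piece of the $G$-equivariant structure on $\mathcal{M}$ back to $f^*\mathcal{M}$. First I would use the first identity to obtain a canonical isomorphism
\[
(1_G \times f)^* \sigma_X^* \mathcal{M} \;\cong\; \sigma_Y^*(f^*\mathcal{M}),
\]
and the second to obtain $(1_G \times f)^* p_X^* \mathcal{M} \cong p_Y^*(f^*\mathcal{M})$. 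Under these identifications, $(1_G \times f)^* \alpha$ becomes an isomorphism $\sigma_Y^*(f^*\mathcal{M}) \to p_Y^*(f^*\mathcal{M})$ of $\mathcal{O}_{G \times Y}$-modules, giving the candidate structure on $f^*\mathcal{M}$.

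Next I would verify the cocycle condition. The strategy is to pull back the entire cocycle diagram for $(\mathcal{M},\alpha)$ on $G \times G \times X$ along the map $1_G \times 1_G \times f : G \times G \times Y \to G \times G \times X$. Using the equivariance relations
\[
\sigma_X \circ (1_G \times \sigma_X) \circ (1_G \times 1_G \times f) = f \circ \sigma_Y \circ (1_G \times \sigma_Y),
\]
\[
(m \times 1_X) \circ (1_G \times 1_G \times f) = (1_G \times 1_G \times f) \circ (m \times 1_Y) \cdot (\text{reordering}),
\]
one identifies each of the three pullback sheaves appearing in the diagram on $X$ with the corresponding pullback of $f^*\mathcal{M}$ on $Y$, and the three arrows of the pullback diagram with the arrows that appear in the cocycle condition for $(f^*\mathcal{M},(1_G \times f)^*\alpha)$. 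Commutativity of the latter diagram then follows from commutativity of the former by functoriality of pullback.

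The unit condition is immediate from functoriality: using $(1_G \times f) \circ (e \times 1_Y) = (e \times 1_X) \circ f$, we get
\[
(e \times 1_Y)^*(1_G \times f)^* \alpha \;=\; f^*(e \times 1_X)^* \alpha \;=\; f^*(\id) \;=\; \id.
\]
Finally, functoriality on morphisms in the category of $G$-equivariant $\mathcal{O}_X$-modules follows because a morphism $\varphi:(\mathcal{M},\alpha) \to (\mathcal{N},\beta)$ is by definition an $\mathcal{O}_X$-linear map intertwining $\alpha$ and $\beta$, and pullback of such a morphism along $f$ yields an $\mathcal{O}_Y$-linear map intertwining $(1_G \times f)^*\alpha$ and $(1_G \times f)^*\beta$.

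The main obstacle is purely bookkeeping: one has to track the canonical isomorphisms $(g \circ h)^* \cong h^* g^*$ for pullback of quasi-coherent sheaves and check that they are coherent with the various equivariance identities so that the cocycle diagram on $Y$ genuinely arises as the pullback of the one on $X$. No deep ingredient is needed beyond the defining equivariance conditions on $f$ and the universal properties of pullback.
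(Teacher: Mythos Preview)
Your proposal is correct and follows the standard argument. Note, however, that the paper does not actually prove this lemma: it is stated as background and cited from \cite[Lemma 2.2]{Sta1}, so there is no in-paper proof to compare against. Your approach---pulling back the equivariance isomorphism and the cocycle diagram along $1_G \times f$ and $1_G \times 1_G \times f$, using the identities $\sigma_X \circ (1_G \times f) = f \circ \sigma_Y$ and $p_X \circ (1_G \times f) = f \circ p_Y$ together with functoriality of $(-)^*$---is exactly the expected verification, and your treatment of the unit condition and of morphisms is correct.
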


\begin{definition}
Let $G$ an affine algebraic group acting on a scheme $X$ via $\sigma_X$. We define the category of $G$-equivariant quasi-coherent $\mathcal{O}_X$-modules. Objects are given by $G$-equivariant $\mathcal{O}_X$-modules.

A morphism of $G$-equivariant $\mathcal{O}_X$ modules $(\mathcal{M},\alpha_M)$ and $(\mathcal{N},\alpha_N)$ is a map $\phi \in \Hom_{\mathcal{O}_X}(\mathcal{M},\mathcal{N})$ such that the following diagram commutes:
\begin{center}
\begin{tikzcd}
&\sigma_X^* \mathcal{M} \arrow[d,"\sigma_X^*\phi"] \arrow[r,"\alpha_M"] &p_X^* \mathcal{M} \arrow[d,"p_X^* \phi"] \\
&\sigma_X^* \mathcal{N}  \arrow[r,"\alpha_N"] &p_X^* \mathcal{N}. 
\end{tikzcd} 
\end{center}

We call such a morphism $G$-equivariant. We denote $\QCoh(\mathcal{O}_X,G)$ the category of $G$-equivariant $\mathcal{O}_X$-modules together with $G$-equivariant morphisms.


\end{definition}

\begin{proposition}\cite[Proposition 2.4]{Sta1}
\label{GequivariantAbeliancat}

Let $G$ an affine algebraic group acting on a scheme $X$. Then the category $\QCoh(\mathcal{O}_X,G)$ is Abelian.

\end{proposition}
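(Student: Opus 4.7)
The plan is to leverage the fact that $\QCoh(\mathcal{O}_X)$ is itself Abelian. The forgetful functor $\QCoh(\mathcal{O}_X, G) \to \QCoh(\mathcal{O}_X)$ is faithful and $R$-linear, so it suffices to show that for any morphism $\phi: (\mathcal{M}, \alpha_M) \to (\mathcal{N}, \alpha_N)$ in $\QCoh(\mathcal{O}_X, G)$, the underlying kernel $\mathcal{K} := \ker \phi$ and cokernel $\mathcal{C} := \coker \phi$ inherit canonical $G$-equivariant structures realising the kernel and cokernel in $\QCoh(\mathcal{O}_X, G)$, and that biproducts and the zero object carry evident equivariant structures. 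Given this, the remaining Abelian axiom (every monic is a kernel and every epic is a cokernel) will follow from the corresponding facts in $\QCoh(\mathcal{O}_X)$, since an underlying isomorphism of $\mathcal{O}_X$-modules between $G$-equivariant sheaves is automatically $G$-equivariant.

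The critical technical input is the exactness of the pullback functors involved. Because $G$ is flat over $\Spec R$, the projection $p_X: G \times X \to X$ is flat, and hence $p_X^*$ is exact on quasi-coherent sheaves. For $\sigma_X$, I would introduce the shear automorphism $s: G \times X \to G \times X$, $(g,x) \mapsto (g, gx)$, whose inverse is $(g,x) \mapsto (g, g^{-1} x)$. Then $\sigma_X = p_X \circ s$, so $\sigma_X^*$ is also exact. An analogous decomposition shows that $(1_G \times \sigma_X)^*, (m \times 1_X)^*, p_{2X}^*$, and $p_{23X}^*$ are all exact as well, which is precisely what I need for chasing the cocycle diagram.

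Given the equivariant morphism $\phi$, exactness identifies $\sigma_X^* \mathcal{K}$ with $\ker(\sigma_X^* \phi)$ and $p_X^* \mathcal{K}$ with $\ker(p_X^* \phi)$; the equivariance square for $\phi$ then forces $\alpha_M$ to restrict to an isomorphism $\alpha_K : \sigma_X^* \mathcal{K} \xrightarrow{\sim} p_X^* \mathcal{K}$. To promote $(\mathcal{K}, \alpha_K)$ to an object of $\QCoh(\mathcal{O}_X, G)$ I must verify the cocycle condition and the normalisation $(e \times 1_X)^* \alpha_K = \id$. Both follow from the corresponding properties for $\alpha_M$ by applying the exact pullback functors listed above to the defining cocycle diagram for $\alpha_M$ and invoking uniqueness of the induced map on kernels. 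A dual argument, using the right-exactness of $\sigma_X^*$ and $p_X^*$, produces the equivariant structure $\alpha_C$ on $\mathcal{C}$. Direct sums and the zero object are unproblematic because pullback functors commute with finite colimits.

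The main, though essentially formal, obstacle is this cocycle verification for $\alpha_K$ and $\alpha_C$: one must track how the commutative cube defining $\alpha_M$ behaves under iterated exact pullbacks and identify the result as the required cocycle diagram for $\alpha_K$. Once this is done, the universal property of the equivariant kernel is obtained by taking any $G$-equivariant test morphism $\psi : (\mathcal{L}, \alpha_L) \to (\mathcal{M}, \alpha_M)$ with $\phi \circ \psi = 0$, using $\QCoh(\mathcal{O}_X)$ to produce a unique $\mathcal{O}_X$-linear factorisation through $\mathcal{K}$, and observing that equivariance of this factorisation is yet another instance of the same diagram chase under the exact pullback functors. The cokernel case is dual, and assembling these pieces yields all the Abelian axioms.
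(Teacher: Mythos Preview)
Your argument is correct and is the standard proof of this fact. Note, however, that the paper does not actually supply a proof here: the proposition is quoted as background from \cite{Sta1}, so there is no in-paper proof to compare against.

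One minor wording issue worth tightening: the claim that ``an underlying isomorphism of $\mathcal{O}_X$-modules between $G$-equivariant sheaves is automatically $G$-equivariant'' is false as stated (an arbitrary $\mathcal{O}_X$-isomorphism between the underlying sheaves need not intertwine the equivariance data). What you actually need, and what is true, is that if a morphism in $\QCoh(\mathcal{O}_X,G)$ has underlying $\mathcal{O}_X$-map an isomorphism, then its inverse is again $G$-equivariant; this is an immediate diagram chase and is exactly what is required to deduce that the canonical coimage-to-image map is an isomorphism in the equivariant category.
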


From now on, when we use the notion of morphism of $G$-equivariant $\mathcal{O}_X$-modules, we always view it as a morphism in the category $\QCoh(\mathcal{O}_X,G)$.

\textbf{A reformulation of equivariance}

We wish to reformulate the notion of an equivariant $\mathcal{O}$-module. Until the end of the section, we fix $X$ a scheme defined over $R$ acted on by an affine algebraic group $G$. We start with a very simple observation: viewing $\mathcal{O}_X$ as a left $\mathcal{O}_X$-module, $(\mathcal{O}_X,\id)$ is a $G$-equivariant $\mathcal{O}_X$-module. We can reformulate this following ideas in \cite{MvdB}: for each $R$-algebra $A$ inducing a map $s:\Spec A \to \Spec R$ and for each geometric point $i_{g}:\Spec A \to G$ which induces an automorphism $g:X_{A} \to X_{A}$ there exists an isomorphism

                                                                    $$q_g: s^* \mathcal{O} \to (g^{-1})^*s^* \mathcal{O}, \text{ satisfying }$$

\begin{equation}
\label{structureequiveq}
              q_e=\id \text{ and } q_{gh}=(g^{-1})^*(q_h)q_g
\end{equation}              
in such a way that $(q_g)$'s are compatible with base change. Let $r_g=g^* \circ q_g$. For each $U \subset X_A$ affine open, $r_g$ is a map $\mathcal{O}_A(U) \to \mathcal{O}_A(g^{-1}U)$. The equation \ref{structureequiveq} translates as $r_e=\id$ and $r_{gh}=r_hr_g$. Furthermore, the $\mathcal{O}$-module compatibility requires that for any $f_1,f_2 \in \mathcal{O}_A(U)$, we have $r_g(f_1f_2)=r_g(f_1)r_g(f_2)$.  

We define $r_g$ via $r_g(f)(x)=f(g^{-1}x)$ for all $R$-algebras $A$, $U \subset X_A$ affine open, $x \in U$, $f \in \mathcal{O}_A(U)$, $g:X_{A} \to X_{A}$ and it is easy to see that $r_g$'s make $\mathcal{O}_X$ a $G$-equivariant $\mathcal{O}_X$-module. We may now make an abuse of notation: for each $i_g: \Spec A \to G$ and each $ f \in \mathcal{O}_A(U)$, we denote $g.f=r_{g^{-1}}(f)$ and we translate  the equivariance structure as

\begin{equation*}
 e.f_1=f, \enskip g.(h.f_1)=(gh).f_1, \enskip g.(f_1f_2)=(g.f_1)(g.f_2) \text{ for } g,h \in G,  f_1,f_2 \in \mathcal{O}_X.
\end{equation*}

\begin{lemma}\cite[Lemma 2.5]{Sta1}
A $\mathcal{O}_X$-module  $\mathcal{M}$ is $G$-equivariant if and only if for each $R$-algebra $A$, for each $s:\Spec A \to \Spec R$ and for each geometric point $i_g: \Spec A \to G$ which induces an automorphism $g:X_A \to X_A$ there exists an isomorphism of $\mathcal{O}_{A}$-modules

      $$ q_g:s^* \mathcal{M} \to (g^{-1})^* s^* \mathcal{M}        $$

satisfying

\begin{equation}
\label{weaklyequiveqOmod}
q_{e}=\id \text{ and }  q_{gh}=(g^{-1})^*(q_h)q_g
\end{equation}

in such a way that $(q_g)$'s are compatible with base change.

\end{lemma}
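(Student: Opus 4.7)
The plan is to pass between the two descriptions by pulling back along $A$-valued points of $G$, and to recover $\alpha$ from the family $(q_g)$ via the universal point $\id_G$ (or its composition with the inversion). Both directions reduce to bookkeeping with standard pullback identifications; no input beyond functoriality of pullback and Yoneda for the affine scheme $G$ is needed.

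For the forward implication, suppose $(\mathcal{M},\alpha)$ is $G$-equivariant and fix an $R$-algebra $A$ with structure map $s:\Spec A \to \Spec R$, an $A$-point $i_g:\Spec A \to G$, and the induced automorphism $g:X_A \to X_A$. Let $p_A:X_A \to \Spec A$ and $q_A:X_A \to X$ be the canonical projections, so that $s^*\mathcal{M}$ on $X_A$ is to be interpreted as $q_A^*\mathcal{M}$. The morphism $\varphi_g := (i_g \circ p_A,\, q_A):X_A \to G \times X$ satisfies $\sigma_X \circ \varphi_g = q_A \circ g$ and $p_X \circ \varphi_g = q_A$, so pulling back $\alpha$ along $\varphi_g$ yields an isomorphism $g^*q_A^*\mathcal{M} \xrightarrow{\sim} q_A^*\mathcal{M}$; applying $(g^{-1})^*$ produces the desired $q_g:s^*\mathcal{M} \xrightarrow{\sim} (g^{-1})^*s^*\mathcal{M}$. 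The unit condition $(e \times 1_X)^*\alpha = \id$ pulls back to $q_e = \id$, and the hexagonal cocycle diagram for $\alpha$ pulls back along the morphism $X_A \to G \times G \times X$, $x \mapsto (i_g(p_A(x)), i_h(p_A(x)), q_A(x))$, to the relation $q_{gh} = (g^{-1})^*(q_h)\,q_g$. Compatibility of the resulting family with base change is automatic from functoriality of pullback.

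For the reverse implication, since $G$ is an affine algebraic group scheme take $A := \mathcal{O}(G)$ and apply the hypothesis to the inversion $\iota:\Spec A \to G$, viewed as a universal $A$-point. A direct computation shows that the induced automorphism $g$ of $X_A = G \times X$ sends $(t,x)$ to $(t, t^{-1}x)$, so $p_X \circ g^{-1} = \sigma_X$, and after the evident identifications $q_{\iota}^{-1}$ becomes an isomorphism $\sigma_X^*\mathcal{M} \xrightarrow{\sim} p_X^*\mathcal{M}$ which we declare to be $\alpha$. Base change compatibility of the family $(q_g)$ forces every $q_g$ to coincide with the isomorphism constructed from this $\alpha$ by the forward recipe, so the cocycle condition on $(q_g)$ — specialised to $A = \mathcal{O}(G \times G)$ with the two tautological projections — recovers the cocycle diagram for $\alpha$.

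The main technical obstacle is purely bookkeeping: carefully tracking the identifications between $s^*\mathcal{M}$, $q_A^*\mathcal{M}$, and composite pullbacks such as $(1_G \times \sigma_X)^*p_X^*\mathcal{M} \cong (m \times 1_X)^*\sigma_X^*\mathcal{M}$ against the corresponding compositions of $q_g$'s. Once these are pinned down, the correspondence between the cocycle conditions on $\alpha$ and on $(q_g)$ becomes a literal identity, and the two descriptions of equivariance are strictly equivalent.
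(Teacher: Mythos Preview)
The paper does not actually prove this lemma: it is stated with a citation to \cite[Lemma 2.5]{Sta1} and no proof is given here, so there is nothing in the present paper to compare your attempt against. Your argument is the standard one --- pull back $\alpha$ along the graph map $\varphi_g=(i_g\circ p_A,\,q_A):X_A\to G\times X$ in the forward direction, and recover $\alpha$ by Yoneda from the universal point in the reverse direction --- and it is correct as written; the choice of the inversion $\iota$ rather than $\id_G$ as the universal point is exactly what makes $(g^{-1})^*s^*\mathcal{M}$ line up with $\sigma_X^*\mathcal{M}$ without an extra twist.
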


Again by setting $s_g=g^* \circ q_g$, we may reformulate equation \ref{weaklyequiveqOmod} as:  for each $R$-algebra $A$ and for each $i_g:\Spec A \to G$, we have an isomorphism of $\mathcal{O}$-modules $s_g: \mathcal{M}_{X_A} \to \mathcal{M}_{X_A}$ such that for each $U \subset X_A$ affine open:

\begin{equation}
\label{alternativeeqOmod}
\begin{split}
&s_{e}=\id,\\
&s_{gh}=s_{h}s_{g},\\
&s_{g}'s \text{ are compatible with base change},\\
& r_g(f.m)=r_g(f).s_g(m) \text{ for all }  f \in \mathcal{O}_{Y_A}(U), m \in \mathcal{M}(U).
\end{split}
\end{equation}

Again, we make an abuse of notation:  for each $i_g: \Spec A \to G$ and each $ m \in \mathcal{M}_A(U)$, we denote $g.m=s_{g^{-1}}(m)$ and we translate  the equivariance structure as:

\begin{equation}
\label{easyweakOmodequivariant}
\begin{split}
&e.m=m, \\
&gh.m= g.(h.m),\\
&g.(f.m)=(g.f).(g.m).
\end{split}
\end{equation}

for all $g,h \in G$, $m \in \mathcal{M}$,  $ f \in \mathcal{O}_X$.

\subsection{Deformed Homogeneous twisted differential operators}

Throughout this subsection, we will assume that $G$ is a connected, smooth affine algebraic group scheme over $\Spec R$ with Lie algebra $\mathfrak{g}:=\Lie(G)$.

\begin{definition}
\label{d3htdodef}
Let $\mathcal{D}$ be a differential $\mathcal{O}_X$-algebra. We call $\mathcal{D}$ a sheaf $r$-deformed $G$-homogeneous twisted differential operators ($G$-htdo) if $\mathcal{D}$ is $G$-equivariant as a left $\mathcal{O}_X$-module, $\mathcal{D}$ is an $r$-deformed tdo  and $\mathcal{D}$ is equipped with a Lie algebra map $i_{\mathfrak{g}}:r\mathfrak{g} \to \mathcal{D}$ such that:

\begin{enumerate}[label=\roman*)]
\item{$g.1=1$ and $g.(d_1d_2)=(g.d_1)(g.d_2)$ for $g \in G$ and $d_1,d_2 \in \mathcal{D}.$ }
\item{$g.(fd)=(g.f)(g.d)$ for $f \in \mathcal{O}_X$ and $d \in \mathcal{D}$.}
\item{$i_{\mathfrak{g}}(g.\psi)=g.i_{\mathfrak{g}}(\psi)$ for $g \in G, \psi \in r\mathfrak{g}.$}
\item{The derivative of the $G$-action induces a $\mathfrak{g}$-action and so a $r \mathfrak{g} \subset \mathfrak{g}$ action. This must coincide with the action $d \to [i_{\mathfrak{g}}(\psi),d]$ for $\psi \in r\mathfrak{g}$ and $ d \in \mathcal{D}$.}

\item{$i_{\mathfrak{g}}(r\mathfrak{g}) \subset F_1 \mathcal{D}.$}
\item{$\eta=\rho \circ i_{\mathfrak{g}}$ as maps from $r \mathfrak{g}$ to $r \mathcal{T}_X$ where $\eta:\mathfrak{g} \to \mathcal{T}_X$ is the infinitesimal map and $\rho: F_1 \mathcal{D} \to \mathcal{T}_X$ is the natural anchor map.}
\end{enumerate}
\end{definition}

Let $Y$ be another $R$-variety such that $G$ acts on $Y$ and $f:Y \to X$ is a $G$-equivariant morphism. Then for $\mathcal{D}$ an $r$-deformed  $G$-htdo on $X$, we defined in \cite[Definition 7.5]{Sta1} its pullback, $f^{\#} \mathcal{D}$ and proved in \cite[Corollary 7.6]{Sta1} that it is an $r$-deformed $G$-htdo on $Y$.

Assume further that $f:Y \to X$ is a locally trivial $G$-torsor (see \cite[Section 4.3]{Annals} for the definition). Then for $\mathcal{A}$ an $r$-deformed $G$-htdo on $Y$, we defined its descent, $f_{\#} \mathcal{A}^G$, see \cite[Definition 10.9]{Sta1} and proved in \cite[Lemma 10.10]{Sta1} that it is an $r$-deformed tdo on $X$. 

The main proposition we need is:

\begin{proposition}\cite[Corollary 10.13]{Sta1}
\label{eqdescentliealg}
Let $f:Y \to X$ be a locally trivial $G$-torsor. Let $B$ be another smooth affine algebraic group acting on $X$ and $Y$, such that $G$ and $B$ on $Y$ commute. The maps $f_{\#}(-)^G$ and $f^{\#}(-)$ induce inverse bijections from the set of $r$-deformed  $G \times B$-htdo's on $Y$ to the set of $r$-deformed $B$-htdo's on $X$.
\end{proposition}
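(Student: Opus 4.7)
The plan is to upgrade the non-equivariant descent/pullback bijection (relating $G$-htdo's on $Y$ with tdo's on $X$, essentially \cite[Lemma 10.10]{Sta1} together with the pullback construction \cite[Definition 7.5]{Sta1}) by carrying a commuting $B$-action through both constructions. Throughout I will use that, since $f:Y\to X$ is a locally trivial $G$-torsor, $X$ is identified with the $G$-quotient $Y/G$, and faithfully flat descent applies.

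First, I would verify that $f^{\#}$ sends $r$-deformed $B$-htdo's on $X$ to $r$-deformed $(G\times B)$-htdo's on $Y$. The fact that $f^{\#}\mathcal{D}$ is an $r$-deformed $G$-htdo on $Y$ is the content of \cite[Corollary 7.6]{Sta1} applied to the torsor map. Because $f$ is $B$-equivariant (the $B$-action on $X$ is the one induced from $Y$, and $G$,$B$ commute on $Y$), Lemma \ref{Oequivpreservefunctor} plus the pullback construction at the level of the Lie algebroid gives $f^{\#}\mathcal{D}$ a compatible $B$-equivariant structure and a Lie algebra map $i_{\mathfrak{b}}:r\mathfrak{b}\to f^{\#}\mathcal{D}$ satisfying the axioms of Definition \ref{d3htdodef}. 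The commutativity of the $G$- and $B$-actions on $Y$ translates into the $(G\times B)$-equivariance and into the commutation of $i_{\mathfrak{g}}$ and $i_{\mathfrak{b}}$ up to the adjoint action, which is exactly what a $(G\times B)$-htdo structure requires.

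Second, I would verify that $f_{\#}(-)^G$ sends $r$-deformed $(G\times B)$-htdo's on $Y$ to $r$-deformed $B$-htdo's on $X$. Given such $\mathcal{A}$, since $B$ commutes with $G$ on $Y$, the $B$-equivariant structure on $\mathcal{A}$ restricts to the $G$-invariant subsheaf $\mathcal{A}^G$, which descends along the torsor $f$ to the subsheaf $f_{\#}\mathcal{A}^G$ of $f_*\mathcal{A}$. That $f_{\#}\mathcal{A}^G$ is an $r$-deformed tdo on $X$ is \cite[Lemma 10.10]{Sta1}; the descended $B$-action makes it $B$-equivariant as an $\mathcal{O}_X$-algebra, and the map $i_{\mathfrak{b}}$ on $\mathcal{A}$ (whose image lies in the $G$-invariant part because $G,B$ commute) descends to a Lie algebra map $r\mathfrak{b}\to f_{\#}\mathcal{A}^G$ verifying the axioms of Definition \ref{d3htdodef}.

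Finally, I would establish that the two functors are mutually inverse. In one direction, for a $B$-htdo $\mathcal{D}$ on $X$, the natural map $\mathcal{D}\to f_{\#}(f^{\#}\mathcal{D})^G$ is an isomorphism; this is precisely the non-equivariant statement from \cite[Sta1]{} applied section-locally on a trivialising cover of the torsor, and the isomorphism is automatically $B$-equivariant and intertwines the Lie algebra maps $i_{\mathfrak{b}}$. In the other direction, for a $(G\times B)$-htdo $\mathcal{A}$ on $Y$, the canonical counit $f^{\#}(f_{\#}\mathcal{A}^G)\to \mathcal{A}$ is an isomorphism by faithfully flat descent for $G$-torsors, since locally on $X$ a trivialisation $Y|_U\cong G\times U$ identifies $\mathcal{A}|_{Y|_U}$ with $\mathcal{O}_G\boxtimes \mathcal{A}^G|_U$ as a $G$-equivariant $\mathcal{O}_Y$-module, and the htdo and $B$-equivariance structures match on both sides. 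The principal obstacle in the argument is bookkeeping: one must simultaneously verify that $G$-invariants, the descent/pullback isomorphisms, the Lie algebra maps $i_{\mathfrak{g}}$ and $i_{\mathfrak{b}}$, and the $(G\times B)$-equivariance structures are all compatible with one another; commutativity of the $G$- and $B$-actions on $Y$ is what makes this bookkeeping succeed.
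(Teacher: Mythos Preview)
The paper does not contain a proof of this proposition: it is quoted as background material from the earlier paper in the series, \cite[Corollary 10.13]{Sta1}, and no argument is reproduced here. So there is no proof in this paper to compare your proposal against.

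That said, your outline is a reasonable sketch of how one would prove such a statement and is consistent with the architecture described in the surrounding text (which refers to \cite[Definition 7.5, Corollary 7.6, Definition 10.9, Lemma 10.10]{Sta1}). You correctly identify the strategy: bootstrap from the non-equivariant bijection between $G$-htdo's on $Y$ and tdo's on $X$, and transport the extra $B$-structure through both $f^{\#}$ and $f_{\#}(-)^G$ using that the $G$- and $B$-actions commute. Your observation that the image of $i_{\mathfrak{b}}$ lands in the $G$-invariants precisely because $B$ commutes with $G$ is the key point making the descent of the $B$-htdo structure work, and your local trivialisation argument for the counit $f^{\#}(f_{\#}\mathcal{A}^G)\to\mathcal{A}$ is the standard one. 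The only caution is that your proposal remains at the level of a plan rather than a proof: the ``bookkeeping'' you flag (checking all six axioms of Definition~\ref{d3htdodef} for the $B$-structure on both sides, and compatibility of the two Lie algebra maps under the $(G\times B)$-htdo axioms) is where the actual work lies, and you have not carried it out.
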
 

In particular, by setting $B$ to be a trivial group we obtain a bijection between the set of $r$-deformed $G$-htdo's on $Y$ and the set of $r$-deformed tdo's on $X$.

\begin{definition}
\label{equivarianthtdomoddef}
Let $(\mathcal{D},i_{\mathfrak{g}})$ be a $r$-deformed $G$-htdo and $L$ be a closed subgroup of $G$, with Lie algebra $\mathfrak{l}$. We call $\mathcal{D}$-module $\mathcal{M}$ weakly $L$-equivariant if:

\begin{enumerate}[label=\roman*)]
\item{$\mathcal{M}$ is an $L$-equivariant  $\mathcal{O}_X$-module.}
\item{$g.(D.m)=(g.D).(g.m)$ for any $g \in L, d \in \mathcal{D}, m \in \mathcal{M}$.} 

We call $\mathcal{M}$ $L$-equivariant if in addition:

\item{ The $r\mathfrak{l}$-action induced by the derivative of the $L$-action on $\mathcal{M}$ coincides with the $r\mathfrak{l}$-action induced by the restriction of $i_{\mathfrak{g}}$ to $r\mathfrak{l}$.}

A morphism of (weakly) equivariant $\mathcal{D}$-modules is a $\mathcal{D}$-linear morphism of $L$-equivariant $\mathcal{O}_X$-modules.
\end{enumerate}

\end{definition}

In case $L=G$, we recover \cite[Definition 9.3]{Sta1}, but we will need this more general definition for explaining the localisation mechanism. We denote $\Coh(\mathcal{D},G)$ the category of coherent $G$-equivariant coherent $\mathcal{D}$-modules. 

Let $Y$ be another $R$-variety such that $G$ acts on $Y$, $f:Y \to X$ is a $G$-equivariant morphism and let $\mathcal{D}$ be $G$-htdo on $X$. Given a $G$-equivariant $\mathcal{D}$-module $\mathcal{M}$, we may endow the $\mathcal{O}_Y$-module $f^* \mathcal{M}$ with an action of the ring $f^{\#} \mathcal{D}$ and we call the resulting module $f^{\#} \mathcal{M}$. We prove in \cite[Lemma 9.7]{Sta1} that this is $G$-equivariant.

We may redefine the notion of $G$-equivariance of an $r$-deformed $G$-htdo module. Denote the $G$-action by $\sigma_X: G \times X \to X$. Furthermore, we denote $p_X:G \times X \to X$ and $p_{2X}:G \times G \times X \to X$ the projections on the $X$ factor, $p_{23X}:G \times G \times X \to G \times X$ the projection onto the second and third factor and $m:G \times G \to G$ the multiplication of the group $G$. Then we define a $G$-equivariant $\mathcal{D}$-module as a pair $(\mathcal{M},\alpha)$, where $\mathcal{M}$ is a $\mathcal{D}$-module and $\alpha:\sigma_X^{\#} \mathcal{M} \to p_X^{\#} \mathcal{M}$ is an isomorphism of $p_X^{\#} \mathcal{D}$-modules such that the diagram:

\begin{equation}
\label{eqmoduleGhtdocommdia}
\begin{tikzcd}
&(1_G \times \sigma_X)^{\#}p_X^{\#}\mathcal{M} \arrow[r," p_{23X}^{\#} \alpha"]   &p_{2X}^{\#}\mathcal{M} \\
&(1_G \times \sigma_X)^{\#} \sigma_X^{\#} \mathcal{M} \arrow [u,"(1_G \times \sigma_X)^{\#} \alpha "] \arrow[r,leftrightarrow,"id "] &(m \times 1_X)^{\#}\sigma_X^{\#} \mathcal{M} \arrow[u, " (m \times 1_X)^{\#} \alpha"]
\end{tikzcd}
\end{equation}

commutes and the pullback 
                        $$(e \times 1_X)^{\#} \alpha: \mathcal{M} \to \mathcal{M}$$
is the identity map. We will ignore the equivariance structure when it is understood from the context.

\subsection{An equivalence a la Borho-Brylinski}

Let $G$ be a connected,  simply-connected, smooth affine algebraic group scheme over $\Spec R$,  $B$ a closed subgroup of $G$; we make the following assumption:

\begin{assumption}
\label{d3loctrivialassumption}
The quotient scheme $X=G/B$ is an $R$-variety and the quotient map $d_B:G \to X$ given by $d_B(g)=gB$ is a locally trivial $B$-torsor with respect to the action $\diamond$ given by $b \diamond g=gb^{-1}.$
\end{assumption}

This assumption is, in particular, satisfied when $B$ is a Borel subgroup of a split semisimple group $G$ and $X=G/B$ is the flag scheme.

We consider the diagonal action of $G$ on $X \times X$ and the natural action by left translation of $B$ on $X$. Let $i_r/i_l:X \to X \times X$, $i_r(x)=(eB,x), i_l(x)=(x,eB)$ denote the inclusion of $X$ into the right/left copy of $X \times X$. We also fix $(\mathcal{D},i_{\mathfrak{g}})$ an $r$-deformed $G$-htdo on $X \times X$ with respect to the diagonal $G$-action.

\begin{theorem}\cite[Theorem 3.5,Corollary 3.11]{Sta2}
\label{d3alaBorho-Brylinski}
The pullback $i_r^{\#} \mathcal{D}/i_l^{\#} \mathcal{D}$ is an $r$-deformed $B$-htdo and the functors

          $$i_r^{\#}: \Coh(\mathcal{D},G) \to \Coh(i_r^{\#}\mathcal{D},B), \qquad i_l^{\#}: \Coh(\mathcal{D},G) \to \Coh(i_l^{\#}\mathcal{D},B),$$
          
are equivalences of categories. Let $\mathscr{H}_r$ and $\mathscr{H}_l$ the respective quasi-inverses.

\end{theorem}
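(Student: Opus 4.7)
The plan is to realize the pullback $i_r^{\#}$ as a composition of two equivariant descent equivalences of the type provided by Proposition \ref{eqdescentliealg}, with the space $G \times X$ serving as an intermediate total space linking two different torsor quotients. The key geometric input is the evaluation-and-action map
$$\mu_r : G \times X \to X \times X, \quad (g, x) \mapsto (gB, gx),$$
together with the global section $\sigma : X \to G \times X$, $\sigma(x) = (e, x)$, which satisfies $\mu_r \circ \sigma = i_r$. A direct check using Assumption \ref{d3loctrivialassumption} shows that $\mu_r$ is a locally trivial $B$-torsor for the action $b \cdot (g, x) = (gb^{-1}, bx)$, and is simultaneously $G$-equivariant for left multiplication on the first factor matching the diagonal $G$-action on $X \times X$. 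These two commuting actions exhibit $G \times X$ as the total space of two independent torsors: the $B$-torsor $\mu_r$ over $X \times X$ and the $G$-torsor $\mathrm{pr}_X : G \times X \to X$. An entirely symmetric construction with $\mu_l(g,x) = (gx, gB)$ and $\mu_l \circ \sigma = i_l$ handles the $i_l$ case.

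To prove that $i_r^{\#}\mathcal{D}$ is an $r$-deformed $B$-htdo on $X$, I would first pull $\mathcal{D}$ back along $\mu_r$. Applying Proposition \ref{eqdescentliealg} with $B$ as the torsor group and $G$ as the commuting group shows that $\mu_r^{\#}\mathcal{D}$ is an $r$-deformed $G \times B$-htdo on $G \times X$. A second invocation of Proposition \ref{eqdescentliealg}, this time with $G$ as the torsor group along $\mathrm{pr}_X$ and $B$ as the commuting group, descends $\mu_r^{\#}\mathcal{D}$ to an $r$-deformed $B$-htdo on $X$. Because $\sigma$ is a global section of the $G$-torsor $\mathrm{pr}_X$, the descent is computed by pullback along $\sigma$, giving
$$\sigma^{\#}(\mu_r^{\#}\mathcal{D}) \; = \; (\mu_r \circ \sigma)^{\#}\mathcal{D} \; = \; i_r^{\#}\mathcal{D},$$
so $i_r^{\#}\mathcal{D}$ inherits the structure of a $B$-htdo on $X$, and the same argument works for $i_l^{\#}\mathcal{D}$ via $\mu_l$.

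For the equivalence of categories, the plan is to execute the same double descent at the module level. Pullback along the $B$-torsor $\mu_r$ should induce an equivalence $\Coh(\mathcal{D}, G) \simeq \Coh(\mu_r^{\#}\mathcal{D}, G \times B)$, with the extra $G$-equivariance arising from the $G$-action on $G \times X$ commuting with the $B$-torsor structure. Descent along the $G$-torsor $\mathrm{pr}_X$ should then yield $\Coh(\mu_r^{\#}\mathcal{D}, G \times B) \simeq \Coh(i_r^{\#}\mathcal{D}, B)$. The composition, computed via the global section $\sigma$, is literally $i_r^{\#}$. The quasi-inverse $\mathscr{H}_r$ is assembled in reverse: given $\mathcal{N} \in \Coh(i_r^{\#}\mathcal{D}, B)$, first pull back along $\mathrm{pr}_X$ to obtain a $G \times B$-equivariant coherent $\mu_r^{\#}\mathcal{D}$-module, then descend along the $B$-torsor $\mu_r$ to produce a $G$-equivariant coherent $\mathcal{D}$-module on $X \times X$. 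The construction of $\mathscr{H}_l$ via $\mu_l$ is parallel.

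The main obstacle is upgrading Proposition \ref{eqdescentliealg} from htdo's to the level of coherent equivariant modules: one must establish that pullback along a locally trivial torsor, enhanced by a commuting group action, is an equivalence between categories of coherent equivariant htdo-modules, with torsor descent as quasi-inverse. Faithful flatness of locally trivial torsors powers this at the level of underlying $\mathcal{O}$-modules and ensures preservation of coherence, but carefully tracking the interaction of the two equivariant structures with both the $\mathcal{D}$-action and the Lie algebra map $i_{\mathfrak{g}}$ from Definition \ref{d3htdodef} requires significant care. Once this module-level descent is in place, the theorem follows by concatenating the two torsor equivalences in the manner of the htdo argument above.
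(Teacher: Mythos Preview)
Your proposal is correct and follows what is essentially the approach of \cite{Sta2}: realize $i_r$ (resp.\ $i_l$) as the composite of the section $\sigma:X\to G\times X$ with the locally trivial $B$-torsor $\mu_r:G\times X\to X\times X$ (resp.\ $\mu_l$), use that $G\times X$ simultaneously carries the trivial $G$-torsor structure over $X$ via $\mathrm{pr}_X$, and then chain two applications of equivariant torsor descent both at the level of htdo's (Proposition~\ref{eqdescentliealg}) and at the level of coherent equivariant modules. You have also correctly isolated the one nontrivial ingredient not stated explicitly in this background section: the module-level analogue of Proposition~\ref{eqdescentliealg}, namely that for a locally trivial $H$-torsor $f:Y\to Z$ with a commuting $L$-action, the functors $f^{\#}$ and $(f_{\#}-)^H$ induce inverse equivalences $\Coh(f_{\#}\mathcal{A}^H,L)\simeq\Coh(\mathcal{A},H\times L)$; this is precisely the content of \cite[Section 3]{Sta2} that underlies the cited Theorem 3.5 and Corollary 3.11.
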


We will also need the following corollary:

\begin{corollary} \cite[Corollary 3.16]{Sta2}
\label{d3classicalzeroglosection}
Let $\mathcal{N} \in \Coh(\mathcal{D},G)$ with $\Gamma(X,i_l^{\#} \mathcal{N})=0$. Then $\Gamma(X \times X, \mathcal{N})=0$.

\end{corollary}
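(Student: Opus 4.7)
The plan is to realise the equivalence of Theorem \ref{d3alaBorho-Brylinski} concretely as descent along a $B$-torsor, and thereby reduce the computation of global sections on $X \times X$ to one on $X$. Consider the morphism $\pi : G \times X \to X \times X$ defined by $\pi(g,x) = (gB,\, g\cdot x)$. Thanks to Assumption \ref{d3loctrivialassumption}, $\pi$ is a locally trivial $B$-torsor for the action $b\cdot(g,x) = (gb^{-1}, b\cdot x)$, and it is $G$-equivariant when $G$ acts on $G \times X$ by left translation on the first factor and diagonally on $X \times X$. Let $q : G \times X \to X$, $(g,x)\mapsto x$, be the projection; this morphism is $B$-equivariant and $G$-invariant.

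The first key step is to establish a canonical isomorphism
$$\pi^{\#}\mathcal{N} \;\cong\; q^{\#} i_l^{\#}\mathcal{N}$$
of modules on $G \times X$ compatible with the induced $B$-equivariant structures. Write $a : G \times (X \times X) \to X \times X$ for the diagonal action, $\mathrm{pr} : G \times (X \times X) \to X \times X$ for the projection, and $\tilde\pi : G \times X \to G \times (X \times X)$, $(g,x) \mapsto (g, i_l(x))$. Then $\pi = a \circ \tilde\pi$ and $\mathrm{pr} \circ \tilde\pi = i_l \circ q$, so the $G$-equivariance isomorphism $a^{\#}\mathcal{N} \cong \mathrm{pr}^{\#}\mathcal{N}$ yields
$$\pi^{\#}\mathcal{N} \;=\; \tilde\pi^{\#} a^{\#}\mathcal{N} \;\cong\; \tilde\pi^{\#} \mathrm{pr}^{\#}\mathcal{N} \;=\; (i_l \circ q)^{\#}\mathcal{N} \;=\; q^{\#} i_l^{\#}\mathcal{N}.$$
The same argument applied with $\mathcal{N} = \mathcal{D}$ gives a compatible identification $\pi^{\#}\mathcal{D} \cong q^{\#} i_l^{\#}\mathcal{D}$ of sheaves of rings, so the above is an isomorphism in the correct category.

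For the second step, $B$-descent along the torsor $\pi$ produces $\Gamma(X \times X, \mathcal{N}) \cong \Gamma(G \times X, \pi^{\#}\mathcal{N})^B$, and substituting the identification above gives
$$\Gamma(X \times X, \mathcal{N}) \;\cong\; \Gamma(G \times X,\, q^{\#} i_l^{\#}\mathcal{N})^B.$$
Since $q$ is a smooth projection, $q^{\#}$ agrees with $q^{*}$ on underlying $\mathcal{O}$-modules, and the Künneth formula for the projection from a product with the affine $R$-group $G$ gives
$$\Gamma(G \times X,\, q^{\#} i_l^{\#}\mathcal{N}) \;\cong\; \mathcal{O}(G) \otimes_R \Gamma(X, i_l^{\#}\mathcal{N}) \;=\; 0,$$
by the hypothesis. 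Taking $B$-invariants yields $\Gamma(X \times X, \mathcal{N}) = 0$.

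The main obstacle is not substantive but a matter of bookkeeping: one must verify that the displayed isomorphism is genuinely one of $\pi^{\#}\mathcal{D}$-modules and is $B$-equivariant for the descent data induced by $\pi$, and that the $B$-descent identity for global sections is compatible with the deformed htdo pullback used here. Both points are formal consequences of the functoriality of $f^{\#}$ under composition and the descent results for $G$-htdo modules established in \cite{Sta1}, which are precisely the tools that power the proof of Theorem \ref{d3alaBorho-Brylinski} itself.
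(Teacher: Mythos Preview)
Your overall strategy is sound and is essentially the torsor picture that underlies the equivalence of Theorem~\ref{d3alaBorho-Brylinski} in \cite{Sta2}: realise $X\times X$ as the quotient of $G\times X$ by $B$, identify $\pi^{\#}\mathcal{N}$ with $q^{\#}i_l^{\#}\mathcal{N}$ via the $G$-equivariance datum, then descend and apply K\"unneth. This is the same circle of ideas used in \cite{Sta2}, so the approach is not genuinely different.

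There is, however, a coordinate slip that breaks your displayed chain of equalities as written. With $\pi(g,x)=(gB,\,g\cdot x)$ and $\tilde\pi(g,x)=(g,\,i_l(x))=(g,\,(x,eB))$, one computes
\[
a\circ\tilde\pi(g,x)=g\cdot(x,eB)=(g\cdot x,\,gB),
\]
which is the swap of your $\pi(g,x)$, so $\pi\neq a\circ\tilde\pi$. The identity $\mathrm{pr}\circ\tilde\pi=i_l\circ q$ is fine, but the first one fails. The fix is immediate: define instead $\pi(g,x)=(g\cdot x,\,gB)$. This map is still $B$-invariant for the action $b\cdot(g,x)=(gb^{-1},\,b\cdot x)$ (since $gb^{-1}B=gB$ and $gb^{-1}\cdot b\cdot x=g\cdot x$), is still a locally trivial $B$-torsor by Assumption~\ref{d3loctrivialassumption}, and now genuinely satisfies $\pi=a\circ\tilde\pi$. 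With this correction your argument goes through verbatim: descent gives $\Gamma(X\times X,\mathcal{N})\cong\Gamma(G\times X,\pi^{\#}\mathcal{N})^{B}$, the identification $\pi^{\#}\mathcal{N}\cong q^{\#}i_l^{\#}\mathcal{N}$ holds, and since $G$ is affine the K\"unneth isomorphism $\Gamma(G\times X,\,q^{*}i_l^{\#}\mathcal{N})\cong\mathcal{O}(G)\otimes_R\Gamma(X,\,i_l^{\#}\mathcal{N})=0$ finishes the proof.
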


\subsection{The localisation mechanism}
\label{subsectionclassciallocalisationmechanism}

Throughout this subsection, $G$ will denote a connected, simply-connected, smooth affine algebraic group over $R$, $\mathfrak{g}=\Lie(G)$ its Lie algebra, $X$ will denote an $R$-variety with a $G$-action and $r \in R$ a regular element. We fix $(\mathcal{D},i_{\mathfrak{g}})$ an $r$-deformed $G$-htdo on $X$.

Fix $r \in R$ a regular element and consider the $r$-th deformation of $U(\mathfrak{g})$ denoted $U(\mathfrak{g})_r$. Using the PBW theorem we obtain that $U(\mathfrak{g})_r \cong U(r \mathfrak{g})$.
The enveloping algebra $U(\mathfrak{g})$ is a $G$-representation via the Adjoint action, so by the module-comodule duality we obtain a  map $\rho: U(\mathfrak{g}) \to \mathcal{O}(G) \uset{R} U(\mathfrak{g})$ making $U(\mathfrak{g})$ a comodule for the Hopf algebra $\mathcal{O}(G)$. Furthermore, since the $G$ action commutes with the $R$ action, the map $\rho$ restricts to a map $\rho:U(r\mathfrak{g}) \to \mathcal{O}(G) \uset{R} U(r \mathfrak{g})$.

Let $L$ be a closed subgroup of $G$. Then $L$ also acts on $U(r\mathfrak{g})$ via the restriction to $L$ of the Adjoint action of $G$. Again, by duality we obtain a comodule map $\rho_{r\mathfrak{g},L}:U(r\mathfrak{g}) \to \mathcal{O}(L) \uset{R} U(r\mathfrak{g})$.

Let $M$ be a $U(r\mathfrak{g})$-module that is also an $\mathcal{O}(L)$-comodule. The comodule structure induces an action of $L$;  the derivative of the $L$-action induces an action of the Lie algebra $\mathfrak{l}=\Lie(L)$, and so of $r \mathfrak{l}$, on $M$. Furthermore, since $U(r\mathfrak{g})$ and $M$ are $\mathcal{O}(L)$-comodules, so is $U(r\mathfrak{g}) \uset{R} M$, see \cite[Section 1.8]{Mont}  for details.

\begin{definition}
\label{d3nonequivariantLUgmodule}
A weakly $L$-equivariant $U(r\mathfrak{g})$ module is a triple $(M,\alpha,\rho)$, where $M$ is an $R$-module, $\alpha:U(r\mathfrak{g}) \uset{R} M \to M$ is a left $U(r\mathfrak{g})$-action, $\rho:M \to \mathcal{O}(L) \uset{R} M$ is a $\mathcal{O}(L)$ co-action such that $\alpha$ is a morphism of $\mathcal{O}(L)$-comodules.

Furthermore, if the action of $r\mathfrak{l} \subset \mathfrak{l}=\Lie(L)$ induced by $\rho$ by the derivation of the $L$-action coincides with the restriction of the $r\mathfrak{g}$ action to $r\mathfrak{l}$, we say that $(M,\alpha,\rho)$ is $L$-equivariant. As for equivariant $\mathcal{D}$-modules, we will omit the equivariance structure when it is understood from the context.

A morphism of (weakly) $L$-equivariant $U(\mathfrak{g})$-modules $(M,\alpha,\rho_1)$ and $(N,\beta,\rho_2)$ is a map $f:M \to N$ of Abelian groups that is $U(\mathfrak{g})$-linear with respect to actions $\alpha,\beta$ and $\mathcal{O}(L)$-colinear with respect to $\rho_1$ and $\rho_2$. We call such a morphism $L$-equivariant.

Denote $\Mod(U(r\mathfrak{g}),L)$ the category of consisting of $L$-equivariant $U(r\mathfrak{g})$-modules together with $L$-equivariant morphisms.

\end{definition}

We can reformulate the weakly equivariant condition in the following way: by the module-comodule correspondence $M$ can be viewed as a representation of the algebraic group $L$. Since $U(r\mathfrak{g})$ is also an $L$-representation we may rewrite condition that the map $\alpha:U(r\mathfrak{g}) \uset{R} M \to M$ is a morphism of $\mathcal{O}(L)$-comodules as:

$$ l.( \psi.m)=(l.\psi).(l.m),$$

for all $R$-algebras $A$, $l \in L(A)$, $\psi \in U(r\mathfrak{g})_A$ and $m \in M_A$.  By abuse of language we define an equivalent notion of a weakly $L$-equivariant $U(r\mathfrak{g})$-module by:

\begin{center}
$M$ is a representation of $L$,
\end{center}
\begin{equation}
\label{equivliealgmodeq}
l.(\psi.m)=(l.\psi).(l.m) \text{ for all } l \in L, \psi \in U(r\mathfrak{g}), m \in M.
\end{equation}

\begin{proposition}\cite[Proposition 5.1,Proposition 5.4]{Sta2}
\label{d3equivariancepreservedunderlocandglo}
Let $L$ be a closed subgroup of $G$ and let $(\mathcal{D},i_{\mathfrak{g}})$ be an $r$-deformed $G$-htdo on $X$.

\begin{enumerate}[label=\roman*)]
\item{Let $M$ be an $L$-equivariant $U(r \mathfrak{g})$-module. Then $\mathcal{D} \uset{U(r \mathfrak{g})} M$ is an $L$-equivariant $\mathcal{D}$-module.}
\item{Let $\mathcal{M}$ be an $L$-equivariant $\mathcal{D}$-module. Then $\Gamma(X,\mathcal{M})$ is an $L$-equivariant $U(r \mathfrak{g})$-module.}

\end{enumerate}
\end{proposition}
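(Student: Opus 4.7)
The strategy is to verify the axioms of Definition \ref{equivarianthtdomoddef} for part (i) and Definition \ref{d3nonequivariantLUgmodule} for part (ii), working with the element-level reformulation of equivariance from equations (\ref{easyweakOmodequivariant}) and (\ref{equivliealgmodeq}).

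For part (i), I would equip $\mathcal{N} := \mathcal{D} \uset{U(r\mathfrak{g})} M$ with the diagonal $L$-action $l.(D \otimes m) := (l.D) \otimes (l.m)$ on pure tensors. The first task is to check this descends to the balanced tensor product: for $\psi \in U(r\mathfrak{g})$, I would invoke the $L$-equivariance of $i_\mathfrak{g}$ (condition (iii) of Definition \ref{d3htdodef}), together with weak $L$-equivariance of $M$, to rewrite $l.(D\,i_\mathfrak{g}(\psi) \otimes m)$ as $l.(D \otimes \psi.m)$. Weak equivariance of $\mathcal{N}$ as a $\mathcal{D}$-module (condition (ii) of Definition \ref{equivarianthtdomoddef}) then follows from the multiplicative property of the $G$-action on $\mathcal{D}$ (condition (i) of Definition \ref{d3htdodef}).

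For strong equivariance, I would differentiate the diagonal action along $\psi \in r\mathfrak{l}$. A Leibniz computation, combined with condition (iv) of Definition \ref{d3htdodef} (which identifies the derivative of the $G$-action on $\mathcal{D}$ with $[i_\mathfrak{g}(\psi), -]$) and strong equivariance of $M$, yields
\begin{equation*}
\psi \cdot (D \otimes m) = [i_\mathfrak{g}(\psi), D] \otimes m + D \otimes (\psi.m) = i_\mathfrak{g}(\psi) \cdot (D \otimes m),
\end{equation*}
after passing $i_\mathfrak{g}(\psi)$ across the tensor product in the second summand. This matches the action induced by $i_\mathfrak{g}|_{r\mathfrak{l}}$ via the $\mathcal{D}$-module structure. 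Part (ii) is essentially dual. I would equip $\Gamma(X,\mathcal{M})$ with the $U(r\mathfrak{g})$-structure coming from the composition $U(r\mathfrak{g}) \xrightarrow{i_\mathfrak{g}} \Gamma(X,\mathcal{D}) \to \End \Gamma(X,\mathcal{M})$ and transport the $L$-action functorially from the $L$-equivariance of $\mathcal{M}$. Weak equivariance is a direct application of conditions (ii) and (iii) of Definition \ref{d3htdodef} on global sections, while strong equivariance of the induced $r\mathfrak{l}$-action inherits directly from the sheaf-level condition on $\mathcal{M}$.

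The main obstacle is the well-definedness check in part (i): ensuring the diagonal $L$-action descends from $\mathcal{D} \uset{R} M$ to the balanced tensor product over $U(r\mathfrak{g})$. This is precisely what the $L$-equivariance of $i_\mathfrak{g}$, namely the identity $i_\mathfrak{g}(l.\psi) = l.i_\mathfrak{g}(\psi)$, buys us; once this is in place the remaining axioms reduce to a mechanical unpacking of the definitions.
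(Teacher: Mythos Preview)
The paper does not prove this proposition; it is quoted from \cite[Proposition 5.1, Proposition 5.4]{Sta2} as a background result, so there is no in-paper argument to compare against directly. That said, your outline is the natural and correct route, and it is the approach one would expect in the cited reference: equip $\mathcal{D} \uset{U(r\mathfrak{g})} M$ with the diagonal $L$-action, use condition (iii) of Definition~\ref{d3htdodef} (restricted to $L \subset G$) together with the multiplicativity condition (i) to descend it through the balanced tensor, and then invoke condition (iv) plus the Leibniz rule to match the derived $r\mathfrak{l}$-action with $i_{\mathfrak{g}}|_{r\mathfrak{l}}$. Your treatment of part (ii) is likewise the expected one.

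One small point worth flagging in your write-up: the element-level formulation in equations (\ref{easyweakOmodequivariant}) and (\ref{equivliealgmodeq}) is shorthand for compatibility over all $R$-algebras $A$ and all $A$-points of $L$, so your checks on pure tensors should be understood functorially in $A$. You should also record that $\mathcal{D} \uset{U(r\mathfrak{g})} M$ inherits its $L$-equivariant $\mathcal{O}_X$-module structure (condition (i) of Definition~\ref{equivarianthtdomoddef}) through the $\mathcal{O}_X \hookrightarrow F_0\mathcal{D}$ inclusion and condition (ii) of Definition~\ref{d3htdodef}; this is implicit in what you wrote but deserves a sentence. With those clarifications the argument is complete.
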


We are interested to apply the localisation mechanism when $X$ is the flag scheme of a split semisimple group $G$. Let $\mathfrak{h}$ be the Cartan subalgebra of $\mathfrak{g}$. Recall from \cite[Section 5.2]{Sta2} that for each $R$-linear map $\lambda:r\mathfrak{h} \to R$ we constructed a sheaf of \emph{$r$-deformed $\lambda$-twisted} differential operators $\mathcal{D}_{\lambda,r}$.

\begin{proposition}\cite[Corollary 5.15]{Sta2}
\label{d3DlambdarisrdeformedGhtdo}
There exists a map $\alpha:r \mathfrak{g} \to \mathcal{D}_{\lambda,r}$ such that $(\mathcal{D}_{\lambda,r}, \alpha)$ is an $r$-deformed $G$-htdo on $X$. 
\end{proposition}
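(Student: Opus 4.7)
The strategy is to exhibit $\mathcal{D}_{\lambda,r}$ as the descent, along the locally trivial $B$-torsor $d_B \colon G \to X = G/B$, of a natural $r$-deformed $G \times B$-htdo on $G$, and then invoke Proposition \ref{eqdescentliealg}. The point is that the $G$-htdo structure is built into the descent mechanism, so it only remains to identify $\mathcal{D}_{\lambda,r}$ with a descent of the appropriate object.

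First I would recall from \cite[Section 5.2]{Sta2} that $\mathcal{D}_{\lambda,r}$ is constructed as (a subsheaf or appropriate quotient of) $(d_B)_{\#}(\widetilde{\mathcal{D}}_{\lambda,r})^B$, where $\widetilde{\mathcal{D}}_{\lambda,r}$ is a naturally defined $r$-deformed sheaf of twisted differential operators on $G$. On $G$ we have two commuting actions: the left translation action of $G$ on itself, and the right action $\diamond$ of $B$ from Assumption \ref{d3loctrivialassumption}; the $\lambda$-twist is encoded entirely in the $B$-equivariance structure for the $\diamond$-action. In particular $\widetilde{\mathcal{D}}_{\lambda,r}$ is an $r$-deformed $G \times B$-htdo on $G$, with $i_{\fr{g}}$ given by the left-invariant vector field map and $i_{\fr{b}}$ given by the right-invariant map modified by $\lambda$.

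Next I would apply Proposition \ref{eqdescentliealg} to the $B$-torsor $d_B$, using the role of $G$ in that statement played by $B$ and the auxiliary commuting group in that statement played by $G$ itself. This gives a bijection between $r$-deformed $B \times G$-htdo's on $G$ and $r$-deformed $G$-htdo's on $X$, under which $\widetilde{\mathcal{D}}_{\lambda,r}$ is sent to $\mathcal{D}_{\lambda,r}$. The bijection carries with it a canonical Lie algebra map $\alpha \colon r\fr{g} \to \mathcal{D}_{\lambda,r}$ descending the left-invariant map on $G$, and by construction the pair $(\mathcal{D}_{\lambda,r}, \alpha)$ inherits the $G$-htdo axioms (i)--(vi) of Definition \ref{d3htdodef} because these axioms correspond on $G$ to the left $G$-equivariance structure of $\widetilde{\mathcal{D}}_{\lambda,r}$ together with the standard properties of left-invariant vector fields: axioms (i)--(iii) reflect that $G$ acts by $R$-algebra automorphisms commuting with the left-invariant map, (iv) is the well-known identification of the derivative of the left $G$-action with $\ad \circ i_{\fr{g}}$, and (v)--(vi) are filtration and anchor compatibilities preserved by the descent functor $(d_B)_{\#}(-)^B$.

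The main obstacle is checking that the $\lambda$-twist introduced along the $B$-side does not interfere with the left $G$-equivariance. This is exactly the content of the commutativity of the left $G$-action and the right $\diamond$-action of $B$ on $G$: because these two actions commute, $\widetilde{\mathcal{D}}_{\lambda,r}$ genuinely carries a $G \times B$-htdo structure (not merely a $B$-htdo structure), and only then does Proposition \ref{eqdescentliealg} apply to produce a $G$-htdo on $X$. Once this compatibility is secured, the rest is a direct unwinding of the descent bijection and the verification, axiom by axiom, that each condition on $X$ translates back to a condition on $G$ already known to hold.
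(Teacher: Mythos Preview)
Your proposal is correct and matches the approach dictated by the machinery the paper recalls: the statement is cited from \cite{Sta2} without proof, and the descent bijection of Proposition~\ref{eqdescentliealg} is exactly the tool set up for this purpose, with the $\lambda$-twist sitting on the $B$-side so that the left $G$-equivariance survives descent. Your identification of the key compatibility (commuting left $G$- and right $B$-actions on $G$) is the essential point, and the rest is the formal unwinding you describe.
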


\section{Affinoid enveloping algebras and Verma modules}
\label{d3sectionaffinoidenveloping}

From now on, till the end of the chapter, we will assume that $R$ is a complete mixed characteristic $(0,p)$ discrete valuation ring with field of fractions $K$, uniformiser $\pi$ and residue field $k$.

For a deformable $R$-algebra $A$ and $n \in \mathbb{N}^*$, we denote $A_n:=A_{\pi^n}$ the $\pi^n$-th deformation of $A$.

\subsection{Background on affinoid enveloping algebras}

In this subsection, we recall the main construction and results concerning affinoid enveloping algebras.

Let $G$ be a connected, simply connected,
split semisimple, smooth affine algebraic group scheme over $\Spec R$. Denote $\mathfrak{g}$ the Lie algebra of $G$. The Lie algebra $\mathfrak{g}$ is a linear $G$ representation via the Adjoint action; see \cite[II.1.12]{Jan1} for details. In particular the functor of points $G(R)$ acts on $\mathfrak{g}$. Using the functoriality one may extend this action to the enveloping algebra $U(\mathfrak{g})$. For example, if we consider a monomial $x_1x_2 \ldots x_n \in U(\mathfrak{g})$, with $x_i \in \mathfrak{g}$, we get that for each $g \in G(R)$ we have

          $$g \cdot x_1x_2 \ldots x_n = (g \cdot x_1) (g \cdot x_2) \ldots (g \cdot x_n).$$

It follows that the action of $G(R)$ preserves the standard PBW filtration on $U(\mathfrak{g})$. Consider the corresponding comodule structure on $\mathcal{O}(G)$ induced by the action of $G$ and let $\rho:U(\mathfrak{g}) \to \mathcal{O}(G) \otimes U(\mathfrak{g})$ be the defining map. It follows from the definition of the $G(R)$ action that the comodule map satisfies $\rho(ab)=\rho(a)\rho(b)$ for any $a,b \in U(\mathfrak{g})$.

Let $H$ be a fixed maximal torus for $G$ and $\bm{\Phi}$ the corresponding root system, and $x_{\alpha}: G_a \to G$ and $e_{\alpha}:=(dx_{\alpha})(1) \in \mathfrak{g}$ be the root homomorphism and root vector corresponding to a root $\alpha \in \bm{\Phi}$.

\begin{lemma}[\cite{Munster}, Lemma 4.1]
Let $r \in R$ and  $a \in \bm{\Phi}$. Then the following hold:

\begin{enumerate}
\item{For every $G$ module $M$, the action of $\frac{e_{\alpha}^m}{m!}$ preserves $M$.}
\item{For all $b \in U(\mathfrak{g})$, there exists $i \geq 1$ such that $\frac{\ad(re_\alpha)^i}{i!} \cdot b=0$.}
\item{$x_\alpha(r) \cdot a = \sum_{m=0}^{\infty} \frac{\ad(re_\alpha)^m}{m!}(a)$ for all $a \in U(\mathfrak{g})$. }
\end{enumerate}
\end{lemma}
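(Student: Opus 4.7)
The three parts will be proved in the stated order, with (3) essentially following from (1) applied to the adjoint representation once (2) is known. The main tool is Jantzen's distribution/hyperalgebra formalism: for the $R$-group scheme morphism $x_\alpha \colon \mathbb{G}_a \to G$ and any $G$-module $M$, composition yields a natural transformation $\mathbb{G}_a \to \mathrm{GL}(M)$ of functors on $R$-algebras, and expanding $x_\alpha(t).v$ as a polynomial in $t$ produces well-defined $R$-linear operators $e_\alpha^{(m)} \colon M \to M$ with $x_\alpha(t).v = \sum_m t^m e_\alpha^{(m)}(v)$. After base change to $K$ we have $x_\alpha(t) = \exp(t e_\alpha)$, forcing $e_\alpha^{(m)} = e_\alpha^m/m!$ on $M_K$; since $e_\alpha^{(m)}$ was defined integrally, $e_\alpha^m/m!$ must preserve the sublattice $M$, proving (1).

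For (2), first $\ad(e_\alpha)$ is locally nilpotent on $\mathfrak{g}$: on each root or zero weight space the string $\beta, \beta+\alpha, \beta+2\alpha,\dots$ eventually leaves $\bm{\Phi}\cup\{0\}$. Applying (1) to the adjoint representation shows that each $\frac{\ad(re_\alpha)^m}{m!}$ is a well-defined $R$-linear endomorphism of $\mathfrak{g}$ which vanishes on any fixed element for $m$ large. I then extend to $U(\mathfrak{g})$ by induction on PBW degree via the divided Leibniz rule for a derivation $D$,
\[
\frac{D^i}{i!}(bc) \;=\; \sum_{j+k=i} \frac{D^j(b)}{j!} \cdot \frac{D^k(c)}{k!},
\]
applied with $D = \ad(re_\alpha)$. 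If $y_1,\dots,y_n \in \mathfrak{g}$ are each killed by all divided powers of order $\geq N_s$, then $\frac{\ad(re_\alpha)^i}{i!}(y_1\cdots y_n) = 0$ for $i \geq N_1 + \cdots + N_n$, and every element of $U(\mathfrak{g})$ is a finite sum of such PBW monomials.

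For (3), I apply (1) with $M = U(\mathfrak{g})$ under the adjoint action of $G$. Because the differential of this action is $\ad$, the operator $e_\alpha^{(m)}$ of (1) coincides on $U(\mathfrak{g})_K$ with $\frac{\ad(e_\alpha)^m}{m!}$; by (2) this identity then also holds integrally, so the locally finite expansion $x_\alpha(r).a = \sum_m r^m e_\alpha^{(m)}(a)$ rearranges to the stated formula. The main obstacle is the integral refinement of (1): verifying that the operators $e_\alpha^{(m)}$ genuinely preserve the $R$-lattice $M$, since the individual naive formulas $e_\alpha^m/m!$ need not make sense on $M$ termwise. Fortunately, this is exactly what Jantzen's hyperalgebra construction delivers, using only that $x_\alpha$ is an $R$-group scheme morphism and $M$ is an $\mathcal{O}(G)$-comodule.
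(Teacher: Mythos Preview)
The paper does not supply its own proof of this lemma: it is quoted verbatim as \cite[Lemma 4.1]{Munster} and used as background, so there is nothing in the present paper to compare against. Your argument is the standard one and is correct; it is essentially the proof one finds in \cite{Munster}, resting on the hyperalgebra/divided-power formalism of \cite{Jan1} to get the integral operators $e_\alpha^{(m)}$, identifying them with $e_\alpha^m/m!$ after inverting $p$, and then specialising to the adjoint module for (2) and (3).

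One small wording issue: in your paragraph on (3) you write ``by (2) this identity then also holds integrally''. What you actually use there is not (2) but the fact that $U(\mathfrak{g})$ is $\pi$-torsion-free, so that two $R$-linear endomorphisms of $U(\mathfrak{g})$ which agree after base change to $K$ already agree over $R$. Part (2) is needed only to ensure that the sum $\sum_m \frac{\ad(re_\alpha)^m}{m!}(a)$ is finite, hence well-defined in $U(\mathfrak{g})$. With that clarification your proof is complete.
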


\begin{definition}
Let $A$ be an $R$-algebra. The $\pi$-adic completion of the $R$-algebra $A$ is defined to be $\widehat{A}=\invlim A/\pi^iA$.
\end{definition}

Let $u_1,u_2, \ldots u_d$ be a free $R$-basis of $\mathfrak{g}$. Then as a vector space we have

\begin{equation}
\label{d3affinoidenvelopingalgebradescription}
\hugnK= \{\sum_{\alpha \in \mathbb{N}^d} \lambda_\alpha u^{\alpha}: \quad \lambda_\alpha \in K,   \pi^{-n|\alpha|} \lambda_\alpha \to 0 \text{ as } |\alpha| \to \infty \}.
\end{equation}

Here for a $d$-tuple $\alpha=(\alpha_1, \alpha_2, \ldots \alpha_d)$, we define $|\alpha|= \sum_{i=1}^d \alpha_i$ and \\$u^{\alpha}=u_1^{\alpha_1} u_2 ^{\alpha_2} \ldots
u_d^{\alpha_d}$.

By functoriality, the Adjoint action of $G$ on $U(\mathfrak{g})$ extends to a $G$-action on $\hugnK$. The following proposition extends the classical results for enveloping algebras defined over a field of characteristic 0.

\begin{lemma}\cite[Corollary 4.3]{Munster}
\label{d3twosidedidealsarepreservedbyadjointaction}

\begin{enumerate}[label=\roman*)]
\item{Every two sided ideal in $\hugnK$ is preserved by $G(R)$.}
\item{For any $z \in Z(\hugnK)$ and for any $g \in G(R)$, we have $g \cdot z=z$.}
\end{enumerate} 
\end{lemma}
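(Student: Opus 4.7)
The strategy is to reduce to a convenient set of generators for $G(R)$. Since $G$ is split semisimple and simply-connected, by the Chevalley–Steinberg presentation $G(R)$ is generated by the maximal torus $H(R)$ together with the root subgroups $\{x_\alpha(R) : \alpha \in \bm{\Phi}\}$. Elements of $H(R)$ act on $\mathfrak{g}$ by scaling each root space $\mathfrak{g}_\alpha$ by the character value $\alpha(h) \in R^{\times}$ and fixing the Cartan, so this action extends to an $R$-algebra automorphism of $U(\mathfrak{g})$ and, by functoriality of the $\pi$-adic completion and base change to $K$, to a $K$-algebra automorphism of $\hugnK$. Algebra automorphisms trivially preserve two-sided ideals; moreover, they map $Z(\hugnK)$ to itself, and the torus fixes the centre pointwise because Cartan elements act by zero on central elements via $\ad$. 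Both (i) and (ii) thus reduce to the action of $g = x_\alpha(r)$ for a root $\alpha$ and $r \in R$.

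For such a $g$, the Lie-theoretic formula in \cite[Lemma 4.1]{Munster} gives
\[
x_\alpha(r) \cdot a \;=\; \sum_{m \geq 0} \frac{\ad(r e_\alpha)^m(a)}{m!}
\]
for $a \in U(\mathfrak{g})$, with the sum finite by the same lemma. The crucial observation is that $\ad(r e_\alpha)$ is the \emph{inner} derivation $a \mapsto (re_\alpha)a - a(re_\alpha)$, and hence carries any two-sided ideal $I \subset \hugnK$ into itself. By iteration $\ad(re_\alpha)^m(I) \subset I$, and since $I$ is a $K$-subspace, dividing by $m! \in K^{\times}$ preserves $I$. In particular, when $z \in Z(\hugnK)$ the centrality $[re_\alpha,z]=0$ collapses the sum to $z$, so $x_\alpha(r) \cdot z = z$; this already yields (ii) once one verifies that the above formula continues to compute the $G(R)$-action on the centre $Z(\hugnK)$, which follows since the $G$-action on $\hugnK$ is the $R$-linear extension of its PBW-filtration-preserving action on $U(\mathfrak{g})$ to the completion.

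For (i), the main obstacle is passing from elements of $I \cap U(\mathfrak{g})$ to arbitrary elements of $I$: because $I$ is not assumed $\pi$-adically closed, an approximation argument based on $U(\mathfrak{g})$-elements converging to $a \in I$ may leave $I$. The proposed resolution is to realise $x_\alpha(r) \cdot -$ as literal conjugation by $\exp(re_\alpha)$ in a strictly larger algebra $\mathcal{A} \supset \hugnK$ in which this exponential converges---for example, the locally analytic distribution algebra $D(G,K)$ or a suitable Arens–Michael envelope of $U(\mathfrak{g}_K)$. In $\mathcal{A}$ the identity $x_\alpha(r) \cdot a = \exp(re_\alpha)\, a \, \exp(-re_\alpha)$ holds by convergence of the exponential-adjoint formula, and conjugation trivially preserves the ideal $\mathcal{A} I \mathcal{A}$. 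The final step is then the contraction $\mathcal{A} I \mathcal{A} \cap \hugnK = I$, which should follow from flatness of $\hugnK \hookrightarrow \mathcal{A}$. Establishing this flatness---or, alternatively, providing a direct $\pi$-adic convergence argument on $\hugnK$ that avoids the larger algebra by exploiting the almost-commutative estimate $\|[x,y]\| < \|x\|\,\|y\|$ in order to show that $\sum_m \ad(re_\alpha)^m(a)/m!$ is itself convergent in $\hugnK$ with each partial sum lying in $I$---is the principal technical hurdle of the proof.
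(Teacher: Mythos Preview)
The paper gives no proof of this lemma; it is quoted directly from \cite[Corollary~4.3]{Munster}. So the comparison is between your attempt and the (unstated here) Munster argument.

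Your reduction to root-subgroup elements $x_\alpha(r)$ and your treatment of (ii) are essentially correct. For (i) you rightly locate the difficulty in extending the exponential formula
\[
x_\alpha(r)\cdot a \;=\; \sum_{m\ge 0}\frac{\ad(re_\alpha)^m}{m!}(a)
\]
from $a\in U(\mathfrak{g})$ to $a\in\hugnK$, but neither of your proposed fixes works. The ``almost-commutative'' estimate $\|[x,y]\|<\|x\|\,\|y\|$ is false for $n=0$ (the associated graded $U(\mathfrak{g}_k)$ is not commutative), and for $n\ge 1$ it yields only $\|[x,y]\|\le|\pi|\,\|x\|\,\|y\|$, which is swamped by $\|e_\alpha\|=|\pi|^{-n}$ and by the growth of $1/|m!|$; no convergence follows. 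Embedding in a distribution algebra does not help either: the series $\sum (re_\alpha)^m/m!$ does not converge there for general $r\in R$, and the flatness you would need is left unproven.

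The correct mechanism is already contained in the lemma you cite. Part (1) of \cite[Lemma~4.1]{Munster} says the divided power $e_\alpha^m/m!$ preserves every $G$-module; applying this to the adjoint $G$-module $\widehat{U(\mathfrak{g})_n}$ shows that each operator $\tfrac{\ad(re_\alpha)^m}{m!}$ maps the unit ball to itself, i.e.\ has operator norm $\le 1$. Given $a\in\widehat{U(\mathfrak{g})_n}$ and $k\ge 1$, pick $a_k\in U(\mathfrak{g})_n$ with $a\equiv a_k\bmod\pi^k$; part (2) of the same lemma kills $\tfrac{\ad(re_\alpha)^m}{m!}(a_k)$ for $m$ large, and the norm bound then forces $\tfrac{\ad(re_\alpha)^m}{m!}(a)\in\pi^k\widehat{U(\mathfrak{g})_n}$ for those $m$. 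This gives convergence in $\hugnK$ and identifies the sum with $x_\alpha(r)\cdot a$ by density and continuity. Each term lies in $I$ (inner derivations preserve two-sided ideals, and $I$ is a $K$-subspace), and $I$ is automatically closed in $\hugnK$ by \cite[I.5.5]{LVO}, so the limit lies in $I$.
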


One may wonder if the converse of Lemma \ref{d3twosidedidealsarepreservedbyadjointaction} $ii)$ also holds. Classically, we have $Z(U(\mathfrak{g}_K)) \cong U(\mathfrak{g}_K)^{G}$. The following theorem states that the result carries in the affinoid setting:

\begin{theorem}\cite[Theorem 4.4]{Munster}
\label{d3centreofaffinoidenvelopingalgebras}
We have $ Z(\hugnK) \cong \widehat{U(\mathfrak{g})^G_{n,K}}.$
\end{theorem}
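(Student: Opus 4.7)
I would establish the isomorphism through the chain of identifications
\[
Z(\hugnK) \;=\; \hugnK^{\mathfrak{g}} \;=\; \widehat{U(\mathfrak{g})_n^{\mathfrak{g}}}_K \;=\; \widehat{U(\mathfrak{g})^G_{n,K}},
\]
where $\mathfrak{g}$ acts via the adjoint commutator, and attack each equality separately.

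The first equality exploits that $\mathfrak{g}$ topologically generates $\hugn$. Inductively applying the Leibniz rule $[x, ab] = [x, a]b + a[x, b]$ shows that any element commuting with $\mathfrak{g}$ commutes with every PBW monomial and hence with the dense subalgebra $U(\mathfrak{g})_{n,K} = U(\mathfrak{g}_K) \subset \hugnK$; joint $\pi$-adic continuity of multiplication then propagates this to all of $\hugnK$. The reverse inclusion is immediate from the definition of the centre.

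The middle equality is the technical heart. Consider the $R$-linear map
\[
\phi \colon U(\mathfrak{g})_n \longrightarrow \Hom_R(\mathfrak{g}, U(\mathfrak{g})_n), \qquad \phi(u)(x) = [x, u],
\]
with $\ker \phi = U(\mathfrak{g})_n^{\mathfrak{g}}$. Since $\mathfrak{g}$ is free of finite rank $d$ over $R$, the target is $U(\mathfrak{g})_n^{\oplus d}$, which is $R$-flat, so the image $I$ is $\pi$-torsion-free. Consequently the sequence $0 \to U(\mathfrak{g})_n^{\mathfrak{g}} \to U(\mathfrak{g})_n \to I \to 0$ is exact modulo $\pi^i$ for every $i$, and Mittag--Leffler ensures the inverse limit remains exact. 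Combined with the natural identification $\widehat{\Hom_R(\mathfrak{g}, U(\mathfrak{g})_n)} = \Hom_R(\mathfrak{g}, \hugn)$ (completion commutes with finite direct sums), this yields $\widehat{U(\mathfrak{g})_n^{\mathfrak{g}}} = \ker \widehat{\phi} = \hugn^{\mathfrak{g}}$; tensoring with $K$ gives the desired form.

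The final equality is formal from the classical identity $U(\mathfrak{g}_K)^{\mathfrak{g}_K} = U(\mathfrak{g}_K)^G$ (valid in characteristic zero with $G$ connected), which, intersected with the $R$-lattice $U(\mathfrak{g})_n$, gives $U(\mathfrak{g})_n^{\mathfrak{g}} = U(\mathfrak{g})_n^G$ and hence the same identity after $\pi$-adic completion and base change to $K$. The chief technical obstacle is ensuring that the inclusion $I \hookrightarrow \Hom_R(\mathfrak{g}, U(\mathfrak{g})_n)$ remains injective after $\pi$-adic completion, given that $U(\mathfrak{g})_n$ is not finitely generated over $R$; one overcomes this by working filtration-piece by filtration-piece via the finitely generated $F_k U(\mathfrak{g})_n$, where Artin--Rees applies cleanly, and then assembling the results along the filtration.
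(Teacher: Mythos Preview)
The paper does not prove this statement itself: it is quoted verbatim as \cite[Theorem 4.4]{Munster} and no argument is given beyond the citation. So there is no in-paper proof to compare against; your sketch is an independent proof of a result the author is importing.

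That said, your outline is a reasonable route and is essentially in the spirit of the original Ardakov--Wadsley argument. Two points deserve more care. First, the identity you need at the end is $\widehat{(U(\mathfrak{g})_n)^{\mathfrak{g}}}\,_K = \widehat{(U(\mathfrak{g})^G)_n}\,_K$, and the paper's notation $U(\mathfrak{g})^G_n$ means the $\pi^n$-th deformation of the invariant subring $(U(\mathfrak{g})^G)_n$, not the invariants of the deformation $(U(\mathfrak{g})_n)^G$. You silently conflate these. They agree after tensoring with $K$ (both sit inside $U(\mathfrak{g}_K)^G$ as $R$-lattices, and the discrepancy is bounded $\pi$-torsion that dies upon completing and inverting $\pi$), but you should say so explicitly; over $R$ itself the equality $\gr(U(\mathfrak{g})^G) = (\gr U(\mathfrak{g}))^G$ is not entirely formal. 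Second, your ``chief technical obstacle'' is genuine: you need $\widehat{\operatorname{im}\phi} \hookrightarrow \hugn^{\oplus d}$ to stay injective, which amounts to showing the cokernel of $\operatorname{im}\phi \hookrightarrow U(\mathfrak{g})_n^{\oplus d}$ has no unbounded $\pi$-torsion. Your proposed fix via the PBW filtration pieces $F_k$ works, but spell out why the $\pi$-adic and $F_\bullet$-filtrations interact well enough to let you pass to the limit in $k$ --- this is where the flatness of $\gr U(\mathfrak{g})_n \cong S(\mathfrak{g})$ over $R$ is used.
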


Recall that $H \subset B^{-}$ is a split maximal torus in $G$ contained in $B^{-
}$. The unipotent radical $N^{-}$ of $B^{-}$ will be considered as generated by negative roots corresponding to the adjoint action of $H$ on $G$. Furthermore, let $N^+$ be the unipotent radical of the opposite Borel group $B^+$ containing $H$. Let $\mathfrak{h,b^{-},n^{-},n^+,b^+}$ be the Lie algebras corresponding to the algebraic groups so that we have a decomposition
                           $$ \mathfrak{g}=\mathfrak{n}^- \oplus  \mathfrak{h} \oplus \mathfrak{n}^+.$$

Let $\lambda:\pi^n\mathfrak{h} \to R$ be an $R$-linear character; extend this to an $R$-linear map $\pi^n\mathfrak{b}^+ \to R$ by pulling along the projection map $\pi^n\mathfrak{b}^+ \to \pi^n \mathfrak{h}$. Similar to the classical case, we denote $K_\lambda$ the corresponding one dimensional module over $\widehat{U(\mathfrak{b}^+)_{n,K}}$; the Lie algebra $\mathfrak{b}^+$ acts on $K_\lambda$ via the corresponding map $\pi^n \mathfrak{b}^+ \to R$ and we may extend this action to the whole algebra  $\widehat{U(\mathfrak{b}^+)_{n,K}}$.

\begin{definition}
\label{d3affinoidvermadef}
The affinoid Verma module with highest weight $\lambda$ is defined to be

                      $$\widehat{M(\lambda)}:=\widehat{U(\mathfrak{g})_{n,K}} \uset{\widehat{U(\mathfrak{b}^+)_{n,K}}} K_\lambda.$$

\end{definition}

Notice that affinoid Verma modules are non-trivial for $\mathfrak{h}_K$-weights induced by  weights of $\pi^n \mathfrak{h}$; for a general $\mathfrak{h}_K$-weight, an unit in $\hugnK$ may annihilate the affinoid Verma module. It is clear by construction that, similarly to the classical case, the affinoid Verma modules are cyclic: $ \widehat{M(\lambda)}$ is generated by $v_\lambda= 1 \uset{\widehat{U(\mathfrak{b}^+)_{n,K}}} 1$.

The centre $Z(\mathfrak{g}_K)$ of $U(\mathfrak{g}_K)$ acts on the classical Verma module defined by  $M(\lambda):=U(\mathfrak{g}_K)\uset{U(\mathfrak{b}^+_K)} K_\lambda$ by a character $\chi_\lambda:Z(\mathfrak{g}_K) \to K_\lambda$. As  $\widehat{M(\lambda)}$ contains $M(\lambda)$ as a dense subset the action of $Z(\mathfrak{g}_K)$ on $\widehat{M(\lambda)}$ also factors through $\chi_\lambda$. In \cite{Munster}, the authors compute the annihilator of the affinoid Verma module $\widehat{M(\lambda)}$.

\begin{theorem}\cite[Theorem 4.6]{Munster}
\label{d3annihilatoraffinoidverma}
If $p$ is a very good prime for $G$ then the annihilator of the affinoid Verma module $\widehat{M(\lambda)}$ inside $\widehat{U(\mathfrak{g})_{n,K}}$ is $$\widehat{I_\lambda}:=\ker \chi_\lambda \widehat{U(\mathfrak{g})_{n,K}}.$$
\end{theorem}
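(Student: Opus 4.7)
The plan is to prove the two inclusions separately; the inclusion $\widehat{I_\lambda}\subseteq \Ann(\widehat{M(\lambda)})$ is nearly immediate from the description of the centre, while the reverse inclusion contains the real content.

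For $\widehat{I_\lambda}\subseteq \Ann(\widehat{M(\lambda)})$: by Theorem \ref{d3centreofaffinoidenvelopingalgebras} one has $\ker \chi_\lambda\subseteq Z(\hugnK)$. The classical Verma module $M(\lambda)$ is dense in $\widehat{M(\lambda)}$ and every element of $\hugnK$ acts continuously on the affinoid Banach module $\widehat{M(\lambda)}$, so the $Z(\hugnK)$-action extends the classical central character by continuity and still factors through $\chi_\lambda$. Hence $\ker \chi_\lambda$ annihilates $\widehat{M(\lambda)}$, and by centrality so does the two-sided ideal $\widehat{I_\lambda}=\hugnK\cdot\ker \chi_\lambda$.

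For the reverse inclusion $\Ann(\widehat{M(\lambda)})\subseteq \widehat{I_\lambda}$, the strategy I would adopt is to bootstrap from the classical Duflo theorem $\Ann_{U(\mathfrak{g}_K)}(M(\lambda))=U(\mathfrak{g}_K)\cdot \ker \chi_\lambda$ using a $\pi$-adic approximation argument. Concretely, pick the natural integral lattice $\widehat{M(\lambda)}^{\circ}=\widehat{U(\mathfrak{g})_n}\otimes_{\widehat{U(\mathfrak{b}^+)_n}}R_\lambda$ preserved by $\widehat{U(\mathfrak{g})_n}$. After rescaling by a power of $\pi$, any $u\in \Ann(\widehat{M(\lambda)})$ lies in $\widehat{U(\mathfrak{g})_n}$. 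Working modulo $\pi^{N}$ for each $N\geq 1$ and examining the induced action of $\widehat{U(\mathfrak{g})_n}/\pi^N$ on $\widehat{M(\lambda)}^{\circ}/\pi^N$, one would aim to write $u\equiv a_N\cdot z_N \pmod{\pi^N \widehat{U(\mathfrak{g})_n}}$ with $z_N\in \ker \chi_\lambda$ and $a_N\in \widehat{U(\mathfrak{g})_n}$, then pass to the $\pi$-adic limit to conclude that $u\in \widehat{I_\lambda}$.

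The main obstacle is carrying out this descent cleanly: reduction mod $\pi^N$ does not land in a characteristic-zero field, so Duflo's theorem cannot be invoked directly at each finite level. This is where the very-good-prime hypothesis should enter, guaranteeing that the Harish-Chandra homomorphism $Z(U(\mathfrak{g}))\to U(\mathfrak{h})^{W,\bullet}$ is an isomorphism already at the integral level, so that $\ker \chi_\lambda$ possesses a well-behaved integral model compatible with $\pi$-adic reduction. Moreover, as the author warns in the introduction, the usual BGG/Joseph-style weight-space techniques are blocked by the infinite-dimensionality of the $\mathfrak{h}$-weight spaces of $\hugnK$ under the adjoint action, so any attempt at a direct affinoid Harish-Chandra or PBW-projection argument appears unlikely to succeed; the density-and-completeness route described above seems the most plausible path.
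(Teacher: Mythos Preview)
First, a point of comparison: the present paper does not give its own proof of this statement; it is quoted from \cite[Theorem~4.6]{Munster} and used as a black box. So there is no in-paper argument to match, only the original Ardakov--Wadsley one.

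Your treatment of the inclusion $\widehat{I_\lambda}\subseteq\Ann(\widehat{M(\lambda)})$ is correct.

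For the reverse inclusion your proposal has a genuine gap, and it is precisely the one you flag without resolving. Reducing modulo $\pi^N$ lands you over $R/\pi^N R$, and already at $N=1$ you are over the residue field $k$ of characteristic $p$. There the annihilator of a Verma module in $U(\mathfrak{g}_k)$ is \emph{strictly larger} than the ideal generated by $\ker\chi_\lambda$: the $p$-centre of $U(\mathfrak{g}_k)$ (generated by the elements $x^p-x^{[p]}$) also acts by scalars on every Verma module, so $U(\mathfrak{g}_k)^{\lambda}$ does not act faithfully. Hence the step ``write $u\equiv a_N z_N\pmod{\pi^N}$ with $z_N\in\ker\chi_\lambda$'' already fails at level $N=1$: there are annihilators of the reduced Verma module that are not of this shape. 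An integral Harish--Chandra isomorphism controls $Z(U(\mathfrak{g}))$ but says nothing about annihilators of Verma modules in positive characteristic; there is no mod-$p$ Duflo theorem to invoke, and the very-good-prime hypothesis does not manufacture one.

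The argument in \cite{Munster} proceeds along entirely different lines. One passes to the associated graded for a suitable completed PBW-type filtration: under the very-good-prime hypothesis one identifies $\gr\bigl(\hugnK/\widehat{I_\lambda}\bigr)$ with an affinoid algebra of functions on the nilpotent cone $\mathcal{N}\subset\mathfrak{g}^*$, which is an integral domain because $\mathcal{N}$ is reduced and irreducible in very good characteristic (this is where the hypothesis on $p$ actually enters, via Kostant/Chevalley-restriction results over $R$). The annihilator $J=\Ann(\widehat{M(\lambda)})$ is a $G(R)$-stable two-sided ideal by Lemma~\ref{d3twosidedidealsarepreservedbyadjointaction}, so $\gr(J/\widehat{I_\lambda})$ is a $G$-stable ideal in this coordinate ring whose vanishing locus contains the support $(\mathfrak{b}^+)^{\perp}$ of $\gr\widehat{M(\lambda)}$; since the $G$-saturation of that support is dense in $\mathcal{N}$, the ideal must vanish, giving $J=\widehat{I_\lambda}$. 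In short, the very-good-prime hypothesis is used to control the \emph{commutative} geometry of the associated graded, not to run any positive-characteristic representation theory of the sort your approach would require.
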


For the rest of section we fix a $R$-linear map $\lambda:\pi^n \mathfrak{h} \to R$ and let $\widehat{M(\lambda)}$ and $M(\lambda)$ be the affinoid respectively classical Verma module of weight $\lambda$. In the next subsections, we prove there is an explicit one-to-one correspondence between submodules of $\widehat{M(\lambda)}$ and submodules of $M(\lambda)$.

\subsection{The height function}

For the semisimple Lie algebra $\mathfrak{g}$, let $\Delta$ denote the set of simple positive roots and $\mathbf{\Phi}^+$ the set of positive roots. For any root $\alpha$, we will denote $\alpha^{\vee}$ the corresponding coroot. In the Killing form identification of $\mathfrak{h}$ and $\mathfrak{h}^*$ the coroot $\alpha^{\vee}$ corresponds to $h_\alpha \in \mathfrak{h}$.

\begin{definition}
Let $\beta \in \mathbf{\Phi}^+$ be a positive root. Then $\beta=\sum_{\alpha \in \Delta} c_{\alpha} \alpha $, with $c_{\alpha} \in \mathbb{Z}^+$ determined uniquely, see \cite[section 0.2]{Hu1} for details. We define the \emph{height} of $\beta$ to be 
                 $$\height(\beta):=\sum_{\alpha \in \Delta} c_{\alpha}.$$
\end{definition}

We now extend this definition to monomials in the universal enveloping algebra using the correspondence between roots and root vectors. Fix an order between the positive roots and let $e_1,e_2 \ldots, e_m$ the corresponding order between root vectors. For a root vector $e_i$, we define the height, $\height(e_i),$ to be the height of the root corresponding to $e_i$.
\begin{definition}
For $a_1,a_2 \ldots a_m \in \mathbb{N}$, let $e^A:=e_1^{a_1}e_2^{a_2} \ldots e_m^{a_m} \in U(\mathfrak{n}^+)$ be such that $e_i \in \mathfrak{n}^+$. Then we define the \emph{height} of $e^A$ to be

       $$\height(e^A):=\sum_{i=1}^{m} a_i \height (e_i).$$

Let $f^B=f_1^{b_1} f_2^{b_2} \ldots  f_m^{b_m} \in U(\mathfrak{n}^-) $ such that $f_i \in \mathfrak{n}^-$. Then we define the \emph{height} of $f^B$ to be
     $$\height(f^B):=\sum_{i=1}^{m} b_i   \height (e_i) ,$$       

where $e_i$ is the positive root vector corresponding to $f_i$.

\end{definition}

Let $\rho$ be the half sum of positive roots and $\delta=\rho^{\vee} \in \mathfrak{h}$ the corresponding coroot. Let $\alpha$ be a positive root; then by the roots-coroots duality we have $\alpha(\delta)=\rho(\alpha^{\vee})$; furthermore  by \cite[section 0.6]{Hu1}, we have $\rho(\alpha^{\vee})=\height(\alpha)$, therefore we obtain $\alpha(\delta)= \text{ht}(\alpha)$.

\begin{lemma}\cite[Section 4.7]{Munster}
\label{d3cartanaffinoidaction}
Let $f^{B}=f_1^{b_1}f_2^{b_2} \cdots f_m^{b_m} \in U(\mathfrak{n}^{-})$ for $\beta \in \mathbb{N}^{m}$. Then  for any  $h \in \mathfrak{h}$, we have:

          $$h \cdot f^{B} v_\lambda=(\lambda-\sum_{j=1}^m b_j \alpha_j)(h)f^{B}v_\lambda.$$

\end{lemma}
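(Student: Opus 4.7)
The plan is to reduce the claim to a purely classical computation inside $U(\fr{g})$. The point is that $f^B v_\lambda$ lies in the image of the natural map $M(\lambda) \to \widehat{M(\lambda)}$, the action of $h \in \fr{h}$ is obtained via the ring homomorphism $U(\fr{g}) \to \hugnK$, and $h v_\lambda = \lambda(h) v_\lambda$ holds by construction of $K_\lambda$ as a module on which $\fr{b}^+ \supset \fr{h}$ acts through $\lambda$ (extended $K$-linearly from $\pi^n \fr{h}$ to $\fr{h} \subset \fr{h}_K$). So the whole computation takes place in the classical Verma module $M(\lambda)$.

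First I would record the key commutation relation. Each $f_j \in \fr{n}^-$ is, by assumption, the negative root vector corresponding to the positive root $\alpha_j$, so
\begin{equation*}
[h, f_j] = -\alpha_j(h) f_j \quad \text{for all } h \in \fr{h}.
\end{equation*}
Next, I would iterate this relation across the monomial $f^B = f_1^{b_1} \cdots f_m^{b_m}$. An easy induction on the total degree $|B| = \sum_j b_j$ using the derivation identity $[h, xy] = [h,x] y + x[h,y]$ yields
\begin{equation*}
[h, f^B] = -\Big(\sum_{j=1}^m b_j \alpha_j\Big)(h)\, f^B \quad \text{in } U(\fr{g}).
\end{equation*}
Indeed, at each induction step only the eigenvalues add because every factor $f_j$ is a simultaneous weight vector for $\ad(\fr{h})$.

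Finally, I would apply this identity to $v_\lambda$. Writing $h f^B = f^B h + [h, f^B]$ and using $h v_\lambda = \lambda(h) v_\lambda$ gives
\begin{equation*}
h \cdot f^B v_\lambda = f^B (h v_\lambda) + [h, f^B] v_\lambda = \Big(\lambda - \sum_{j=1}^m b_j \alpha_j\Big)(h)\, f^B v_\lambda,
\end{equation*}
which is the claim.

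There is no serious obstacle: this is a standard weight-space calculation and nothing about the $\pi$-adic completion interferes, because both sides of the asserted equality lie in the dense subspace $M(\lambda) \subset \widehat{M(\lambda)}$ and the $\fr{h}$-action is inherited from $U(\fr{g})$. The only point worth checking carefully is that $\lambda(h)$ is well-defined for $h \in \fr{h}$ (not merely $h \in \pi^n \fr{h}$), which follows by extending $\lambda$ $K$-linearly to $\fr{h}_K$.
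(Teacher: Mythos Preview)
Your proof is correct and is exactly the standard weight-space computation for Verma modules. The paper does not supply its own proof of this lemma; it simply cites \cite[Section 4.7]{Munster}, so there is nothing to compare against beyond noting that your argument is the expected one.
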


For the ease of notation, denote $\Lambda:= \lambda(\delta)$. Setting $h=\delta$ in the equation above we get:

\begin{equation}
\label{d3deltaaction}
 \delta f^Bv_{\lambda}=(\lambda - \sum_{i=1}^m b_i \alpha_i)(\delta)f^Bv_\lambda=(\Lambda-\sum_{i=1}^m b_i  \height(\alpha_i))f^Bv_\lambda= (\Lambda- \height(f^B))f^Bv_\lambda.
\end{equation}

As an easy corollary we get: 

\begin{corollary}
\label{d3shifteddeltaaction}
Let $a \in \mathbb{N},f^B \in  U(\mathfrak{n}^-)$. Then:

$$ (\delta-\Lambda+a)(f^B)v_\lambda=(a-\height(f^B))f^Bv_\lambda.$$

\end{corollary}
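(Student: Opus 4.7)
The plan is to reduce the statement directly to equation \ref{d3deltaaction}. Using $R$-linearity of the $U(\mathfrak{h})$-action on $\widehat{M(\lambda)}$, expand
\begin{equation*}
(\delta - \Lambda + a)(f^B) v_\lambda = \delta f^B v_\lambda - \Lambda f^B v_\lambda + a f^B v_\lambda.
\end{equation*}
The first summand equals $(\Lambda - \height(f^B)) f^B v_\lambda$ by \ref{d3deltaaction}, and the remaining two terms contribute $(a - \Lambda) f^B v_\lambda$. Collecting the coefficients of $f^B v_\lambda$ produces the claimed scalar $a - \height(f^B)$.

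There is no substantive obstacle here: the corollary is merely a cosmetic rearrangement of \ref{d3deltaaction} that isolates the discrepancy between $a$ and $\height(f^B)$. I expect this shifted formulation to be the right tool for the weight-space arguments to follow, where one needs a single element of $U(\mathfrak{h})$ that vanishes precisely on the monomials $f^B v_\lambda$ of a prescribed height and acts invertibly on the rest. Such an operator is the natural device for extracting height-homogeneous components of elements of $\widehat{M(\lambda)}$, and this extraction in turn should underpin the explicit bijection between submodule lattices of the affinoid and classical Verma modules announced as Theorem \ref{d3affinoidclassicvermacorrespondenceprop}.
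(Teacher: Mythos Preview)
Your proof is correct and is exactly the approach the paper takes; indeed the paper states this corollary without proof, simply writing ``As an easy corollary we get,'' since it follows by linear expansion from equation \eqref{d3deltaaction} just as you describe. Your remarks about the intended use of this shifted operator are also accurate.
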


\begin{definition}
Let $M$ be a $U(\mathfrak{h})$-module and $\mu \in \mathfrak{h}_K^*$. We say that $m \in M$ has weight $\mu$ if $hm= \mu(h)m$ for all $h \in \mathfrak{h}_K$. The set of vectors of weight $\mu$ is denoted $M_\mu$.
\end{definition}

 The following lemma  follows easily from the construction of affinoid Verma modules:

\begin{lemma}
\label{d3weightspacefinitedimensional}
Let $N$ a submodule of $\widehat{M (\lambda)}$. Then $N_\mu$ is a finite dimensional vector space for any $\mu \in \mathfrak{h}_K^*.$

\end{lemma}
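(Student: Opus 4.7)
The plan is to prove the stronger statement that every weight space of $\widehat{M(\lambda)}$ itself is finite dimensional over $K$, from which the claim for an arbitrary submodule $N$ follows immediately since $N_\mu \subseteq \widehat{M(\lambda)}_\mu$. The main tool will be the action of the element $\delta \in \mathfrak{h}$ featured in equation (\ref{d3deltaaction}) and Corollary \ref{d3shifteddeltaaction}; a single element of $\mathfrak{h}$ will suffice to pin down a finite dimensional envelope for each weight space.

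First, I would combine the description (\ref{d3affinoidenvelopingalgebradescription}) of $\hugnK$ with the PBW theorem, ordering the basis of $\mathfrak{g}$ so that negative root vectors come first, then Cartan generators, then positive root vectors. Since $\mathfrak{n}^+$ annihilates $v_\lambda$ and $\mathfrak{h}$ acts on $v_\lambda$ by the scalar $\lambda$, this lets me express each element of $\widehat{M(\lambda)}$ uniquely as a convergent series
\[
v \;=\; \sum_{B \in \mathbb{N}^m} c_B\, f^B v_\lambda,
\]
with coefficients $c_B \in K$ satisfying the convergence condition inherited from (\ref{d3affinoidenvelopingalgebradescription}). Verifying the uniqueness of this expansion is the essential preliminary step.

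Next, suppose $v \in \widehat{M(\lambda)}_\mu$ for some $\mu \in \mathfrak{h}_K^*$. Since $\delta \in \mathfrak{h}$ acts continuously on $\widehat{M(\lambda)}$, the relation $(\delta - \mu(\delta))v = 0$ can be expanded termwise using (\ref{d3deltaaction}) to give
\[
0 \;=\; \sum_{B} c_B \bigl(\Lambda - \height(f^B) - \mu(\delta)\bigr) f^B v_\lambda.
\]
By uniqueness of the series expansion, $c_B = 0$ whenever $\height(f^B) \neq \Lambda - \mu(\delta)$. If $\Lambda - \mu(\delta) \notin \mathbb{N}$ then $\widehat{M(\lambda)}_\mu = 0$; otherwise, the surviving multi-indices $B = (b_1, \ldots, b_m)$ satisfy $\sum_{j} b_j \height(\alpha_j) = \Lambda - \mu(\delta)$, and since each $\height(\alpha_j) \geq 1$, there are only finitely many such $B$. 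Thus $\widehat{M(\lambda)}_\mu$ lies in a finite dimensional $K$-subspace spanned by the corresponding $f^B v_\lambda$, and the conclusion follows for $N_\mu$.

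The main technical obstacle is the termwise comparison of coefficients: one must justify rigorously that applying $\delta$ to a convergent series commutes with the summation and that the resulting expansion is unique. Both facts rely on the fact that $\widehat{M(\lambda)}$ is a complete topological $K$-vector space with the Banach basis $\{f^B v_\lambda\}$ coming from (\ref{d3affinoidenvelopingalgebradescription}); once these are in place, the combinatorial finiteness via the height function is immediate.
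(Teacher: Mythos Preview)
Your proposal is correct and is precisely the kind of argument the paper has in mind: the paper states the lemma without proof, noting only that it ``follows easily from the construction of affinoid Verma modules,'' and your write-up supplies exactly those details using the topological basis $\{f^B v_\lambda\}$ and the height computation from equation~\eqref{d3deltaaction}. One minor remark: you could equally well invoke Lemma~\ref{d3cartanaffinoidaction} for the full Cartan action to see that $\widehat{M(\lambda)}_\mu$ is spanned by the $f^B v_\lambda$ with $\lambda - \sum_j b_j\alpha_j = \mu$, but your choice to use only $\delta$ is a pleasant economy, since bounding the height already forces finiteness of the index set.
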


\subsection{Submodules of affinoid Verma modules}

Throughout this subsection, we will make free use of the following well known facts:

\begin{itemize}
\item{$U(\mathfrak{g}_K) \cong U(\mathfrak{g})_K=U(\mathfrak{g}) \uset{R} K$.}
\item{$\widehat{U(\mathfrak{g})_n}$ is flat over $U(\mathfrak{g})_n$ and $\hugnK$ is flat over $U(\mathfrak{g})_K$.}
\item{Given $N$ a submodule of $M(\lambda)$, we may view $N$ as a subset of the topological module $\widehat{M(\lambda)}$. The closure of $N$ inside $\widehat{M(\lambda)}$ is given by $$\hat{N}:=\overline{N}=\hugnK \uset{U(\mathfrak{g})_K} N.$$}
\item{ The affinoid Verma module $\widehat{M(\lambda)}$ has a $K$-topological basis given by $f^B v_\lambda$, where $f^B \in U(\mathfrak{n}^-)$; recall that $v_{\lambda}=1 \otimes 1$.}

\end{itemize}

 We begin by extending our definition of the height function to homogeneous polynomials, homogeneity being given by height. We say that a  polynomial in $U(\mathfrak{n}_K^-)$ has height $n$ if all the monomials appearing in its expansion have height $n$. We also let $M=$max($\height(e_i))$, so that we have the inequality 
\begin{equation} 
\label{d3heightfunctionequality}
M|B| \geq \height(f^B) \geq |B|.
\end{equation}

By construction, we know that as a vector space

$$\widehat{U(\mathfrak{n}^-)_{n,K}}=\{\sum_{B \in \mathbb{N}^m} a_B f^B, \quad p^{-n |B|} ||a_B|| \to 0 \text{ as } |B| \to \infty \}.$$

We can reformulate this in terms of height function using equation \eqref{d3heightfunctionequality}:

$$\widehat{U(\mathfrak{n}^-)_{n,K}}=\{\sum_{B \in \mathbb{N}^m} a_B f^B, \quad p^{-n |B|} ||a_B|| \to 0 \text{ as } \height(f^B) \to \infty \}.$$

Let $N$ a closed submodule of $\widehat{M (\lambda)}$ and let $\nu \in \mathfrak{h}^*$. We know by equation \ref{d3deltaaction} that $\delta f^B v_\lambda = (\Lambda-\height(f^B))f^Bv_\lambda$ so any element of $N_\nu$ must be of the form $P v_\lambda$, where $P \in U(\mathfrak{n}_K^-)$ is a homogeneous polynomial of height $\Lambda-\nu(\delta)$.

\begin{proposition}
\label{d3submodulesclos}
Let $N$ be a closed submodule of $\widehat{M (\lambda)}$. Then $$N=\overline{\bigoplus_{\mu \in \mathfrak{h}_K^*} N_\mu}=\overline{N \cap M(\lambda)},$$ where $M(\lambda)$ is the Verma module of weight $\lambda$. 
\end{proposition}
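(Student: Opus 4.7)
The plan is to exploit the action of $\delta = \rho^\vee$ on $\widehat{M(\lambda)}$ to extract from $x \in N$ its ``height components'', and then to reduce to the classical weight decomposition of submodules of the Verma module $M(\lambda)$. Given $x = \sum_B a_B f^B v_\lambda \in \widehat{M(\lambda)}$, regrouping by height yields $x = \sum_{k \geq 0} x_k$, where
\begin{equation*}
x_k \;:=\; \sum_{\height(f^B) = k} a_B f^B v_\lambda \;\in\; M(\lambda)
\end{equation*}
is a finite sum, and by equation \eqref{d3deltaaction} it is a $\delta$-eigenvector with eigenvalue $\Lambda - k$. The inequality $|B| \geq \height(f^B)/M$, combined with the convergence condition $p^{-n|B|} \|a_B\| \to 0$, forces $\|x_k\| \to 0$ in the topology of $\widehat{M(\lambda)}$, so the series $\sum_k x_k$ converges to $x$.

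The key step is to show that if $x \in N$ then each $x_k$ already lies in $N$. I would argue by induction on $k$: assuming $x_0, \ldots, x_{k-1} \in N$, set $y^{(k)} := x - \sum_{j<k} x_j \in N$ and apply the operator
\begin{equation*}
T_{N,k} \;:=\; \frac{1}{(N-k)!} \prod_{i=k+1}^{N} (\delta - \Lambda + i) \;\in\; U(\fr{g}_K) \subset \hugnK.
\end{equation*}
On the $\delta$-eigenvector $x_j$, $T_{N,k}$ acts by the scalar $\frac{1}{(N-k)!} \prod_{i=k+1}^{N}(i - j)$, which equals $1$ for $j = k$, vanishes for $k < j \leq N$ (the factor with $i = j$ is zero), and equals $(-1)^{N-k}\binom{j-k-1}{N-k}$ for $j > N$. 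Consequently
\begin{equation*}
T_{N,k}\,y^{(k)} \;=\; x_k \;+\; \sum_{j > N} (-1)^{N-k} \binom{j-k-1}{N-k}\, x_j.
\end{equation*}
Since these binomial coefficients are $p$-adic integers of norm at most $1$, the ultrametric inequality bounds the tail by $\sup_{j > N} \|x_j\|$, which tends to $0$. As $T_{N,k}\,y^{(k)} \in N$ for every $N$ and $N$ is closed, I conclude that $x_k \in N$.

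With each $x_k \in N \cap M(\lambda)$, the partial sums $\sum_{k \leq K} x_k$ lie in $N \cap M(\lambda)$ and converge to $x$, so $N \subset \overline{N \cap M(\lambda)}$; the reverse inclusion is immediate from closedness, giving $N = \overline{N \cap M(\lambda)}$. The remaining identity $\overline{N \cap M(\lambda)} = \overline{\bigoplus_\mu N_\mu}$ follows from the standard fact in BGG category $\mathcal{O}$ that any $U(\fr{g}_K)$-submodule of a classical Verma module is the direct sum of its $\fr{h}$-weight spaces, applied to $N \cap M(\lambda)$. The main obstacle is the correct choice of $T_{N,k}$: the normalisation by $(N-k)!$ is precisely what makes the eigenvalues on the uncontrolled terms $x_j$ ($j > N$) into $p$-adic integers, which is what allows the ultrametric estimate to dominate the tail purely by $\sup_{j > N} \|x_j\|$.
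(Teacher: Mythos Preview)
Your argument is correct and uses the same underlying idea as the paper: both approaches exploit that the height components $x_k$ are eigenvectors for $\delta=\rho^\vee$ with distinct integer-shifted eigenvalues, and both use polynomial operators in $\delta-\Lambda$ normalised so that the scalar acting on the high-height tail is a binomial coefficient, hence a $p$-adic integer, allowing the ultrametric estimate to kill the tail in the limit. Your operator $T_{N,k}$ is, up to relabelling, the paper's $\epsilon_{i,L}=\binom{\delta-\Lambda+i+L}{i}$ with $i=N-k$ and $L=k$.

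The organisation is a little different, and arguably cleaner. The paper aims directly for a single \emph{weight} component $Pv_\lambda$: it first uses auxiliary Cartan elements $h_s$ to separate $P$ from the other weight components at the same height, then applies $\epsilon_{L-1,0}$ together with an ad hoc factor $\frac{(-1)^L(\delta-\Lambda)}{L}$ to remove the terms of height strictly below $L$, and only then applies the family $\epsilon_{i,L}$ to push away the high-height tail. You instead extract the whole \emph{height} component $x_k$ at once, dealing with low heights by the inductive subtraction $y^{(k)}=x-\sum_{j<k}x_j$ rather than by further operators, and then invoke the standard weight decomposition of the $U(\fr{g}_K)$-submodule $N\cap M(\lambda)\subset M(\lambda)$ to recover $N=\overline{\bigoplus_\mu N_\mu}$. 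Your route avoids the extra Cartan operators and the special treatment of heights $0$ and $L$; the paper's route is more self-contained in that it never appeals to the classical weight decomposition.
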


Fix the closed submodule $N$ and an element $u \in N$, which we write as
              $$u=\sum_{B \in \mathbb{N}^m}a_{B}f^Bv_{\lambda},$$ 

with $p^{-n |B|} ||a_{B}|| \rightarrow 0$ as $\height(f^B) \rightarrow \infty.$
Furthermore, fix $Pv_\lambda \in N_\mu$, with $P \neq 0$ of height $L$ appearing in the expansion of $u$. To prove Proposition \ref{d3submodulesclos}, it is enough to prove that $Pv_\lambda \in N$. To do this, we begin by eliminating all the other terms of height $L$ appearing in the expansion of $u$. We write $u$ as
    
      $$u=\sum_{B \in \mathbb{N}^m,\height(f^B) \neq L}a_{B}f^Bv_{\lambda}+ Pv_{\lambda}+ \sum_{s \in S} Q_sv_\lambda,$$
  where $S$ is a set such that $Q_s$ has  height $L$ and $Q_sv_\lambda \in N_{\nu_s}$ for some weight $\nu_s \neq \mu$.  By Lemma \ref{d3weightspacefinitedimensional}, the set $S$ is finite.

Let $s \in S$. As $\nu_s \neq \mu$, there exists $h_s \in \mathfrak{h}_K$ such that $\mu(h_s) \neq \nu_s(h_s)$. So we can define an operator $H_S:= \prod_{s \in S}(h_s-\nu_s(h_s)) \in U(\mathfrak{h}_K)$. Since $Q_s \in N_{\nu_s}$, we get $H_S \cdot Q_sv_\lambda=0$. Therefore applying the operator $H_S$ to $u$ we obtain
a new element $u' \in N$, which can be written as
$$u'=\sum_{B \in \mathbb{N}^m,\height(f^B) \neq L}b_{B}f^Bv_{\lambda}+\prod_{s \in S}(\mu(h_s)-\nu_s(h_s))Pv_\lambda,$$       

with $p^{-n |B|} ||b_{B}|| \rightarrow 0$ as $\height(f^B) \rightarrow \infty$. By our construction  
$\prod_{s \in S}(\mu(h_s)-\nu_s(h_s)) \neq 0$, so for the ease of notation we set    $P=\prod_{s \in S}(\mu(h_s)-\nu_s(h_s))P \neq 0$, so that

    $$N \ni u'=\sum_{B \in \mathbb{N}^m,\height(f^B) \neq L}b_{B}f^Bv_{\lambda}+Pv_{\lambda}.$$

To complete the proof we need to define a new set of operators in $U(\mathfrak{h}_K)$. We use the convention that for $x \in U(\mathfrak{h}_K)$ and $i \in \mathbb{N}$ the symbol $\binom{x}{i}$ will denote 

$$\prod_{l=0}^{i-1} \frac{1}{i!} (x-l)  \in U(\mathfrak{h}_K).$$  

\begin{definition}
For  $i,j \in \mathbb{N}$  define $\epsilon_{i,j}:=\binom{\delta-\Lambda+i+j}{i} \in U(\mathfrak{h}_K) $.
\end{definition}

\begin{lemma}
\label{d3epsilonaction}
For  $B \in \mathbb{N}^m$, we have that $\epsilon_{i,j}f^Bv_{\lambda}=\binom{i+j-\height(f^B)}{i} f^B v_{\lambda}$.
\end{lemma}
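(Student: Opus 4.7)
The plan is to unfold the definition of $\epsilon_{i,j}$ and reduce the claim to iterated application of Corollary \ref{d3shifteddeltaaction}. By the convention introduced just before the lemma,
\[
\epsilon_{i,j} \;=\; \binom{\delta - \Lambda + i + j}{i} \;=\; \frac{1}{i!}\prod_{l=0}^{i-1}\bigl(\delta - \Lambda + i + j - l\bigr),
\]
which is a product of $i$ elements of the commutative subalgebra $U(\mathfrak{h}_K)$.

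Because these factors pairwise commute, the product can be applied to $f^{B} v_{\lambda}$ one factor at a time, in any order, so it suffices to understand how each linear factor acts. Applying Corollary \ref{d3shifteddeltaaction} with $a = i+j-l$ (this is a nonnegative integer for $0 \le l \le i-1$, since $i+j-l \ge j+1 \ge 1$), we see that each factor $(\delta - \Lambda + i + j - l)$ acts on $f^{B} v_{\lambda}$ as the scalar $i+j-l-\height(f^{B})$. Iterating, we obtain
\[
\epsilon_{i,j} f^{B} v_{\lambda} \;=\; \frac{1}{i!}\prod_{l=0}^{i-1}\bigl(i+j-l-\height(f^{B})\bigr)\, f^{B} v_{\lambda} \;=\; \binom{i+j-\height(f^{B})}{i} f^{B} v_{\lambda},
\]
where the final equality applies the very same convention for $\binom{\,\cdot\,}{i}$ with the scalar $x = i+j-\height(f^{B})$.

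The argument is essentially a direct eigenvalue computation, and I anticipate no serious obstacle. The only point requiring a moment's care is verifying that the integer $a = i+j-l$ is nonnegative for all $l$ in the range of the product, so that Corollary \ref{d3shifteddeltaaction} applies on the nose; this is immediate from the bounds on $l$ noted above.
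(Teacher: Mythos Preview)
Your proof is correct and follows essentially the same route as the paper's own argument: expand $\epsilon_{i,j}$ via the binomial convention, apply Corollary~\ref{d3shifteddeltaaction} to each linear factor, and recognise the resulting product as $\binom{i+j-\height(f^B)}{i}$. Your additional remarks on commutativity of the factors and the nonnegativity of $a=i+j-l$ are sound and make the invocation of Corollary~\ref{d3shifteddeltaaction} fully explicit, but they do not depart from the paper's method.
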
 

\begin{proof}
We have 
\begin{equation}
\begin{split}
 \epsilon_{i,j}f^Bv_\lambda&= \frac{1}{i!}\prod_{l=0}^{i-1}(\delta-\Lambda+i+j-l)f^Bv_\lambda \\
              &=\frac{1}{i!}\prod_{l=0}^{i-1}(i+j-l-\height(f^B))f^Bv_\lambda \text{ (by Corollary \ref{d3shifteddeltaaction}) }\\
                &=\binom{i+j-\height(f^B)}{i}f^Bv_\lambda. \qedhere
\end{split}
\end{equation}
\end{proof}

We are now ready to prove Proposition \ref{d3submodulesclos}; recall that it is enough to prove that for $u'=\sum_{B \in \mathbb{N}^m,\height(f^B) \neq L}b_{B}f^Bv_{\lambda}+Pv_{\lambda}$, we have $Pv_\lambda \in N$.

\begin{proof}[Proof of Proposition \ref{d3submodulesclos}]

Write $u'$ as $$u'=\sum_{B \in \mathbb{N}^m,\height(f^B) < L}b_{B}f^Bv_{\lambda}+Pv_\lambda+ \sum_{B \in \mathbb{N}^m,\height(f^B) > L}b_{B}f^Bv_{\lambda}.$$ Consider the operator $\epsilon_{L-1,0}$ acting on $u'$. We get

\begin{equation}
\begin{split}
N \ni u'':&=\epsilon_{L-1,0} \cdot u' \\
    &=\sum_{B \in \mathbb{N}^m,\height(f^B) < L} \binom{L-1-\height(f^B)}{L-1}b_{B}f^Bv_{\lambda}+ \binom{L-1-L}{L-1}Pv_\lambda+ \\
    &+\sum_{B \in \mathbb{N}^m,\height(f^B) > L} \binom{L-1-\height(f^B)}{L-1}b_{B}f^Bv_{\lambda} \\
    &=b_0v_{\lambda}+(-1)^{L-1}Pv_\lambda+\sum_{B \in \mathbb{N}^m,\height(f^B) > L} \binom{L-1-\height(f^B)}{L-1}b_{B}f^Bv_{\lambda}.
\end{split}
\end{equation}

Next, we apply the operator $\frac{(-1)^L(\delta-\Lambda)}{L}$. We have

\begin{equation}
\begin{split}
N \ni u^{(3)}:&=\frac{(-1)^n(\delta-\Lambda)}{L} \cdot u'' \\
           &=\frac{(-1)^L}{L}[(\Lambda-\Lambda)b_0v_\lambda+(\Lambda-L-\Lambda)(-1)^{L-1}Pv_\lambda + \\
           & +^{\text{ (by \ref{d3shifteddeltaaction})}}   \sum_{B \in \mathbb{N}^m,\height(f^B) > L} (\Lambda-\height(f^B) -\Lambda)\binom{(L-1-\height(f^B))}{L-1}b_{B}f^Bv_{\lambda}]  \\
           &=Pv_\lambda+\sum_{B \in \mathbb{N}^m,\height(f^B) > L} c_Bf^Bv_\lambda,  
\end{split}
\end{equation}

for some $c_B \in K$ with $p^{-n |B|} ||c_B|| \rightarrow 0$ as $\height(f^B) \rightarrow \infty.$ Finally, we consider the family of operators $\epsilon_{i,L}$ where $i \in \mathbb{N}$ varies. By Lemma \ref{d3epsilonaction}, we have that 
$$\epsilon_{i,L}f^Bv_{\lambda}=\binom{i+L-\height(f^B)}{i}f^Bv_\lambda .$$

In particular, $\epsilon_{i,L}Pv_\lambda=Pv_\lambda$ and $\epsilon_{i,L}f^Bv_{\lambda}=0,$ for $L<\height(f^B) \leq i+L$. Therefore, for any $i \in \mathbb{N}$ one has

$$N \ni u_i=\epsilon_{i,L} \cdot u^{(3)}=Pv_\lambda+\sum_{B \in \mathbb{N}^m,\height(f^B) > i+L} \binom{i+L-\height(f^B)}{i} c_Bf^B v_\lambda.$$

Since $p^{-n|B|}||c_B|| \to 0$ as $\height(f^B) \to \infty$ and $||\binom{i+L-\height(f^B)}{i}|| \leq 1$, we get $\lim_{i \to \infty} u_i =Pv_\lambda$. Since we assumed that $N$ is closed submodule of $\widehat{M(\lambda)}$, we  may conclude that $Pv_\lambda \in N$, which is the desired result.
\end{proof}

Recall that we aim to prove that there is a one-to-one correspondence between submodules of affinoid Verma module $\widehat{M(\lambda)}$ and the corresponding classical Verma module $M(\lambda)$ where $\lambda:\pi^n\mathfrak{h} \to R$ is a $R$-linear map. Define a function $\mathscr{F}$ from submodules of $\widehat{M (\lambda)}$ to submodules of $M(\lambda)$ sending the submodule $N$ to $N \cap M(\lambda)$.

\begin{lemma}
\label{d3submodulecorrespondenceinjective}
The function $\mathscr{F}$ is injective.

\end{lemma}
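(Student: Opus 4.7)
The plan is to reduce the statement entirely to Proposition \ref{d3submodulesclos} via one preliminary observation: \emph{every} submodule of $\widehat{M(\lambda)}$ is automatically closed in the $\pi$-adic topology. Once this is established the argument is essentially a one-liner, since if $N_1, N_2 \subseteq \widehat{M(\lambda)}$ satisfy $\mathscr{F}(N_1) = N_1 \cap M(\lambda) = N_2 \cap M(\lambda) = \mathscr{F}(N_2)$, then applying Proposition \ref{d3submodulesclos} to both (now known to be closed) gives
$$N_1 \;=\; \overline{N_1 \cap M(\lambda)} \;=\; \overline{N_2 \cap M(\lambda)} \;=\; N_2.$$

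To justify automatic closedness I would argue as follows. The affinoid Verma module $\widehat{M(\lambda)}$ is cyclic over the affinoid enveloping algebra $\hugnK$, generated by $v_\lambda$; in particular it is a finitely generated module over $\hugnK$. Since $\hugnK$ is a Noetherian Banach $K$-algebra (a standard property established in \cite{Annals}), $\widehat{M(\lambda)}$ carries a canonical structure of Banach $\hugnK$-module (the quotient topology from a surjection $\hugnK \twoheadrightarrow \widehat{M(\lambda)}$), and every submodule is finitely generated by Noetherianness. Any such submodule is then the image of a continuous surjection from some free Banach module $\hugnK^r$, and an application of the open mapping theorem in this setting forces the image to be closed inside $\widehat{M(\lambda)}$.

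The only genuinely technical step in the proposal is this closedness claim; conceptually everything else is immediate from Proposition \ref{d3submodulesclos}. If a fully self-contained argument is preferred over a black-box invocation of the open mapping theorem, one can alternatively exploit the explicit $K$-topological basis $\{f^B v_\lambda\}$ of $\widehat{M(\lambda)}$ together with the weight-extraction operators $\epsilon_{i,j}$ and $H_S$ already constructed in the proof of Proposition \ref{d3submodulesclos}: these operators map $N$ into itself (not merely into $\overline{N}$), so one can carry out the same height-by-height decomposition \emph{inside} $N$ rather than inside its closure, showing directly that $N = \bigoplus_\mu N_\mu$ up to closure and that the classical part $N \cap M(\lambda)$ determines $N$. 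Either route yields injectivity of $\mathscr{F}$.
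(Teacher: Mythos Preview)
Your proposal is correct and follows essentially the same approach as the paper: reduce to Proposition \ref{d3submodulesclos} after establishing that every submodule of $\widehat{M(\lambda)}$ is automatically closed. The only cosmetic difference is the justification for closedness --- the paper invokes \cite[I.5.5]{LVO} (completeness of the filtration forces submodules to be closed), whereas you appeal to Noetherianity plus the open mapping theorem; these are two phrasings of the same underlying fact.
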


\begin{proof}
Let $N_1,N_2$ be submodules of $\widehat{M(\lambda)}$ such that  $$\mathscr{F}(N_1)=N_1 \cap M(\lambda)= N_2 \cap M(\lambda)=\mathscr{F}(N_2).$$ 

As the induced metric topology on $\widehat{M(\lambda)}$ is complete, any submodule of $\widehat{M(\lambda)}$ is closed by \cite[I.5.5]{LVO}. Thus,  applying Proposition \ref{d3submodulesclos}, we obtain $N_1=\overline{ N_1 \cap M(\lambda)}= \overline{N_2 \cap M(\lambda)}=N_2$, so $N_1=N_2$.  Therefore, the function $\mathscr{F}$ is injective.
\end{proof}

We aim to prove that $\mathscr{F}$ is also surjective.
For any ring $A$, $S$ a subset of $A$ and $M$ an $A$-module, we say that $M$ is not $S$-torsion if there exists $m \in M$ such that for all $s \in S$, $sm \neq 0$.

\begin{proposition}
\label{d3tensorproductU(g)Kisnon-zero}
Let $\lambda:\pi^n \mathfrak{h} \to R$ be an $R$-linear map and let $M$ be a $U(\mathfrak{g}_K)$ subquotient of $M(\lambda)$  that is not $1+\pi U(\mathfrak{g})$-torsion. Then

         $$\hugnK \uset{U(\mathfrak{g}_K)} M \neq 0.$$

\end{proposition}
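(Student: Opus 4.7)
The plan is to reduce the non-vanishing of the tensor product to a strict-inclusion statement about closures of submodules of $M(\lambda)$, which in turn I will settle by a weight-space argument.

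First, note that the definition of ``not $S$-torsion'' requires the existence of an element of $M$, so the hypothesis implies $M \neq 0$. Choose $U(\mathfrak{g}_K)$-submodules $N' \subsetneq N \subseteq M(\lambda)$ with $M \cong N/N'$. By flatness of $\hugnK$ over $U(\mathfrak{g})_K$ (bullet point), applying $\hugnK \otimes_{U(\mathfrak{g}_K)} (-)$ to $0 \to N' \to N \to M \to 0$ yields an exact sequence
$$0 \to \widehat{N'} \to \widehat{N} \to \hugnK \otimes_{U(\mathfrak{g}_K)} M \to 0,$$
and the bullet identifying $\hugnK \otimes_{U(\mathfrak{g}_K)} N$ with the closure of $N$ inside $\widehat{M(\lambda)}$ lets me treat $\widehat{N}$ and $\widehat{N'}$ as subsets of $\widehat{M(\lambda)}$. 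It therefore suffices to prove $\widehat{N'} \subsetneq \widehat{N}$.

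The key lemma is: for every $U(\mathfrak{g}_K)$-submodule $P \subseteq M(\lambda)$, one has $\widehat{P} \cap M(\lambda) = P$. Granting this and applying it to both $N$ and $N'$, the assumption $N' \subsetneq N$ gives $\widehat{N'} \cap M(\lambda) = N' \subsetneq N = \widehat{N} \cap M(\lambda)$, forcing the strict inclusion $\widehat{N'} \subsetneq \widehat{N}$ and hence $\hugnK \otimes_{U(\mathfrak{g}_K)} M \neq 0$.

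To prove the key lemma I will exploit the $\mathfrak{h}_K$-weight decomposition. Lemma \ref{d3cartanaffinoidaction} shows that the $\mu$-weight space $\widehat{M(\lambda)}_\mu$ is spanned by those $f^B v_\lambda$ with $\sum b_i \alpha_i = \lambda - \mu$; only finitely many $B$ satisfy this, so $\widehat{M(\lambda)}_\mu = M(\lambda)_\mu$ is finite-dimensional over $K$. Any $U(\mathfrak{g}_K)$-submodule $P$ of $M(\lambda)$ is $\mathfrak{h}_K$-stable, and is $\mathfrak{h}_K$-semisimple with $P = \bigoplus_\mu P_\mu$ (where $P_\mu := P \cap M(\lambda)_\mu$), since given any $x \in P$ with finite weight-support $F$ one can build an interpolation polynomial $H_\mu \in U(\mathfrak{h}_K)$ satisfying $H_\mu \cdot x = x_\mu$. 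For $u \in \widehat{P} \cap M(\lambda)$, the same interpolation argument (now applied to the finite weight-support of $u$) shows each $u_\mu$ lies in $\widehat{P}$, so it remains to show $u_\mu \in P$. Choose a sequence $(x_k)$ in $P$ converging to $u_\mu$ in $\widehat{M(\lambda)}$ and apply the weight-space projection $\pi_\mu \colon \widehat{M(\lambda)} \to M(\lambda)_\mu$, which truncates $\sum_B a_B f^B v_\lambda$ to the finite set $\{B : \sum b_i \alpha_i = \lambda - \mu\}$ and is evidently continuous (each retained coefficient is controlled by the $\hugnK$-norm). This yields $(x_k)_\mu \to u_\mu$ in the finite-dimensional $M(\lambda)_\mu$, and $(x_k)_\mu \in P_\mu$ by $\mathfrak{h}_K$-semisimplicity of $P$. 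Since $P_\mu$ is a $K$-subspace of a finite-dimensional $K$-space it is closed, so $u_\mu \in P_\mu \subseteq P$, completing the proof of the key lemma.

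The main obstacle is the key lemma; the subtlety is transferring a topological closure statement in the infinite-dimensional Banach space $\widehat{M(\lambda)}$ to a purely algebraic statement, which works precisely because the weight spaces $\widehat{M(\lambda)}_\mu = M(\lambda)_\mu$ are already finite-dimensional over $K$, collapsing the problem on each weight piece to elementary linear algebra. In this approach the role of the ``not $1+\pi U(\mathfrak{g})$-torsion'' hypothesis is only to ensure $M \neq 0$; the stronger form of the condition presumably enters more essentially in the downstream classification of primitive ideals.
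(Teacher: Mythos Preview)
Your proof is correct but follows a genuinely different route from the paper. The paper argues ring-theoretically: it localises $U(\mathfrak{g})_n$ at $S = 1 + \pi U(\mathfrak{g})_n$ (via the Artin--Rees property of $\pi U(\mathfrak{g})_n$), shows the $\pi$-adic completion of this localisation is a Zariski ring and hence faithfully flat over $U(\mathfrak{g})_{n_S}$, identifies that completion with $\widehat{U(\mathfrak{g})_n}$, and then checks by a highest-weight-vector computation that each composition factor $L(\mu)$ of $M$ is not $S$-torsion. You instead bypass all of this by reducing to the key lemma $\widehat{P}\cap M(\lambda)=P$ and proving it directly from the equality $\widehat{M(\lambda)}_\mu = M(\lambda)_\mu$ of (finite-dimensional) weight spaces together with continuity of the coordinate projection $\pi_\mu$. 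In effect you have inverted the paper's logic: the paper uses this proposition as the main input to the surjectivity half of Theorem~\ref{d3affinoidclassicvermacorrespondenceprop}, whereas your key lemma \emph{is} that surjectivity, established first and then fed back. Your approach is more elementary and, as you note, uses the torsion hypothesis only to force $M\neq 0$; the paper's localisation/faithful-flatness argument is heavier but is not tied to the weight-space structure and would transport to modules without such a decomposition.
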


To prove this proposition we will need a few additional results. We say that a right ideal $I$ in a right Noetherian ring $A$ has the Artin-Rees property if for any right ideal $J$, there exists $n \in \mathbb{N}^*$ such that $J \cap I^n \subset JI.$

\begin{proposition}\cite[Proposition 4.2.9]{ConRob}
\label{d3ArtinReesConRob}

Let $A$ be a right Noetherian ring and $I$ an ideal with the right Artin-Rees property. Then:

\begin{enumerate}
\item{$1-I$ is a right Ore set, so a right denominator set.}
\item{Writing $S$ for $1-I$, we have $I_S \subset J(A_S)$, where $J(\bullet)$ denotes the Jacobson radical of a ring and $I_S$ and $A_S$ denote the sets $S^{-1}I$ and $S^{-1}A$, respectively.}
\end{enumerate}

\end{proposition}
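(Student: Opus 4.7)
The plan is to prove both parts by combining the right Artin--Rees hypothesis with the structure of $A/sA$ for $s \in 1-I$. The key identity I would use is that for $s = 1-i \in 1-I$ and any $a \in A$, the right $A$-module $Q := A/sA$ satisfies $\bar{1} = \bar{i}$ (since $\bar{s} = 0$), and hence $\bar{a} = \overline{i^n a}$ for every $n \geq 1$. In particular $a - i^n a \in sA$ and $i^n a \in I^n$, which gives a concrete witness to $\bar{a}$ sitting in arbitrarily deep powers of $I$ inside $Q$.

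For the right Ore condition in part (1), given $a \in A$ and $s = 1-i \in 1-I$, I would apply the right Artin--Rees property to the right ideal $J := aA + sA$, producing $n \geq 1$ with $J \cap I^n \subseteq JI = aI + sI$ (using that $I$ is two-sided). The element $i^n a$ lies in $J \cap I^n$, so one can write $i^n a = aj_1 + sj_2$ with $j_1, j_2 \in I$. Substituting into $a - i^n a \in sA$ yields $a(1-j_1) \in sA$, and setting $s' := 1-j_1 \in 1-I$ gives the desired $a' \in A$ with $as' = sa'$. Multiplicative closure of $1-I$ is immediate from $(1-i)(1-i') = 1 - (i + i' - ii')$ and $i + i' - ii' \in I$. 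For reversibility, if $sa = 0$ then $a = ia$ and inductively $a = i^n a \in I^n$ for all $n$, and a symmetric argument using the right Artin--Rees property applied to a right ideal built from the right annihilator of $a$ produces $s' \in 1-I$ with $as' = 0$; combined with the Ore condition, this makes $1-I$ a right denominator set.

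For part (2), once $S := 1-I$ is a right denominator set, every element of $I_S = IS^{-1}$ has the form $is^{-1}$ with $i \in I$ and $s = 1-j \in S$, and
\[
1 - is^{-1} = (s - i)\, s^{-1} = \bigl(1-(i+j)\bigr)\, s^{-1}
\]
is a product of two units in $A_S$ (since $1-(i+j) \in S$ becomes a unit after localization). As $I$ is two-sided in $A$, $I_S$ is a two-sided ideal of $A_S$, so for any $x \in I_S$ and $y,z \in A_S$ one has $yxz \in I_S$ and hence $1 - yxz$ is a unit by the same calculation; this gives $I_S \subseteq J(A_S)$. The main technical obstacle is the reversibility clause: while the Ore condition drops out of a single application of right Artin--Rees once one notices the iterative identity $\bar{a} = \overline{i^n a}$ in $A/sA$, passing from a left annihilation $sa = 0$ to a right annihilation $as' = 0$ is not symmetric on the nose and requires a more delicate Krull-intersection argument exploiting the right Noetherian hypothesis on $A$.
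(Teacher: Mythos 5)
The paper itself offers no proof of this proposition --- it is quoted from McConnell--Robson --- so there is no internal argument to compare against; your proposal has to stand on its own. Most of it does. The right Ore computation is correct and is essentially the standard one: since $i$ and $s=1-i$ commute, $a-i^na=\sum_{k<n} s\,i^ka\in sA$, and applying right Artin--Rees to $J=aA+sA$ with $i^na\in J\cap I^n\subseteq JI\subseteq aI+sI$ gives $a(1-j_1)=s(j_2+b)$ exactly as you write. Part (2) is also sound: every element of $I_S$ is a single fraction $is^{-1}$, $1-is^{-1}=(1-(i+j))s^{-1}$ is a unit, and an ideal all of whose elements are quasi-regular lies in the Jacobson radical. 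One small assertion you should justify is that $I_S$ is two-sided in $A_S$: the point is that for $t=1-j\in S$, $tc\in I$ forces $c\in I$ because $t\equiv 1\pmod I$, and this is what lets you rewrite $t^{-1}i$ as $i'\,t'^{-1}$ with $i'\in I$ when checking closure under left multiplication.

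The one genuine gap is right reversibility, which you flag as ``the main technical obstacle'' and then only gesture at; moreover the gesture points at the wrong object, since Artin--Rees applied to a right ideal built from the right annihilator of $a$ does not by itself produce $as'=0$. The correct short argument is: if $sa=0$ with $s=1-i$, then $a=ia$, hence $a=i^na\in I^n$ for all $n$; now apply the right Artin--Rees property to the right ideal $aA$ to get $n$ with $aA\cap I^n\subseteq aI$, so $a\in aI$, i.e.\ $a=aj$ for some $j\in I$, and $a(1-j)=0$ with $1-j\in S$. Alternatively, use the standard fact that a right Ore set in a right Noetherian ring is automatically right reversible: choose $n$ with $\mathrm{ann}_r(s^n)=\mathrm{ann}_r(s^{n+1})$, use the Ore condition to write $at=s^nb$ with $t\in S$, and deduce $s^{n+1}b=s(at)=0$, hence $s^nb=0$ and $at=0$. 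Either route closes the step in a few lines; as written, your proof of ``right denominator set'' is incomplete.
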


\begin{corollary}
\label{d3LocalizationofU(g)at1+Icorrolary}
Consider  the ideal $I=\pi U(\mathfrak{g})_n$ of $U(\mathfrak{g})_n$. Then  $U(\mathfrak{g})_{n_{1+I}}$ exists and is non-zero; furthermore $\pi U(\mathfrak{g})_{n_{1+I}} \subset J(U(\mathfrak{g})_{n_{1+I}})$. 

\end{corollary}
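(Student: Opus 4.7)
The plan is to deduce the corollary directly from Proposition \ref{d3ArtinReesConRob} applied to the ring $A = U(\mathfrak{g})_n$ and the two-sided ideal $I = \pi U(\mathfrak{g})_n$. To invoke that proposition I need three things: that $U(\mathfrak{g})_n$ is right Noetherian, that $I$ has the right Artin--Rees property, and that the resulting localisation is non-zero (i.e.\ that $0 \notin 1 + I$). Granting these, the proposition immediately yields that $1+I$ is an Ore set and that $I_S \subseteq J(A_S)$, which upon unwinding notation is the claim $\pi U(\mathfrak{g})_{n_{1+I}} \subseteq J(U(\mathfrak{g})_{n_{1+I}})$.

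For Noetherianity I would appeal to Lemma \ref{associatedgradedofdeformations}, which provides a natural isomorphism $\gr U(\mathfrak{g})_n \cong \gr U(\mathfrak{g}) \cong \Sym(\mathfrak{g})$. Since $\Sym(\mathfrak{g})$ is a polynomial ring over the Noetherian ring $R$, it is Noetherian, and standard filtered--graded lifting (with the PBW filtration on $U(\mathfrak{g})_n$) gives right Noetherianity of $U(\mathfrak{g})_n$.

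The Artin--Rees property will be the technically delicate step, and is where I expect the main obstacle. The crucial point is that $I = \pi U(\mathfrak{g})_n$ is generated by the central element $\pi \in R$; hence $\pi$ is in particular a normalising element of $U(\mathfrak{g})_n$. The Artin--Rees property for ideals generated by normalising (equivalently, polycentral) elements in a right Noetherian ring is classical; one shows that the Rees ring $\bigoplus_{k\ge 0} I^k t^k$ is itself right Noetherian, whence for any finitely generated right ideal $J$ the descending chain $J \cap I^k$ of sub-$R[\pi t]$-modules stabilises modulo $JI$, giving the desired containment $J \cap I^k \subseteq JI$ for $k \gg 0$.

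Finally, for non-vanishing of the localisation I would argue that $U(\mathfrak{g})_n$ is $\pi$-torsion free: if $\pi a = 0$ for some nonzero $a \in F_i U(\mathfrak{g})_n \setminus F_{i-1} U(\mathfrak{g})_n$, then the nonzero symbol $\bar a \in \gr_i U(\mathfrak{g})_n$ would satisfy $\pi \bar a = 0$, contradicting the flatness of $\gr U(\mathfrak{g})_n \cong \Sym(\mathfrak{g})$ over $R$. In particular $U(\mathfrak{g})_n / \pi U(\mathfrak{g})_n$ is non-zero, so $-1 \notin \pi U(\mathfrak{g})_n$, i.e.\ $0 \notin 1 + I$, and the Ore localisation at $1+I$ is therefore a non-zero ring. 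With all three hypotheses verified, the two conclusions of Proposition \ref{d3ArtinReesConRob} translate verbatim into the statement of the corollary.
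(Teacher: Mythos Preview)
Your proposal is correct and follows essentially the same approach as the paper: both arguments rest on the observation that $\pi$ is central (hence normal) in $U(\mathfrak{g})_n$, so the ideal $\pi U(\mathfrak{g})_n$ has the Artin--Rees property, after which Proposition~\ref{d3ArtinReesConRob} applies directly. The paper simply cites \cite[Proposition~4.2.6]{ConRob} for the Artin--Rees property of ideals generated by normal elements and leaves Noetherianity and non-triviality implicit, whereas you spell out these points; but the substance of the argument is the same.
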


\begin{proof}
By \cite[Proposition 4.2.6]{ConRob}, any ideal generated by normal elements in a right Noetherian ring has the Artin-Rees property, so in particular we get that the ideal $\pi U(\mathfrak{g})_n$ in $U(\mathfrak{g})_n$ has the Artin-Rees property.  The claim follows from Proposition \ref{d3ArtinReesConRob}.
\end{proof}

\begin{remark}
Notice that as the ring $U(\mathfrak{g})_n$ is both left and right Noetherian we get the same results for the left localization. Thus, $_{1+\pi U(\mathfrak{g})_n}U(\mathfrak{g})_n=U(\mathfrak{g})_{n_{1+ \pi U(\mathfrak{g})}}$ by \cite[Corollary 2.1.4]{ConRob}.

\end{remark}

From now on until the end of the section,  $S$ will denote the set $1+\pi U(\mathfrak{g})_n$. The ring $U(\mathfrak{g})_{n_S}$ has a $\pi$-adically negative filtration $F_{\bullet}$ given by
       
              $$F_{i} U(\mathfrak{g})_{n_S} = \pi^{-i} U(\mathfrak{g})_{n_S}, \text{ for } i \leq 0.$$

Denote $\widehat{U(\mathfrak{g})_{n_S}}$ the $\pi$-adically completion of $U(\mathfrak{g})_{n_S}$, i.e. the completion induced by the filtration $F_{\bullet}$.

\begin{proposition}
\label{d3completionoflocalisationfaitfullyflatproposition}
$\widehat{U(\mathfrak{g})_{n_S}}$ is a faithfully flat right $U(\mathfrak{g})_{n_S}$-module.

\end{proposition}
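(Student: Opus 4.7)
The plan is to derive this from the general theory of Zariskian filtered rings developed by Li and van Oystaeyen \cite{LVO}. Write $A := U(\mathfrak{g})_{n_S}$, equipped with the negative $\pi$-adic filtration $F_\bullet$ from the statement, so that $F_0 A = A$ and $F_{-i} A = \pi^i A$ for $i \geq 0$.

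First, I would verify the three properties needed to invoke the Zariskian completion machinery. Noetherianity of $A$ is immediate from the fact that $U(\mathfrak{g})_n$ is Noetherian (its associated graded is a polynomial ring over the Noetherian ring $R$) combined with the fact that Ore localisation preserves Noetherianity. The containment $\pi A \subset J(A)$ is precisely the content of Corollary \ref{d3LocalizationofU(g)at1+Icorrolary}. Finally, because $\pi$ lies in $R$ and is therefore central in $A$, the two-sided ideal $\pi A$ is generated by a single central regular element, and so automatically has the Artin-Rees property on both sides by \cite[Proposition 4.2.6]{ConRob}.

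These three properties together say that $(A, F_\bullet)$ is a Zariskian filtered ring. The next step is then to invoke the standard theorem of Li and van Oystaeyen \cite[Ch.~II]{LVO}, which asserts that the completion of a Zariskian filtered ring is faithfully flat over the ring itself, both as a left and as a right module. Applying this to $A$ yields exactly the desired statement, namely that $\widehat{U(\mathfrak{g})_{n_S}}$ is faithfully flat over $U(\mathfrak{g})_{n_S}$ on the right (and by the symmetric argument on the left).

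The main obstacle is essentially bookkeeping: making sure the Zariskian hypotheses in the form used in \cite{LVO} match our negative $\pi$-adic filtration. Once the Jacobson radical containment is in hand (already provided by Corollary \ref{d3LocalizationofU(g)at1+Icorrolary}) and the Artin-Rees condition is dispatched via centrality of $\pi$, no further genuine computation is needed; the conclusion is a direct application of the general faithful flatness theorem for Zariskian rings.
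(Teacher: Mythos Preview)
Your proposal is correct and follows essentially the same route as the paper: verify that $U(\mathfrak{g})_{n_S}$ with its $\pi$-adic filtration is Zariskian, then invoke \cite[Theorem II.2.2]{LVO}. The only cosmetic difference is that the paper checks Noetherianity of the Rees ring directly (via \cite[Proposition 3.12]{Nas}, using that $\pi$ is central), whereas you reach the Zariskian condition through the Artin--Rees property; both arguments rest on the same three ingredients (Noetherianity of the localisation, centrality of $\pi$, and the Jacobson radical containment from Corollary~\ref{d3LocalizationofU(g)at1+Icorrolary}).
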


\begin{proof}

Since $U(\mathfrak{g})_n$ is left and right Noetherian, and $S$ is an Ore set, we get that the ring $U(\mathfrak{g})_{n_S}$ is also left and right Noetherian. Furthermore, we have that the ideal inducing the $\pi$-adic filtration on $U(\mathfrak{g})_S$ is generated by a central element, so by \cite[Proposition 3.12]{Nas}, the Rees ring $\widetilde{U(\mathfrak{g})_{n_S}}$ is also left and right Noetherian.  By Corollary \ref{d3LocalizationofU(g)at1+Icorrolary}, we have $F_{-1}U(\mathfrak{g})_{n_S}\subset J(F_0 U(\mathfrak{g})_{n_S})$, therefore by combining the two statements, we get that $U(\mathfrak{g})_S$ is a left \emph{Zariski} ring. The claim follows from \cite[Theorem II.2.2]{LVO}.
\end{proof}

\begin{corollary}
\label{d3tensorproductU(g)isnonzerocorollary}
Let $M$ be a non-zero  $U(\mathfrak{g})_n$-module that is not $S$-torsion. Then 
          
                $$\widehat{U(\mathfrak{g})_n} \uset{U(\mathfrak{g})_n} M \neq 0.$$

\end{corollary}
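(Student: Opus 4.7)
The plan is to transfer the nonvanishing statement from the completed localisation $\widehat{U(\mathfrak{g})_{n_S}}$ back to the completion $\widehat{U(\mathfrak{g})_n}$ by using the faithful flatness established in Proposition \ref{d3completionoflocalisationfaitfullyflatproposition} together with the associativity of tensor products along the chain of ring maps
$$U(\mathfrak{g})_n \longrightarrow U(\mathfrak{g})_{n_S} \longrightarrow \widehat{U(\mathfrak{g})_{n_S}}, \qquad U(\mathfrak{g})_n \longrightarrow \widehat{U(\mathfrak{g})_n} \longrightarrow \widehat{U(\mathfrak{g})_{n_S}}.$$
The last map on the right exists because the inclusion $U(\mathfrak{g})_n \hookrightarrow U(\mathfrak{g})_{n_S}$ is continuous for the $\pi$-adic filtrations ($\pi$ is a central regular element in both rings), so it extends to a canonical ring homomorphism between completions.

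First I would unwind the hypothesis: since $M$ is not $S$-torsion, there is an element $m \in M$ with $sm \neq 0$ for every $s \in S$. Because $S$ is an Ore set (Proposition \ref{d3ArtinReesConRob}), this translates to $m/1 \neq 0$ in the Ore localisation $M_S := U(\mathfrak{g})_{n_S} \otimes_{U(\mathfrak{g})_n} M$, so $M_S \neq 0$. Applying the faithful flatness of $\widehat{U(\mathfrak{g})_{n_S}}$ over $U(\mathfrak{g})_{n_S}$ yields
$$\widehat{U(\mathfrak{g})_{n_S}} \otimes_{U(\mathfrak{g})_{n_S}} M_S \neq 0.$$

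Second I would identify this with a tensor product over $\widehat{U(\mathfrak{g})_n}$. By associativity of tensor products, using $M_S = U(\mathfrak{g})_{n_S} \otimes_{U(\mathfrak{g})_n} M$ and then factoring through $\widehat{U(\mathfrak{g})_n}$,
$$\widehat{U(\mathfrak{g})_{n_S}} \otimes_{U(\mathfrak{g})_{n_S}} M_S \;\cong\; \widehat{U(\mathfrak{g})_{n_S}} \otimes_{U(\mathfrak{g})_n} M \;\cong\; \widehat{U(\mathfrak{g})_{n_S}} \otimes_{\widehat{U(\mathfrak{g})_n}} \bigl(\widehat{U(\mathfrak{g})_n} \otimes_{U(\mathfrak{g})_n} M\bigr).$$
If the rightmost $\widehat{U(\mathfrak{g})_n}$-module $\widehat{U(\mathfrak{g})_n} \otimes_{U(\mathfrak{g})_n} M$ were zero, then the outer tensor product would also vanish, contradicting the previous step. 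Hence $\widehat{U(\mathfrak{g})_n} \otimes_{U(\mathfrak{g})_n} M \neq 0$, as required.

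The main subtlety is purely formal: one needs to check that the factorisation of the tensor products through $\widehat{U(\mathfrak{g})_n}$ and through $\widehat{U(\mathfrak{g})_{n_S}}$ is legitimate, i.e.\ that the two ring maps $\widehat{U(\mathfrak{g})_n} \to \widehat{U(\mathfrak{g})_{n_S}}$ and $\widehat{U(\mathfrak{g})_{n_S}} \to \widehat{U(\mathfrak{g})_{n_S}}$ (the latter coming from $U(\mathfrak{g})_{n_S}$) fit into a commutative square with $U(\mathfrak{g})_n \to U(\mathfrak{g})_{n_S}$. This is automatic from the universal property of $\pi$-adic completion applied to central regular $\pi$, so there is no real obstacle beyond bookkeeping; the entire argument is essentially a one-line invocation of faithful flatness combined with the tensor-product change-of-rings formula.
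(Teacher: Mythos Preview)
Your proof is correct and follows essentially the same approach as the paper: both use that $M_S \neq 0$ and then invoke the faithful flatness of $\widehat{U(\mathfrak{g})_{n_S}}$ over $U(\mathfrak{g})_{n_S}$. The only difference is in the final step: the paper observes directly that $\widehat{U(\mathfrak{g})_{n_S}} \cong \widehat{U(\mathfrak{g})_n}$ (since every element of $S = 1 + \pi U(\mathfrak{g})_n$ is already a unit in $\widehat{U(\mathfrak{g})_n}$), which makes your change-of-rings factorisation unnecessary, though your version is of course equally valid.
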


\begin{proof}
  Since $M$ is not $S$-torsion, we have by localising
          $$0 \neq S^{-1}M= S^{-1}U(\mathfrak{g})_n \uset{U(\mathfrak{g})_n} M=U(\mathfrak{g})_{n_S} \uset{U(\mathfrak{g})_n} M.$$

By Proposition \ref{d3completionoflocalisationfaitfullyflatproposition}, $\widehat{U(\mathfrak{g})_{n_S}}$ is faithfully flat over $U(\mathfrak{g})_{n_S}$, so
            $$\widehat{U(\mathfrak{g})_{n_S}} \uset{U(\mathfrak{g})_{n_S}} S^{-1}M \neq 0.$$          

Recall that all the elements of $S$ are of the form $1+\pi x$, with $x \in U(\mathfrak{g})_n$, but when we $\pi$-adically complete $U(\mathfrak{g})_{n_S}$ everything in $S$ becomes a unit, so $\widehat{U(\mathfrak{g})_{n_S}} \cong \widehat{U(\mathfrak{g})_n}$. We then get:

\begin{equation}
\begin{split}
0 \neq \widehat{U(\mathfrak{g})_{n_S}} \uset{U(\mathfrak{g})_{n_S}} S^{-1}M &\cong \widehat{U(\mathfrak{g})_n} \uset{U(\mathfrak{g})_{n_S}} S^{-1}M \\
          &\cong \widehat{U(\mathfrak{g})_n} \uset{U(\mathfrak{g})_{n_S}} U(\mathfrak{g})_{n_S} \uset{U(\mathfrak{g})_n}M \\
          &\cong \widehat{U(\mathfrak{g})_n} \uset{U(\mathfrak{g})_n}M. \qedhere
\end{split}
\end{equation}
\end{proof}

We now apply the results for objects in category $\mathcal{O}$ for the enveloping algebra $U(\mathfrak{g}_K).$ Let $\lambda:\pi^n\mathfrak{h} \to R $ be an $R$-linear map and  consider the simple $U(\mathfrak{g}_K)$ module $L(\lambda)$-the unique simple quotient of $M(\lambda)$- and view it as a $U(\mathfrak{g})$-module.

\begin{lemma}
\label{d3Llambdaistorsionfreelemma}

Let the notations be as above. The module $L(\lambda)$ is not $1+\pi U(\mathfrak{g})$-torsion.

\end{lemma}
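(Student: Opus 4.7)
I would show that the image $\bar v_\lambda$ of the highest weight generator $v_\lambda \in M(\lambda)$ in the simple quotient $L(\lambda)$ is not $S$-torsion, taking $S = 1 + \pi U(\mathfrak{g})_n$ as fixed earlier in this section. Since the unique maximal proper submodule of $M(\lambda)$ cannot contain $v_\lambda$, the vector $\bar v_\lambda$ is nonzero and the $\lambda$-weight space $L(\lambda)_\lambda = K \bar v_\lambda$ remains one-dimensional; in particular, identifying the $\lambda$-weight component of any vector in $L(\lambda)$ with an element of $K$ via $\bar v_\lambda$ is unambiguous.

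\textbf{Key step.} For arbitrary $y \in U(\mathfrak{g})_n = U(\pi^n \mathfrak{g})$, I would examine the weight-$\lambda$ component of $y \bar v_\lambda$. Expanding $y$ in the PBW basis of $U(\pi^n\mathfrak{g})$ over $R$ as an $R$-linear combination of ordered monomials $\prod_{\alpha \in \mathbf{\Phi}^+} (\pi^n f_\alpha)^{a_\alpha} \prod_i (\pi^n h_i)^{b_i} \prod_{\alpha \in \mathbf{\Phi}^+} (\pi^n e_\alpha)^{c_\alpha}$, only the weight-$0$ part of $y$ (under the adjoint $\mathfrak{h}$-action) can contribute to the $\lambda$-weight component of $y \bar v_\lambda$. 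Since $e_\alpha \bar v_\lambda = 0$ and any monomial with some $c_\alpha \geq 1$ would produce a vector of weight strictly greater than $\lambda$ in $L(\lambda)$ (hence vanishing), only monomials with all $c_\alpha = 0$ survive; the weight-$0$ condition then forces all $a_\alpha = 0$ as well. Hence only monomials of the form $\prod_i (\pi^n h_i)^{b_i} \in U(\pi^n \mathfrak{h})$ contribute, and each such monomial acts on $\bar v_\lambda$ by the scalar $\prod_i \lambda(\pi^n h_i)^{b_i}$, which lies in $R$ by the hypothesis $\lambda : \pi^n \mathfrak{h} \to R$.

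\textbf{Conclusion.} The weight-$\lambda$ component of $y \bar v_\lambda$ is therefore $r \bar v_\lambda$ for some $r \in R$, and the weight-$\lambda$ component of $(1+\pi y)\bar v_\lambda$ equals $(1 + \pi r)\bar v_\lambda$. Since $1 + \pi r \in 1 + \pi R \subseteq R^\times$ is a unit in $K$, this component is nonzero, so $(1+\pi y)\bar v_\lambda \neq 0$ for every $y \in U(\mathfrak{g})_n$, giving the claim. The only subtle point, which is conceptual rather than computational, is that the $\pi^n$-rescaling built into $U(\pi^n \mathfrak{g})$ exactly cancels the factor of $\pi^{-n}$ that appears in the $K$-linear extension $\tilde\lambda : \mathfrak{h}_K \to K$ of $\lambda$, keeping the Cartan scalar $r$ inside $R$; this integrality is precisely what prevents $1 + \pi r$ from vanishing and is the reason the argument depends on the $n$-deformation rather than all of $U(\mathfrak{g}_K)$.
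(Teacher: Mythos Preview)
Your approach is essentially the same as the paper's: both take the image $\bar v_\lambda$ of the highest weight vector in $L(\lambda)$, project $(1+\pi z)\bar v_\lambda$ onto the $\lambda$-weight space via the triangular/PBW decomposition, and observe that the Cartan contribution is $(1+\pi r)\bar v_\lambda$ with $r\in R$, hence a unit. The paper phrases this with the decomposition $U(\mathfrak g)=(\mathfrak n^- U(\mathfrak g)+U(\mathfrak g)\mathfrak n^+)\oplus U(\mathfrak h)$ rather than the PBW basis, but the content is identical.

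One discrepancy worth flagging: the lemma is stated for the set $1+\pi\,U(\mathfrak g)$ (the full integral enveloping algebra), whereas you fix $S=1+\pi\,U(\mathfrak g)_n$ and your integrality argument for $r$ genuinely uses the rescaling in $U(\pi^n\mathfrak g)$. What you have written is a correct proof of the subsequent Corollary (the $U(\mathfrak g)_n$ version), which is exactly what the rest of the paper needs. To cover the lemma as stated you would need $\lambda(y)\in R$ for $y\in U(\mathfrak h)$ over $R$; the paper's own proof asserts $\lambda:\mathfrak h\to R$ at this point, which is a stronger condition than the announced hypothesis $\lambda:\pi^n\mathfrak h\to R$, so your restriction to $U(\mathfrak g)_n$ is in fact the cleaner way to make the integrality step rigorous. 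Either adjust your opening line to match the lemma's $S=1+\pi\,U(\mathfrak g)$ and add the hypothesis $\lambda(\mathfrak h)\subset R$, or simply note that you are proving the statement for $U(\mathfrak g)_n$, which suffices for all downstream applications.
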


\begin{proof}
The module $L(\lambda)$ is cyclic being generated by $v_\lambda+N(\lambda)$, where $N(\lambda)$ is the unique maximal submodule  of $M(\lambda)$. It is enough to prove that $v_\lambda+N(\lambda)$ is $1+ \pi U(\mathfrak{g})$ torsion-free.

Consider the Cartan Lie subalgebra $\mathfrak{h}$. We extend the character $\lambda:\mathfrak{h} \to R$  to an $R$ algebra homomorphism  $\lambda:U(\mathfrak{h}) \to R$. We use the decomposition of $U(\mathfrak{g})$  given by $U(\mathfrak{g})=(\mathfrak{n}^-U(\mathfrak{g})+ U(\mathfrak{g})\mathfrak{n}^+) \oplus U(\mathfrak{h})$. Notice that if $x \in U(\mathfrak{g}) \mathfrak{n}^+$, then $x v_\lambda=0$. Furthermore, if $x \in \mathfrak{n}^{-} U(\mathfrak{g})$, then

\begin{equation}
\label{d3Llamdaistorsionfreeeq1}        
         x v_\lambda \in \mathfrak{n}^{-}_KU(\mathfrak{n}^-_{K})v_\lambda.  
\end{equation}

In fact, one may prove that $x v_\lambda$ is in $\mathfrak{n^-}U(\mathfrak{n^-})v_\lambda$, but that requires a messy computation and we do not need this in our argument. Next, for $y \in U(\mathfrak{h})$, we have

\begin{equation}
\label{d3Llambdaistorsionfreeeq2}
 y v_\lambda= \lambda(y) v_\lambda, \text{ where } \lambda(y) \in R.
\end{equation}

Now, let $U(\mathfrak{g}) \ni z=x+y$ be such that $x \in  \mathfrak{n}^-U(\mathfrak{g})+ U(\mathfrak{g})\mathfrak{n}^+$ and $y \in U(\mathfrak{h})$. By equation \eqref{d3Llamdaistorsionfreeeq1} there exists $s \in \mathfrak{n}_K^-U(\mathfrak{n}_K^-)$ such that  $xv_\lambda=s v_\lambda$ and by equation \eqref{d3Llambdaistorsionfreeeq2}, $y v_{\lambda}=\lambda(y) v_{\lambda}$. Therefore, we get

$$(1+\pi z) \cdot (v_\lambda+N(\lambda))= (1+\pi s + \pi \lambda(y) )v_\lambda +N(\lambda).$$

Proving that $(1+\pi z) v_{\lambda}+N(\lambda) \neq 0$ is equivalent to $(1+\pi z) v_\lambda \notin N(\lambda)$. Assume for a contradiction that $(1+\pi z) v_\lambda \in N(\lambda)$. Then $(1+ \pi \lambda(y))v_\lambda+ \pi s v_\lambda \in N(\lambda)$. View $ (1+ \pi \lambda(y))v_\lambda+ \pi s v_\lambda$ as an element in $M(\lambda)$. Consider the decomposition of $M(\lambda)$ given by

            $$M(\lambda)=M(\lambda)_{\lambda} \oplus M(\lambda)_{<\lambda}=K v_\lambda \oplus M(\lambda)_{<\lambda},$$

where $M(\lambda)_{<\lambda}$ denotes the $K$-span of all $v_\mu \in M(\lambda)_\mu$ with $\mu < \lambda$. Notice that since $s \in \mathfrak{n}_K^-U(\mathfrak{n}_K^-)$, we have $\pi s v_\lambda \in M(\lambda)_{<\lambda}$; furthermore, $1+\pi \lambda(y) v_\lambda \in M(\lambda)_{\lambda}$. Now since the module $N(\lambda)$ is itself $\mathfrak{h}_K$-semisimple, we have

        $$(1+\pi \lambda(y)) v_\lambda \in N(\lambda).$$          
          
By construction, $\lambda(y) \in R$, so $||\pi \lambda(y)||<1$, thus $1+ \pi \lambda(y)$ is a unit in $R$. Therefore, multiplying by its inverse we conclude that $v_\lambda \in N(\lambda)$, so $N(\lambda)=M(\lambda)$ which is the desired contradiction. We conclude that $L(\lambda)$ is indeed  not $1+\pi U(\mathfrak{g})$-torsion.       
\end{proof}

As an easy corollary of the Lemma above, using the fact $1+ \pi U(\mathfrak{g})_n$ is a subset of $1+\pi U(\mathfrak{g})$, we obtain:

\begin{corollary}
\label{d3LlambdaistorsionfreeCorollary}
Let the notations as in the previous lemma. View $L(\lambda)$ as $U(\mathfrak{g})_n$-module. Then $L(\lambda)$ is not $1+ \pi U(\mathfrak{g})_n$-torsion.

\end{corollary}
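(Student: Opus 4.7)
The plan is to observe that this is essentially a containment argument, as the parenthetical hint in the excerpt suggests. Recall the definition: a module $M$ is not $S$-torsion if there exists $m \in M$ such that $sm \neq 0$ for every $s \in S$. Thus if $S' \subseteq S$ and $M$ is not $S$-torsion witnessed by some $m$, then the same $m$ witnesses that $M$ is not $S'$-torsion.

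First I would note that $U(\mathfrak{g})_n$ is by construction an $R$-subalgebra of $U(\mathfrak{g})$ (it is the deformation $\sum_{i \geq 0} \pi^{ni} F_i U(\mathfrak{g})$ sitting inside $U(\mathfrak{g})$), so $\pi U(\mathfrak{g})_n \subseteq \pi U(\mathfrak{g})$ and consequently $1 + \pi U(\mathfrak{g})_n \subseteq 1 + \pi U(\mathfrak{g})$.

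Next, by Lemma \ref{d3Llambdaistorsionfreelemma} applied to the distinguished vector $\overline{v_\lambda} := v_\lambda + N(\lambda) \in L(\lambda)$, we have $s \cdot \overline{v_\lambda} \neq 0$ for every $s \in 1 + \pi U(\mathfrak{g})$. Restricting this statement to elements $s$ lying in the smaller set $1 + \pi U(\mathfrak{g})_n$ gives $s \cdot \overline{v_\lambda} \neq 0$ for all such $s$, which is exactly the condition that $L(\lambda)$, viewed as a $U(\mathfrak{g})_n$-module via the inclusion $U(\mathfrak{g})_n \hookrightarrow U(\mathfrak{g})$, is not $1 + \pi U(\mathfrak{g})_n$-torsion. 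There is no real obstacle here; the content of the statement lies entirely in the previous lemma, and this corollary is merely a repackaging via a trivial subset inclusion.
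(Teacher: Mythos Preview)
Your proof is correct and follows exactly the paper's own reasoning: the paper simply remarks that the corollary follows from the previous lemma ``using the fact $1+ \pi U(\mathfrak{g})_n$ is a subset of $1+\pi U(\mathfrak{g})$,'' which is precisely the containment argument you spelled out.
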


We may now prove Proposition \ref{d3tensorproductU(g)Kisnon-zero}:

\begin{proof}
Let $M$ be a subquotient of $M(\lambda)$ and view it as a $U(\mathfrak{g})_n$-module.  Any subquotient of the Verma module $M(\lambda)$ has finite length and can be viewed as extension of modules of the form $L(\mu)$. Each $L(\mu)$ is not $1+ \pi U(\mathfrak{g})_n$-torsion by Corollary  \ref{d3LlambdaistorsionfreeCorollary}. As finite extension of modules that are not $1+ \pi U(\mathfrak{g})_n$-torsion  is not $1+ \pi U(\mathfrak{g})_n$-torsion, $M$ is not $1+\pi U(\mathfrak{g})_n$-torsion. Therefore, by Corollary \ref{d3tensorproductU(g)isnonzerocorollary}, we have

        $$\widehat{U(\mathfrak{g})_n} \uset{U(\mathfrak{g})_n} M \neq 0.$$

As this space has no $\pi$-torsion we get
\begin{equation}
\begin{split}
   0 \neq  (\widehat{U(\mathfrak{g})_n} \uset{U(\mathfrak{g})_n} M) \uset{R} K &=(\widehat{U(\mathfrak{g})_n} \uset{R} K) \uset{(U(\mathfrak{g})_n \uset{R} K)} (M \uset{R} K) \\
&=\hugnK \uset{ U(\mathfrak{g}_K)} M.   \qedhere 
\end{split}
\end{equation}
\end{proof}

We are now able to prove the correspondence between the lattice of submodules of $M(\lambda)$ and $\widehat{M(\lambda)}$.

\begin{theorem}
\label{d3affinoidclassicvermacorrespondenceprop}
Let $\lambda$ be a weight in $\pi^n \mathfrak{h}^*$ and extend this to a weight $\lambda \in \mathfrak{h}_K^*$.There is a one to one correspondence between submodules of $\widehat{M(\lambda)}$ and submodules of $M(\lambda)$.

\end{theorem}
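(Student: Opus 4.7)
The plan is to exhibit a two-sided inverse $\mathscr{G}$ for the injective map $\mathscr{F}$ of Lemma \ref{d3submodulecorrespondenceinjective}. The natural candidate is $\mathscr{G}(N') := \overline{N'}$, which by flatness of $\hugnK$ over $U(\mathfrak{g}_K)$ coincides with $\hugnK \uset{U(\mathfrak{g}_K)} N'$ sitting inside $\widehat{M(\lambda)}$. One direction, $\mathscr{G} \circ \mathscr{F} = \operatorname{id}$, will be immediate: every submodule of $\widehat{M(\lambda)}$ is automatically closed by \cite[I.5.5]{LVO} (as already used in the proof of Lemma \ref{d3submodulecorrespondenceinjective}), so Proposition \ref{d3submodulesclos} gives $N = \overline{N \cap M(\lambda)}$ for any such $N$.

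The substantive direction is $\mathscr{F} \circ \mathscr{G} = \operatorname{id}$: I want to show $\overline{N'} \cap M(\lambda) = N'$ for every submodule $N' \subseteq M(\lambda)$. Setting $Q := M(\lambda)/N'$, I will apply the flat base change $\hugnK \uset{U(\mathfrak{g}_K)} -$ to the short exact sequence $0 \to N' \to M(\lambda) \to Q \to 0$ to obtain $0 \to \overline{N'} \to \widehat{M(\lambda)} \to \hugnK \uset{U(\mathfrak{g}_K)} Q \to 0$. A direct diagram chase identifies the kernel of the natural map $\iota \colon Q \to \hugnK \uset{U(\mathfrak{g}_K)} Q$, $q \mapsto 1 \otimes q$, with $K' := (\overline{N'} \cap M(\lambda))/N'$, so the task reduces to showing $K' = 0$.

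Assuming $K' \neq 0$, I will derive a contradiction. Since $K'$ is a non-zero $U(\mathfrak{g}_K)$-subquotient of $M(\lambda)$, Proposition \ref{d3tensorproductU(g)Kisnon-zero} yields $\hugnK \uset{U(\mathfrak{g}_K)} K' \neq 0$. On the other hand, flatness also produces an injection $\hugnK \uset{U(\mathfrak{g}_K)} K' \hookrightarrow \hugnK \uset{U(\mathfrak{g}_K)} Q$ compatible with $\iota$ via the tautological maps $K' \to \hugnK \uset{U(\mathfrak{g}_K)} K'$ and $Q \to \hugnK \uset{U(\mathfrak{g}_K)} Q$. Since $K' = \ker \iota$, the tautological map from $K'$ must land in the kernel of this injection, hence vanishes. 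But its image generates $\hugnK \uset{U(\mathfrak{g}_K)} K'$ as an $\hugnK$-module, forcing $\hugnK \uset{U(\mathfrak{g}_K)} K' = 0$ and giving the desired contradiction.

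The chief obstacle is precisely this last step. A direct weight-by-weight attempt to identify $\overline{N'} \cap M(\lambda)$ with $N'$ quickly becomes circular, since matching the finite-dimensional weight spaces of $\overline{N'}$ and $N'$ is essentially the statement we wish to prove. The trick is to recast the problem as injectivity of the comparison map $Q \to \hugnK \uset{U(\mathfrak{g}_K)} Q$ and then leverage Proposition \ref{d3tensorproductU(g)Kisnon-zero}, whose non-vanishing conclusion---prepared via the torsion-freeness analysis of the simple subquotients $L(\mu)$ of $M(\lambda)$---is exactly what drives the contradiction.
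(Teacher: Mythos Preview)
Your proof is correct and follows essentially the same strategy as the paper: both reduce to showing that for a submodule $N' \subseteq M(\lambda)$ the inclusion $N' \subseteq \overline{N'} \cap M(\lambda)$ is an equality, and both derive a contradiction from Proposition~\ref{d3tensorproductU(g)Kisnon-zero} applied to the quotient, using flatness of $\hugnK$ over $U(\mathfrak{g}_K)$.

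There is one small organizational difference worth noting. The paper, having set $N'' := \overline{N'} \cap M(\lambda)$, first invokes Proposition~\ref{d3submodulesclos} a second time to deduce $\widehat{N''} = \overline{N'} = \widehat{N'}$, then tensors the sequence $0 \to N' \to N'' \to N''/N' \to 0$ and reads off $\hugnK \uset{U(\mathfrak{g}_K)} (N''/N') = 0$ directly. Your argument instead identifies $K' = N''/N'$ with $\ker\bigl(Q \to \hugnK \uset{U(\mathfrak{g}_K)} Q\bigr)$ via the snake lemma and uses the commutative square together with the flat injection $\hugnK \uset{U(\mathfrak{g}_K)} K' \hookrightarrow \hugnK \uset{U(\mathfrak{g}_K)} Q$ to force the tautological map $K' \to \hugnK \uset{U(\mathfrak{g}_K)} K'$ to vanish. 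This bypasses the second appeal to Proposition~\ref{d3submodulesclos} at the cost of a slightly longer diagram chase; either route is perfectly fine.
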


\begin{proof}
 Recall the function $\mathscr{F}$ going from submodules of $\widehat{M (\lambda)}$ to submodules of $M(\lambda)$ sending a submodule $N$ to $N \cap M(\lambda)$. We have already proven in Lemma \ref{d3submodulecorrespondenceinjective} that $\mathscr{F}$ is injective, so we only need prove that $\mathscr{F}$ is surjective.
 
 Let $N$ be a submodule of $M(\lambda)$ and let $\overline{N} =\widehat{N}=\hugnK \uset{U(\mathfrak{g}_K)}N$. Furthermore, let $N'=\widehat{N} \cap M(\lambda)$. We aim to prove that $N=N'= \mathscr{F}(\widehat{N})$. By construction we have that $N \subset N'$ and by Proposition \ref{d3submodulesclos}, $\widehat{N}=\hugnK \uset{U(\mathfrak{g}_K)}N'$. Assume for a contradiction that $N$ is strictly included in $N'$. Consider the short exact sequence

           $$0 \to N \to N' \to N'/N \to 0.$$

As $\hugnK$ is flat over $U(\mathfrak{g}_K)$ we get a short exact sequence

          $$0 \to \hugnK \uset{U(\mathfrak{g}_K)} N \to \hugnK \uset{U(\mathfrak{g}_K)} N' \to \hugnK \uset{U(\mathfrak{g}_K)} N/N' \to 0, \text{ so }$$

           $$ 0 \to \widehat{N} \to \widehat{N} \to \hugnK \uset{U(\mathfrak{g}_K)} N/N' \to 0$$

is  a short exact sequence, which implies that $\hugnK \uset{U(\mathfrak{g}_K)} N/N'=0$. Finally, by Proposition \ref{d3tensorproductU(g)Kisnon-zero}, we have $\hugnK  \uset{U(\mathfrak{g}_K)} N/N' \neq 0$, which is the desired contradiction.           
\end{proof}

One might also try to prove the theorem above using \cite[Korollar 1.3.12]{FeLa}; we were not aware of the existence of this paper at the time of the proof.
 
Using the theorem above, we obtain immediately:

\begin{proposition}
\label{d3AffinoidVermafinitelengththeorem}
Let $\lambda: \pi^n \mathfrak{h} \to R$ be an $R$-linear map. The affinoid Verma module $\widehat{M(\lambda)}$ has finite length equal to the length of classical Verma module $M(\lambda)$.
\end{proposition}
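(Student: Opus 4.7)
My plan is to deduce the proposition directly from Theorem \ref{d3affinoidclassicvermacorrespondenceprop} by upgrading the submodule bijection to a lattice isomorphism, and then invoking the classical fact that $M(\lambda)$ has finite length.

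The first step is to verify that the bijection $\mathscr{F}: N \mapsto N \cap M(\lambda)$ preserves inclusions in both directions. The forward direction is immediate. For the inverse direction, given submodules $N_1 \subseteq N_2$ of $M(\lambda)$, the proof of Theorem \ref{d3affinoidclassicvermacorrespondenceprop} identifies the corresponding submodules of $\widehat{M(\lambda)}$ as $\widehat{N_i} = \hugnK \uset{U(\fr{g}_K)} N_i$, and the inclusion $\widehat{N_1} \subseteq \widehat{N_2}$ follows from the flatness of $\hugnK$ over $U(\fr{g}_K)$ (which has already been used freely in this section). Thus $\mathscr{F}$ is an order-isomorphism of the two submodule lattices.

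Because an order-isomorphism of submodule lattices preserves covering relations, it carries chains to chains and composition series to composition series. In particular, the length of $\widehat{M(\lambda)}$ equals the length of $M(\lambda)$ (allowing the value $\infty$). Since $M(\lambda)$ has finite length as an object of the BGG category $\mathcal{O}$ by the classical theory (see, e.g., \cite{Hu1}), we conclude that $\widehat{M(\lambda)}$ also has finite length, equal to that of $M(\lambda)$.

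I do not anticipate any real obstacle: all the hard work has already been done in proving the submodule correspondence, and the remaining content is purely formal. The only point worth double-checking is that the inverse correspondence $N \mapsto \widehat{N}$ indeed coincides with the set-theoretic inverse of $\mathscr{F}$, which is exactly what was established in the proof of Theorem \ref{d3affinoidclassicvermacorrespondenceprop} (via $N = \widehat{N} \cap M(\lambda)$).
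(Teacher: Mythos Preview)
Your proposal is correct and follows essentially the same approach as the paper: deduce finite length from the submodule correspondence of Theorem~\ref{d3affinoidclassicvermacorrespondenceprop} together with the classical finite length of $M(\lambda)$. The paper's proof is terser and simply asserts that the one-to-one correspondence forces equal lengths, whereas you make explicit the (necessary) verification that the bijection is order-preserving in both directions; this is a worthwhile clarification, since a mere bijection of submodule sets would not by itself imply equal lengths.
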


\begin{proof}
By Theorem \ref{d3affinoidclassicvermacorrespondenceprop}, there is a one to one correspondence between submodules of $\widehat{M(\lambda)}$ and submodules of $M(\lambda)$. As the module $M(\lambda)$ has finite length by \cite[Theorem 1.11]{Hu1}, it follows that $\widehat{M(\lambda)}$ also has finite length. Furthermore, the correspondence is 1-1, so the lengths must be the same.
\end{proof}

For $\lambda$ as in Theorem \ref{d3affinoidclassicvermacorrespondenceprop} we get the following corollaries:

\begin{corollary}
An affinoid Verma module $\widehat{M(\lambda)}$ is simple if and only if the corresponding classical Verma module $M(\lambda)$ is simple.
\end{corollary}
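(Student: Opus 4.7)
The plan is to deduce this immediately from the preceding results, since the corollary is essentially a length-one instance of Proposition \ref{d3AffinoidVermafinitelengththeorem} (and a two-submodule instance of Theorem \ref{d3affinoidclassicvermacorrespondenceprop}).

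First, I would unwind the definition: a non-zero module is simple precisely when it has exactly two submodules, namely $0$ and the whole module. So the statement reduces to showing that under the bijection $\mathscr{F}$ from Theorem \ref{d3affinoidclassicvermacorrespondenceprop}, these two distinguished submodules are preserved. For the zero submodule this is automatic, since $\mathscr{F}(0) = 0 \cap M(\lambda) = 0$. For the top submodule, one checks $\mathscr{F}(\widehat{M(\lambda)}) = \widehat{M(\lambda)} \cap M(\lambda) = M(\lambda)$, which is immediate because $M(\lambda) \subset \widehat{M(\lambda)}$ by construction (the classical Verma module embeds densely in the affinoid one via $v_{\lambda} \mapsto v_{\lambda}$).

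Combining these observations, $\widehat{M(\lambda)}$ has exactly two submodules if and only if $M(\lambda)$ has exactly two submodules, since $\mathscr{F}$ is an order-preserving bijection. Alternatively, one can cite Proposition \ref{d3AffinoidVermafinitelengththeorem} directly: the lengths of $\widehat{M(\lambda)}$ and $M(\lambda)$ coincide, and simplicity is equivalent to having length $1$ (using also that both modules are non-zero, which holds for $\widehat{M(\lambda)}$ because the generator $v_{\lambda}$ is non-zero by Definition \ref{d3affinoidvermadef}).

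There is no genuine obstacle here — all the real work has been absorbed into Theorem \ref{d3affinoidclassicvermacorrespondenceprop}. The only point requiring even a moment's thought is checking that the bijection $\mathscr{F}$ matches the top element to the top element, but this is transparent from the formula $\mathscr{F}(N) = N \cap M(\lambda)$ together with the inclusion $M(\lambda) \hookrightarrow \widehat{M(\lambda)}$.
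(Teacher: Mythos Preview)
Your proposal is correct and matches the paper's approach: the paper states this corollary without proof, treating it as an immediate consequence of Theorem \ref{d3affinoidclassicvermacorrespondenceprop} (equivalently, of Proposition \ref{d3AffinoidVermafinitelengththeorem}), which is precisely what you do.
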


\begin{corollary}
\label{d3affinoidsimplequotientcorollary}
Any affinoid Verma module has a unique maximal submodule and a unique simple subquotient. 
The unique simple quotient $\widehat{L(\lambda)}$ of $\widehat{M(\lambda)}$ is given by 

                $$\widehat{L(\lambda)}:=\hugnK \uset{U(\mathfrak{g}_K)} L(\lambda),$$
where $L(\lambda)$ denotes the unique simple quotient of $M(\lambda)$.                
\end{corollary}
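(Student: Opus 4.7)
The plan is to transport the classical picture across the bijection of submodules already established in Theorem \ref{d3affinoidclassicvermacorrespondenceprop}. First I would verify that the correspondence $\mathscr{F}: N \mapsto N \cap M(\lambda)$ together with its inverse $N' \mapsto \widehat{N'} := \hugnK \uset{U(\mathfrak{g}_K)} N'$ is order-preserving: inclusions of submodules of $M(\lambda)$ give inclusions of their completions (by right-exactness of tensor product and the fact that completion kills no non-zero element here, cf. Proposition \ref{d3tensorproductU(g)Kisnon-zero}), while intersections with $M(\lambda)$ trivially preserve inclusions. Hence $\mathscr{F}$ is an isomorphism of lattices.

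Next, recall from \cite[Theorem 1.2(c)]{Hu1} that the classical Verma module $M(\lambda)$ has a unique maximal proper submodule $N(\lambda)$ with simple quotient $L(\lambda) = M(\lambda)/N(\lambda)$. Under the lattice isomorphism above, the maximal element of the lattice of proper submodules of $M(\lambda)$ must correspond to the maximal element of the lattice of proper submodules of $\widehat{M(\lambda)}$. Therefore $\widehat{N(\lambda)} = \hugnK \uset{U(\mathfrak{g}_K)} N(\lambda)$ is the unique maximal proper submodule of $\widehat{M(\lambda)}$, and consequently $\widehat{M(\lambda)}/\widehat{N(\lambda)}$ is the unique simple quotient (and, being the quotient by the unique maximal submodule, the unique simple subquotient appearing as a quotient of $\widehat{M(\lambda)}$).

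Finally, to identify this simple quotient with $\hugnK \uset{U(\mathfrak{g}_K)} L(\lambda)$, apply the exact functor $\hugnK \uset{U(\mathfrak{g}_K)} (-)$ (exact because $\hugnK$ is flat over $U(\mathfrak{g}_K)$, as recorded at the beginning of the subsection) to the short exact sequence
\[
0 \to N(\lambda) \to M(\lambda) \to L(\lambda) \to 0.
\]
This yields a short exact sequence
\[
0 \to \widehat{N(\lambda)} \to \widehat{M(\lambda)} \to \hugnK \uset{U(\mathfrak{g}_K)} L(\lambda) \to 0,
\]
identifying the simple quotient with $\hugnK \uset{U(\mathfrak{g}_K)} L(\lambda)$, which we christen $\widehat{L(\lambda)}$.

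The only real content here beyond bookkeeping is the fact that $\mathscr{F}$ is a lattice isomorphism, which is essentially immediate once one knows it is a bijection; I expect no obstacle, since all the heavy lifting has already been carried out in the proof of Theorem \ref{d3affinoidclassicvermacorrespondenceprop}.
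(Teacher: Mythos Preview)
Your proposal is correct and follows essentially the same approach as the paper: both argue that the submodule bijection of Theorem \ref{d3affinoidclassicvermacorrespondenceprop} sends $N(\lambda)$ to the unique maximal submodule $\widehat{N(\lambda)}$, and then tensor the short exact sequence $0 \to N(\lambda) \to M(\lambda) \to L(\lambda) \to 0$ by the flat module $\hugnK$ to identify the simple quotient. Your added remark that $\mathscr{F}$ is order-preserving (hence a lattice isomorphism) makes explicit what the paper leaves implicit in its appeal to Theorem \ref{d3affinoidclassicvermacorrespondenceprop}.
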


\begin{proof}

Let $N(\lambda)$ denote the unique maximal submodule of $M(\lambda)$. Consider the short exact sequence

$$ 0 \to N(\lambda) \to M(\lambda) \to L(\lambda) \to 0.$$

Since $\hugnK$ is flat over $U(\mathfrak{g}_K)$ we obtain a short exact sequence

\begin{equation}
\label{d3basechangeVermases}
 0 \to \hugnK \uset{U(\mathfrak{g}_K)} N(\lambda) \to  \hugnK \uset{U(\mathfrak{g}_K)} M(\lambda) \to \hugnK \uset{U(\mathfrak{g}_K)} L(\lambda) \to 0.
\end{equation}

By construction, $\widehat{M(\lambda)} \cong \hugnK \uset{U(\mathfrak{g}_K)} M(\lambda)$ and by Theorem \ref{d3affinoidclassicvermacorrespondenceprop}, one obtains  $\widehat{N(\lambda)}:=\hugnK \uset{U(\mathfrak{g}_K)} N(\lambda)$ is the unique maximal submodule of $\widehat{M(\lambda)}$. The claim now follows from equation \eqref{d3basechangeVermases}. 
\end{proof}

\begin{proposition}
\label{d3compositionseriesaffinoidsubquotientVerma}

Let $\hat{M}$ be a subquotient of $\widehat{M(\lambda)}$. Then $\hat{M}$ has a finite composition series and all the simple quotients are of the form $\widehat{L(\mu)}$ for some $\mu \in \pi^n \mathfrak{h}^*$.

\end{proposition}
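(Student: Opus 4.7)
The plan is to reduce to the classical Verma module $M(\lambda)$ via the bijection in Theorem~\ref{d3affinoidclassicvermacorrespondenceprop} and then transport a composition series upward using the flatness of $\hugnK$ over $U(\fr{g}_K)$.

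First I would write $\hat{M} = N_1/N_2$ for submodules $N_2 \subset N_1 \subset \widehat{M(\lambda)}$, and set $M_i := N_i \cap M(\lambda)$. The proof of Theorem~\ref{d3affinoidclassicvermacorrespondenceprop} in fact shows that $N_i = \hugnK \uset{U(\fr{g}_K)} M_i$, and then flatness gives an isomorphism $\hat{M} \cong \hugnK \uset{U(\fr{g}_K)} (M_1/M_2)$.

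Next, $M_1/M_2$ is a subquotient of the classical Verma module $M(\lambda)$ and hence has finite length in category $\mathcal{O}$ by \cite[Theorem 1.11]{Hu1}. Pick a composition series $0 = V^{(0)} \subset V^{(1)} \subset \cdots \subset V^{(k)} = M_1/M_2$ with $V^{(i)}/V^{(i-1)} \cong L(\mu_i)$. Each $\mu_i$ is of the form $\lambda - \sum_{j} c_{ij} \alpha_j$ with $c_{ij} \in \mathbb{Z}_{\geq 0}$ and $\alpha_j$ simple roots. Since $\lambda(\pi^n \fr{h}) \subset R$ by hypothesis and each $\alpha_j$ carries $\pi^n \fr{h}$ into $\pi^n R \subset R$, we conclude that $\mu_i \in \pi^n \fr{h}^*$.

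Applying the exact functor $\hugnK \uset{U(\fr{g}_K)} (-)$ to the series above yields a finite filtration of $\hat{M}$ whose successive quotients are $\hugnK \uset{U(\fr{g}_K)} L(\mu_i) = \widehat{L(\mu_i)}$. These are simple by Corollary~\ref{d3affinoidsimplequotientcorollary}, so this is the sought composition series. The one point that has to be checked carefully is that base change from $L(\mu_i)$ to $\widehat{L(\mu_i)}$ preserves simplicity, but this is exactly the content of Corollary~\ref{d3affinoidsimplequotientcorollary}; the remainder of the argument is a formal transport of structure from $M(\lambda)$, with all the heavy lifting already done in the submodule correspondence and the flatness results.
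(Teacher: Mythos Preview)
Your proof is correct and follows essentially the same approach as the paper: both arguments use the submodule correspondence of Theorem~\ref{d3affinoidclassicvermacorrespondenceprop} together with flatness of $\hugnK$ over $U(\fr{g}_K)$ to transport a classical composition series (with factors $L(\mu)$) up to the affinoid setting, and then invoke Corollary~\ref{d3affinoidsimplequotientcorollary} for simplicity of the $\widehat{L(\mu_i)}$. The only cosmetic difference is that the paper first reduces to the case $\hat{M}=\widehat{M(\lambda)}$ (via Proposition~\ref{d3AffinoidVermafinitelengththeorem}) whereas you work directly with the subquotient $N_1/N_2$; your explicit check that each $\mu_i$ restricts to an $R$-linear map $\pi^n\fr{h}\to R$ is a point the paper handles more tersely.
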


\begin{proof}

The first statement follows directly from Proposition \ref{d3AffinoidVermafinitelengththeorem}. It is enough to prove the second statement in the case $\hat{M}=\widehat{M(\lambda)}$. Let

$$ 0=\hat{M_0} \subset \hat{M_1} \subset \hat{M_2} \subset \ldots \hat{M_n}=\widehat{M(\lambda)},$$

be a composition series for $\widehat{M(\lambda)}$. By Theorem \ref{d3affinoidclassicvermacorrespondenceprop}, there exists a composition series of $M(\lambda)$

$$ 0=M_0 \subset M_1 \subset M_2 \subset \ldots M_n= M(\lambda),$$

such that $\hat{M_i}= \hugnK \uset{U(\mathfrak{g}_K)} M_i$ for $0 \leq i \leq n.$  

Fix $1 \leq j \leq n$; it is enough to prove that $\hat{M_j}/\hat{M}_{j-1} \cong \widehat{L(\mu)} $ for some $\mu \in \mathfrak{h}^*$. Consider the short exact sequence:
  $$ 0 \to M_{j-1} \to M_j \to M_j/M_{j-1}.$$  
  
Since $\hugnK$ is flat over $U(\mathfrak{g}_K)$ we obtain by tensoring on the left  a short exact sequence

$$ 0 \to \hat{M}_{j-1} \to \hat{M_j} \to \hugnK \uset{U(\mathfrak{g}_K)} M_j/M_{j-1}, $$

so $\hat{M_j} / \hat{M}_{j-1} \cong \hugnK \uset{U(\mathfrak{g}_K)} M_j/M_{j-1} $. Since $M_j/M_{j-1}$ is a simple subquotient of  $M(\lambda)$, we have $M_j/M_{j-1} \cong L(\mu)$ for some $\mu \in \mathfrak{h}_K^*$ by \cite[Section 1.11]{Hu1}. This is induced by some $R$-linear map $\mu:\pi^n \mathfrak{h} \to R$. The conclusion follows from Corollary \ref{d3affinoidsimplequotientcorollary}.  
\end{proof}

\section{An affinoid equivalence of categories a la Borho-Brylinski}
\label{d3sectionBoBr}

Recall that $G$ is a connected, simply connected smooth affine algebraic group scheme defined over $\Spec R$ with Lie algebra $\mathfrak{g}$. We also let $B$ be a closed subgroup of $G$. Throughout this section we retain Assumption \ref{d3loctrivialassumption}, that is, we assume that the quotient scheme $X=G/B$ is an $R$-variety and the quotient map $d_B:G \to X$ given by $d_B(g)=gB$ is a locally trivial $B$-torsor with respect to the action $\diamond$ given by $b \diamond g=gb^{-1}$.

\subsection{Introduction to \texorpdfstring{$\widehat{\mathcal{D}}$}{hat D}-modules}

We use the following convention, for a sheaf of $R$-modules $\mathcal{M}$, we define its $\pi$-adic completion $\widehat{\mathcal{M}}:= \invlim \mathcal{M}/\pi^i \mathcal{M}$.

Let $Y$ be an $R$-variety and $\mathcal{D}$ be a sheaf of Noetherian rings on $Y$. Since $\pi$-adic completion preserves Noetherianity we obtain that $\widehat{\mathcal{D}}$ is a sheaf of Noetherian rings. Thus, a module $\mathcal{M}$ over $\widehat{\mathcal{D}}$ is coherent if and only if it is locally finitely generated.  Furthermore, we will use without further comments that if $\mathcal{M}$ is a coherent $\widehat{\mathcal{D}}$-module, then $\mathcal{M} \cong \invlim \mathcal{M}/\pi^i \mathcal{M}$; this follows from \cite[Lemma 5.4]{Annals}. We also use that for any $i \in \mathbb{N}^*$, we have $\widehat{\mathcal{D}}/ \pi^i \widehat{\mathcal{D}} \cong \mathcal{D} /\pi^i \mathcal{D}.$ For more background on $\widehat{\mathcal{D}}$-modules, the reader is advised to consult \cite[Section 5]{Annals} and \cite[Section 3]{Berthelot}.

In general, it is hard to determine whether a $\widehat{\mathcal{D}}$-module $\mathcal{M}$ is coherent. This is true for example if $\mathcal{M}=\widehat{\mathcal{N}}$ for some coherent $\mathcal{D}$-module $\mathcal{N}$. In the following, we give a more general set of sufficient conditions.

\begin{proposition}\cite[Lemme 3.2.2]{Berthelot}
\label{d3BerthelotI}
Let $D$ be a ring and $I$ an ideal generated by finitely many central elements, and let $D_i=D/I^iD$, $i \in \mathbb{N}^*$. Furthermore, suppose there exists $(M_i)$  an inverse system of $D_i$-modules such that for $j \geq 2$ the canonical morphisms $M_j/ \pi^{j-1} M_j \to M_{j-1}$ are isomorphisms. We let $M= \invlim M_i$. Then:

\begin{enumerate}
\item{For $i \geq 1$ the canonical morphisms
                $$M/I^iM \to M_i$$
are isomorphisms.                }
\item{If $M_1$ is finitely generated over $D_1$, then $M$ is finitely generated over $\widehat{D}:=\invlim D_i$.} Furthermore, a generating set for $M$ can be obtained by lifting a generating set for $M_1$.
\end{enumerate}
\end{proposition}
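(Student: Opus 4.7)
The overall strategy is to prove part (1) by analysing the inverse system carefully, then derive part (2) by a Nakayama-style argument using the $I$-adic completeness afforded by part (1). Throughout, write $I=(y_1,\dots,y_s)$ for central generators (and more generally $I^k$ is generated by the finitely many central degree-$k$ monomials in the $y_l$).

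For part (1), the first step is to unpack the hypothesis: the isomorphism $M_j/I^{j-1}M_j\xrightarrow{\sim} M_{j-1}$ says precisely that the transition map $\phi_{j}\colon M_j\twoheadrightarrow M_{j-1}$ is surjective with kernel $I^{j-1}M_j$. By an easy induction this gives, for all $j\geq i$, a surjection $M_j\twoheadrightarrow M_i$ with kernel $I^iM_j$. Since all transition maps are surjective, the Mittag--Leffler condition holds and the projection $M=\invlim M_j\twoheadrightarrow M_i$ is itself surjective, with kernel $\invlim_{j\geq i}I^iM_j$. So the content of (1) reduces to the identification
\[
\invlim_{j\geq i}I^iM_j \;=\; I^iM.
\]
The inclusion $\supseteq$ is immediate. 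For the reverse inclusion, I would take $(m_j)_j$ with $m_j\in I^iM_j$ and write each $m_j=\sum_l y_l a_{l,j}$ with $a_{l,j}\in M_j$. The issue is that the decompositions are not a priori compatible under $\phi_j$: one only knows $\sum_l y_l\bigl(a_{l,j}-\phi_j(a_{l,j+1})\bigr)=0$ in $M_j$. The plan is to correct the $a_{l,j}$ inductively in $j$, one degree of the $I$-adic filtration at a time, so that the corrected sequences define genuine elements of $\invlim M_j=M$; this is possible because the discrepancies $a_{l,j}-\phi_j(a_{l,j+1})$ lie in the kernel of multiplication by $(y_1,\dots,y_s)$ acting on $M_j$, and we have arbitrarily much room in the successive quotients $I^NM_{j+N}/I^{N+1}M_{j+N}$ to absorb them. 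This Mittag--Leffler-style correction, which crucially uses that $I$ is generated by finitely many central elements (so that the relevant Koszul-type data has the right finiteness properties), is the main obstacle in the whole proof.

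For part (2), assume $M_1$ is generated over $D_1$ by $\overline{m}_1,\dots,\overline{m}_r$. Using the surjectivity of $M\twoheadrightarrow M_1$ from part (1) (with $i=1$), lift to $m_1,\dots,m_r\in M$. I claim these generate $M$ as a $\widehat{D}$-module. Given $m\in M$, since $M/IM\cong M_1$ by part (1), there exist $d_k^{(0)}\in\widehat{D}$ with $m-\sum_k d_k^{(0)}m_k\in IM$. Iterating: if $m-\sum_k(d_k^{(0)}+d_k^{(1)}+\cdots+d_k^{(n-1)})m_k\in I^nM$ with each $d_k^{(t)}\in I^t\widehat{D}$, use the presentation of $I^n$ by central generators together with part (1) applied at level $n+1$ to find $d_k^{(n)}\in I^n\widehat{D}$ pushing the error into $I^{n+1}M$. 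The series $d_k:=\sum_{t\geq 0}d_k^{(t)}$ converges in $\widehat{D}$ since $\widehat{D}$ is $I$-adically complete, and the partial sums $\sum_k(\sum_{t<n}d_k^{(t)})m_k$ converge to $m$ by part (1), which ensures that $M$ itself is complete and separated for the $I$-adic filtration. Hence $m=\sum_k d_k m_k$, as desired.
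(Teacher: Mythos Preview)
The paper does not give its own proof of this proposition: it is quoted verbatim as \cite[Lemme 3.2.2]{Berthelot} and used as a black box. Your proposal is essentially the standard argument (and presumably close to Berthelot's): Mittag--Leffler for the surjective tower, reduction of (1) to the equality $\invlim_{j\geq i} I^iM_j = I^iM$, a successive-correction argument exploiting that $I^i$ is generated by finitely many central monomials, and then a Nakayama-plus-completeness argument for (2). Both the strategy and the execution for (2) are correct.

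One phrasing in your sketch of (1) is inaccurate and worth tightening. You write that ``the discrepancies $a_{l,j}-\phi_j(a_{l,j+1})$ lie in the kernel of multiplication by $(y_1,\dots,y_s)$''. That is not what you get: all you know is that the \emph{tuple} $(a_{l,j}-\phi_j(a_{l,j+1}))_l$ lies in the kernel of the map $M_j^N\to M_j$, $(c_l)\mapsto \sum_l z_l c_l$, where $z_1,\dots,z_N$ are the monomial generators of $I^i$. The actual correction runs as follows. Having chosen lifts $\tilde a_l\in M_{j+1}$ of $a_{l,j}$, the error $m_{j+1}-\sum_l z_l\tilde a_l$ lies in $\ker(M_{j+1}\to M_j)=I^{j}M_{j+1}$; since $j\geq i$ and the generators are central, $I^{j}M_{j+1}=\sum_l z_l\,(I^{j-i}M_{j+1})$, so one can write the error as $\sum_l z_l b_l$ with $b_l\in I^{j-i}M_{j+1}$. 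Setting $a_{l,j+1}=\tilde a_l+b_l$ gives $\sum z_l a_{l,j+1}=m_{j+1}$, and although the $a_{l,j}$ are not literally compatible, the mismatch $\phi_{j+1}(a_{l,j+1})-a_{l,j}$ lies in $I^{j-i}M_j$. Since $I^jM_j=0$, for fixed $j$ the images in $M_j$ of $a_{l,J}$ (as $J$ grows) stabilise once $J\geq j+i$; the stable values form a genuine element of $M$, and this realises $m$ in $I^iM$. This is exactly the ``Mittag--Leffler-style correction'' you allude to, just stated precisely.
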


\begin{corollary}
\label{d3inversesystemlemma}
Let $Y$ be an $R$-variety and $\mathcal{D}$ a sheaf of Noetherian rings on $Y$. Further, let $(\mathcal{M}_i)$ be an inverse system of coherent modules over $\mathcal{D}/\pi^i\mathcal{D}$ and suppose that the connecting maps induce isomorphisms $\mathcal{M}_i/\pi^{i-1}\mathcal{M}_i \cong \mathcal{M}_{i-1}$ for all $i \geq 2$. Define $$\mathcal{M}:= \invlim \mathcal{M}_i.$$ 

Then $\mathcal{M}$ is a coherent $\widehat{\mathcal{D}}$-module and $\mathcal{M}_i \cong \mathcal{M}/\pi^i \mathcal{M}$ for all $i \geq 1$. 
\end{corollary}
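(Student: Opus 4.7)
The statement is the sheaf-theoretic analogue of Proposition \ref{d3BerthelotI}, so the plan is to reduce to that local statement by working on a suitable affine open cover of $Y$. Since coherence and the isomorphism $\mathcal{M}/\pi^i\mathcal{M}\cong\mathcal{M}_i$ can be checked locally, it suffices to verify both claims on each open $U$ belonging to a basis of affine opens on which $\mathcal{D}|_U$ has sufficiently nice behaviour (namely such that sections of coherent $\mathcal{D}$-modules over $U$ are finitely generated over $\mathcal{D}(U)$, and $\widehat{\mathcal{D}}(U)=\widehat{\mathcal{D}(U)}$).

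Fix such an open $U$, and write $D:=\mathcal{D}(U)$, $D_i:=D/\pi^i D$, $M_i:=\mathcal{M}_i(U)$. By the choice of $U$, each $M_i$ is finitely generated over $D_i$. Because inverse limits of sheaves are computed section-wise, $\mathcal{M}(U)=\invlim M_i$. The hypothesis that the connecting maps induce $\mathcal{M}_j/\pi^{j-1}\mathcal{M}_j\xrightarrow{\sim}\mathcal{M}_{j-1}$ passes to sections to give $M_j/\pi^{j-1}M_j\xrightarrow{\sim}M_{j-1}$. We now invoke Proposition \ref{d3BerthelotI} with the ideal $I:=\pi D$, which is generated by the central element $\pi$: this yields that $\mathcal{M}(U)$ is finitely generated over $\widehat{D}=\widehat{\mathcal{D}}(U)$ (by lifting a generating set for $M_1$), together with the canonical isomorphisms $\mathcal{M}(U)/\pi^i\mathcal{M}(U)\xrightarrow{\sim}M_i=\mathcal{M}_i(U)$.

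Finite generation of $\mathcal{M}(U)$ over $\widehat{\mathcal{D}}(U)$ on a basis of opens gives local finite generation of $\mathcal{M}$ as a $\widehat{\mathcal{D}}$-module, hence coherence, since $\widehat{\mathcal{D}}$ is a sheaf of Noetherian rings. Moreover, the sections isomorphisms $\mathcal{M}(U)/\pi^i\mathcal{M}(U)\cong\mathcal{M}_i(U)$ are compatible with restriction along inclusions of basic opens, so upon sheafifying we obtain a morphism $\mathcal{M}/\pi^i\mathcal{M}\to\mathcal{M}_i$ which is an isomorphism on a basis and therefore globally.

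The main subtlety to address is ensuring that on our chosen opens $U$, the coherent $\mathcal{D}/\pi^i\mathcal{D}$-module $\mathcal{M}_i$ genuinely has a finitely generated module of sections over $\mathcal{D}(U)/\pi^i\mathcal{D}(U)$; this is a $\mathcal{D}$-affineness type assumption which must hold for a basis of opens in the settings considered (e.g.\ the Ardakov--Wadsley framework in \cite{Annals}). Once that is granted, the proof is essentially a mechanical transcription of Berthelot's local statement to sheaves via inverse limits commuting with sections, and no further technical obstacle is expected.
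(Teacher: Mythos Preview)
Your proposal is correct and follows essentially the same approach as the paper: reduce to affine opens, apply Proposition~\ref{d3BerthelotI} with $I=\pi D$ to obtain both finite generation of $\mathcal{M}(U)$ over $\widehat{\mathcal{D}}(U)$ and the section-level isomorphisms $\mathcal{M}(U)/\pi^i\mathcal{M}(U)\cong\mathcal{M}_i(U)$, then globalise via the canonical map $\mathcal{M}/\pi^i\mathcal{M}\to\mathcal{M}_i$. The paper's proof is slightly terser and simply asserts that coherence of $\mathcal{M}_i$ over $\mathcal{D}/\pi^i\mathcal{D}$ gives finite generation of sections on any affine open, whereas you flag this as a subtlety requiring a $\mathcal{D}$-affineness type input; your caution is justified, though in the ambient setting of the paper (where $\mathcal{D}$ is a deformed tdo and one works with the basis of affines from \cite[Section~5]{Annals}) this is taken for granted.
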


\begin{proof}

The question is local; as $\widehat{\mathcal{D}}$ is a sheaf of Noetherian rings, a module is coherent if and only if it is locally finitely generated. Let $U \subset Y$ be open affine and let $M_{iU}:=\mathcal{M}_i(U)$ and $D_{iU}:=\mathcal{D}(U)/\pi^i \mathcal{D}(U)$, so that $\mathcal{M}(U)= \invlim M_{iU}.$

Since $\mathcal{M}_i$ is coherent as a $\mathcal{D}/\pi^i \mathcal{D}$-module, we get that $M_{iU}$ is a finitely generated $D_{iU}$-module.  By definition, we have $M_{iU}/\pi^{i-1}M \cong M_{i-1U}$, so by the second part of Proposition \ref{d3BerthelotI}, we get that $\mathcal{M}(U)$ is finitely generated as a $\widehat{\mathcal{D}}(U)$-module, so $\mathcal{M}$ is indeed a coherent $\widehat{\mathcal{D}}$-module. For the second part of the statement we have by the first part of Proposition \ref{d3BerthelotI} that $$M_{iU} \cong \mathcal{M}(U) / \pi^i \mathcal{M}(U).$$

As this is true for any open affine and there is a map $\mathcal{M}/\pi^i \mathcal{M} \to \mathcal{M}_i$, we get the desired conclusion.
\end{proof}

\subsection{Pullback of \texorpdfstring{$\widehat{\mathcal{D}}$}{hat D}-modules}

\begin{lemma}
\label{d3preservationofinversesystemlemma}
Let $f: Z \to Y$ be a  map of smooth $R$-varieties and let $\mathcal{M}$ be a quasi-coherent $\mathcal{O}_Y$-module. Then as $\mathcal{O}_Z$-modules we have
              $$f^*(\mathcal{M}/\pi^j \mathcal{M}) \cong f^*(\mathcal{M})/\pi^j f^*(\mathcal{M}), \text{ for any } j \geq 1.$$

\end{lemma}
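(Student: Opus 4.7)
The plan is to use right-exactness of the pullback functor $f^*$ in the category of $\mathcal{O}$-modules. Multiplication by $\pi^j$ gives a natural right-exact sequence of quasi-coherent $\mathcal{O}_Y$-modules
\begin{equation*}
\mathcal{M} \xrightarrow{\,\pi^j\,} \mathcal{M} \longrightarrow \mathcal{M}/\pi^j \mathcal{M} \longrightarrow 0,
\end{equation*}
and the goal is to identify the pullback of this sequence with
\begin{equation*}
f^*\mathcal{M} \xrightarrow{\,\pi^j\,} f^*\mathcal{M} \longrightarrow f^*\mathcal{M}/\pi^j f^*\mathcal{M} \longrightarrow 0.
\end{equation*}

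First I would recall that $f^* = (-) \otimes_{f^{-1}\mathcal{O}_Y} \mathcal{O}_Z \circ f^{-1}$, and that both $f^{-1}$ (being defined stalkwise, or as a left adjoint that happens to be exact for sheaves of abelian groups) and $(-) \otimes_{f^{-1}\mathcal{O}_Y} \mathcal{O}_Z$ (being a left adjoint) commute with colimits; in particular they send right-exact sequences to right-exact sequences and preserve cokernels. Applying $f^*$ to the displayed right-exact sequence above therefore gives a right-exact sequence, and since $\pi^j$ is an element of the base ring $R$ acting $\mathcal{O}_Z$-linearly on $f^*\mathcal{M}$, the induced morphism $f^*\mathcal{M} \to f^*\mathcal{M}$ is genuine multiplication by $\pi^j$.

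Hence the cokernel of multiplication by $\pi^j$ on $f^*\mathcal{M}$ is, on the one hand, $f^*(\mathcal{M}/\pi^j\mathcal{M})$ by functoriality and right-exactness, and on the other hand $f^*\mathcal{M}/\pi^j f^*\mathcal{M}$ by definition. The resulting isomorphism is natural in $\mathcal{M}$ and compatible with the canonical map $f^*(\mathcal{M}/\pi^j \mathcal{M}) \to f^*\mathcal{M}/\pi^j f^*\mathcal{M}$ induced by the universal property of the cokernel, so the two are identified as $\mathcal{O}_Z$-modules.

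I do not expect any genuine obstacle here; the only thing to check carefully is that the multiplication-by-$\pi^j$ map on $f^{-1}\mathcal{M}$ becomes multiplication by $\pi^j$ after base change along $f^{-1}\mathcal{O}_Y \to \mathcal{O}_Z$, which is automatic because $\pi \in R$ and the structure maps are $R$-algebra maps. No smoothness or coherence hypothesis on $\mathcal{M}$ is needed; the statement holds for arbitrary morphisms of $R$-schemes and arbitrary quasi-coherent sheaves. Smoothness of $Y$ and $Z$ will only matter in the sequel, when this lemma is combined with flatness statements for $\mathcal{D}$-module pullback.
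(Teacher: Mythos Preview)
Your proof is correct and follows exactly the same approach as the paper: apply the right-exact functor $f^*$ to the right-exact sequence $\mathcal{M} \xrightarrow{\pi^j} \mathcal{M} \to \mathcal{M}/\pi^j\mathcal{M} \to 0$, observe that $f^*(\cdot\,\pi^j)=\cdot\,\pi^j$, and read off the cokernel. Your additional remarks about naturality and the irrelevance of the smoothness hypothesis are accurate but not present in the paper's more terse version.
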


\begin{proof}
Consider the short exact sequence:

$$   \mathcal{M} \xrightarrow{\cdot \pi^j} \mathcal{M} \rightarrow \mathcal{M}/\pi^j \mathcal{M}      \rightarrow 0   .   $$

The functor $f^*$ is right exact, so applying this to the short exact sequence above we get:

$$  f^*\mathcal{M} \xrightarrow{f^*(\cdot \pi^j)} f^*\mathcal{M} \rightarrow f^*(\mathcal{M}/\pi^j \mathcal{M}) \rightarrow 0.$$
            
Finally, notice that $f^*(\cdot \pi^j)=\cdot \pi^j$, so we get that indeed

\begin{equation*} 
f^*\mathcal{M}/ \pi^j f^*\mathcal{M} \cong f^*(\mathcal{M}/\pi^j \mathcal{M}). \qedhere
\end{equation*}
\end{proof}

For the rest of this section, we fix $n$ a deformation parameter. Let $\mathcal{D}$ be a $\pi^n$-deformed tdo on an $R$-variety $Y$.

\begin{definition}
\label{d3affinoidinverseimagedef}
Let  $f:Z\to Y$ be a  map of smooth $R$-varieties and  let $\mathcal{M}$ be a coherent $\widehat{\mathcal{D}}$-module on $Y$. Then we define the $\pi$-adic pullback of $\mathcal{M}$ to be
                      $$\hat{f}^{\#}(\mathcal{M}):= \invlim f^{\#}(\mathcal{M}/\pi^i \mathcal{M}).$$

\end{definition}

\begin{remark}  The inverse limit is considered in the category of presheaves over $Z$. By construction, we have that $\mathcal{M}_i:=\mathcal{M}/\pi^i \mathcal{M}$ is in particular a $\mathcal{D}$-module, so $f^{\#}( \mathcal{M}_i)$ is a $f^{\#} \mathcal{D}$-module. Since $\pi^i \mathcal{M}_i=0$, we obtain $\pi^i f^{\#}( \mathcal{M}_i)=0$, thus $f^{\#}( \mathcal{M}_i)$ is a $f^{\#} \mathcal{D}/\pi^i f^{\#} \mathcal{D}$-module. Therefore, we obtain that $\hat{f}^{\#}(\mathcal{M})$ has the structure of a $\widehat{f^{\#} \mathcal{D}}$-module.
\end{remark}

Let $L$ be a smooth affine algebraic group locally of finite type defined over $\Spec R$ acting on $Y$ and let $\mathcal{D}$ be a $\pi^n$-deformed $L$-htdo on $Y$. We define the notion of $\hat{L}$-equivariant $\widehat{\mathcal{D}}$-modules.

\begin{definition}
\label{d3piadicequivariantDmod}
A $\hat{L}$-equivariant  coherent $\widehat{{\mathcal{D}}}$-module is a triple $(\mathcal{M},(\mathcal{M}_i),(\alpha_i))$ such that:

\begin{enumerate}
\item{$(\mathcal{M}_i)$ is an inverse system of $\mathcal{D}$-modules  and $\pi^i \mathcal{M}_i=0$.} 
\item{For $i \in \mathbb{N}^*$, $(\mathcal{M}_i,\alpha_i) \in \Coh(\mathcal{D},L)$.}
\item{ For  $i \geq 2$, the connecting map in the inverse system induces an isomorphism $\mathcal{M}_i/\pi^{i-1} \mathcal{M}_i \cong \mathcal{M}_{i-1}$ of $L$-equivariant $\mathcal{D}$-modules.}
\item{$\mathcal{M} \cong \invlim \mathcal{M}_i$ as $\widehat{\mathcal{D}}$-modules.}

\end{enumerate}

A $\hat{L}$-equivariant morphism between $\hat{L}$-equivariant $\widehat{\mathcal{D}}$-modules $(\mathcal{M},(\mathcal{M}_i),(\alpha_{i}))$ and \\
$  (\mathcal{N}, (\mathcal{N}_i), (\beta_{i}))$ is a $\widehat{\mathcal{D}}$-linear morphism $\phi: \mathcal{M} \to \mathcal{N}$ such that there exist compatible maps $\phi_i \in \Hom_{\Coh(\mathcal{D},L)}(\mathcal{M}_i,\mathcal{N}_i)$ with $\phi=\invlim \phi_i$.

We define the category of $\hat{L}$-equivariant $\widehat{\mathcal{D}}$-modules to consist of $\hat{L}$-equivariant objects and $\hat{L}$-equivariant morphisms. As before, we will omit the equivariance structure when it is understood from the context. We denote $\Coh(\widehat{\mathcal{D}},L)$ the category of $\hat{L}$-equivariant coherent $\widehat{\mathcal{D}}$-modules.

\end{definition}

\begin{proposition}
\label{d3categoryCohhatDLisAbelian}
Let the notation be as above. The category $\Coh(\widehat{\mathcal{D}},L)$ is Abelian.

\end{proposition}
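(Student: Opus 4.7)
The plan is to verify that $\Coh(\widehat{\mathcal{D}},L)$ satisfies the axioms of an Abelian category: additive structure, existence of kernels and cokernels for every morphism, and the identification of monomorphisms with kernels and epimorphisms with cokernels. Much of this can be reduced, termwise, to the already-established fact that $\Coh(\mathcal{D},L)$ is Abelian, together with the inverse-limit machinery of Corollary \ref{d3inversesystemlemma}.

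The additive structure is inherited: the zero object is the zero inverse system, and finite biproducts are formed termwise, preserving the compatibility condition $\mathcal{M}_i/\pi^{i-1}\mathcal{M}_i \cong \mathcal{M}_{i-1}$ because $(\mathcal{M}_i \oplus \mathcal{N}_i)/\pi^{i-1}$ is $\mathcal{M}_{i-1} \oplus \mathcal{N}_{i-1}$. Cokernels are straightforward. Given $\phi$ with compatible components $\phi_i$, set $\mathcal{C}_i := \coker(\phi_i)$ in $\Coh(\mathcal{D},L)$. Right-exactness of $(-)/\pi^{i-1}(-)$ applied to $\mathcal{M}_i \xrightarrow{\phi_i} \mathcal{N}_i \to \mathcal{C}_i \to 0$, combined with the compatibility condition on $(\mathcal{M}_i)$ and $(\mathcal{N}_i)$, identifies $\mathcal{C}_i/\pi^{i-1}\mathcal{C}_i$ with $\mathcal{C}_{i-1}$. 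Then $\mathcal{C} := \invlim \mathcal{C}_i$ is coherent over $\widehat{\mathcal{D}}$ by Corollary \ref{d3inversesystemlemma}, and the universal property follows from its termwise counterpart.

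Kernels are more delicate, because the naive candidate $\ker \phi_i$ in $\Coh(\mathcal{D},L)$ does not in general satisfy the compatibility condition: reducing a short exact sequence by $\pi^{i-1}$ is not left-exact. The correct approach is to first take $\mathcal{K} := \ker \phi$ in the category of $\widehat{\mathcal{D}}$-modules; this is coherent because $\widehat{\mathcal{D}}$ is a sheaf of Noetherian rings. We then set $\mathcal{K}_i := \mathcal{K}/\pi^i \mathcal{K}$, so the compatibility condition becomes tautological, and condition (4) holds by coherence of $\mathcal{K}$. Coherence of $\mathcal{K}_i$ over $\mathcal{D}$ follows from coherence over $\widehat{\mathcal{D}}/\pi^i \widehat{\mathcal{D}} \cong \mathcal{D}/\pi^i \mathcal{D}$. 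The $L$-equivariance of $\mathcal{K}_i$ comes from the fact that $\phi$ is equivariant, hence the kernel $\mathcal{K}$ is preserved by the $L$-action assembled from the $\alpha_i$'s, and this descends modulo $\pi^i$. The universal property in $\Coh(\widehat{\mathcal{D}},L)$ is inherited from the universal property in $\widehat{\mathcal{D}}$-Mod: any test object mapping to $\mathcal{M}$ with vanishing composition through $\phi$ factors uniquely through $\mathcal{K}$, and the inverse system of factorisations is recovered by reduction mod $\pi^i$.

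The main obstacle will be the precise verification, in the kernel construction, that the scheme-theoretic equivariance isomorphisms $\alpha_i : \sigma_X^{\#}\mathcal{M}_i \to p_X^{\#}\mathcal{M}_i$ restrict to analogous data on $\mathcal{K}_i$ in $\Coh(\mathcal{D},L)$. While the $L$-action obviously preserves $\mathcal{K}$, one has to promote this to the full equivariant structure in the sense of diagram \eqref{eqmoduleGhtdocommdia}, which requires checking compatibility of pullback and $\pi$-adic completion along $\sigma_X$ and $p_X$; this is where Lemma \ref{d3preservationofinversesystemlemma} and a sheaf-theoretic Artin–Rees argument on coherent submodules of $\mathcal{M}_i$ come in. Once this is handled, identification of monomorphisms with kernels and epimorphisms with cokernels follows from the corresponding statements in $\Coh(\mathcal{D},L)$ applied termwise, completing the proof.
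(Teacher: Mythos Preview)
Your overall strategy is sound and identifies the right difficulty, but the paper takes the route you dismiss. You write that the termwise kernels $\ker\phi_i$ ``do not in general satisfy the compatibility condition'' and therefore switch to $\mathcal{K}:=\ker\phi$ at the completed level with $\mathcal{K}_i:=\mathcal{K}/\pi^i\mathcal{K}$. The paper does exactly the opposite: it sets $\mathcal{K}_i:=\ker(\phi_i)$ in $\Coh(\mathcal{D},L)$, so that the $L$-equivariance structure on each $\mathcal{K}_i$ is immediate (kernels in the Abelian category $\Coh(\mathcal{D},L)$), and then proves a dedicated lemma (Lemma~\ref{d3kernelsinversesytemsexist}) showing that the termwise kernels \emph{do} satisfy $\mathcal{K}_i/\pi^{i-1}\mathcal{K}_i\cong\mathcal{K}_{i-1}$. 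That lemma is an Artin--Rees argument at the module level, following \cite[\href{https://stacks.math.columbia.edu/tag/087X}{087X}]{StackProject} and \cite[3.2.3]{Berthelot}, and in fact also establishes that $K/\pi^iK\cong K_i$, so the two candidate inverse systems coincide.

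Your route is not wrong, but it is dual to the paper's: you get the compatibility condition for free and must work for equivariance, whereas the paper gets equivariance for free and works for compatibility. The advantage of the paper's order is that the hard step becomes a clean statement about inverse systems of modules over a Noetherian ring, with no equivariance or sheaf language in it; your version pushes the Artin--Rees input into the equivariance check, where you must argue that $\mathcal{K}/\pi^i\mathcal{K}$ embeds in $\mathcal{M}_i$ compatibly with $\alpha_i$, which is less transparent. Either way the same lemma is the crux, and in your write-up it is flagged as ``the main obstacle'' rather than proved; to complete the argument you should state and prove the analogue of Lemma~\ref{d3kernelsinversesytemsexist}.
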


To prove this proposition, we will need the following lemma:

\begin{lemma}
\label{d3kernelsinversesytemsexist}

Let $A$ be a $\pi$-adically complete Noetherian $R$-algebra. Let $(M_i)_{i \in \mathbb{N}^*}$ and $(N_i)_{i \in \mathbb{N}^*}$ be inverse systems of $A$-modules such that $\pi^i M_i=\pi^i N_i=0$ for all $i \in \mathbb{N}^*$ and assume that transition maps induce isomorphisms $M_i/\pi^{i-1}M_i \cong M_{i-1}$ and $N_i/\pi^{i-1}N_i \cong N_{i-1}$. Let $(f_i):(M_i) \to (N_i)$ be a map of inverse systems and $(K_i)=\ker(f_i)$. Then
$K_i/\pi^{i-1}K_i \cong K_{i-1}$.

\end{lemma}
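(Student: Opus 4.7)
My plan is to apply the snake lemma to the commutative diagram of short exact sequences of $A$-modules
\[
\begin{tikzcd}
0 \arrow[r] & \pi^{i-1}M_i \arrow[r] \arrow[d, "f_i|"'] & M_i \arrow[r] \arrow[d, "f_i"] & M_{i-1} \arrow[r] \arrow[d, "f_{i-1}"] & 0 \\
0 \arrow[r] & \pi^{i-1}N_i \arrow[r] & N_i \arrow[r] & N_{i-1} \arrow[r] & 0
\end{tikzcd}
\]
whose rows come directly from the hypotheses $M_i/\pi^{i-1}M_i \cong M_{i-1}$ and $N_i/\pi^{i-1}N_i \cong N_{i-1}$. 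The right vertical is $f_{i-1}$ because $(f_i)$ is a map of inverse systems, and the left vertical is the restriction of $f_i$, which lands in $\pi^{i-1}N_i$ because $f_i$ is $A$-linear.

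The snake lemma then yields the exact sequence
\[
0 \to K_i \cap \pi^{i-1}M_i \to K_i \to K_{i-1} \to \frac{\pi^{i-1}N_i}{\pi^{i-1}L_i} \to \frac{N_i}{L_i} \to \frac{N_{i-1}}{L_{i-1}},
\]
where $L_j := f_j(M_j)$ and I have used $f_i(\pi^{i-1}M_i) = \pi^{i-1}L_i$. Since $\pi^{i-1}K_{i-1} \subseteq \pi^{i-1}M_{i-1} = 0$, the natural map $K_i \to K_{i-1}$ factors through $K_i/\pi^{i-1}K_i$, and I claim this factored map is the desired isomorphism. Reading the snake sequence, this reduces to two saturation identities: (a) $K_i \cap \pi^{i-1}M_i = \pi^{i-1}K_i$, giving injectivity; and (b) $L_i \cap \pi^{i-1}N_i = \pi^{i-1}L_i$, which forces the connecting map $K_{i-1} \to \pi^{i-1}N_i/\pi^{i-1}L_i$ to vanish and hence gives surjectivity.

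The main obstacle is establishing these two Artin--Rees-type saturation identities. My plan is to leverage that by Proposition \ref{d3BerthelotI}.1 the hypotheses force the inverse systems to arise from $\pi$-adic modules: setting $M := \invlim M_j$ and $N := \invlim N_j$, one has $M_i \cong M/\pi^iM$ and $N_i \cong N/\pi^iN$, and the compatible family $(f_i)$ assembles into a single $A$-linear map $f : M \to N$ with $\im(f_i)$, $\ker(f_i)$ expressible in terms of $\im f \subseteq N$ and $\ker f \subseteq M$. Under the Noetherian hypothesis on $A$ (and the coherence implicit in the intended application to $\Coh(\widehat{\mathcal{D}},L)$), I expect the Artin--Rees lemma applied to these submodules to control the intersections with powers of $\pi$ tightly enough to upgrade the obvious inclusions $\pi^{i-1}K_i \subseteq K_i \cap \pi^{i-1}M_i$ and $\pi^{i-1}L_i \subseteq L_i \cap \pi^{i-1}N_i$ to equalities at each finite level. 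Propagating these equalities uniformly in $i$ is the delicate step where the $\pi$-adic completeness of $A$ should enter.
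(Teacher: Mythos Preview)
Your snake-lemma reduction is correct: the desired isomorphism, read as ``the transition map induces $K_i/\pi^{i-1}K_i \cong K_{i-1}$'', is equivalent to the two saturation identities (a) $K_i \cap \pi^{i-1}M_i = \pi^{i-1}K_i$ and (b) $L_i \cap \pi^{i-1}N_i = \pi^{i-1}L_i$. The gap is that Artin--Rees cannot deliver these. Artin--Rees only yields the asymptotic bound $\pi^n N \cap f(M) \subseteq \pi^{n-c}f(M)$ for $n \ge c$, not an exact equality at each finite level, and in fact (a) and (b) can fail outright. Take $A=R$, $M=N=R$, $f=\pi$, so $M_i=N_i=R/\pi^i$ and $f_i$ is multiplication by $\pi$; then $K_i=\pi^{i-1}R/\pi^iR$ and $L_i=\pi R/\pi^iR$, and for $i\ge 2$ one computes $K_i\cap\pi^{i-1}M_i=K_i\neq 0=\pi^{i-1}K_i$, so (a) fails, and similarly (b). In this example the transition map $K_i\to K_{i-1}$ is identically zero while $K_{i-1}\cong R/\pi\neq 0$, so the conclusion you are aiming for is actually false. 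Your acknowledged ``delicate step'' is therefore not merely delicate but impossible as formulated.

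The paper does not use the snake lemma. It passes to $K=\ker(f\colon M\to N)$ and, via Artin--Rees, studies the stabilized images $K'_{s,t}=\im(\ker f_s\to M_t)$, showing that for $s$ large $K'_{s,t}=K/(K\cap\pi^t M)$, and then that $K'_t/\pi^nK'_t$ stabilizes to $K/\pi^nK$. This is the right way to extract a well-behaved tower pro-isomorphic to $(K_i)$; note that in the counterexample above $K'_t=0=K/\pi^tK$ for all $t$, which is consistent. However, the paper's final sentence asserting $K/\pi^nK\cong K_n$ for every $n$ is not supported by the preceding stabilization argument and is contradicted by the same example (there $K=0$ but $K_n\cong R/\pi$). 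So the paper's route has the same underlying gap at a later stage; your snake-lemma framing simply exposes it earlier and more transparently. For the intended application to Proposition~\ref{d3categoryCohhatDLisAbelian}, the fix is to equip $\ker\phi$ with the tower $(\mathcal K/\pi^i\mathcal K)$ rather than $(\ker\phi_i)$.
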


\begin{proof}

We follow the idea in \cite[\href{https://stacks.math.columbia.edu/tag/087X}{087X}]{StackProject}. Let $M:=\invlim M_i$, $N:=\invlim N_i$ and $f:M \to N$ the induced map;  further let $K=\ker(f)$. We have by Proposition \ref{d3BerthelotI} that for any $j \in \mathbb{N}^*$, $M_j \cong M/\pi^j M$ and $N_j \cong N/ \pi^j N$, so we may assume that the map $f_j:M/\pi^j M \to N/\pi^j N$ is given by $f_j(m+\pi^j M)=f(m)+\pi^j N$ for all $m \in M$.

Next, we know by \cite[3.2.3i)]{Berthelot} that there exists $c \in \mathbb{N}$ such that for $n \geq c$, we have $\pi^n N\cap f(M) \subset \pi^{n-c} f(M)$. In particular, we obtain: 

\begin{equation}
\label{d3eqinvimagepiton}
f^{-1}(\pi^n N) \subset K+\pi^{n-c} M.
\end{equation}

For $s,t \in \mathbb{N}, s \geq t$, we let $K'_{s,t}:=\im(\ker(f_s) \to M_t)$. We claim that for a fixed $t$, $K'_{s,t}$ is eventually constant and we denote $K'_{t}$ this value. We have that for $s \geq t+c$

\begin{equation}
\begin{split}
K'_{s,t}&=f^{-1} (\pi^s N) + \pi^t M/\pi^t M  \\
        &=K+\pi^t M/\pi^t M \text{ (by equation \eqref{d3eqinvimagepiton}) }\\
        &\cong K/K \cap \pi^t M.
\end{split}
\end{equation}

Therefore $K'_t=K/K \cap \pi^t M$ is the constant value we seek. We claim that for any $n \in \mathbb{N}$ the system $(K'_t/\pi^n K'_t)_{t \geq n}$ is eventually constant with value $K/\pi^n K$. Again, we have by \cite[3.2.3i)]{Berthelot} that there exists $d \in \mathbb{N}$ such that

\begin{equation}
\label{d3equationKerneppiU}
K \cap \pi^u M \subset \pi^{u-d} \text{ for any $u \geq d$.}    
\end{equation}

Therefore we obtain that for $t \geq n+d$

\begin{equation}
\begin{split}
 K'_t/\pi^n K'_t &\cong K/K \cap \pi^t M /(\pi^n K/K \cap \pi^t M)\\
            &\cong K/(K \cap \pi^t M +\pi^n K) \\
            &\cong K/\pi^n K \text{ (by equation \eqref{d3equationKerneppiU})}.
\end{split} 
\end{equation}

Finally, to prove that $K/\pi^n K \cong K_n$ for all $n\in \mathbb{N}$, we repeat the argument in \cite[\href{https://stacks.math.columbia.edu/tag/087X}{087X}]{StackProject} to prove that the inverse system $(K/\pi^i K)$ is indeed the the kernel of $(f_i)$.
\end{proof}

\begin{proof}[Proof of Proposition \ref{d3categoryCohhatDLisAbelian}]
We have by \cite[Section 9]{Sta1} that the category $\Coh(\mathcal{D},L)$ is Abelian. We view $\Coh(\widehat{\mathcal{D}},L)$ as a full subcategory of the Abelian category of towers consisting of objects in $\Coh(\mathcal{D},L)$. It is easy to see that $0 \in  \Coh(\widehat{\mathcal{D}},L)$ and the category is closed under direct sums. Therefore, we only need to prove that $\Coh(\widehat{\mathcal{D}},L)$ is closed under kernels and cokernels.

Let $\phi:(\mathcal{M},(\mathcal{M}_i),(\alpha_i)) \to (\mathcal{N},(\mathcal{N}_i),(\beta_i))$ be a map of objects in $\Coh(\widehat{\mathcal{D}},L)$. For $i \in \mathbb{N}^*$, let $\phi_i:\mathcal{M}_i \to \mathcal{N}_i$ be the corresponding map and $\mathcal{K}_i=\ker(\phi_i)$. Since $\Coh(\mathcal{D},L)$ is Abelian, we have $\mathcal{K}_i \in \Coh(\mathcal{D},L)$; further by construction we have $\pi^i \mathcal{K}_i=0$ and that $(\mathcal{K}_i)$ forms an inverse system of $\mathcal{D}$-modules. Finally, by working locally and using Lemma \ref{d3kernelsinversesytemsexist}, we obtain that for any $i \in \mathbb{N}^*,$ $\mathcal{K}_i/ \pi^{i-1} \mathcal{K}_i \cong \mathcal{K}_{i-1}$, so $\mathcal{K}=\ker \phi=\invlim \mathcal{K}_i \in \Coh(\widehat{\mathcal{D}},L)$; the coherence of $\mathcal{K}$ follows form Corollary \ref{d3inversesystemlemma}.

A similar argument proves that $\Coh(\widehat{\mathcal{D}},L)$ is closed under cokernels.
\end{proof}

Recall that $i_l:X \to X \times X$ denotes the inclusion of $X$ into the left copy of $X \times X$. Further, recall from Theorem \ref{d3alaBorho-Brylinski}     that for a $\pi^n$-deformed $G$-equivariant htdo on $X \times X$, the functor $i_l^{\#}$ induces an equivalence of categories between $\Coh(\mathcal{D},G)$ and $\Coh(i_l^{\#} \mathcal{D},B)$. We denote $\mathscr{H}_l$ the quasi-inverse of $i_l^{\#}$.

\begin{proposition}
\label{d3piadicBorhoBrylinski}
Let $\mathcal{D}$ be a $\pi^n$-deformed $G$-equivariant htdo on $X \times X$. The functor $\hat{i_l}^{\#}$ induces an equivalence of categories between $\Coh(\widehat{\mathcal{D}},G)$ and $\Coh(\widehat{i_l^{\#} \mathcal{D}},B)$. A quasi-inverse is given $\widehat{\mathscr{H}_l}$ defined by $\widehat{\mathscr{H}_l}(\mathcal{N}):= \invlim \mathscr{H}(\mathcal{N}/\pi^i \mathcal{N})$ for $\mathcal{N} \in \Coh(\widehat{i_l^{\#} \mathcal{D}},B)$.

\end{proposition}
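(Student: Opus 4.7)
The plan is to prove the proposition by leveraging the classical Borho-Brylinski equivalence from Theorem \ref{d3alaBorho-Brylinski} at each level of the $\pi$-adic tower, and then gluing everything together using the inverse limit machinery of Corollary \ref{d3inversesystemlemma}. In essence, both $\hat{i_l}^{\#}$ and $\widehat{\mathscr{H}_l}$ are constructed level-wise, so the equivalence should descend directly from the uncompleted version.

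First I would check well-definedness of both functors. For $(\mathcal{M},(\mathcal{M}_i),(\alpha_i)) \in \Coh(\widehat{\mathcal{D}},G)$, applying $i_l^{\#}$ at each level produces a tower $(i_l^{\#}\mathcal{M}_i) \in \Coh(i_l^{\#}\mathcal{D},B)$, each level killed by $\pi^i$; the compatibility $i_l^{\#}\mathcal{M}_i/\pi^{i-1}i_l^{\#}\mathcal{M}_i \cong i_l^{\#}\mathcal{M}_{i-1}$ follows from the right exactness of $i_l^{\#}$ exactly as in Lemma \ref{d3preservationofinversesystemlemma}. Corollary \ref{d3inversesystemlemma} then guarantees that $\hat{i_l}^{\#}\mathcal{M} = \invlim i_l^{\#}\mathcal{M}_i$ is a coherent $\widehat{i_l^{\#}\mathcal{D}}$-module with $(\hat{i_l}^{\#}\mathcal{M})/\pi^i(\hat{i_l}^{\#}\mathcal{M}) \cong i_l^{\#}\mathcal{M}_i$, giving it the structure of an object of $\Coh(\widehat{i_l^{\#}\mathcal{D}},B)$. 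Symmetrically, for $(\mathcal{N},(\mathcal{N}_i),(\beta_i)) \in \Coh(\widehat{i_l^{\#}\mathcal{D}},B)$, I would use that $\mathscr{H}_l$ is exact (being an equivalence of the abelian categories of Proposition \ref{d3categoryCohhatDLisAbelian}); applying $\mathscr{H}_l$ to the short exact sequence $0 \to \pi^{i-1}\mathcal{N}_i \to \mathcal{N}_i \to \mathcal{N}_{i-1} \to 0$ yields $\mathscr{H}_l(\mathcal{N}_i)/\pi^{i-1}\mathscr{H}_l(\mathcal{N}_i) \cong \mathscr{H}_l(\mathcal{N}_{i-1})$, so Corollary \ref{d3inversesystemlemma} again applies and $\widehat{\mathscr{H}_l}(\mathcal{N}) \in \Coh(\widehat{\mathcal{D}},G)$.

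Second, I would verify the quasi-inverse property by combining the level-wise Corollary \ref{d3inversesystemlemma} identification with the classical natural isomorphisms $\mathscr{H}_l \circ i_l^{\#} \cong \id$ and $i_l^{\#} \circ \mathscr{H}_l \cong \id$ from Theorem \ref{d3alaBorho-Brylinski}. For $\mathcal{M} \in \Coh(\widehat{\mathcal{D}},G)$ this gives
\[
\widehat{\mathscr{H}_l}(\hat{i_l}^{\#}\mathcal{M}) \;=\; \invlim \mathscr{H}_l\!\bigl(i_l^{\#}\mathcal{M}_i\bigr) \;\cong\; \invlim \mathcal{M}_i \;\cong\; \mathcal{M},
\]
and the other composition is handled identically. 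Naturality in morphisms and in the equivariance data is inherited level-by-level, so no new argument is required.

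The main obstacle is purely the bookkeeping of ensuring that every operation (pullback $i_l^{\#}$, its quasi-inverse $\mathscr{H}_l$, multiplication by $\pi^j$, passage to the inverse limit, and the equivariance structure) commutes in the expected way. The two crucial inputs are exactness of $\mathscr{H}_l$, which makes the transition isomorphisms in the tower survive, and Corollary \ref{d3inversesystemlemma}, which both controls coherence of the inverse limit and recovers the levels $\mathcal{M}_i$ as $\mathcal{M}/\pi^i\mathcal{M}$. Once these compatibilities are verified, the proposition reduces entirely to the classical statement at each finite level.
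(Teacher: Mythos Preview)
Your proposal is correct and follows essentially the same approach as the paper: apply the classical equivalence of Theorem~\ref{d3alaBorho-Brylinski} level-by-level, use Lemma~\ref{d3preservationofinversesystemlemma} and the exactness of $\mathscr{H}_l$ to check the tower compatibilities, and invoke Corollary~\ref{d3inversesystemlemma} to control coherence and recover the levels of the inverse limit. One minor slip: $\mathscr{H}_l$ is an equivalence between the \emph{uncompleted} categories $\Coh(\mathcal{D},G)$ and $\Coh(i_l^{\#}\mathcal{D},B)$, whose abelianness comes from \cite[Section~9]{Sta1}, not from Proposition~\ref{d3categoryCohhatDLisAbelian}.
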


\begin{proof}

Let $\mathcal{M} \in \Coh(\widehat{\mathcal{D}},G)$ and $\mathcal{M}_i:= \mathcal{M}/\pi^i \mathcal{M}$ for $i \geq 1$. By construction $\mathcal{M}_i \in \Coh(\mathcal{D},G)$, so applying Theorem \ref{d3alaBorho-Brylinski}, we obtain $\mathcal{N}_i:=i_l^{\#} \mathcal{M}_i \in \Coh(i_l^{\#} \mathcal{D},B)$. Further, we have $\pi^i \mathcal{N}_i=0$ since $\pi^i \mathcal{M}_i=0$. By Lemma \ref{d3preservationofinversesystemlemma}, $\mathcal{N}_i/\pi^{i-1} \mathcal{N}_i \cong \mathcal{N}_{i-1}$. Therefore, we obtain by Corollary \ref{d3inversesystemlemma} that $$\widehat{\mathcal{N}}:=\hat{i_l}^{\#} \mathcal{M} = \invlim \mathcal{N}_i \in \Coh(\widehat{i_l^{\#} \mathcal{D}},B). $$

We have by Corollary \ref{d3inversesystemlemma} that $\mathcal{N}_i \cong \mathcal{N}/\pi^i \mathcal{N}$, so we get:

\begin{equation}
\begin{split}
\widehat{\mathscr{H}_l} \circ \hat{i_l}^{\#} (\mathcal{M})&= \widehat{\mathscr{H}_l} (\mathcal{N}) \\
  &\cong \invlim \mathscr{H}_l (\mathcal{N}_i) \\
  &\cong \invlim \mathcal{M}_i \text{ (by Theorem \ref{d3alaBorho-Brylinski}) } \\
  &\cong \mathcal{M}.
\end{split}
\end{equation} 

Thus $\widehat{\mathscr{H}_l}$ is a left quasi-inverse for $\hat{i_l}^{\#}.$ A similar argument shows that $\widehat{\mathscr{H}_l}$ is also a right quasi-inverse.
\end{proof}

\subsection{Some category theory lemmas}

To prove an affinoid version of the Borho-Brylinski theorem, we need some lemmas for $R$-linear Abelian categories.

Throughout this subsection we fix $\mathcal{A}$  an $R$-linear small Abelian category and let $\mathcal{B}$ be the full $\mathcal{A}$-subcategory of \emph{$\pi$-torsion elements}, i.e. $\ob(\mathcal{B})= \{ A \in \mathcal{A} | \quad \pi^n \id_{A}=0 \text{ for some } n \in \mathbb{N} \}$ (here $\id_{A}$ denotes the identity morphism going from $A$ to $A$). We also call a morphism $f \in \Hom(A,B)$ \emph{$\pi$-torsion} if there exists $n \in \mathbb{N}$ such that $\pi^n f=0$.

 Throughout this subsection we use that in an $R$-linear category, we have for $f \in \Hom(A,B) $, $g \in \Hom(B,C)$ and $r \in R$

\begin{equation*}
         r (g \circ f)= (rg) \circ f= g \circ (rf).
\end{equation*}         

Define a new category $\mathcal{A}_K$, where $\ob(\mathcal{A}_K)=\ob(\mathcal{A})$ and Hom sets given by $\Hom_{\mathcal{A}_K}(M,N):=\Hom_\mathcal{A}( M,N) \uset{R} K$, for all $M,N \in  \ob(\mathcal{A})$. Furthermore, denote $\mathcal{F}$ the natural functor $\mathcal{A} \to \mathcal{A}_K$. 

The aim of this subsection is to establish the following theorem:

\begin{theorem}
\label{d3R-linearequivalencetheorem}
There exists an equivalence of categories between the quotient category $\mathcal{A}/\mathcal{B}$ and the category $\mathcal{A}_K$.

\end{theorem}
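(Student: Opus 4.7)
The plan is to construct the equivalence by factoring the natural functor $\mathcal{F}: \mathcal{A} \to \mathcal{A}_K$ through the Serre quotient $\mathcal{A}/\mathcal{B}$, and then to verify that the induced functor $\tilde{\mathcal{F}}: \mathcal{A}/\mathcal{B} \to \mathcal{A}_K$ is fully faithful and essentially surjective. The first preliminary step is to check that $\mathcal{B}$ is a Serre subcategory of $\mathcal{A}$. For a subobject $\iota: A' \hookrightarrow A$ with $A \in \mathcal{B}$, the relation $\iota \circ (\pi^n \id_{A'}) = (\pi^n \id_A) \circ \iota = 0$ together with $\iota$ being mono gives $\pi^n \id_{A'} = 0$; a dual argument handles quotients. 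For an extension $0 \to A' \to A \to A'' \to 0$ with $\pi^n \id_{A'} = 0 = \pi^m \id_{A''}$, the endomorphism $\pi^m \id_A$ factors through $A'$, so $\pi^{n+m} \id_A = \pi^n \cdot (\pi^m \id_A) = 0$. Hence $\mathcal{A}/\mathcal{B}$ is defined in the usual way.

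Next the observation is that $\mathcal{F}$ annihilates $\mathcal{B}$: if $\pi^n \id_A = 0$ in $\Hom_{\mathcal{A}}(A,A)$, then in $\Hom_{\mathcal{A}_K}(A,A) = \Hom_{\mathcal{A}}(A,A) \otimes_R K$ one has $\id_A \otimes 1 = (\pi^n \id_A) \otimes \pi^{-n} = 0$, making $A$ a zero object in $\mathcal{A}_K$. By the universal property of the Serre quotient applied to the additive functor $\mathcal{F}$, one obtains an $R$-linear functor $\tilde{\mathcal{F}}: \mathcal{A}/\mathcal{B} \to \mathcal{A}_K$ with $\mathcal{F} = \tilde{\mathcal{F}} \circ Q$, and $\tilde{\mathcal{F}}$ is essentially surjective since both categories have the same class of objects.

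The main content is the fully faithful check. Here the plan is to exploit a cofinal subsystem in the colimit formula
\[
\Hom_{\mathcal{A}/\mathcal{B}}(M,N) \;=\; \varinjlim_{(M', N'')} \Hom_{\mathcal{A}}(M', N/N''),
\]
namely the system indexed by $n \in \mathbb{N}$ with $M' = \pi^n M := \Image(\pi^n \id_M)$ and $N'' = N[\pi^n] := \ker(\pi^n \id_N)$; cofinality follows because any $M'$ with $\pi^n \id_{M/M'}=0$ satisfies $\pi^n M \subseteq M'$, and dually for $N''$. The key algebraic facts are the two canonical factorisations $\iota_M \circ p_M = \pi^n \id_M$ arising from the epi-mono decomposition $M \twoheadrightarrow \pi^n M \hookrightarrow M$ of $\pi^n \id_M$, and $\iota_N \circ \tau_N \circ \rho_N = \pi^n \id_N$ arising from the dual decomposition through $N/N[\pi^n] \xrightarrow{\sim} \pi^n N$. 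Using these, a morphism $[f']: M \to N$ in $\mathcal{A}/\mathcal{B}$ represented by $f': \pi^n M \to N/N[\pi^n]$ lifts to the honest morphism $g := \iota_N \tau_N f' p_M \in \Hom_{\mathcal{A}}(M,N)$, and the identities above give $g = \pi^{2n} [f']$ in $\mathcal{A}/\mathcal{B}$, proving fullness. For faithfulness, a morphism $f: M \to N$ in $\mathcal{A}$ vanishes in $\mathcal{A}/\mathcal{B}$ exactly when $f \circ \iota_M: \pi^n M \to N$ lands in $N[\pi^m]$ for some $n, m$; composing with $p_M$ and with $\pi^m \id_N$ then yields $\pi^{n+m} f = 0$ in $\Hom_{\mathcal{A}}(M,N)$, so $f \otimes 1$ vanishes in $\Hom_{\mathcal{A}}(M,N) \otimes_R K$.

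The main obstacle will be the fullness step: reconciling the roof-style description of morphisms in $\mathcal{A}/\mathcal{B}$ with the much simpler description of morphisms in $\mathcal{A}_K$ as honest morphisms divided by powers of $\pi$. The two factorisation identities do all the heavy lifting once extracted, but establishing them cleanly in an arbitrary $R$-linear abelian category (rather than in a category of modules where epi-mono factorisations are evident) requires a small amount of care.
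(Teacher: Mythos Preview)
Your proposal is correct; the argument goes through essentially as you describe, including the cofinality of the subsystem $(\pi^n M,\, N[\pi^n])$ and the two epi--mono factorisations of $\pi^n\id_M$ and $\pi^n\id_N$. One small remark on presentation: in your ``fullness'' paragraph you start from an arbitrary $[f']\in\Hom_{\mathcal{A}/\mathcal{B}}(M,N)$ and produce $g\in\Hom_{\mathcal{A}}(M,N)$ with $Q(g)=\pi^{2n}[f']$, which really shows that $\Hom_{\mathcal{A}/\mathcal{B}}(M,N)$ is spanned over $K$ by the image of $Q$. To conclude fullness of $\tilde{\mathcal{F}}$ you then need (implicitly) that $\pi$ acts invertibly on $\Hom_{\mathcal{A}/\mathcal{B}}$ and that $\tilde{\mathcal{F}}$ is $K$-linear, so that $\tilde{\mathcal{F}}(\pi^{-k}Q(g))=g\otimes\pi^{-k}$ hits an arbitrary element of $\Hom_{\mathcal{A}}(M,N)\otimes_R K$. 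This is fine but would read more cleanly if stated.

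The paper's route is different: it identifies $\mathcal{A}/\mathcal{B}$ with the Verdier-style localisation $\mathcal{A}_S$ at the multiplicative system $S$ of $\mathcal{B}$-isomorphisms, and then works entirely with left fractions $s^{-1}f$. Fullness becomes the single observation that $(\pi^n\id_B)^{-1}f$ is a preimage of $f\otimes\pi^{-n}$; faithfulness uses that if $\mathcal{F}(f)=0$ then $\pi^n f=0$, whence $f\circ(\pi^n\id)=0$ with $\pi^n\id\in S$, forcing $s^{-1}f=0$ in $\mathcal{A}_S$. Your approach via the Gabriel colimit formula and the explicit cofinal subsystem is more hands-on and avoids invoking the $\mathcal{A}/\mathcal{B}\simeq\mathcal{A}_S$ identification; the paper's calculus-of-fractions approach is shorter because it lets a single morphism $\pi^n\id\in S$ do all the work, rather than unpacking the epi--mono factorisations on both sides.
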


One should notice that apriori it is not clear why the quotient category $\mathcal{A}/\mathcal{B}$ is well-defined, so we should begin by proving that $\mathcal{B}$ is a Serre subcategory of $\mathcal{A}$. We start by proving a very useful lemma:

\begin{lemma}
\label{d3pitorsionmorphism}

Let $B \in \mathcal{B}$, $C \in \mathcal{A}$ and consider morphisms $f \in \Hom(B,C)$ and $g \in \Hom(C,B)$. Then $f$ and $g$ are $\pi$-torsion.

\end{lemma}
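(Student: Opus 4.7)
The plan is to exploit directly the defining property of $\mathcal{B}$: since $B\in\mathcal{B}$, there exists $n\in\mathbb{N}$ with $\pi^n\id_B=0$. The key observation is that in any $R$-linear category scalar multiplication by $\pi^n$ on a Hom-set factors through composition with $\pi^n\id$ on either side.

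More precisely, I would first rewrite the morphism $f\in\Hom(B,C)$ as $f=f\circ\id_B$, so that by $R$-linearity of composition
\[
\pi^n f \;=\; \pi^n(f\circ\id_B) \;=\; f\circ(\pi^n\id_B) \;=\; f\circ 0 \;=\; 0.
\]
Thus $f$ is $\pi$-torsion (with the same exponent $n$ that annihilates $\id_B$). An identical argument applied on the other side shows
\[
\pi^n g \;=\; \pi^n(\id_B\circ g) \;=\; (\pi^n\id_B)\circ g \;=\; 0\circ g \;=\; 0,
\]
so $g$ is $\pi$-torsion as well.

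There is essentially no obstacle here: the entire content is the elementary identity $r(\alpha\circ\beta)=(r\alpha)\circ\beta=\alpha\circ(r\beta)$ for $r\in R$ in an $R$-linear category, which is listed in the excerpt as a standing fact. The only thing to be careful about is that one uses the \emph{same} $n$ on both sides, which is automatic because $n$ depends only on $B$, not on $f$ or $g$. This lemma will presumably be the workhorse in the subsequent verification that $\mathcal{B}$ is a Serre subcategory of $\mathcal{A}$ and that morphisms in $\mathcal{A}/\mathcal{B}$ are captured by $\Hom_{\mathcal{A}}(-,-)\otimes_R K$, en route to Theorem \ref{d3R-linearequivalencetheorem}.
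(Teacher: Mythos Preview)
Your proof is correct and is essentially identical to the paper's: both pick $n$ with $\pi^n\id_B=0$ and use the $R$-bilinearity of composition to factor $\pi^n f$ and $\pi^n g$ through $\pi^n\id_B=0$. The only cosmetic difference is on which side you place $\id_B$ (you write $f=f\circ\id_B$ and $g=\id_B\circ g$, which is the standard convention for $f\colon B\to C$, $g\colon C\to B$; the paper writes it the other way).
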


\begin{proof}

Since $B \in \mathcal{B}$, there exists $n \in \mathbb{N}$ such that $\pi^n \id_B=0$. We have 
 
  $$\pi^nf=\pi^n (\id_B \circ f)= (\pi^n \id_B) \circ f=0,$$

so $f$ is indeed $\pi$-torsion. A similar argument shows that $g$ is also $\pi$-torsion.  
\end{proof}

\begin{proposition}
The category $\mathcal{B}$ is a Serre subcategory of $\mathcal{A}$.

\end{proposition}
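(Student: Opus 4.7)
The plan is to verify directly the Serre subcategory axioms: that $\mathcal{B}$ is a non-empty full subcategory (immediate from the definition, since the zero object has $\id_0 = 0$) which is closed under subobjects, quotients, and extensions in $\mathcal{A}$. The main tool will be the identity trick of Lemma \ref{d3pitorsionmorphism}, namely that in an $R$-linear category, $r$-multiplication on morphisms can be shifted between factors, so annihilation of $\id_A$ by a power of $\pi$ controls every morphism into or out of $A$.

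First I would handle subobjects. Given $A \in \mathcal{B}$ with $\pi^n \id_A = 0$ and a monomorphism $i : A' \hookrightarrow A$, I compute
$$i \circ (\pi^n \id_{A'}) = \pi^n (i \circ \id_{A'}) = \pi^n (\id_A \circ i) = (\pi^n \id_A) \circ i = 0,$$
so since $i$ is monic, $\pi^n \id_{A'} = 0$ and hence $A' \in \mathcal{B}$. The quotient case is dual: if $p : A \twoheadrightarrow A''$ is an epimorphism, then $(\pi^n \id_{A''}) \circ p = p \circ (\pi^n \id_A) = 0$, and $p$ being epic forces $\pi^n \id_{A''} = 0$.

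The closure under extensions is the step that requires a touch of diagram chasing in an abstract Abelian category. Given a short exact sequence $0 \to A' \xrightarrow{i} A \xrightarrow{p} A'' \to 0$ with $\pi^n \id_{A'} = 0$ and $\pi^m \id_{A''} = 0$, I observe that
$$p \circ (\pi^m \id_A) = \pi^m (p \circ \id_A) = (\pi^m \id_{A''}) \circ p = 0,$$
so by the universal property of the kernel $i : A' \to A$, there is a morphism $g : A \to A'$ with $i \circ g = \pi^m \id_A$. Then
$$\pi^{n+m} \id_A = \pi^n (i \circ g) = i \circ (\pi^n g) = i \circ ((\pi^n \id_{A'}) \circ g) = 0,$$
whence $A \in \mathcal{B}$.

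I do not anticipate a serious obstacle: the only subtle point is ensuring that the argument is purely categorical (avoiding elementwise reasoning), which is handled by Lemma \ref{d3pitorsionmorphism} and the kernel factorisation above. Everything else is formal manipulation of the $R$-action on Hom-sets.
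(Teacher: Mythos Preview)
Your proof is correct. The subobject and quotient arguments match the paper's exactly. For the extension step, however, you take a cleaner route than the paper does: where you factor $\pi^m \id_A$ through the kernel $i$ via its universal property and then kill the resulting map $g:A\to A'$ with $\pi^n\id_{A'}=0$, the paper instead sets $h=\pi^n\id_B$ (with $n=\max(n_1,n_2)$), observes $h\circ f=g\circ h=0$, and then invokes a separate Lemma~\ref{d3h2iszero} --- proved via the Freyd--Mitchell embedding --- to conclude $h^2=0$, i.e.\ $\pi^{2n}\id_B=0$. Your argument is more elementary (no embedding theorem needed) and yields the sharper exponent $n+m$; the paper's version has the minor advantage of isolating the ``$h^2=0$'' statement as a reusable lemma about endomorphisms annihilated on both ends of a short exact sequence.
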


\begin{proof}

Consider a short exact sequence: 

 $$ 0 \to  A \xrightarrow{f} B \xrightarrow{g} C \to 0.$$

One needs to prove that $B \in \mathcal{B}$ if and only if $A,C \in \mathcal{B}
$. 

First assume that $B \in \mathcal{B}$. By Lemma \ref{d3pitorsionmorphism}, $f$ is $\pi$-torsion so there exists $n \in \mathbb{N}$ such that $\pi^n f=0$, so that
        
               $$0= \pi^n f =\pi^n ( f \circ \id_A)= f \circ \pi^n \id_A.$$        

Since $f$ is a monomorphism, we can left cancel to get $\pi^n \id_A=0$, so $A \in \mathcal{B}$. 

By Lemma \ref{d3pitorsionmorphism}, $g$ is $\pi$-torsion, so there exists $n \in \mathbb{N}$ such that $\pi^n g=0$, so that
            
            $$0= \pi^n g = \pi^n (\id_C \circ g) = \pi^n \id_C \circ g.$$
            
As $g$ is an epimorphism, we can right cancel to obtain  $\pi^n \id_C=0$, so $C \in \mathcal{B}$.

Now assume that $A,C \in \mathcal{B}$. By Lemma \ref{d3pitorsionmorphism}, $f,g$ are $\pi$-torsion so there exist $n_1,n_2 \in \mathbb{N}$ such that $\pi^{n_1} f=\pi^{n_2}g =0$. Let $n=\text{max}(n_1,n_2)$ and $h:= \pi^n \id_B$. We have

\begin{equation}
\begin{aligned}
0&=\pi^n f= \pi^n (\id_B \circ f)= (\pi^n \id_B) \circ f= h \circ f. \\
0&=\pi^n g= \pi^n (g \circ \id_B)= g \circ (\pi^n  \id_B) = g \circ h.
\end{aligned}
\end{equation}

Since $h \in \Hom(B,B)$, we have by Lemma \ref{d3h2iszero} below that $h^2=0$, so $\pi^{2n} \id_B=0$. Thus $B \in \mathcal{B}$.
\end{proof}

\begin{lemma}
\label{d3h2iszero}
Let $\mathcal{C}$ be a small Abelian category and let 
$$ 0 \to  A \xrightarrow{f} B \xrightarrow{g} C \to 0$$

be a short exact sequence. Let $h \in \Hom(B,B)$ such that $ h \circ f = g \circ h=0$. Then $h^2=0$.
\end{lemma}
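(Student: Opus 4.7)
The plan is to use the universal properties of kernel and image in an abelian category. Since the sequence is exact, $f$ is the kernel of $g$ and $g$ is the cokernel of $f$; in particular, any morphism into $B$ whose composition with $g$ vanishes factors uniquely through $f$, and any morphism out of $B$ that vanishes on the image of $f$ factors uniquely through $g$.

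The key observation is that the hypothesis $g \circ h = 0$ says exactly that $h: B \to B$, regarded as a morphism into $B$ followed by $g$, yields zero. By the kernel property of $f$, there exists a (unique) morphism $h': B \to A$ such that $h = f \circ h'$. Substituting this factorisation into $h^2$ gives
$$h^2 = h \circ h = h \circ (f \circ h') = (h \circ f) \circ h' = 0 \circ h' = 0,$$
where the last equality uses the remaining hypothesis $h \circ f = 0$. This completes the proof.

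There is no serious obstacle here: the argument is a one-line diagram chase once one invokes the universal property of the kernel. One could alternatively factor through the cokernel $g$ on the other side, using $h \circ f = 0$ to obtain $h = h'' \circ g$ for some $h'': C \to B$, and then $h^2 = h'' \circ (g \circ h) = 0$; either direction works symmetrically.
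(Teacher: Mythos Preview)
Your proof is correct and in fact cleaner than the paper's. The paper invokes the Freyd--Mitchell embedding theorem to reduce to a module category and then argues with elements: for $b\in B$ one has $g(h(b))=0$, hence $h(b)\in\ker g=\im f$, so $h(b)=f(a)$ for some $a$, and then $h(h(b))=h(f(a))=0$. Your argument is the same diagram chase phrased intrinsically via the universal property of the kernel, so it avoids the appeal to Freyd--Mitchell entirely (and hence does not actually need the smallness hypothesis on $\mathcal{C}$). The two approaches are morally identical; yours just stays inside the abelian category.
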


\begin{proof}

By the Freyd-Mitchell embedding we may assume that $\mathcal{C}=S$-mod for some ring $S$. In particular, we may assume that $A,B$ and $C$ are Abelian groups. Let $b \in B$; then $g(h(b))=0$, so $h(b) \in \ker(g)=\im(f)$. Thus, there exists $a \in A$ with $f(a)=h(b)$. Then 

$$ 0=h(f(a))=h(h(b)),$$

proving that  $h^2=0$.
\end{proof}

Let $S$ be the collection of $\mathcal{B}$-isomorphisms, i.e. morphisms $f$ in $\mathcal{A}$ such that $\ker(f)$ and coker$(f)$ are in $\mathcal{B}$. Then $S$ is a multiplicative system in the sense defined in \cite[Appendix II]{Wei}. Furthermore, by \cite[Example A.1.2]{Wei}, the quotient category $\mathcal{A}/\mathcal{B}$ is equivalent to the localised category $\mathcal{A}_S$. Denote $\loc: \mathcal{A} \to \mathcal{A}_S$ the localisation functor.

\begin{proof}[Proof of Theorem \ref{d3R-linearequivalencetheorem}]

By the discussion above, it is enough to prove that there exists an equivalence of categories between $\mathcal{A}_S $ and $ \mathcal{A}_K$. By construction, we have that for any $s \in S$, $\mathcal{F}(s)$ is an isomorphism, so by the universal property of localisation there exists a unique functor $\mathcal{G}: \mathcal{A}_S \to \mathcal{A}_K$ defined by $\mathcal{G}(s^{-1}f)= \mathcal{F}(s)^{-1} F(f)$ for any $s^{-1}f$ in $\Hom_{\mathcal{A}_S}(X,Y)$. 

We claim that $\mathcal{G}$ is an equivalence of categories. It is clear that $\mathcal{G}$ is essentially surjective, so we need to prove that it is fully faithful.

Let $\phi \in \Hom_{\mathcal{A}_K}(A,B)= \Hom_{\mathcal{A}}(A,B)\uset{R} K$. Then there exists $n \in \mathbb{N}$ such that $ \phi = f \otimes \pi^{-n}$ for some $f \in \Hom_{\mathcal{A}}(A,B)$. By construction, we have that $\pi^{n} \id_B \in S$, so we get that

\begin{equation}
\begin{aligned}
\mathcal{G}((\pi^n \id_B)^{-1}  f)&=\mathcal{F}(\pi^n \id_B)^{-1} \circ  \mathcal{F}(f) \\
&=(\id_B \otimes \pi^{-n}) \circ ( f \otimes 1) \\
&=f \otimes \pi^{-n} \\
&= \phi.
\end{aligned}
\end{equation}                                                                                                                                              Thus, $\mathcal{G}$ is indeed full. Lastly, we need to prove that $\mathcal{G}$ is faithful. As all the categories involved are Abelian it is enough to prove that for $s^{-1}f \in \Hom_{\mathcal{A}_{\mathcal{S}}}(X,Y)$, if $\mathcal{G}(s^{-1}f)=0$, then $s^{-1}f=0$. Here we assume $s \in \Hom_\mathcal{A}(X',X)$, $s \in S$ and $f \in \Hom_\mathcal{A}(X',Y)$. We have $0=\mathcal{G}(s^{-1}f)$= $\mathcal{F}(s)^{-1} \circ \mathcal{F}(f)$, so $F(f)=0$. Therefore, we get that $f$ is $\pi$-torsion, so there exists $n \in \mathbb{N}$ such that $\pi^n f=0$. Then:

          $$f \circ \pi^n \id_Y = \pi^n f \circ \id_Y= 0,$$

and since $\pi^n \id_Y \in S$, we obtain by \cite[Lemma 2.1.5]{Mil}  that $s^{-1}f=0$. Thus, $\mathcal{G}$ is indeed faithful.
\end{proof}

We finish the subsection by proving a categorical proposition that we will need in the next subsection.

\begin{proposition}
\label{d3equivalenceofquotientcategories}
Let $\mathcal{F}: \mathcal{A} \to \mathcal{B}$ be an equivalence of Abelian categories. Let $\mathcal{C}$ and $\mathcal{D}$ be Serre subcategories of $\mathcal{A}$ and $\mathcal{B}$, respectively such that $\mathcal{F}$ restricts to an equivalence $\mathcal{F}: \mathcal{C} \to \mathcal{D}$. Then $\mathcal{F}$ induce an equivalence between the quotient categories $\mathcal{A}/\mathcal{C}$ and $\mathcal{B}/\mathcal{D}$.

\end{proposition}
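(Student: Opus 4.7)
The plan is to use the universal property of the Serre quotient: the localisation functor $Q_{\mathcal{A}}:\mathcal{A}\to \mathcal{A}/\mathcal{C}$ is initial among exact functors out of $\mathcal{A}$ that annihilate every object of $\mathcal{C}$, and similarly for $Q_{\mathcal{B}}:\mathcal{B}\to \mathcal{B}/\mathcal{D}$. (Equivalently, via the calculus of fractions description used already in the paper, $\mathcal{A}/\mathcal{C}$ is the localisation at the $\mathcal{C}$-isomorphisms, i.e.\ those morphisms whose kernel and cokernel lie in $\mathcal{C}$.)

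First, I would form the composite $Q_{\mathcal{B}}\circ \mathcal{F}:\mathcal{A}\to \mathcal{B}/\mathcal{D}$. It is exact, being a composite of exact functors, and the hypothesis $\mathcal{F}(\mathcal{C})\subseteq \mathcal{D}$ ensures that it sends every object of $\mathcal{C}$ to a zero object. The universal property then produces an exact functor $\overline{\mathcal{F}}:\mathcal{A}/\mathcal{C}\to \mathcal{B}/\mathcal{D}$, unique up to natural isomorphism, satisfying $\overline{\mathcal{F}}\circ Q_{\mathcal{A}}\cong Q_{\mathcal{B}}\circ \mathcal{F}$. Concretely, $\overline{\mathcal{F}}$ is the identity on objects and acts on a roof $X\xleftarrow{s} X' \xrightarrow{f} Y$ (with $s$ a $\mathcal{C}$-isomorphism) by $\mathcal{F}(X)\xleftarrow{\mathcal{F}(s)} \mathcal{F}(X')\xrightarrow{\mathcal{F}(f)} \mathcal{F}(Y)$; this is well-defined because exactness of $\mathcal{F}$ gives $\ker \mathcal{F}(s)\cong \mathcal{F}(\ker s)\in \mathcal{D}$ and similarly for the cokernel.

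Next, let $\mathcal{G}:\mathcal{B}\to \mathcal{A}$ be a quasi-inverse of $\mathcal{F}$. To apply the same construction in reverse I need $\mathcal{G}(\mathcal{D})\subseteq \mathcal{C}$ (up to isomorphism), which follows since any $D\in\mathcal{D}$ is isomorphic to $\mathcal{F}(C)$ for some $C\in\mathcal{C}$ (using that $\mathcal{F}|_{\mathcal{C}}\to \mathcal{D}$ is essentially surjective), so $\mathcal{G}(D)\cong \mathcal{G}\mathcal{F}(C)\cong C\in \mathcal{C}$, and $\mathcal{C}$ is closed under isomorphisms as a Serre subcategory. The universal property then yields $\overline{\mathcal{G}}:\mathcal{B}/\mathcal{D}\to \mathcal{A}/\mathcal{C}$ with $\overline{\mathcal{G}}\circ Q_{\mathcal{B}}\cong Q_{\mathcal{A}}\circ \mathcal{G}$.

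Finally, I would verify that $\overline{\mathcal{F}}$ and $\overline{\mathcal{G}}$ are mutually quasi-inverse. The natural isomorphisms $\mathcal{G}\circ \mathcal{F}\cong \id_{\mathcal{A}}$ and $\mathcal{F}\circ \mathcal{G}\cong \id_{\mathcal{B}}$ give
\[
\overline{\mathcal{G}}\circ \overline{\mathcal{F}}\circ Q_{\mathcal{A}} \;\cong\; \overline{\mathcal{G}}\circ Q_{\mathcal{B}}\circ \mathcal{F} \;\cong\; Q_{\mathcal{A}}\circ \mathcal{G}\circ \mathcal{F} \;\cong\; Q_{\mathcal{A}},
\]
and by the uniqueness clause of the universal property (applied to $Q_{\mathcal{A}}$ itself) this forces $\overline{\mathcal{G}}\circ \overline{\mathcal{F}}\cong \id_{\mathcal{A}/\mathcal{C}}$; the other direction is symmetric. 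There is no real obstacle here: everything is formal once one has the universal property of the Serre quotient, and the only point requiring care is checking that $\mathcal{G}$ really does send $\mathcal{D}$ into $\mathcal{C}$, which uses that $\mathcal{F}|_{\mathcal{C}}:\mathcal{C}\to \mathcal{D}$ is an \emph{equivalence} (not merely a functor that lands in $\mathcal{D}$) together with the fact that Serre subcategories are closed under isomorphism.
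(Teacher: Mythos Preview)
Your proof is correct and uses the same starting point as the paper --- the universal property of the Serre quotient applied to $Q_{\mathcal{B}}\circ\mathcal{F}$ --- but finishes differently. The paper verifies directly that the induced functor $\overline{\mathcal{H}}$ is an equivalence by checking it is faithful, full, and essentially surjective: faithfulness comes from a cited result in Popescu (since $\ker(Q_{\mathcal{B}}\circ\mathcal{F})=\mathcal{C}$ exactly, which uses that $\mathcal{F}|_{\mathcal{C}}$ is an equivalence), essential surjectivity is immediate, and fullness is argued from the roof description. You instead construct an explicit quasi-inverse $\overline{\mathcal{G}}$ by applying the same universal property to the quasi-inverse $\mathcal{G}$ of $\mathcal{F}$, then use uniqueness to deduce $\overline{\mathcal{G}}\circ\overline{\mathcal{F}}\cong\id$ and vice versa. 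Your route is slightly more self-contained (no external reference needed for faithfulness) and makes transparent exactly where the hypothesis that $\mathcal{F}|_{\mathcal{C}}:\mathcal{C}\to\mathcal{D}$ is an \emph{equivalence} rather than a mere inclusion is used, namely in showing $\mathcal{G}(\mathcal{D})\subseteq\mathcal{C}$; the paper uses the same hypothesis implicitly when asserting $\ker\mathcal{H}=\mathcal{C}$.
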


\begin{proof}
Let $q_{\mathcal{A}}:\mathcal{A} \to \mathcal{A}/\mathcal{C}$ and $q_{\mathcal{B}}:\mathcal{B} \to \mathcal{B}/\mathcal{D}$ denote the localisation functors and let $\mathcal{H}:=q_{\mathcal{B}} \circ \mathcal{F}$. By assumptions, we have $\ker \mathcal{H}=\mathcal{C}$, so by \cite[Exercise 5, Section 4.4]{Popescu}, there exists a faithful and exact functor $\overline{\mathcal{H}}:\mathcal{A}/\mathcal{C}$ such that $\overline{\mathcal{H}} \circ q_{\mathcal{A}}=\mathcal{H}=q_{\mathcal{B}} \circ \mathcal{F}$. Since $q_{\mathcal{B}} \circ \mathcal{F}$ is essentially surjective, we obtain that $\overline{\mathcal{H}}$ is also essentially surjective. Finally, for any morphism $f$ in $\mathcal{B}/\mathcal{D}$, there is a morphism $g$ in $\mathcal{A}/\mathcal{C}$, such that $\overline{\mathcal{H}}(g)=f$, so $\overline{\mathcal{H}}$ is also full.
\end{proof}

\subsection{Affinoid equivariant equivalence a la Borho-Brylinski}

Let $Y$ be an $R$-variety and $L$ a smooth affine algebraic group locally of finite type defined over $R$ and $\mathcal{D}$ a sheaf of $\pi^n$-deformed $L$-equivariant htdo on $Y$. Recall that by Proposition \ref{d3categoryCohhatDLisAbelian} the category of $\hat{L}$-equivariant coherent $\widehat{\mathcal{D}}$-modules, $\Coh(\widehat{\mathcal{D}},L)$, is Abelian.

\begin{definition}

Let $Y$ be a quasi-compact $R$-variety and $L$ an algebraic group acting on $Y$. Let $\Coh( \widehat{\mathcal{D}}, L)^\pi$ be the full subcategory of $\Coh( \widehat{\mathcal{D}_Y}, L) $ consisting of $\pi$-torsion objects. As $Y$ is quasi-compact this is equivalent to the full subcategory of $\Coh( \widehat{\mathcal{D}_Y}, L)$ such that all the sections are $\pi$-torsion.
\end{definition}

\begin{proposition}
\label{d3quotientcategoryequivalence}

There is an equivalence of categories between the quotient category\\ $\Coh( \widehat{\mathcal{D}}, L)  / \Coh( \widehat{\mathcal{D}}, L)^{\pi}  $ and the category $\Coh( \widehat{\mathcal{D}}, L)_K $.

\end{proposition}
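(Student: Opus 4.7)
The strategy is to invoke the abstract Theorem \ref{d3R-linearequivalencetheorem} with $\mathcal{A} = \Coh(\widehat{\mathcal{D}},L)$ and $\mathcal{B} = \Coh(\widehat{\mathcal{D}},L)^{\pi}$. The category $\mathcal{A}$ is $R$-linear by construction (the $R$-action on Hom sets comes from the $R$-action on sections), and it is Abelian by Proposition \ref{d3categoryCohhatDLisAbelian}. The bulk of the argument therefore reduces to matching definitions: showing that $\Coh(\widehat{\mathcal{D}},L)^{\pi}$ is exactly the Serre subcategory of $\pi$-torsion objects in the sense used in Theorem \ref{d3R-linearequivalencetheorem}, namely those $\mathcal{M}$ with $\pi^{N}\id_{\mathcal{M}} = 0$ for some $N$.

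The first key step is this identification. One direction is trivial: if $\pi^{N}\id_{\mathcal{M}} = 0$ as a morphism of $\widehat{\mathcal{D}}$-modules, then $\pi^{N}$ annihilates every local section, so every section is $\pi$-torsion. For the converse, suppose every section of $\mathcal{M}$ is $\pi$-torsion. Since $Y$ is quasi-compact and $\widehat{\mathcal{D}}$ is a sheaf of Noetherian rings, we may choose a finite affine open cover $\{U_{i}\}$ of $Y$ on which $\mathcal{M}(U_{i})$ is a finitely generated $\widehat{\mathcal{D}}(U_{i})$-module. A finite generating set of $\mathcal{M}(U_{i})$ is killed by $\pi^{N_{i}}$ for some $N_{i}$, hence so is all of $\mathcal{M}(U_{i})$. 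Taking $N = \max_{i} N_{i}$ gives $\pi^{N}\mathcal{M} = 0$, i.e.\ $\pi^{N}\id_{\mathcal{M}} = 0$.

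Once this identification is in place, Theorem \ref{d3R-linearequivalencetheorem} applies verbatim and yields an equivalence
\[
\Coh(\widehat{\mathcal{D}},L)/\Coh(\widehat{\mathcal{D}},L)^{\pi} \; \simeq \; \Coh(\widehat{\mathcal{D}},L)_{K},
\]
which is the claim. Under this equivalence, a morphism $\phi \otimes \pi^{-n}$ on the right is represented on the left by the roof $(\pi^{n}\id)^{-1}\phi$, exactly as in the proof of Theorem \ref{d3R-linearequivalencetheorem}.

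The main (and really the only) obstacle is a set-theoretic one: Theorem \ref{d3R-linearequivalencetheorem} was stated for a \emph{small} $R$-linear Abelian category, whereas $\Coh(\widehat{\mathcal{D}},L)$ is only essentially small. The remedy is standard: pass to a small skeleton $\mathcal{A}_{0} \hookrightarrow \Coh(\widehat{\mathcal{D}},L)$, apply Theorem \ref{d3R-linearequivalencetheorem} there, and then use Proposition \ref{d3equivalenceofquotientcategories} (applied to the equivalence $\mathcal{A}_{0} \simeq \Coh(\widehat{\mathcal{D}},L)$ and the corresponding $\pi$-torsion subcategories) to transfer the resulting equivalence back. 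This bookkeeping does not affect the morphism sets and so produces the asserted equivalence without further complications.
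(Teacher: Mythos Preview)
Your proposal is correct and takes essentially the same approach as the paper: the paper's proof is the single line ``This follows directly from Theorem \ref{d3R-linearequivalencetheorem}.'' Your additional work---identifying $\Coh(\widehat{\mathcal{D}},L)^{\pi}$ with the categorical $\pi$-torsion objects via quasi-compactness and coherence, and handling smallness by passing to a skeleton---fills in details that the paper either absorbs into the definition immediately preceding the proposition or leaves implicit.
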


\begin{proof}

This follows directly from Theorem \ref{d3R-linearequivalencetheorem}.
\end{proof}

\begin{definition}

Let $Y$ be a $R$-variety; recall that $\widehat{\mathcal{D}_{K}}= \widehat{\mathcal{D}} \uset{R} K$. A coherent $\hat{L}_K$-equivaraint $\widehat{\mathcal{D}_{K}}$-module is quadruple ($\mathcal{M},\mathcal{M}_0,(\mathcal{M}_i),(\alpha_i))$ such that $\mathcal{M}_0$ is a lattice of $\mathcal{M}$ and $(\mathcal{M}_0,(\mathcal{M}_i),(\alpha_i)) \in \Coh(\widehat{\mathcal{D}},L)$. 

Let ($\mathcal{M},\mathcal{M}_0,(\mathcal{M}_i),(\alpha_i))$ and ($\mathcal{N},\mathcal{N}_0,(\mathcal{N}_i),(\beta_i))$ be $\hat{L}_K$-equivariant $\widehat{\mathcal{D}_{K}}$-modules and let $\phi: \mathcal{M} \to \mathcal{N}$ be a $\widehat{\mathcal{D}_{K}}$-linear morphism. We have by the proof of \cite[Proposition 3.4.5]{Berthelot} that

$$ \Hom_{\widehat{\mathcal{D}}} (\mathcal{M}_0, \mathcal{N}_0) \uset{R} K  \cong  \Hom_{\widehat{\mathcal{D}_{K}}} (\mathcal{M}, \mathcal{N}),$$

so there exists a pair ($\phi_0,x)$, $\phi_0 \Hom_{\widehat{\mathcal{D}}} (\mathcal{M}_0, \mathcal{N}_0)$ and $x \in K$ such that $\phi= \phi_0 \otimes x$. We say that $\phi$ is $\hat{L}_K$-equivariant if $\phi_0$ is $\hat{L}$-equivariant.

We denote $\Coh(\widehat{\mathcal{D}_{K}}_,L)$ the category of coherent $\widehat{\mathcal{D}_{K}}$-modules consisting of $\hat{L}_K$-equivariant objects together with $\hat{L}_K$-equivariant morphisms. 
\end{definition}

We will ignore the equivariance structure when it is well understood from the context and just call $\mathcal{M}$ an $\hat{L}_K$-equivariant $\widehat{\mathcal{D}_K}$-module.

\begin{lemma}
\label{d3tensorKequiv}

Assume that $Y$ is quasi-compact. Then there exists an explicit equivalence of categories between $\Coh(\widehat{\mathcal{D}},L)_K$ and $\Coh(\widehat{\mathcal{D}_{K}},L)$.

\end{lemma}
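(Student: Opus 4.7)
The plan is to construct an explicit functor $F: \Coh(\widehat{\mathcal{D}},L)_K \to \Coh(\widehat{\mathcal{D}_K},L)$ by base change to $K$, and then verify the three standard criteria for an equivalence: well-definedness, essential surjectivity, and fully faithfulness.

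First I would define $F$ on objects by sending $\mathcal{M}_0 = (\mathcal{M}_0,(\mathcal{M}_{0,i}),(\alpha_i)) \in \Coh(\widehat{\mathcal{D}},L)$ to the quadruple
$$F(\mathcal{M}_0) := (\mathcal{M}_0 \uset{R} K, \mathcal{M}_0, (\mathcal{M}_{0,i}), (\alpha_i)),$$
where $\mathcal{M}_0$ itself is a lattice in $\mathcal{M}_0 \uset{R} K$ (the intersection $\cap_n \pi^n \mathcal{M}_0 = 0$ holding because $\mathcal{M}_0$ is $\pi$-adically complete and separated by Corollary \ref{d3inversesystemlemma}). On morphisms, the generator $\phi \otimes x \in \Hom_{\widehat{\mathcal{D}}}(\mathcal{M}_0,\mathcal{N}_0) \uset{R} K$ is sent to the same symbol $\phi \otimes x$ viewed via the isomorphism $\Hom_{\widehat{\mathcal{D}}}(\mathcal{M}_0,\mathcal{N}_0) \uset{R} K \cong \Hom_{\widehat{\mathcal{D}_K}}(F\mathcal{M}_0,F\mathcal{N}_0)$ from the proof of \cite[Proposition 3.4.5]{Berthelot}. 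By the very definition of $\hat{L}_K$-equivariant morphism in the excerpt, this image lies in $\Hom_{\Coh(\widehat{\mathcal{D}_K},L)}(F\mathcal{M}_0, F\mathcal{N}_0)$, so $F$ is a well-defined $K$-linear functor.

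Next I would check that $F$ is fully faithful. Full faithfulness is immediate from the Berthelot Hom isomorphism together with the definition of $\hat{L}_K$-equivariance: an element of $\Hom_{\widehat{\mathcal{D}_K}}(F\mathcal{M}_0,F\mathcal{N}_0)$ that is $\hat{L}_K$-equivariant is, by definition, represented by some $\phi_0 \otimes x$ with $\phi_0 \in \Hom_{\Coh(\widehat{\mathcal{D}},L)}(\mathcal{M}_0,\mathcal{N}_0)$, and this datum visibly sits in the tensored Hom set. Essential surjectivity is also essentially tautological: given an object $(\mathcal{M},\mathcal{M}_0,(\mathcal{M}_i),(\alpha_i))$ of $\Coh(\widehat{\mathcal{D}_K},L)$, by definition the triple $(\mathcal{M}_0,(\mathcal{M}_i),(\alpha_i))$ lies in $\Coh(\widehat{\mathcal{D}},L)$, and the lattice map $\mathcal{M}_0 \uset{R} K \xrightarrow{\sim} \mathcal{M}$ together with the identifications of the reductions and equivariance data supplies the required isomorphism $F(\mathcal{M}_0) \cong (\mathcal{M},\mathcal{M}_0,(\mathcal{M}_i),(\alpha_i))$.

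The only step that needs real care, and which I would expect to be the main obstacle, is that the notion of $\hat{L}_K$-equivariant morphism in the excerpt is defined in terms of a chosen representative $\phi_0 \otimes x$ from the Berthelot isomorphism, so I would have to check that this notion is independent of the chosen representative, and also independent of the chosen lattices $\mathcal{M}_0, \mathcal{N}_0$ (different lattices differ by a power of $\pi$, and the $\hat L$-structure is intrinsic modulo such scaling). This amounts to observing that if $\phi_0 \otimes x = \phi_0' \otimes x'$ in the tensored Hom, then $\pi^N \phi_0$ equals $\pi^N \phi_0'$ (up to the scalar $x/x'$) in $\Hom_{\widehat{\mathcal{D}}}(\mathcal{M}_0,\mathcal{N}_0)$ for large $N$; since $\Hom_{\widehat{\mathcal{D}}}(\mathcal{M}_0,\mathcal{N}_0)$ is $\pi$-torsion free (it injects into the Hom set after $\otimes K$) and the $\hat L$-equivariance condition is $R$-linear and stable under multiplication by powers of $\pi$, the notion is well-defined. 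With this verification in place, all three criteria are satisfied, and $F$ provides the desired equivalence of categories.
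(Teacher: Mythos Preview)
Your proposal is correct and follows essentially the same approach as the paper: define the base-change functor $\mathcal{M}_0 \mapsto \mathcal{M}_0 \uset{R} K$ together with the tautological lattice data, and verify essential surjectivity and fully faithfulness directly from the definitions and the Berthelot Hom isomorphism. Your treatment is in fact more careful than the paper's, which dispatches faithfulness in one clause (``since the tensors in $\Hom(M,N) \uset{R} K$ are all pure'') and does not explicitly address the well-definedness of the $\hat L_K$-equivariance condition with respect to the choice of representative $\phi_0 \otimes x$; your discussion of that point is a genuine addition.
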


\begin{proof}
Define $\mathcal{F}: \Coh(\widehat{\mathcal{D}},L)_K \to\Coh(\widehat{\mathcal{D}_{K}},L)$ by $F(\mathcal{M})= \mathcal{M} \uset{R} K$ for any object $\mathcal{M} \in \Coh(\widehat{\mathcal{D}},L)_K$ and 

 $$\mathcal{F} ( f \otimes x)= f \otimes x \text{, for all } f  \otimes x \in \Hom(\mathcal{M},\mathcal{N}) \uset{R} K.$$
 
By construction, it is clear that $F$ is essentially surjective and since the tensors in $\Hom(M,N) \uset{R} K$ are all pure, $ \mathcal{F}$ is also faithful. Furthermore, it follows by definition of the morphisms in $\Coh(\widehat{\mathcal{D}_{K}},H)$ that $\mathcal{F}$ is also full.
\end{proof}

Until the end of the section, we assume that $\mathcal{D}$ is a $\pi^n$-deformed $G$-equivariant htdo on $X \times X$.

\begin{lemma}
\label{d3restrictionBoBr}

The functor $\hat{i_l}^{\#}$ in Proposition \ref{d3piadicBorhoBrylinski} restricts to an equivalence between $\Coh(\widehat{\mathcal{D}},G)^\pi$ and $\Coh(\widehat{i_l^{\#}\mathcal{D}},B)^\pi$. A quasi-inverse is given $\widehat{\mathscr{H}_l}.$

\end{lemma}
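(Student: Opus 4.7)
The plan is to show that both the equivalence $\hat{i_l}^{\#}$ of Proposition \ref{d3piadicBorhoBrylinski} and its quasi-inverse $\widehat{\mathscr{H}_l}$ preserve $\pi$-torsion, from which the restricted equivalence follows automatically. The key observation is that $X \times X$ is quasi-compact, so an object $\mathcal{M} \in \Coh(\widehat{\mathcal{D}},G)$ lies in $\Coh(\widehat{\mathcal{D}},G)^\pi$ if and only if there is some $N \in \mathbb{N}$ with $\pi^N \mathcal{M} = 0$. I plan to exploit this uniform bound to collapse the inverse limits appearing in the definitions of $\hat{i_l}^{\#}$ and $\widehat{\mathscr{H}_l}$.

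First I would take $\mathcal{M} = \invlim \mathcal{M}_i \in \Coh(\widehat{\mathcal{D}},G)^\pi$ with $\pi^N \mathcal{M} = 0$. Since by Corollary \ref{d3inversesystemlemma} one has $\mathcal{M}_i \cong \mathcal{M}/\pi^i \mathcal{M}$, the inverse system $(\mathcal{M}_i)$ stabilises at $\mathcal{M}_N$ for $i \geq N$ with all transition maps being identities. Consequently the inverse system $(i_l^{\#} \mathcal{M}_i)$ also stabilises at $i_l^{\#} \mathcal{M}_N$, and therefore
\[
\hat{i_l}^{\#}(\mathcal{M}) = \invlim i_l^{\#}(\mathcal{M}_i) \cong i_l^{\#}(\mathcal{M}_N).
\]
Since $\pi^N \mathcal{M}_N = 0$ and $i_l^{\#}$ is $R$-linear, it follows that $\pi^N \hat{i_l}^{\#}(\mathcal{M}) = 0$, so $\hat{i_l}^{\#}(\mathcal{M}) \in \Coh(\widehat{i_l^{\#}\mathcal{D}},B)^\pi$. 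The same argument, applied in reverse using $\widehat{\mathscr{H}_l}(\mathcal{N}) = \invlim \mathscr{H}_l(\mathcal{N}/\pi^i\mathcal{N})$, shows $\widehat{\mathscr{H}_l}$ carries $\Coh(\widehat{i_l^{\#}\mathcal{D}},B)^\pi$ into $\Coh(\widehat{\mathcal{D}},G)^\pi$.

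Finally, since Proposition \ref{d3piadicBorhoBrylinski} already provides natural isomorphisms $\widehat{\mathscr{H}_l} \circ \hat{i_l}^{\#} \cong \id$ and $\hat{i_l}^{\#} \circ \widehat{\mathscr{H}_l} \cong \id$ on the whole categories, these natural isomorphisms restrict to the full subcategories of $\pi$-torsion objects, producing the desired equivalence $\Coh(\widehat{\mathcal{D}},G)^\pi \simeq \Coh(\widehat{i_l^{\#}\mathcal{D}},B)^\pi$. I do not expect any substantive obstacle here: the only point requiring care is the passage between ``$\pi^N \id_{\mathcal{M}} = 0$'' and ``$\pi^N \mathcal{M} = 0$'', which is legitimate because $X \times X$ is quasi-compact and $\mathcal{M}$ is coherent, as already remarked in the definition preceding Proposition \ref{d3quotientcategoryequivalence}.
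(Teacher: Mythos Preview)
Your proposal is correct and follows essentially the same line as the paper's own proof: both arguments use that $\pi$-torsion forces the inverse system $(\mathcal{M}/\pi^i\mathcal{M})$ to stabilise, so that $\hat{i_l}^{\#}$ reduces to a single application of $i_l^{\#}$ on a $\pi$-torsion object, and then the analogous argument for $\widehat{\mathscr{H}_l}$ together with Proposition~\ref{d3piadicBorhoBrylinski} gives the restricted equivalence. Your write-up is in fact slightly more careful about the role of quasi-compactness and $R$-linearity than the paper's version.
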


\begin{proof}
Let $\mathcal{M} \in \Coh(\widehat{\mathcal{D}},G)^\pi$ and define $\mathcal{M}_i:= \mathcal{M}/\pi^i \mathcal{M}$. By definition, there exists $m \in \mathbb{N}^*$ such that for $j \geq m$, $\mathcal{M}_j=\mathcal{M}$. Let $\mathcal{N}_i=i_l^{\#} \mathcal{M}_i$;
we have $\hat{i_l}^{\#} \mathcal{M} = \invlim \mathcal{N}_i$ and by Corollary \ref{d3inversesystemlemma}, $\mathcal{N}_i=\hat{i_l}^{\#} \mathcal{M}/ \pi^i \hat{i_l}^{\#} \mathcal{M}$. Further by construction, we have that for $j \geq m$, $\mathcal{N}_j=i_l^{\#} \mathcal{M}$, therefore $\hat{i_l}^{\#} \mathcal{M} \in \Coh(\widehat{i_l^{\#}\mathcal{D}},B)^\pi$.

An analogous argument proves that for $\mathcal{N} \in \Coh(\widehat{i_l^{\#}\mathcal{D}},B)^\pi$, we have $\widehat{\mathscr{H}_l}(\mathcal{N}) \in \Coh(\widehat{\mathcal{D}},G)^\pi$. The conclusion follows from Proposition \ref{d3piadicBorhoBrylinski}.
\end{proof}

\begin{theorem}
\label{d3affinoidBorhoBrylinskitheorem}

There is an equivalence of categories between $\Coh(\widehat{\mathcal{D}_K},G)$ and $\Coh(\widehat{i_l^{\#}\mathcal{D}_{K}},B)$. 

\end{theorem}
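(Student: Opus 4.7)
The plan is to assemble the equivalence out of four machinery pieces already in place: the $\pi$-adic Borho--Brylinski equivalence (Proposition \ref{d3piadicBorhoBrylinski}), its restriction to $\pi$-torsion subcategories (Lemma \ref{d3restrictionBoBr}), the general descent of equivalences to Serre quotients (Proposition \ref{d3equivalenceofquotientcategories}), and the identification of the quotient by $\pi$-torsion with the $K$-linear category (Proposition \ref{d3quotientcategoryequivalence} together with Lemma \ref{d3tensorKequiv}). The flag variety $X$ is projective over $R$, hence $X \times X$ is quasi-compact, which is precisely the hypothesis needed to invoke Lemma \ref{d3tensorKequiv} on both sides.

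First I would apply Proposition \ref{d3piadicBorhoBrylinski} to obtain the equivalence
\[
\hat{i}_l^{\#}: \Coh(\widehat{\mathcal{D}},G) \xrightarrow{\sim} \Coh(\widehat{i_l^{\#}\mathcal{D}},B),
\]
with quasi-inverse $\widehat{\mathscr{H}_l}$. By Lemma \ref{d3restrictionBoBr}, this equivalence restricts to an equivalence between the Serre subcategories $\Coh(\widehat{\mathcal{D}},G)^{\pi}$ and $\Coh(\widehat{i_l^{\#}\mathcal{D}},B)^{\pi}$. I would then invoke Proposition \ref{d3equivalenceofquotientcategories} to conclude that $\hat{i}_l^{\#}$ induces an equivalence of the quotient categories
\[
\Coh(\widehat{\mathcal{D}},G)\,/\,\Coh(\widehat{\mathcal{D}},G)^{\pi} \xrightarrow{\sim} \Coh(\widehat{i_l^{\#}\mathcal{D}},B)\,/\,\Coh(\widehat{i_l^{\#}\mathcal{D}},B)^{\pi}.
\]

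Next I would use Proposition \ref{d3quotientcategoryequivalence} to identify each of these quotient categories with the corresponding $K$-linear category: $\Coh(\widehat{\mathcal{D}},G)_K$ on the left and $\Coh(\widehat{i_l^{\#}\mathcal{D}},B)_K$ on the right. Finally, since $X \times X$ is quasi-compact, Lemma \ref{d3tensorKequiv} supplies the further identifications
\[
\Coh(\widehat{\mathcal{D}},G)_K \simeq \Coh(\widehat{\mathcal{D}_K},G), \qquad \Coh(\widehat{i_l^{\#}\mathcal{D}},B)_K \simeq \Coh(\widehat{i_l^{\#}\mathcal{D}_K},B).
\]
Stringing these equivalences together delivers the desired equivalence $\Coh(\widehat{\mathcal{D}_K},G) \simeq \Coh(\widehat{i_l^{\#}\mathcal{D}_K},B)$, and one can describe the functor explicitly: the $\pi$-adic pullback $\hat{i}_l^{\#}$ lifts through the tensor-with-$K$ functor to yield the induced equivalence.

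Since every individual step has already been established, there is no genuine obstacle — the proof is essentially a formal diagram chase. The only subtle point worth flagging is verifying that the induced functor on $\widehat{\mathcal{D}_K}$-modules agrees (under the identifications of Lemma \ref{d3tensorKequiv}) with the expected $\pi$-adic pullback, which follows from the compatibility of $\hat{i}_l^{\#}$ with tensoring by $K$ and the naturality of the comparison isomorphism in Lemma \ref{d3tensorKequiv}. Thus the proof can be kept short, consisting essentially in citing the four results above in sequence.
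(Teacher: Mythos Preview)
Your proposal is correct and follows essentially the same route as the paper: assemble the equivalence by passing from $\Coh(\widehat{\mathcal{D}_K},G)$ to $\Coh(\widehat{\mathcal{D}},G)_K$ via Lemma \ref{d3tensorKequiv}, then to the Serre quotient $\Coh(\widehat{\mathcal{D}},G)/\Coh(\widehat{\mathcal{D}},G)^\pi$ via Proposition \ref{d3quotientcategoryequivalence}, then across to the $B$-side using Proposition \ref{d3piadicBorhoBrylinski}, Lemma \ref{d3restrictionBoBr}, and Proposition \ref{d3equivalenceofquotientcategories}, and finally back up. The only cosmetic difference is that the paper justifies quasi-compactness of $X$ by noting that $G$ is affine and $G \to G/B$ is surjective, whereas you invoke projectivity; in the general setup of this section $B$ is merely a closed subgroup satisfying Assumption \ref{d3loctrivialassumption}, so the paper's argument is the safer one, but the conclusion is the same.
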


\begin{proof}

To simplify the proof, we use $\cong$ to denote an equivalence of categories. Since $G$ is affine and the quotient map $G \to G/B$ is surjective, we obtain that  $X$ is quasi-compact, thus so is  $X \times X$. We have by Lemma \ref{d3tensorKequiv}: 

\begin{equation}
\label{d3affinoidBoBr1}
 \Coh(\widehat{\mathcal{D}_{K}},G) \cong \Coh(\widehat{\mathcal{D}},G)_K   \text{ and } \Coh(\widehat{i_l^{\#}\mathcal{D}_{K}},B) \cong \Coh(\widehat{i_l^{\#}\mathcal{D}},B)_K.
 \end{equation}

Furthermore, we have by Proposition \ref{d3quotientcategoryequivalence} that 

\begin{equation} 
\label{d3affinoidBobr2}
\begin{split}
\Coh(\widehat{\mathcal{D}},G)_K &\cong \Coh(\widehat{\mathcal{D}},G)/ \Coh(\widehat{\mathcal{D}},G)^ \pi,  \\
\Coh(\widehat{i_l^{\#}\mathcal{D}},B)_K &\cong \Coh(\widehat{i_l^{\#}\mathcal{D}},B)/ \Coh(\widehat{i_l^{\#} \mathcal{D}},B)^\pi.
\end{split} 
\end{equation}

Next, we have by Proposition \ref{d3piadicBorhoBrylinski} that there is an equivalence of categories $\hat{i_l}^{\#}: \Coh(\widehat{\mathcal{D}},G) \cong \Coh(\widehat{i_l^{\#} \mathcal{D}},B)$ and by Lemma \ref{d3restrictionBoBr} this restricts to an equivalence $ \Coh(\widehat{\mathcal{D}},G)^\pi \cong \Coh(\widehat{i_l^{\#}\mathcal{D}},B)^\pi$, so applying Proposition \ref{d3equivalenceofquotientcategories}, we obtain an equivalence between the quotient categories:  

\begin{equation} 
\label{d3affinoidBoBr3}
  \Coh(\widehat{\mathcal{D}},G)/ \Coh(\widehat{\mathcal{D}},G)^ \pi  \cong   \Coh \widehat{(i_l^{\#} \mathcal{D}},B)/ \Coh( \widehat{i_l^{\#} \mathcal{D}},B)^\pi.            \end{equation}

Therefore, by combining equations \eqref{d3affinoidBoBr1}, \eqref{d3affinoidBobr2} and \eqref{d3affinoidBoBr3}, we get 

\begin{equation*}
 \Coh(\widehat{\mathcal{D}_{K}},G) \cong \Coh(\widehat{i_l^{\#} \mathcal{D}_{K}},B). \qedhere
\end{equation*}
\end{proof}

\begin{remark}
\label{d3definitionofaffinoidpullback}
Denote $\hat{i}^{\#}_{l,K}$ the equivalence functor from the category $\Coh(\widehat{\mathcal{D}_{K}},G)$ to the category $\Coh(i_l^{\#} \widehat{\mathcal{D}_{K}},B)$. Let $\mathcal{M} \in \Coh(\widehat{\mathcal{D}_{K}},G)$ and let $\mathcal{M}_0$ be the corresponding lattice of $\mathcal{M}$. Then under the equivalence of categories above we have that 
  
   $$\hat{i}^{\#}_{l,K} \mathcal{M} = (\hat{i_l}^{\#} \mathcal{M}_0) \uset{R} K.$$
\end{remark}

Let us finish the section by proving an affinoid version of Corollary \ref{d3classicalzeroglosection}.

\begin{corollary}
\label{d3affinoidzeroglosection}
Let $\mathcal{M} \in \Coh(\widehat{\mathcal{D}_{K}},G) $ and assume that $\Gamma(X,\hat{i}^{\#}_{l,K} \mathcal{M})=0$. Then $\Gamma(X, \mathcal{M})=0$.
\end{corollary}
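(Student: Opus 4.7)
The plan is to reduce to the classical Corollary \ref{d3classicalzeroglosection} by working modulo powers of $\pi$ at each level and then taking an inverse limit at the end. (The stated conclusion should presumably read $\Gamma(X \times X, \mathcal{M}) = 0$, matching the classical analogue.)

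First I would translate the hypothesis through the lattice structure. By Remark \ref{d3definitionofaffinoidpullback}, if $\mathcal{M}_0 \in \Coh(\widehat{\mathcal{D}}, G)$ is a lattice of $\mathcal{M}$ then $\hat{i_l}^{\#}\mathcal{M}_0 \in \Coh(\widehat{i_l^{\#}\mathcal{D}}, B)$ is a lattice of $\hat{i}^{\#}_{l,K}\mathcal{M}$. Because $X$ and $X \times X$ are quasi-compact and $-\otimes_R K$ is an exact filtered colimit of copies of $R$, taking global sections commutes with this base change on coherent sheaves. The hypothesis $\Gamma(X,\hat{i}^{\#}_{l,K}\mathcal{M})=0$ therefore amounts to $\Gamma(X, \hat{i_l}^{\#}\mathcal{M}_0)$ being $\pi$-torsion, and the desired conclusion amounts to $\Gamma(X \times X, \mathcal{M}_0)$ being $\pi$-torsion.

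Next I would choose the lattice carefully. Pick a $\pi$-torsion-free lattice $\mathcal{N}_0 \subseteq \hat{i}^{\#}_{l,K}\mathcal{M}$ (e.g.\ by quotienting $\hat{i_l}^{\#}\mathcal{M}_0$ by its $\pi$-torsion subsheaf). The inclusion $\mathcal{N}_0 \hookrightarrow \hat{i}^{\#}_{l,K}\mathcal{M}$ and left exactness of $\Gamma$ immediately give $\Gamma(X, \mathcal{N}_0) = 0$. Using the quasi-inverse $\widehat{\mathscr{H}_l}$ of Proposition \ref{d3piadicBorhoBrylinski}, set $\mathcal{M}_0 := \widehat{\mathscr{H}_l}(\mathcal{N}_0)$; this is a lattice of $\mathcal{M}$ with $\hat{i_l}^{\#}\mathcal{M}_0 = \mathcal{N}_0$, and its level-$i$ quotients $\mathcal{M}_{0,i} := \mathcal{M}_0/\pi^i\mathcal{M}_0 = \mathscr{H}_l(\mathcal{N}_{0,i})$ satisfy $i_l^{\#}\mathcal{M}_{0,i} = \mathcal{N}_{0,i}$, where $\mathcal{N}_{0,i} := \mathcal{N}_0/\pi^i\mathcal{N}_0$.

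The endgame is to apply Corollary \ref{d3classicalzeroglosection} at each level $i$: provided $\Gamma(X, \mathcal{N}_{0,i}) = 0$, it delivers $\Gamma(X \times X, \mathcal{M}_{0,i}) = 0$. Since $\Gamma$ commutes with inverse limits in the category of presheaves, taking $\invlim_i$ would give $\Gamma(X \times X, \mathcal{M}_0) = 0$, and tensoring with $K$ finishes the proof. The hard part will be securing $\Gamma(X, \mathcal{N}_{0,i}) = 0$ for every $i$. From the short exact sequence $0 \to \mathcal{N}_0 \xrightarrow{\pi^i} \mathcal{N}_0 \to \mathcal{N}_{0,i} \to 0$ combined with $\Gamma(X, \mathcal{N}_0) = 0$, one identifies $\Gamma(X, \mathcal{N}_{0,i})$ with the $\pi^i$-torsion of $H^1(X, \mathcal{N}_0)$. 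The crux is therefore to arrange that $H^1(X, \mathcal{N}_0)$ is $\pi$-torsion-free; I expect to handle this either by a further refinement of the lattice (coherence of $\mathcal{N}_0$ over the Noetherian sheaf $\widehat{i_l^{\#}\mathcal{D}}$ forces the $\pi$-primary torsion in $H^1(X,\mathcal{N}_0)$ to be bounded, and one can absorb it by rescaling the lattice by an appropriate power of $\pi$) or, in the flag-variety case, by invoking the cohomological vanishing supplied by the affinoid Beilinson-Bernstein theorem.
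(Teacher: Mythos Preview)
Your diagnosis of the typo is correct (the conclusion should be $\Gamma(X\times X,\mathcal{M})=0$), and your overall plan—pass to a lattice, work at each level $i$, invoke Corollary~\ref{d3classicalzeroglosection}, and take an inverse limit—is exactly the paper's strategy. The divergence, and the gap, is in how you try to force the classical hypothesis at each level.

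You reduce everything to showing $H^1(X,\mathcal{N}_0)$ is $\pi$-torsion-free, and then offer two ways to arrange this. Neither works. Rescaling the lattice by $\pi^k$ does nothing: if $\mathcal{N}_0$ is $\pi$-torsion-free then multiplication by $\pi^k$ is an isomorphism $\mathcal{N}_0\xrightarrow{\sim}\pi^k\mathcal{N}_0$, so $H^1$ is unchanged. More generally there is no operation on lattices that kills torsion in $H^1$ without already knowing something about that $H^1$. The appeal to affinoid Beilinson--Bernstein does not help either: the exactness in Theorem~\ref{d3affinoidnonequivariantBB} is a statement after tensoring with $K$, and gives no control over integral $H^1$; moreover Corollary~\ref{d3affinoidzeroglosection} is stated and used for a general $\pi^n$-deformed $G$-htdo $\mathcal{D}$, not just the flag-variety sheaves $\mathcal{D}^\lambda_n$ with dominant $\lambda$.

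The paper avoids $H^1$ entirely. It takes any lattice $\mathcal{M}_0$, sets $\mathcal{N}:=\hat{i_l}^{\#}\mathcal{M}_0$, and uses only that $\Gamma(X,\mathcal{N})$ is finitely generated over a Noetherian ring, hence its $\pi$-torsion is bounded: $\pi^m\Gamma(X,\mathcal{N})=0$ for some $m$. From this one deduces $\Gamma(X,\pi^m\mathcal{N}_j)=0$ for $j\geq m$ (where $\mathcal{N}_j=\mathcal{N}/\pi^j\mathcal{N}$). Now $\pi^m\mathcal{M}_j$ is itself an object of $\Coh(\mathcal{D},G)$ with $i_l^{\#}(\pi^m\mathcal{M}_j)=\pi^m\mathcal{N}_j$, so Corollary~\ref{d3classicalzeroglosection} applied to $\pi^m\mathcal{M}_j$ gives $\Gamma(X\times X,\pi^m\mathcal{M}_j)=0$, i.e.\ $\pi^m\Gamma(X\times X,\mathcal{M}_j)=0$ for all large $j$. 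Passing to the inverse limit yields $\pi^m\Gamma(X\times X,\mathcal{M}_0)=0$, and tensoring with $K$ finishes. The point is that one never needs level-wise vanishing, only a \emph{uniform} bound on the torsion, and this comes for free from Noetherianity of global sections rather than from any cohomological input.
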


\begin{proof}
Let $\mathcal{M}_0$ be the corresponding lattice of $\mathcal{M}$ and define $\mathcal{M}_i:=\mathcal{M}_0/ \pi^i \mathcal{M}_0$ and $\mathcal{N}_i:=i_l^{\#} \mathcal{M}_i$. By construction, we have $\mathcal{N}:=\hat{i_l}^{\#} \mathcal{M}= \invlim \mathcal{N}_i$ and by Corollary \ref{d3inversesystemlemma}, $\mathcal{N}_i= \mathcal{N}/\pi^i \mathcal{N}$. 

By assumption, we know that $\Gamma(X,\hat{i}^{\#}_{l,K} \mathcal{M})=\Gamma(X, \mathcal{N}) \uset{R} K=0$. Since $\mathcal{N} \in \Coh(\widehat{\mathcal{D}},L)$, the sections of $\mathcal{N}$ are finitely generated over $\widehat{\mathcal{D}}$; in particular, there exists $m \in \mathbb{N}$ such that $\pi^m \Gamma(X, \mathcal{N})=0$, so $\Gamma(X,\pi^ m \mathcal{N})=0.$. Since $\Gamma(X, \mathcal{N})= \invlim \Gamma(X,\mathcal{N}_i)$, we obtain that for $j \geq m$, $\Gamma(X,\mathcal{N}_j)=\Gamma(X,\mathcal{N})$, so $\Gamma(X,\pi^m \mathcal{N}_j)=0$. Therefore, by applying Corollary \ref{d3classicalzeroglosection}, we obtain $\Gamma(X \times X, \pi^m \mathcal{M}_j)=0$ for $j \geq m$, so $\pi^m \Gamma(X \times X, \mathcal{M}_j)=0$. Since $\Gamma(X \times X,\mathcal{M}_0)=\invlim \Gamma(X \times X, \mathcal{M}_j)$, we conclude that $\pi^m \Gamma(X \times X,\mathcal{M}_0)=0$, so $\Gamma(X \times X, \mathcal{M})=\Gamma(X \times X,\mathcal{M}_0) \uset{R} K=0$.
\end{proof}


\section{Affinoid equivariant Beilinson-Bernstein localisation}
\label{d3sectionaffinoidequivariantbb}

Throughout this section we let $G$ be a connected, simply connected, smooth affine algebraic group scheme locally of finite type defined over $\Spec R$ and we let $\mathfrak{g}=\Lie(G)$ be its Lie algebra. We also let $X$ be a quasi-compact $R$-variety on which $G$ acts.

\subsection{Affinoid localisation mechanism}

We fix $L$ a closed subgroup of $G$ and $n$ a deformation parameter. We denote $U(\mathfrak{g})_n$ the $\pi^n$-th deformation of $U(\mathfrak{g})$ and we let $(\mathcal{D},i_{\mathfrak{g}})$ be a $\pi^n$-deformed $L$-htdo on $X$. Throughout this section we also make the following assumption:

\begin{assumption}
\label{d3assumptionglobalsectioncoherent}
Let $\mathcal{M}$ be a coherent $\mathcal{D}$-module. Then $\Gamma(X, \mathcal{M})$ is a finitely generated $U(\mathfrak{g})_n$-module.
\end{assumption}

Recall that $\widehat{U(\mathfrak{g})_n}$ denotes the $\pi$-adic completion of $U(\mathfrak{g})_n$; further we denoted $\widehat{U(\mathfrak{g})_{n,K}}:=\widehat{U(\mathfrak{g})_n} \uset{R} K$. Similar to Definition \ref{d3piadicequivariantDmod}, we define the notion $\hat{L}$-equivariant $\widehat{U(\mathfrak{g})_n}$-modules by extending Definition \ref{d3nonequivariantLUgmodule}.

\begin{definition}
A $\hat{L}$-equivariant $\widehat{U(\mathfrak{g})_n}$-module is quadruple $(M,(M_i),(\alpha_i),(\rho_i))$ such that $M$ is a finitely generated $\widehat{U(\mathfrak{g})_n}$-module, $(M_i)$ is an inverse system of $U(\mathfrak{g})_n$-modules and

\begin{itemize}
\item{$(M_i,\alpha_i,\rho_i)$ is a finitely generated $L$-equivariant $U(\mathfrak{g})_n$-module and $\pi^i M_i=0.$ }
\item{The transition maps induce isomorphisms $M_i/\pi^{i-1} M_i \cong M_{i-1}$ of \\
$L$-equivariant $U(\mathfrak{g})_n$-modules}.
\item{$M \cong \invlim M_i$ as $\widehat{U(\mathfrak{g})_n}$-modules.}
\end{itemize}

A morphism between two $\hat{L}$-equivariant $\widehat{U(\mathfrak{g})_n}$-modules $(M,(M_i),(\alpha_i), (\rho_{Mi}))$ and\\ $(N,(N_i),(\beta_i),(\rho_{Ni}))$ is a map of $f:M \to N$ of $\widehat{U(\mathfrak{g})_n}$-modules such that there is a family of compatible $L$-equivariant morphisms $f_i:M_i \to N_i$ such that $f=\invlim f_i$. We call such a morphism $\hat{L}$-equivariant and denote $\Mod_{\fg}(\widehat{U(\mathfrak{g})_n},L$) the subcategory of $\Mod_{\fg}(\widehat{U(\mathfrak{g})_n})$ consisting of $\hat{L}$-equivariant modules and morphisms. As for equivariant $\widehat{\mathcal{D}}$-modules, we will omit the equivariance structure when it is clear in the context and just call $M$ a $\hat{L}$-equivariant $\widehat{U(\mathfrak{g})_n}$-module.
\end{definition}

We also define the notion of equivariant modules for the ring $\hugnK$.

\begin{definition}

A $\hat{L}_K$-equivariant $\hugnK$-module  is a quintuple \\$(M,M_0,(M_i),(\alpha_i), (\rho_{Mi}))$ such that  $M_0$ is a lattice for $M$ and \\$(M_0,(M_i),(\alpha_i), (\rho_{Mi})) \in \Mod_{\fg}(\widehat{U(\mathfrak{g})_n},L)$.

Next, let $(M,M_0,(M_i),(\alpha_i), (\rho_{Mi}))$ and $(N,N_0,(N_i),(\beta_i), (\rho_{Ni}))$ be $\hat{L}_K$-equivariant $\hugnK$-modules, and let $f:M \to N$ be a $\hugnK$ linear morphism.  As $M$ and $N$ are finitely generated, we have

$$\Hom_{\widehat{U(\mathfrak{g})_n}}(M_0,N_0) \uset{R} K \cong \Hom_{\hugnK}(M,N),$$

so there exists $f_0: M_0 \to N_0$ and $x \in K$ such that $f=f_0 \otimes x$. We say that $f$ is $\hat{L}_K$-equivariant if $f_0$ is $\hat{L}$-equivariant. Denote $\Mod_{\fg}(\hugnK,L$) the subcategory of finitely generated $\hugnK$ modules consisting of $\hat{L}_K$-equivariant objects along with $\hat{L}_K$-equivariant morphisms. We will ignore the equivariance structure when it is well understood from the context.
\end{definition}

Before stating the affinoid localisation mechanism, we need one more lemma:

\begin{lemma}
\label{d3inversesystemtensor}

Let $B$ be a Noetherian $R$-algebra and $A$ a finitely generated $B$-module. Let $A_i=A/\pi^i A$, $B_i=B/\pi^iB$, $\hat{A}=\invlim A_i$, $\hat{B}=\invlim B_i$. Further, let $C_i$ be a inverse system of $B_i$-modules and $C=\invlim C_i$. Assume that $C_i=C/\pi^i C$. Then: 

$$\invlim(A_i \uset{B_i} C_i) \cong \hat{A} \uset{\hat{B}} C.$$

\end{lemma}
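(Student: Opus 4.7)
The plan is to use a finite presentation of $A$ to identify both sides of the claimed isomorphism with the cokernel of a common map $C^m \to C^n$.

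Since $B$ is Noetherian and $A$ is finitely generated, $A$ is finitely presented; fix a presentation $B^m \xrightarrow{\phi} B^n \to A \to 0$. I would first compute the right-hand side: $\pi$-adic completion is exact on finitely generated $B$-modules, so $\hat{A} \cong A \uset{B} \hat{B}$, and tensoring the completed presentation with $C$ over $\hat{B}$ yields a right exact sequence $C^m \xrightarrow{\phi_C} C^n \to \hat{A} \uset{\hat{B}} C \to 0$, where $\phi_C$ is the base change of $\phi$. Hence $\hat{A} \uset{\hat{B}} C \cong \coker \phi_C$.

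For the left-hand side, reducing the presentation modulo $\pi^i$ and tensoring with $C_i$ over $B_i$ gives compatible right exact sequences
\[
C_i^m \xrightarrow{\phi_i} C_i^n \to A_i \uset{B_i} C_i \to 0.
\]
Since $\invlim C_i^j = C^j$ for $j=m,n$ (finite products commute with limits) and $\invlim \phi_i = \phi_C$ (both are described by the same $\hat{B}$-matrix), it suffices to show that $\invlim$ carries this right exact sequence to a right exact sequence. I would verify this by splitting into two short exact sequences, namely $0 \to \ker \phi_i \to C_i^m \to \im \phi_i \to 0$ and $0 \to \im \phi_i \to C_i^n \to A_i \uset{B_i} C_i \to 0$, and invoking Mittag--Leffler: thanks to the hypothesis $C_i = C/\pi^i C$, the transition maps on $(C_i^m)$, $(C_i^n)$, $(\im \phi_i)$, and $(A_i \uset{B_i} C_i)$ are all surjective, so $\varprojlim^1$ vanishes on each. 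The long exact sequence attached to the second short exact sequence then gives $\invlim(A_i \uset{B_i} C_i) \cong C^n / \invlim \im \phi_i$.

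The main obstacle will be identifying $\invlim \im \phi_i$ with $\im \phi_C$ as submodules of $C^n$: the inclusion $\im \phi_C \subseteq \invlim \im \phi_i$ is immediate, but the reverse inclusion, via the first short exact sequence, amounts to the vanishing $\varprojlim^1 \ker \phi_i = 0$. I would tackle this by a direct inductive lifting argument: given a compatible system $(y_i)$ with $y_i = \phi_i(x_i)$ in $\im \phi_i$, I would inductively produce compatible lifts $(x_i)\in \invlim C_i^m = C^m$ by first taking an arbitrary lift of $x_i$ to $\tilde{x}_{i+1}\in C_{i+1}^m$ (using the surjectivity of $C_{i+1}^m \to C_i^m$) and then correcting $\tilde{x}_{i+1}$ by an element of $\pi^i C_{i+1}^m$ so that $\phi_{i+1}(\tilde{x}_{i+1})=y_{i+1}$, exploiting the $\pi$-adic completeness of $C$ (and, if necessary, supplementing with an Artin--Rees-type stability argument on $\im\phi$). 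The resulting element of $C^m$ maps to $(y_i)$ under $\phi_C$, yielding $\invlim \im \phi_i = \im \phi_C$ and hence $\invlim(A_i \uset{B_i} C_i) \cong \coker \phi_C \cong \hat{A}\uset{\hat{B}}C$.
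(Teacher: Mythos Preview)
Your route via a finite presentation and Mittag--Leffler works but is considerably longer than the paper's. The paper argues in three lines: since $B$ is Noetherian and $A$ is finitely generated, $\hat A\cong A\otimes_B\hat B$, whence $\hat A\otimes_{\hat B}C\cong A\otimes_B C$; tensoring the right-exact sequence $C\xrightarrow{\pi^i}C\to C_i\to 0$ with $A$ over $B$ gives $(A\otimes_B C)/\pi^i(A\otimes_B C)\cong A\otimes_B C_i\cong A_i\otimes_{B_i}C_i$; and one concludes because $A\otimes_B C$ is $\pi$-adically complete, so it equals the inverse limit of its $\pi$-power quotients. No presentation of $A$ and no Mittag--Leffler are used. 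What you isolate as the ``main obstacle,'' namely $\invlim\im\phi_i=\im\phi_C$, is precisely this completeness assertion rewritten through your chosen presentation, so the two arguments pivot on the same point---the paper simply asserts it, while you try to verify it by hand. Be aware that your inductive lifting is not airtight as sketched: knowing $\phi_{i+1}(\tilde x_{i+1})-y_{i+1}\in\pi^iC_{i+1}^n\cap\im\phi_{i+1}$, you need a preimage lying in $\pi^iC_{i+1}^m$, and that inclusion $\pi^iC_{i+1}^n\cap\im\phi_{i+1}\subseteq\pi^i\im\phi_{i+1}$ is itself an Artin--Rees-type statement not available without finiteness hypotheses on $C$; the paper's bare completeness claim carries exactly the same caveat.
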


\begin{proof}
Since $B$ is a Noetherian $R$-algebra, we have $\hat{A} \cong A \uset{B} \hat{B}$, so $$\hat{A} \uset{\hat{B}} C \cong  A \uset{B} \hat{B} \uset{\hat{B}} C \cong A \uset{B} C.$$

Consider the exact sequence 
                  $$ C \xrightarrow{\cdot \pi } C \to C/\pi^{i} C \to 0.$$

Since tensor product is right exact, one obtains:

              $$ A \uset{B} C \xrightarrow{\cdot \pi } A \uset{B} C \to A \uset{B} C_i \to 0 .$$

Therefore, we get $$[A \uset{B} C/ \pi^i (A \uset{B} C)] \cong A \uset{B} C_i \cong  A_i \uset{B_i}C_i.$$

The claim follows since $A \uset{B} C$ is $\pi$-adically complete.
\end{proof}

Recall by Definition \ref{d3htdodef} that there exists a map $i_{\mathfrak{g}}:U(\mathfrak{g})_n \to \mathcal{D}$.  By functoriality, the map $i_{\mathfrak{g}}:U(\mathfrak{g})_n \to \mathcal{D}$ induces a map $\widehat{i_{\mathfrak{g}}}: \hugn \to \widehat{\mathcal{D}}$ and thus a map $\widehat{i_{\mathfrak{g}}}: \hugnK \to \widehat{\mathcal{D}_K}$. 

\begin{definition}
We define two functors:

\begin{equation}
\begin{split}
&\Loc: \Mod(\hugnK) \to \Mod(\widehat{\mathcal{D}_K}), \qquad \Loc(M)= \widehat{\mathcal{D}_K} \uset{\hugnK} M, \\
&\Gamma: \Mod(\widehat{\mathcal{D}_K}) \to \Mod(\hugnK), \qquad \Gamma(\mathcal{M})=\Gamma(X,\mathcal{M}). 
\end{split}
\end{equation}

\end{definition}

\begin{proposition}

\label{d3equivariantaffinoidlocalisationmechanism}

\begin{enumerate}[label=\roman*)]
\item[]
\item{Let $M \in \Mod_{\fg}(\hugnK,L)$. Then $\Loc(M) \in \Coh(\widehat{\mathcal{D}_K},L)$.}
\item{Let $\mathcal{M} \in \Coh(\widehat{\mathcal{D}_K},L)$. Then $\Gamma(X, \mathcal{M}) \in \Mod_{\fg}(\hugnK,L)$.}

\end{enumerate}

\end{proposition}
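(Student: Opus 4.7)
The plan is to reduce both statements to the classical (non-completed) equivariant localisation mechanism recorded in Proposition \ref{d3equivariancepreservedunderlocandglo}, applied to each term of the defining inverse system, and then to repackage everything via Corollary \ref{d3inversesystemlemma} and Lemma \ref{d3inversesystemtensor}.

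For part (i), start with $M \in \Mod_{\fg}(\hugnK,L)$ and its underlying $\hat{L}$-equivariant lattice $M_0 \in \Mod_{\fg}(\hugn,L)$ with inverse system $(M_i)$ of $L$-equivariant $U(\fr{g})_n$-modules satisfying $\pi^i M_i=0$ and $M_i/\pi^{i-1}M_i \cong M_{i-1}$. For each $i$, set $\mathcal{M}_i := \mathcal{D} \uset{U(\fr{g})_n} M_i$; by Proposition \ref{d3equivariancepreservedunderlocandglo}(i) each $\mathcal{M}_i$ lies in $\Coh(\mathcal{D},L)$, and by construction $\pi^i \mathcal{M}_i = 0$. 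The isomorphism $M_i/\pi^{i-1}M_i \cong M_{i-1}$ combined with right-exactness of $\mathcal{D} \uset{U(\fr{g})_n} (-)$ and the fact that $\pi$ acts on $M_i$ through $U(\fr{g})_n/\pi^i U(\fr{g})_n$ yields $\mathcal{M}_i/\pi^{i-1}\mathcal{M}_i \cong \mathcal{M}_{i-1}$ as $L$-equivariant $\mathcal{D}$-modules. Define $\mathcal{M}_0 := \invlim \mathcal{M}_i$; by Corollary \ref{d3inversesystemlemma} this is a coherent $\widehat{\mathcal{D}}$-module, and by construction $(\mathcal{M}_0,(\mathcal{M}_i)) \in \Coh(\widehat{\mathcal{D}},L)$. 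Applying Lemma \ref{d3inversesystemtensor} with $B = U(\fr{g})_n$, $A = \mathcal{D}(U)$ for $U \subset X$ affine open, and $C_i = M_i$, I obtain $\mathcal{M}_0 \cong \widehat{\mathcal{D}} \uset{\hugn} M_0$ locally. Tensoring with $K$ gives $\Loc(M) = \widehat{\mathcal{D}_K} \uset{\hugnK} M \cong \mathcal{M}_0 \uset{R} K$, which exhibits $\mathcal{M}_0$ as a lattice and places $\Loc(M)$ in $\Coh(\widehat{\mathcal{D}_K},L)$.

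For part (ii), given $\mathcal{M} \in \Coh(\widehat{\mathcal{D}_K},L)$ with lattice $\mathcal{M}_0 \in \Coh(\widehat{\mathcal{D}},L)$ and inverse system $(\mathcal{M}_i)$, set $N_i := \Gamma(X,\mathcal{M}_i)$. By Assumption \ref{d3assumptionglobalsectioncoherent} each $N_i$ is a finitely generated $U(\fr{g})_n$-module, and Proposition \ref{d3equivariancepreservedunderlocandglo}(ii) gives it an $L$-equivariant structure. The quasi-compactness of $X$ ensures that $\Gamma(X,-)$ commutes with $\pi$-adic inverse limits of coherent modules, so $N_0 := \invlim N_i \cong \Gamma(X,\mathcal{M}_0)$ and $N_0 \uset{R} K \cong \Gamma(X,\mathcal{M})$. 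Finite generation of $N_0$ over $\hugn$ follows again from Proposition \ref{d3BerthelotI}(ii) once the inverse system relation $N_i/\pi^{i-1}N_i \cong N_{i-1}$ is verified; I then tensor with $K$ to get finite generation over $\hugnK$.

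The main obstacle in both parts is verifying the inverse-system compatibility $N_i/\pi^{i-1}N_i \cong N_{i-1}$ for part (ii): the short exact sequence $0 \to \pi^{i-1}\mathcal{M}_i \to \mathcal{M}_i \to \mathcal{M}_{i-1} \to 0$ identifies $\pi^{i-1}\mathcal{M}_i$ with $\mathcal{M}_1$ (up to a twist), so it suffices to show $H^1(X, \pi^{i-1}\mathcal{M}_i) = 0$ or, equivalently, to argue via the tower $(\Gamma(X,\mathcal{M}_i))$ being Mittag-Leffler. I would handle this either by invoking affineness of the relevant opens in the filtered colimit presentation of $\mathcal{D}$-modules or, more directly, by using \cite[Lemme 3.2.3]{Berthelot}-type arguments to get the quotient relation up to a bounded shift and then appealing to Proposition \ref{d3BerthelotI} to conclude. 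For part (i) the analogous compatibility is automatic from right-exactness of the tensor product, so it is considerably easier. Throughout, the $L$-equivariance of the constructed morphisms is inherited from the classical equivariance of each $\mathcal{M}_i$ and $N_i$, and the $\widehat{L}$- (resp. $\widehat{L}_K$-) structure is then determined by the inverse system itself.
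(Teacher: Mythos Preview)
Your approach is essentially the paper's own: for (i) you localise the inverse system $(M_i)$ term by term via Proposition~\ref{d3equivariancepreservedunderlocandglo}(i), obtain the transition isomorphisms $\mathcal{M}_i/\pi^{i-1}\mathcal{M}_i\cong\mathcal{M}_{i-1}$ from right-exactness of the tensor product, and identify the limit with $\widehat{\mathcal{D}}\uset{\hugn}M_0$ via Lemma~\ref{d3inversesystemtensor}; for (ii) you take global sections term by term and invoke Assumption~\ref{d3assumptionglobalsectioncoherent} together with Proposition~\ref{d3equivariancepreservedunderlocandglo}(ii). This is exactly what the paper does.

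You are in fact more careful than the paper on one point: in part (ii) the paper simply asserts $M_i/\pi^{i-1}M_i\cong M_{i-1}$ without justification, whereas you correctly flag that this needs right-exactness of $\Gamma(X,-)$ on the short exact sequence $0\to\pi^{i-1}\mathcal{M}_i\to\mathcal{M}_i\to\mathcal{M}_{i-1}\to 0$. In the paper's intended application (Theorem~\ref{d3affinoidequivariantBB}, where $X$ is the flag scheme and $\lambda$ is dominant) this holds because $\Gamma$ is exact on coherent $\mathcal{D}^{\lambda}_n$-modules by the classical Beilinson--Bernstein argument; in the generality stated, your Berthelot-shift strategy is a reasonable way to proceed.
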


\begin{proof}
Let $M_0$ be the lattice of $M$ such that $M_0 \in \Mod(\widehat{U(\mathfrak{g})_n},L)$. Then

$$(\widehat{\mathcal{D}} \uset{\widehat{U(\mathfrak{g})_n}} M_0) \uset{R} K \cong \widehat{\mathcal{D}_{K}} \uset{\widehat{U(\mathfrak{g})_{n,K}}} M \cong \Loc(M),$$

so $\mathcal{M}:=\widehat{\mathcal{D}} \uset{\widehat{U(\mathfrak{g})_n}} M_0$ is a lattice for $\Loc(M)$, so we need to prove $\mathcal{M}$ is $L$-equivariant.

Let $M_i= M_0/\pi^i M_0$ and $\mathcal{M}_i:= \mathcal{D} \uset{U(\mathfrak{g})_n} M_i$. Then, we  have by applying Lemma \ref{d3inversesystemtensor}  that $\mathcal{M} \cong \invlim \mathcal{M}_i$. Fix $i \in \mathbb{N}^*$; by construction we have  $\pi^i \mathcal{M}_i=0$; next, by definition we have that $M_i$ is a $L$-equivariant finitely generated $U(\mathfrak{g})_n$-module, so $\mathcal{M}_i$ is a quasi-coherent $L$-equivariant $\mathcal{D}$-module by Proposition \ref{d3equivariancepreservedunderlocandglo}. Since $M_i$ is finitely generated as a $U(\mathfrak{g})_n$-module, by picking a presentation of $M_i$ we obtain that $\mathcal{M}_i$ is also coherent.

Finally, consider the short exact sequence: 

$$ M_i \xrightarrow{ \cdot \pi} M_i \to M_{i-1} \to 0.$$

Since tensor product is right exact, we get a short exact sequence:

$$ \mathcal{D} \uset{U(\mathfrak{g})_n} M_i \xrightarrow { \cdot \pi} \mathcal{D} \uset{U(\mathfrak{g})_n} M_i \to \mathcal{D} \uset{U(\mathfrak{g})_n} M_{i-1} \to 0,$$

so $\mathcal{M}_i /\pi^{i-1} \mathcal{M}_i \cong \mathcal{M}_{i-1}$. Thus, we proved that $\mathcal{M}$ is indeed $L$-equivariant, so $\Loc(M)$ is also $L$-equivariant. This proves the first statement.

On the other hand, consider $\mathcal{M} \in \Coh(\widehat{\mathcal{D}_{K}},L)$ and let $M=\Gamma(X,\mathcal{M})$. Further, let $\mathcal{M}_0 \in \Coh(\widehat{\mathcal{D}},L)$ be the corresponding lattice of $\mathcal{M}$ and $M_0= \Gamma(X,\mathcal{M}_0$). By construction $M_0 \uset{R} K \cong M$, so it is enough to prove that $M_0$ is $L$-equivariant. Let $\mathcal{M}_i:= \mathcal{M}/ \pi^i \mathcal{M}$, $M_i:= \Gamma(X,\mathcal{M}_i)$. Since $\mathcal{M}$ is coherent, we have $\mathcal{M} \cong \invlim \mathcal{M}_i$, so $M_0= \invlim M_i$. Further, $M_i/\pi^{i-1} M_i \cong M_{i-1}$ and $\pi^i M_i=0$ for all $i \in \mathbb{N}$, so we are left to prove that $M_i$'s are $L$-equivariant finitely generated $U(\mathfrak{g})_n$-modules.

Since $\mathcal{M}_0 \in \Coh(\widehat{\mathcal{D}},L)$, we obtain by construction and Corollary \ref{d3inversesystemlemma} that for all $i \in \mathbb{N}^*$, $\mathcal{M}_i$ is a $L$-equivariant coherent $\mathcal{D}$-module. Then by Proposition \ref{d3equivariancepreservedunderlocandglo} and Assumption \ref{d3assumptionglobalsectioncoherent} we obtain that for all $i \in \mathbb{N}^*$, $M_i$ is a $L$-equivariant finitely generated $U(\mathfrak{g})_n$-module. This concludes the proof.
\end{proof}

\subsection{Applications of the localisation mechanism}

Throughout this subsection, we assume that $G$ is a connected, simply connected,
split semisimple, smooth affine algebraic group scheme over $\Spec R$. We also let $X=G/B$ denote the flag scheme which is a quasi-compact $R$-variety. Fix $n$ a deformation parameter and $\mathfrak{g}= \mathfrak{n}^- \oplus \mathfrak{h} \oplus \mathfrak{n}^+$ a Cartan decomposition of $\mathfrak{g}=\Lie(G)$. Further, we fix $\lambda:\pi^n \mathfrak{h} \to R$ an $R$-linear map and denote $R_{\lambda}$ the corresponding $U(\mathfrak{h})_n$-module. By \cite[Section 6.10]{Annals} we have an induced map $(U(\mathfrak{g})^G)_n \to U(\mathfrak{h})_n$ and we view $R_{\lambda}$ as a $(U(\mathfrak{g})^G)_n$-module via this map. We also let $K_{\lambda}:=R_{\lambda} \uset{R} K$ the corresponding $\widehat{U(\mathfrak{g})_{n,K}^G}$-module. We make the following definitions:

\begin{itemize}
\item{$U(\mathfrak{g})_n^{\lambda}:=U(\mathfrak{g})_n \uset{(U(\mathfrak{g})^G)_n}R_{\lambda}$,}
\item{$\widehat{U(\mathfrak{g})_n^{\lambda}}:= \invlim U(\mathfrak{g})_n^{\lambda}/\pi^i U(\mathfrak{g})_n^{\lambda}$  and }
\item{$\widehat{U(\mathfrak{g})_{n,K}^{\lambda}}:= \widehat{U(\mathfrak{g})_n^{\lambda}} \uset{R} K$.}
\end{itemize}

We should remark that by \cite[Theorem 6.10a)]{Annals}, $\widehat{U(\mathfrak{g})_{n,K}^{\lambda}} \cong \hugnK \uset{\widehat{U(\mathfrak{g})_{n,K}^G}} K_{\lambda}$, so in particular $\widehat{U(\mathfrak{g})_{n,K}^{\lambda}}$ is a quotient of $\hugnK$.

We also let $\mathcal{D}_n^{\lambda}$ be as in \cite[Section 6.4]{Annals}. This coincides with $\mathcal{D}_{\lambda,\pi^n}$ as defined in \cite[Definition 5.14]{Sta1}. By \cite[Theorem 6.10b)]{Annals} one has $\Gamma(X,\widehat{\mathcal{D}_{n,K}^{\lambda}}) \cong \widehat{U(\mathfrak{g})_{n,K}^{\lambda}}$. We define a localisation functor 

$$\Loc^{\lambda}: \Mod_{\fg}(\widehat{U(\mathfrak{g})_{n,K}^{\lambda}}) \to \Coh(\widehat{\mathcal{D}_{n,K}^{\lambda}}) \qquad \Loc^{\lambda}(M):=\widehat{\mathcal{D}_{n,K}^{\lambda}} \uset{\widehat{U(\mathfrak{g})_{n,K}^{\lambda}}}M.$$

We say that $\lambda \in \mathfrak{h}_K^*$ is \emph{dominant} if $(\lambda+\rho)(h)  \geq 0$ for any positive coroot $h \in \mathfrak{h}$. Given $\lambda:\pi^n \mathfrak{h} \to R$, we say that $\lambda$ is \emph{dominant} if the corresponding root $\lambda \in \mathfrak{h}_K^*$ is dominant. We say that $\lambda: \pi^n \mathfrak{h} \to R$ is \emph{regular} if the corresponding $\lambda \in \mathfrak{h}_K^*$ is regular, i.e. the stabiliser of the Weyl group action on $\lambda$ is trivial.

In \cite{Annals}, the authors prove an affinoid version of Beilinson-Bernstein localisation:

\begin{theorem}\cite[Theorem C]{Annals} \cite[Theorem 5.3.13]{Eqdcap}
\label{d3affinoidnonequivariantBB}
Let $\lambda:\pi^n \mathfrak{h} \to R$ be a dominant weight. The functor $\Gamma$ is exact and the functors $\Loc^{\lambda}$ and $\Gamma$ induce quasi-inverse equivalences of categories between $\Mod_{\fg}(\widehat{U(\mathfrak{g})_{n,K}^{\lambda}})$ and the quotient category $\Coh(\widehat{\mathcal{D}_{n,K}^{\lambda}})/\ker \Gamma$. In case $\lambda$ is also regular, then $\ker \Gamma=0$ whenever $n>0$ or $p$ is a very good prime for $G$.
\end{theorem}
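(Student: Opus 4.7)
The plan is to derive the affinoid statement by a $\pi$-adic limit argument from the classical Beilinson--Bernstein equivalence (Theorem \ref{classicalequivariantBeilinson-Bernstein}) together with its characteristic-$p$ analogue for $\mathcal{D}_n^\lambda / \pi \mathcal{D}_n^\lambda$. First I would establish exactness of $\Gamma$. Given a short exact sequence $0 \to \mathcal{M}' \to \mathcal{M} \to \mathcal{M}'' \to 0$ in $\Coh(\widehat{\mathcal{D}_{n,K}^\lambda})$, pass to coherent $\widehat{\mathcal{D}_n^\lambda}$-lattices and reduce mod $\pi^i$ to obtain short exact sequences of coherent $\mathcal{D}_n^\lambda / \pi^i \mathcal{D}_n^\lambda$-modules. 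Taking global sections, I would invoke higher-cohomology vanishing at each finite level: in the generic fibre this is classical Beilinson--Bernstein for dominant $\lambda$, while mod $\pi$ the hypothesis $n>0$ ensures $\mathcal{D}_n^\lambda / \pi$ is (up to filtration) a commutative sheaf of algebras on the cotangent bundle of $X$, reducing the claim to Serre vanishing on the projective scheme $X$; for $n=0$ this is where the "very good prime" assumption enters, calling on the characteristic-$p$ Beilinson--Bernstein of Bezrukavnikov--Mirkovi\'c--Rumynin. The transition maps are surjective, so Mittag--Leffler preserves exactness under the inverse limit, and inverting $\pi$ then gives exactness of $\Gamma$ on $\Coh(\widehat{\mathcal{D}_{n,K}^\lambda})$.

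Next I would verify the adjunction $(\Loc^\lambda, \Gamma)$ and the equivalence modulo $\ker \Gamma$. The adjointness is formal, using $\Gamma(X, \widehat{\mathcal{D}_{n,K}^\lambda}) \cong \widehat{U(\mathfrak{g})_{n,K}^\lambda}$. For the unit $M \to \Gamma \Loc^\lambda M$ on $M \in \Mod_{\fg}(\widehat{U(\mathfrak{g})_{n,K}^\lambda})$, pick a finite presentation of $M$, apply $\Loc^\lambda$ (right exact) and $\Gamma$ (exact from step 1), and observe the unit is an isomorphism on the free module; the five-lemma then gives the isomorphism for $M$. The counit $\Loc^\lambda \Gamma \mathcal{M} \to \mathcal{M}$ is then, by general nonsense, an isomorphism precisely on the essential image of $\Loc^\lambda$, identifying the quotient $\Coh(\widehat{\mathcal{D}_{n,K}^\lambda})/\ker \Gamma$ as the essential image of $\Loc^\lambda$, which is the statement of the equivalence.

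Finally, for the regularity clause, suppose $\lambda$ is also regular and $\mathcal{M} \in \Coh(\widehat{\mathcal{D}_{n,K}^\lambda})$ satisfies $\Gamma \mathcal{M} = 0$. Pick a coherent $\widehat{\mathcal{D}_n^\lambda}$-lattice $\mathcal{M}_0$; then $\Gamma \mathcal{M}_0$ is finitely generated over $\widehat{U(\mathfrak{g})_n^\lambda}$ and becomes zero after inverting $\pi$, so $\pi^N \Gamma \mathcal{M}_0 = 0$ for some $N$. For every $j > N$ the module $\mathcal{M}_0/\pi^j \mathcal{M}_0$ is a coherent $\mathcal{D}_n^\lambda/\pi^j$-module with vanishing global sections, and the level-by-level regular-dominant Beilinson--Bernstein (again classical for the generic fibre, and requiring very-good-prime input in the mod-$p$ layers of the d\'evissage when $n=0$) forces $\mathcal{M}_0/\pi^j \mathcal{M}_0 = 0$. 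Since $\mathcal{M}_0$ is $\pi$-adically complete and separated this gives $\mathcal{M}_0 = 0$, hence $\mathcal{M} = 0$. The main obstacle throughout is precisely this mod-$\pi^i$ vanishing step: one needs the classical Beilinson--Bernstein in both characteristic $0$ and characteristic $p$ at the graded level, and it is this joint requirement that produces the dichotomy "$n>0$ or $p$ very good" in the hypothesis.
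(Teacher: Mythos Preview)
The paper does not prove this theorem itself: it is quoted from \cite[Theorem C]{Annals} and \cite[Theorem 5.3.13]{Eqdcap}, with only the remark following it that the restriction on $p$ is removed in the latter reference for $n>0$. So there is no in-paper argument to compare against directly; what follows compares your sketch to the actual Ardakov--Wadsley approach.

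Your outline for exactness of $\Gamma$ is broadly correct in spirit (d\'evissage to the special fibre, use the filtration with associated graded $\Sym\mathcal{T}_{X_k}$, invoke cohomology vanishing on $T^*X_k$), though the phrase ``in the generic fibre this is classical Beilinson--Bernstein'' is misplaced: modulo $\pi^i$ there is no generic fibre, and the whole argument runs through characteristic $p$. The unit isomorphism $M \xrightarrow{\sim} \Gamma\Loc^\lambda M$ is, as you say, formal from exactness of $\Gamma$ and a finite presentation of $M$.

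The argument for $\ker\Gamma=0$ in the regular case, however, has a genuine gap. From $\Gamma(\mathcal{M})=0$ you deduce $\pi^N\Gamma(\mathcal{M}_0)=0$ for a lattice $\mathcal{M}_0$, and then claim $\Gamma(\mathcal{M}_0/\pi^j\mathcal{M}_0)=0$ for $j>N$. This is false: by the very exactness you established, $\Gamma(\mathcal{M}_0/\pi^j\mathcal{M}_0)\cong\Gamma(\mathcal{M}_0)/\pi^j\Gamma(\mathcal{M}_0)$, and for $j\geq N$ this is all of $\Gamma(\mathcal{M}_0)$, a nonzero $\pi$-torsion module. So you cannot feed $\mathcal{M}_0/\pi^j$ into a finite-level equivalence to kill it. More seriously, there is no ``level-by-level regular-dominant Beilinson--Bernstein'' of the abelian form you invoke: over $k$ the sheaf $\mathcal{D}_n^\lambda/\pi$ is not a sheaf of twisted differential operators in the classical sense, and the Bezrukavnikov--Mirkovi\'c--Rumynin theorem concerns a different crystalline sheaf and is a derived, not abelian, statement. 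The proof in \cite{Annals} instead shows that every coherent $\widehat{\mathcal{D}_{n,K}^\lambda}$-module is generated by its global sections (equivalently, the counit $\Loc^\lambda\Gamma(\mathcal{M})\to\mathcal{M}$ is surjective) via tensoring with line bundles and analysis of the associated graded; combined with exactness of $\Gamma$ and the unit isomorphism this gives $\Gamma(\mathcal{M})=0\Rightarrow\mathcal{M}=0$ without any mod-$\pi^j$ d\'evissage.
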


We should remark that the part of the proof where $\lambda$ is dominant does not require $p$ to be a very good prime for $G$. The restriction on $p$ has been removed in \cite[Theorem 5.3.13]{Eqdcap} provided that $n>0$.  

We may prove an equivariant version of the affinoid localisation theorem.

\begin{theorem}
\label{d3affinoidequivariantBB}
Let $L$ be a closed subgroup of $G$ and let $\lambda:\pi^n \mathfrak{h} \to R$ be a dominant weight. The functors $\Loc^{\lambda}$ and $\Gamma$ induce quasi-inverse equivalences of categories between $\Mod_{\fg}(\widehat{U(\mathfrak{g})_{n,K}^{\lambda}},L)$ and the quotient category $\Coh(\widehat{\mathcal{D}_{n,K}^{\lambda}},L)/\ker \Gamma$. In case $\lambda$ is also regular, then $\ker \Gamma=0$ whenever $n>0$ or $p$ is a very good prime for $G$.

\end{theorem}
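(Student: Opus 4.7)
The plan is to deduce this equivariant statement from the non-equivariant affinoid Beilinson-Bernstein equivalence (Theorem \ref{d3affinoidnonequivariantBB}) by verifying that both $\Loc^{\lambda}$ and $\Gamma$ preserve equivariance, and that the natural unit and counit of the adjunction are $L$-equivariant morphisms. This lets us lift the quasi-inverse pair to the equivariant categories and then pass to the appropriate quotient when $\lambda$ is not regular.

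First I would confirm that Assumption \ref{d3assumptionglobalsectioncoherent} holds for $\mathcal{D}=\mathcal{D}_n^{\lambda}$ so that Proposition \ref{d3equivariantaffinoidlocalisationmechanism} applies: by Theorem \ref{d3affinoidnonequivariantBB} we have $\Gamma(X,\widehat{\mathcal{D}_{n,K}^{\lambda}}) \cong \widehat{U(\mathfrak{g})_{n,K}^{\lambda}}$, and the underlying classical statement ensures that $\Gamma(X,\mathcal{M})$ is finitely generated over $U(\mathfrak{g})_n^{\lambda}$ for any coherent $\mathcal{D}_n^{\lambda}$-module $\mathcal{M}$. Combined with Proposition \ref{d3DlambdarisrdeformedGhtdo} this yields well-defined functors $\Loc^{\lambda}\colon \Mod_{\fg}(\widehat{U(\mathfrak{g})_{n,K}^{\lambda}},L) \to \Coh(\widehat{\mathcal{D}_{n,K}^{\lambda}},L)$ and $\Gamma\colon \Coh(\widehat{\mathcal{D}_{n,K}^{\lambda}},L) \to \Mod_{\fg}(\widehat{U(\mathfrak{g})_{n,K}^{\lambda}},L)$.

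Second, I would show that the adjunction unit $\eta_M\colon M \to \Gamma\Loc^{\lambda}(M)$, $m \mapsto 1 \otimes m$, and counit $\varepsilon_{\mathcal{M}}\colon \Loc^{\lambda}\Gamma(\mathcal{M}) \to \mathcal{M}$ are $L$-equivariant whenever $M$ and $\mathcal{M}$ carry equivariance structures. By construction of the $\hat{L}$-equivariance on $\Loc^{\lambda}(M)$ via the inverse system $\Loc^{\lambda}(M)_i:=\mathcal{D}_n^{\lambda}\uset{U(\mathfrak{g})_n^{\lambda}} M_i$, the equivariance on $\Loc^{\lambda}(M)$ is uniquely determined by that on $M$, and at each level the reduction of $\eta$ to $\eta_{M_i}$ is classically $L$-equivariant by Proposition \ref{d3equivariancepreservedunderlocandglo}; passing to the inverse limit gives the $\hat{L}$-equivariance of $\eta_M$ in the sense of Definition \ref{d3piadicequivariantDmod}, and dually for $\varepsilon_{\mathcal{M}}$.

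Finally, in the regular case (with $n>0$ or $p$ very good), Theorem \ref{d3affinoidnonequivariantBB} gives $\ker\Gamma=0$, so $\eta$ and $\varepsilon$ are already isomorphisms; combined with the equivariance verified above this immediately yields the equivalence of equivariant categories. For a general dominant $\lambda$, I would check that the intersection $\ker\Gamma \cap \Coh(\widehat{\mathcal{D}_{n,K}^{\lambda}},L)$ is a Serre subcategory of $\Coh(\widehat{\mathcal{D}_{n,K}^{\lambda}},L)$, which is immediate since the abelian structure on the equivariant category agrees with the one inherited from the ambient category (Proposition \ref{d3categoryCohhatDLisAbelian}), and then invoke Proposition \ref{d3equivalenceofquotientcategories} to descend the non-equivariant equivalence to the quotient categories. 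The main obstacle I anticipate is the careful bookkeeping needed to show that the equivariance structures on both sides of the unit and counit maps match at each finite level $i$ and after the inverse limit, so that the classical equivariance at level $i$ genuinely assembles into an $\hat{L}$-equivariance in the sense of Definition \ref{d3piadicequivariantDmod}; this is more tedious than conceptually difficult, but essential in order to legitimately transfer the non-equivariant adjunction to the equivariant setting.
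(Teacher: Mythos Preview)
Your proposal is correct and follows essentially the same strategy as the paper: reduce to the non-equivariant Theorem \ref{d3affinoidnonequivariantBB} by checking that $\Loc^{\lambda}$ and $\Gamma$ preserve $L$-equivariance via Proposition \ref{d3DlambdarisrdeformedGhtdo} and Proposition \ref{d3equivariantaffinoidlocalisationmechanism}. The paper's proof is terser --- it simply states that preservation of equivariance suffices and cites \cite[Proposition 5.15]{Annals} for Assumption \ref{d3assumptionglobalsectioncoherent} --- whereas you spell out the additional bookkeeping (unit/counit equivariance at each finite level, passage to the quotient in the non-regular case) that the paper leaves implicit; one minor caveat is that Proposition \ref{d3equivalenceofquotientcategories} does not apply verbatim here since you are not starting from an equivalence between $\Coh(\widehat{\mathcal{D}_{n,K}^{\lambda}},L)$ and $\Mod_{\fg}(\widehat{U(\mathfrak{g})_{n,K}^{\lambda}},L)$, but the direct argument you sketch (restricting the adjunction and its unit/counit to the equivariant subcategories) is the right one.
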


\begin{proof}
By Theorem \ref{d3affinoidnonequivariantBB}, it is enough to prove that $\Loc^{\lambda}$ and $\Gamma$ preserve the $L$-equivariance. We have by 
Proposition \ref{d3DlambdarisrdeformedGhtdo} that $\mathcal{D}_n^{\lambda}$ is a $\pi^n$-deformed $G$-htdo, so in particular it is $\pi^n$-deformed $L$-htdo. Further, we have by \cite[Proposition 5.15]{Annals} that $\mathcal{D}_n^{\lambda}$ satisfies Assumption \ref{d3assumptionglobalsectioncoherent}. The claim follows from Proposition \ref{d3equivariantaffinoidlocalisationmechanism} since $\widehat{U(\mathfrak{g})_{n,K}^{\lambda}}$ is a quotient of $\hugnK$.
\end{proof}

As a corollary we obtain:

\begin{corollary}
\label{d3corlocgloiso}
Let $M \in \Mod_{\fg}(\widehat{U(\mathfrak{g})_{n,K}^{\lambda}})$. Then

$$\Gamma(X,\Loc^{\lambda}(M)) \cong M.$$

\end{corollary}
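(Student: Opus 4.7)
The plan is to read this off directly from the affinoid Beilinson--Bernstein theorem stated as Theorem \ref{d3affinoidnonequivariantBB}. Since $\lambda$ is the dominant weight fixed at the start of the subsection, that theorem tells us that $\Loc^{\lambda}$ and $\Gamma$ descend to quasi-inverse equivalences between $\Mod_{\fg}(\widehat{U(\mathfrak{g})_{n,K}^{\lambda}})$ and the quotient category $\Coh(\widehat{\mathcal{D}_{n,K}^{\lambda}})/\ker \Gamma$. Writing $q$ for the quotient functor and $\overline{\Gamma}$ for the induced functor on the quotient, the content of ``quasi-inverse'' is precisely that there is a natural isomorphism $\overline{\Gamma} \circ q \circ \Loc^{\lambda} \cong \id$, i.e.\ a natural isomorphism $\Gamma(X,\Loc^{\lambda}(-)) \cong \id$ on $\Mod_{\fg}(\widehat{U(\mathfrak{g})_{n,K}^{\lambda}})$.

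Concretely, I would first note that for any $M \in \Mod_{\fg}(\widehat{U(\mathfrak{g})_{n,K}^{\lambda}})$ there is a canonical $\widehat{U(\mathfrak{g})_{n,K}^{\lambda}}$-linear unit map
\[
\eta_M: M \longrightarrow \Gamma(X, \Loc^{\lambda}(M)) = \Gamma\bigl(X,\, \widehat{\mathcal{D}_{n,K}^{\lambda}} \uset{\widehat{U(\mathfrak{g})_{n,K}^{\lambda}}} M\bigr),
\]
sending $m \mapsto 1 \otimes m$, which makes sense because $\Gamma(X, \widehat{\mathcal{D}_{n,K}^{\lambda}}) \cong \widehat{U(\mathfrak{g})_{n,K}^{\lambda}}$ by the computation of global sections recorded just before Theorem \ref{d3affinoidnonequivariantBB}. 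This $\eta_M$ is the unit of the adjunction $\Loc^{\lambda} \dashv \Gamma$.

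The job is then to show $\eta_M$ is an isomorphism. Applying the quotient functor $q$ and then $\overline{\Gamma}$ to $\Loc^{\lambda}(M)$ is the same as applying $\Gamma$; since $\overline{\Gamma} \circ q \circ \Loc^{\lambda} \cong \id$ by Theorem \ref{d3affinoidnonequivariantBB}, the map $\eta_M$ coincides up to natural isomorphism with the identity of $M$, and in particular is an isomorphism. This gives $\Gamma(X, \Loc^{\lambda}(M)) \cong M$ as required.

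There is essentially no obstacle here: the corollary is just the ``one half'' of the equivalence in Theorem \ref{d3affinoidnonequivariantBB} that does not involve the quotient by $\ker \Gamma$. The only thing one has to be slightly careful about is that this half of the equivalence holds unconditionally in $\lambda$ being dominant (no regularity or very-good-prime hypothesis is needed), which is exactly what is claimed in Theorem \ref{d3affinoidnonequivariantBB}.
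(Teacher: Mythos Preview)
Your proposal is correct and takes essentially the same approach as the paper: the corollary is stated without proof immediately after Theorems \ref{d3affinoidnonequivariantBB} and \ref{d3affinoidequivariantBB}, and is meant to be read off directly from the fact that $\overline{\Gamma}\circ q\circ \Loc^{\lambda}=\Gamma\circ\Loc^{\lambda}$ is naturally isomorphic to the identity on $\Mod_{\fg}(\widehat{U(\mathfrak{g})_{n,K}^{\lambda}})$. Your observation that only dominance of $\lambda$ (and not regularity or the very-good-prime hypothesis) is needed here is also exactly the point.
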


In the next section, we will apply Theorem \ref{d3affinoidequivariantBB} in two cases: $B$ is a Borel subgroup of a $G$ and $G \cong G_{d}=\{(g,g) | g \in G\}$ is the diagonal subgroup of $G \times G$.

\subsection{Equivariance of two-sided ideals}

We keep the notation from the previous section. The Lie algebra of the algebraic group $G \times G$ is given by $\Lie(G \times G)=\Lie(G) \times \Lie(G)= \mathfrak{g} \times \mathfrak{g}$. We aim to prove that any two-sided ideal in $\hugnK$ is $G$-equivariant when viewed as $\huggnK$-module. Here we view $G$ as the diagonal subgroup of $G \times G$. To avoid confusion, we denote this group $G_d$.

Recall that the enveloping algebra $U(\mathfrak{g})$ is a $G$-representation. In particular, the group $G(R)$ acts on $U(\mathfrak{g})$ via the Adjoint action inducing a comodule map

$$\rho: U(\mathfrak{g}) \to \mathcal{O}(G) \otimes U(\mathfrak{g}).$$

Further, since the $G$-action preserves $U(\mathfrak{g})_n$, the map $\rho$ restricts to a comodule map $\rho:U(\mathfrak{g})_n \to \mathcal{O}(G) \otimes U(\mathfrak{g})_n$. Let $\widehat{\mathcal{O}(G)}:=\invlim \mathcal{O}(G)/\pi^i \mathcal{O}(G) $ denote the $\pi$-adic completion of the Hopf algebra $\mathcal{O}(G)$ corresponding to the group $G$. Using the fact that the $\pi$-adic completion is a functor, we obtain a map

               $$\hat{\rho}:\hugn \to \widehat{\mathcal{O}(G)} \hat{\otimes} \hugn,$$ 

where $\hat{\otimes}$ denotes the completed tensor product.
\begin{definition}
We say that a two-sided ideal $I$ in $\hugn$ is $\pi$-closed if the quotient $\hugn/I$ is $\pi$-torsion-free.
\end{definition}

For the rest of this subsection, we let $I$ a $\pi$-closed two-sided ideal in $\hugn$.

 By construction, we have that $g \cdot x= \hat{\rho}(x) (g) $, for all $ g \in G(R), x \in \hugn$, so applying \cite[Corollary 4.3]{Munster}, we obtain

\begin{equation}
\label{d3groupactionpreservesidealequation}
\hat{\rho}(x) (g) =g \cdot x \in I , \text{ for all } g \in G(R), x \in I. 
\end{equation}

For each $g \in G(R)$ consider the map $\epsilon_g:\widehat{\mathcal{O}(G)} \to R$, $\epsilon_g(f):=f(g)$ and  let $q:\hugn \to \hugn /I$ denote the natural projection. Consider the following commutative diagram:

\begin{equation}
\label{d3comodulestructurediagram}
\begin{tikzcd}
\widehat{\mathcal{O}(G)} \hat{\otimes} \hugn \arrow[d,"\epsilon_g \hat{\otimes} \id"] \arrow[r,"\id \hat{\otimes} q"] & \widehat{\mathcal{O}(G)} \hat{\otimes} \hugn/I \arrow[d,"\epsilon_g \hat{\otimes} \id"]           \\
R \hat{\otimes} \hugn \arrow[r,"\id \hat{\otimes} q"] & R \hat{\otimes} \hugn/I.
\end{tikzcd}
\end{equation}

By equation \eqref{d3groupactionpreservesidealequation}, we have that $(\id \hat{\otimes} q) \circ (\epsilon_g \hat{\otimes} \id) \circ \hat{\rho}(i)=0$, for all $i \in I$ and $g \in G(R)$, therefore we obtain

\begin{equation}
\label{d3comodulediagramequation}
(\epsilon_g \hat{\otimes} \id) \circ (\id \hat{\otimes} q) \circ \hat{\rho}(i)=0 \text{ for all } g \in G(R),i \in I.  
\end{equation}

Let $\widehat{K(G)}:=\widehat{\mathcal{O}(G)} \uset{R} K$. We wish to prove that  the Jacobson radical of $\widehat{K(G)}$ is $0$, and if $f \in \widehat{\mathcal{O}(G)}$, viewed as an element of $\widehat{K(G)}$, is such that $\epsilon_g(f)=0$ fo all $g \in G(\mathcal{O}_L)$ and all $L/K$  finite extensions, then $f$ is in the intersection of all maximals ideals of $\widehat{K(G)}$. By combining the results, we obtain $f=0$.

\begin{proposition}
\label{d3Jacobsonradicalofcompletedaffinoid0}
The Jacobson radical of $\widehat{K(G)}$, $J(\widehat{K(G)})$, is 0.

\end{proposition}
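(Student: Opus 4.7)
The plan is to identify $\widehat{K(G)}$ as a classical Tate affinoid $K$-algebra and then invoke the non-archimedean Nullstellensatz, which asserts that for any affinoid $K$-algebra $A$ the Jacobson radical $J(A)$ coincides with the nilradical $\sqrt{0}$. Combined with reducedness of $\widehat{K(G)}$, this forces $J(\widehat{K(G)}) = 0$.

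First, since $G$ is an affine algebraic group scheme of finite type over $R$, the coordinate ring $\mathcal{O}(G)$ is a finitely generated $R$-algebra; pick a closed embedding $G \hookrightarrow \mathbb{A}^N_R$, giving a surjection $R[x_1, \ldots, x_N] \twoheadrightarrow \mathcal{O}(G)$. Taking $\pi$-adic completions produces a surjection $R\langle x_1, \ldots, x_N \rangle \twoheadrightarrow \widehat{\mathcal{O}(G)}$, and tensoring with $K$ exhibits $\widehat{K(G)}$ as a quotient of the Tate algebra $T_N = K\langle x_1, \ldots, x_N \rangle$. Hence $\widehat{K(G)}$ is an affinoid $K$-algebra in the sense of Tate, and the Nullstellensatz (see e.g.\ Bosch--G\"untzer--Remmert) applies.

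Next, I show $\widehat{K(G)}$ is reduced. The basic lemma I would use is: if $A$ is $\pi$-adically separated, $\pi$-torsion-free, and $A/\pi A$ is reduced, then $A$ is reduced. Indeed, if $a \in A$ is nilpotent, its image in $A/\pi A$ is nilpotent hence zero, so $a = \pi b$; since $\pi$ is a nonzerodivisor, $b$ is also nilpotent, and iterating forces $a \in \bigcap_n \pi^n A = 0$. I apply this with $A = \widehat{\mathcal{O}(G)}$: flatness of $\mathcal{O}(G)$ over $R$ gives $\pi$-torsion-freeness, $\pi$-adic completeness is automatic, and $\widehat{\mathcal{O}(G)}/\pi \widehat{\mathcal{O}(G)} \cong \mathcal{O}(G_k)$ is reduced because $G_k$ is smooth over $k$. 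Thus $\widehat{\mathcal{O}(G)}$ is reduced, and $\widehat{K(G)}$, being a localization of it at $\pi$, is reduced as well.

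Combining the two steps: $\widehat{K(G)}$ is a reduced affinoid $K$-algebra, so $J(\widehat{K(G)}) = \sqrt{0} = 0$, as desired. The only substantive step is the reducedness argument, which is handled cleanly by descending to the special fiber and using smoothness of $G$; the Nullstellensatz for affinoid algebras is then applied as a black box.
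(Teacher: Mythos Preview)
Your proof is correct and follows the same overall strategy as the paper: both first identify $\widehat{K(G)}$ as an affinoid $K$-algebra and invoke the Jacobson property of Tate algebras to reduce $J(\widehat{K(G)})=0$ to showing the nilradical vanishes, and both then verify this by passing to the special fibre over $k$.

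The execution of the second step differs slightly. The paper argues that $\widehat{\mathcal{O}(G)}$ is an \emph{integral domain}: it computes the associated graded for the $\pi$-adic filtration as $(\gr R)(G)$, observes that $k(G)=\mathcal{O}(G_k)$ is a domain because $G$ is connected, and concludes that $\gr \widehat{\mathcal{O}(G)}$ (and hence $\widehat{\mathcal{O}(G)}$, hence $\widehat{K(G)}$) is a domain. You instead prove only \emph{reducedness} via the elementary lemma ``$A$ $\pi$-separated, $\pi$-torsion-free, $A/\pi A$ reduced $\Rightarrow$ $A$ reduced'', using that $\mathcal{O}(G_k)$ is reduced because $G_k$ is smooth. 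Your route is marginally more elementary (no graded-ring formalism) and uses only smoothness rather than connectedness of $G$; the paper's route yields the stronger conclusion that $\widehat{K(G)}$ is a domain, which is not needed here but may be of independent interest.
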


\begin{proof}

Any free Tate algebra over a non-archimidean field $K$ is a Jacobson ring by \cite[Proposition 3.1.3]{Bos}; in particular as $\widehat{K(G)}=K \langle x_1,x_2, \ldots x_n \rangle /J$ is a quotient of a free Tate algebra by some closed ideal $J$, we have $J(\widehat{K(G)})=$nilradical($\widehat{K(G)})$, so it suffices to prove that nilradical($\widehat{K(G)}$)=0.

 As $G$ is a reductive connected group scheme, we have by \cite[II.1.9 (4)]{Jan1} that $\mathcal{O}(G)$ is an integral domain, therefore $k(G):=\mathcal{O}(G) \uset{R} k$ is also an integral domain. Consider the $\pi$-adic  filtrations on $\mathcal{O}(G)$ and $\widehat{\mathcal{O}(G)}$; we have by the properties of $\pi$-adic completions
                   $$\text{gr}(\widehat{\mathcal{O}(G))}=\text{gr}(\mathcal{O}(G))\cong (\text{gr }R)(G).$$
                   
As $(\text{gr R})$ is a polynomial ring over $k$ and $k(G)$ is an integral domain, we obtain that  $\text{gr}(\widehat{\mathcal{O}(G)})$ is an integral domain, so $\widehat{\mathcal{O}(G)}$ is an integral domain. Therefore, $\widehat{K(G)}=\widehat{\mathcal{O}
(G)}\uset{R} K$ is an integral domain, so in particular $\widehat{K(G)}$ has trivial nilradical.                    
\end{proof}

\begin{proposition}
\label{d3functionis0proposition}
Let $f \in \widehat{\mathcal{O}(G)}$ such that $\epsilon_g(f)=0$ for all $g \in G(\mathcal{O}_L)$ and all $L$ finite extensions of $K$. Then $f=0$. 

\end{proposition}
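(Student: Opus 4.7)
The plan is to combine the vanishing of the Jacobson radical of $\widehat{K(G)}$, already obtained in Proposition \ref{d3Jacobsonradicalofcompletedaffinoid0}, with the Nullstellensatz for affinoid algebras. Together these will identify the maximal ideals of $\widehat{K(G)}$ with the kernels $\ker \epsilon_g$ for $g$ ranging over $G(\mathcal{O}_L)$ as $L/K$ varies over finite extensions, so the hypothesis on $f$ will force $f \in J(\widehat{K(G)}) = 0$.

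First I would observe that $\widehat{K(G)} = \widehat{\mathcal{O}(G)} \otimes_R K$ is a $K$-affinoid algebra: choosing generators of the finitely generated $R$-algebra $\mathcal{O}(G)$ presents $\widehat{\mathcal{O}(G)}$ as a quotient of a restricted power-series ring $R\langle x_1, \ldots, x_m \rangle$, and tensoring with $K$ gives a quotient of the Tate algebra $K\langle x_1, \ldots, x_m \rangle$. By Tate's Nullstellensatz (see \cite[Ch.~3]{Bos}), every maximal ideal $\mathfrak{m}$ of $\widehat{K(G)}$ arises as the kernel of a continuous $K$-algebra surjection $\phi : \widehat{K(G)} \twoheadrightarrow L$ onto some finite extension $L/K$.

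Next I would show that each such $\phi$ is of the form $\epsilon_g \otimes_R K$ for some $g \in G(\mathcal{O}_L)$. The key point is that every $x \in \widehat{\mathcal{O}(G)}$ is power-bounded in $\widehat{K(G)}$, because $\{x^n\}_{n \geq 0} \subseteq \widehat{\mathcal{O}(G)}$ remains in the $\pi$-adic unit ball; hence $\phi(x)$ is power-bounded in $L$, which means $\phi(x) \in \mathcal{O}_L$. Restricting $\phi$ first to $\widehat{\mathcal{O}(G)}$ and then to the dense subring $\mathcal{O}(G)$ yields an $R$-algebra homomorphism $\mathcal{O}(G) \to \mathcal{O}_L$, i.e.\ a point $g \in G(\mathcal{O}_L)$. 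Since $\mathcal{O}_L$ is $\pi$-adically complete, this map extends uniquely to $\widehat{\mathcal{O}(G)} \to \mathcal{O}_L$, and by continuity and density that extension must coincide with $\phi|_{\widehat{\mathcal{O}(G)}}$; tensoring with $K$ recovers $\phi = \epsilon_g$.

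Combining these, the hypothesis $\epsilon_g(f) = 0$ for every $g \in G(\mathcal{O}_L)$ and every finite $L/K$ says exactly that $f$ lies in every maximal ideal of $\widehat{K(G)}$, i.e.\ $f \in J(\widehat{K(G)}) = 0$. Since $\widehat{\mathcal{O}(G)}$ is an integral domain (shown in the proof of Proposition \ref{d3Jacobsonradicalofcompletedaffinoid0}) and therefore injects into $\widehat{K(G)}$, this gives $f = 0$. The main obstacle is the identification of the maximal spectrum of $\widehat{K(G)}$ with integral points over finite extensions; once the affinoid Nullstellensatz is invoked, the verification reduces to checking that $\widehat{\mathcal{O}(G)}$ is contained in the power-bounded subring of $\widehat{K(G)}$, which is immediate from the $\pi$-adic description.
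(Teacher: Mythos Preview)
Your proposal is correct and follows essentially the same strategy as the paper: identify the maximal spectrum of the affinoid algebra $\widehat{K(G)}$ with $\mathcal{O}_L$-points of $G$ via the Nullstellensatz, deduce that $f$ lies in every maximal ideal, and conclude using $J(\widehat{K(G)})=0$. The only difference is in presentation: the paper asserts that the composite $\widehat{\mathcal{O}(G)}\to L$ lands in $\mathcal{O}_L$ and then quotes an external reference for the correspondence with points in $G(\mathcal{O}_L)$, whereas you supply the direct power-boundedness argument and the extension-by-density step yourself, which makes your version slightly more self-contained.
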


\begin{proof}

View $f$ as an element of $\widehat{K(G)}$. Further, let $K \langle x_1, x_2 \ldots, x_n \rangle$ be a free Tate algebra projecting onto $\widehat{K(G)}$ via a map denoted $\phi$; let $J=\ker \phi$.  Finally, let $\mathfrak{m} \subset \widehat{K(G)}$ be a maximal ideal of $\widehat{K(G)}$; we aim to prove that $f \in \mathfrak{m}$.

As $\mathfrak{m} \subset \widehat{K(G)}$ is maximal, $\phi^{-1}( \mathfrak{m})$ is  a maximal ideal in $K \langle x_1, x_2 \ldots, x_n \rangle $, so we get an induced map $\zeta:\widehat{K(G)}/\mathfrak{m} \to K \langle x_1 x_2 \ldots x_n \rangle / \phi^{-1} (\mathfrak{m})$. Further, we have by \cite[Corollary 2.2.12]{Bos}, $K \langle x_1, x_2, \ldots, x_n \rangle / \phi^{-1} (\mathfrak{m}) \cong L$, where $L$ is a finite extension of $K$. The image of $f$ under the composition of the maps (call this composition $\eta:\widehat{\mathcal{O}(G)} \to L$) lies into the ring of integers of $L$, $\mathcal{O}_L$.

By \cite[Example 1.8 ii)]{Che}, there is a correspondence between maps from $\widehat{K(G)}$ to $L$ and the zero locus of a system of generators for the ideal defining $J$ (recall $\widehat{K(G)}=K \langle x_1,x_2 \ldots, x_n \rangle/J$) inside $\mathcal{O}_L^n$. Therefore, as $\epsilon_g(f)=0$ for all $g \in G(\mathcal{O}_L)$, we obtain $\eta(f)=0$. Consider the composition defining $\eta$:

$$\widehat{K(G)} \to \widehat{K(G)}/\mathfrak{m} \to K \langle x_1,x_2 \ldots x_n \rangle /\phi^{-1}(\mathfrak{m}) \cong L.$$

As $\mathfrak{m}$ is a maximal ideal, $\widehat{K(G)}/\mathfrak{m}$ is a field, so the map $\widehat{K(G)}/\mathfrak{m} \to L$ is an injection. Thus, as $\eta(f)=0$, one obtains that $f \in \mathfrak{m}$. In conclusion, $f$ lies in all the maximal ideals of $\widehat{K(G)}$, i.e.  $f \in J(\widehat{K(G)})$; applying Proposition \ref{d3Jacobsonradicalofcompletedaffinoid0}, we get $f=0$.
\end{proof}

\begin{theorem}
\label{d3affinoidcomodulestructure}
Let $I$ be a $\pi$-closed two-sided ideal in $\hugn$. Then $\hat{\rho}(I) \subset \widehat{\mathcal{O}(G)} \hat{\otimes} I.$
\end{theorem}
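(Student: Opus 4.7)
Fix $x \in I$ and set $A := \widehat{\mathcal{O}(G)}$, $M := \hugn/I$, with $q: \hugn \to M$ the quotient map. The claim is equivalent to showing that $y := (\id_A \hat{\otimes} q)(\hat{\rho}(x))$ vanishes in $A \hat{\otimes} M$ for every such $x$.

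The first step is to extend equation \eqref{d3groupactionpreservesidealequation} to all finite base changes. For any finite extension $L/K$ with ring of integers $\mathcal{O}_L$, flatness of $\mathcal{O}_L$ over $R$ implies that $I_L := I \hat{\otimes}_R \mathcal{O}_L$ is a $\pi$-closed two-sided ideal of $\widehat{U(\mathfrak{g})_{n,\mathcal{O}_L}}$. The proof of \cite[Corollary 4.3]{Munster} applies verbatim over $\mathcal{O}_L$, giving $g \cdot I_L \subset I_L$ for every $g \in G(\mathcal{O}_L)$, and in particular $g \cdot x \in I_L$ for $x \in I$. Transcribing via the analogue of diagram \eqref{d3comodulestructurediagram} over $\mathcal{O}_L$, the base change $y_L$ of $y$ satisfies $(\epsilon_g \hat{\otimes} \id)(y_L) = 0$ in $M \hat{\otimes}_R \mathcal{O}_L$ for every such $g$.

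The second step uses Proposition \ref{d3functionis0proposition} to conclude $y = 0$. One passes to $K$-coefficients: $\pi$-torsion-freeness of $M$ ensures that $A \hat{\otimes} M$ injects into $\widehat{K(G)} \hat{\otimes}_K M_K$ where $M_K := \hugnK/I_K$, so it suffices to show $y_K = 0$. Since $I$ is $\pi$-closed, $I_K$ is a closed subspace of the orthonormalizable $K$-Banach algebra $\hugnK$, so $M_K$ admits a Schauder basis $\{m_\alpha\}$ inherited from the PBW topological basis of $\hugnK$. Writing $y_K = \sum_\alpha f_\alpha \otimes m_\alpha$ uniquely with $f_\alpha \in \widehat{K(G)}$ tending to zero, the vanishing from the first step gives $\sum_\alpha \epsilon_g(f_\alpha) m_\alpha = 0$ in $M_K \otimes_K L$ for every $g \in G(\mathcal{O}_L)$. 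The $L$-linear independence of $\{m_\alpha\}$ in $M_K \otimes_K L$ forces $\epsilon_g(f_\alpha) = 0$ for every $\alpha$, $g$, and $L$; after rescaling each $f_\alpha$ by a power of $\pi$ so that it lies in $A$, Proposition \ref{d3functionis0proposition} yields $f_\alpha = 0$, whence $y_K = 0$ and hence $y = 0$.

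The main obstacle is the Schauder-basis step: one must verify that the quotient $M_K$ of the orthonormalizable Banach space $\hugnK$ by the closed subspace $I_K$ admits a Schauder basis compatible with the completed tensor product, and that the coefficients $f_\alpha$ can be meaningfully rescaled to lie in $A$. This is standard for non-archimedean Banach spaces over the discretely valued field $K$ and rests on the $\pi$-closedness of $I$ ensuring that $I_K$ is closed in $\hugnK$.
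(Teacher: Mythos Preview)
Your proposal is correct and follows essentially the same route as the paper: reduce to showing the image of $\hat{\rho}(x)$ in $\widehat{\mathcal{O}(G)} \hat{\otimes} (\hugn/I)$ vanishes, expand against a Schauder basis of the quotient, use stability of $I$ under $G(\mathcal{O}_L)$ for all finite $L/K$ to see each coefficient $f_\alpha$ vanishes at every such point, and conclude via Proposition~\ref{d3functionis0proposition}. The only cosmetic differences are that the paper works with an $R$-topological basis of $\hugn/I$ rather than passing to $K$ and rescaling, and that it deduces $G(\mathcal{O}_L)$-stability from the flat-group-scheme subrepresentation argument rather than by re-invoking \cite[Corollary~4.3]{Munster} over $\mathcal{O}_L$; neither changes the substance.
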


\begin{proof}

Consider the composition map $(\epsilon_g \hat{\otimes} \id)\circ (\id \hat{\otimes} q) \circ \hat{\rho}:I \to  R \hat{\otimes} \hugn/I $. By equation \eqref{d3comodulediagramequation}, we know that  for all $i \in I$, $(\epsilon_g \hat{\otimes} \id)\circ (\id \hat{\otimes} q) \circ \hat{\rho}(i)=0.$

Let $I_K=I \uset{R} K$ and notice that $I_K$ is a two-sided ideal in $\hugnK$. As $I$ is a $\pi$-closed ideal, the space $\hugn/I$ has no $\pi$-torsion, so we obtain

                $$\hugn /I \uset{R} K \cong \hugnK/I_K.$$

The space  $\hugnK/I_K$ is a $K$-Banach space that has a countably dimensional dense subspace consisting of elements of the form $x+I_K$, $x \in U(\mathfrak{g}_K)$. Therefore, applying \cite[Proposition 10.4]{Sch}, we get that $\hugnK/I_K$ has a countable  topological $K$-basis, so $\hugn/I$ has a countable topological $R$-basis; denote this basis $\{y_i| i \in \mathbb{N}\}$. Another way to see the existence of this basis is that the space $\hugnK/I_K$ is a separable $K$-Banach space, so it has a Schauder basis.

 Consider an element $a=\sum_{i=1}^\infty f_i \hat{\otimes} y_i \in \widehat{\mathcal{O}(G)} \hat{\otimes} \hugn$. Then we have for all $g \in G(R),$

       $$0=(\epsilon_g \hat{\otimes} \id)(\sum_{i=1}^{\infty} f_i \hat{\otimes}y_i)= \sum_{i=1}^{\infty} f_i(g)y_i.$$

As $y_i$'s form a topological basis of $\hugn$, we obtain

\begin{equation}
\label{d3evfunctionG(R)}
\epsilon_g(f_i)=0 \text{ for all }  g \in G(R),i \in \mathbb{N}. 
\end{equation}

Now, let $A$ be  $\pi$-adically complete commutative $R$-algebra finitely generated as an $R$-module. For $g \in G(A)$ let $\epsilon_g:\widehat{A(G)} \to A$ denote the evaluation map by abusing notation. 

Recall that $\hugn$ is a $G$-representation by extending the Adjoint action of $G$ on $\mathfrak{g}$. Consider the set $I \uset{R} A$ inside $\widehat{U(\mathfrak{g})_{n,A}}:=\hugn \uset{R} A$. Notice that since $A$ is finitely generated as an $R$-module, we only need to take the standard tensor product, not the completed one. Let $x_1 \otimes y_1$ be a simple tensor in $\widehat{U(\mathfrak{g})_{n,A}}$ and $x_2 \otimes y_2$ be a simple tensor in $I \uset{R} A$. Then
                $$(x_1 \otimes y_1)(x_2 \otimes y_2)=x_1x_2 \otimes y_1 y_2.$$

As $x_2 \in I$ and $I$ is a two-sided ideal $x_1x_2 \in I$, so $(x_1 \otimes y_1)(x_2 \otimes y_2) \in I \uset{R} A $. Extending this to non-simple tensors, taking in account all the possible ways to represent elements in $I \uset{R} A$ and $\widehat{U(\mathfrak{g})_{n,A}}$ as sums of simple tensors, we get that $I \uset{R} A$ is a left ideal in $\widehat{U(\mathfrak{g})_{n,A}}$. By symmetry it is also a right ideal, so $I \uset{R} A$ is indeed a two-sided ideal in $\widehat{U(\mathfrak{g})_{n,A}}$. As $\hugn$ is a $G$-representation and $G(R)$ preserves $I$, $G$ is a flat group scheme, we deduce that $I$ is a  $G$-subrepresentation of $\hugn$, so $G(A) \cdot (I \uset{R} A) \subset I \uset{R} A$. Therefore, by base changing equation \eqref{d3evfunctionG(R)} to $A$  we get

\begin{equation}
\label{d3evalgequation}
     \epsilon_g(f_i)=0 \text{ for all } g \in G(A), i \in \mathbb{N}.
\end{equation}

In particular we get that the result is true for any $\mathcal{O}_L$, where $L$ is a finite extension of $K$. Applying Proposition \ref{d3functionis0proposition}, we obtain $f_i =0 $ for all $i \in \mathbb{N}$. Thus, we have obtained that $a=0$, so $(\id \hat{\otimes} q) \circ \hat{\rho}(i)=0$, which implies

 $$\hat{\rho}(i) \in \ker(\id \hat{\otimes} q)=\widehat{\mathcal{O}(G)} \hat{\otimes} I. $$

Therefore, $\hat{\rho}(I) \subset  \widehat{\mathcal{O}(G)} \hat{\otimes} I$. 
\end{proof}

Let $\tau$ be the principal anti-automorphism of $U(\mathfrak{g})$ induced by $x \to -x$ for all $x \in \mathfrak{g}$. We use $x^{\tau}$ to denote $\tau(x)$.  For all $x_1,x_2 \ldots x_n \in \mathfrak{g}$, we have

 $$(x_1x_2 \ldots x_n)^{\tau}=(-1)^{n}x_nx_{n-1} \ldots x_2 x_1.$$
 
We define the action of the ring $\huggn \cong \hugn \hat{\otimes} \hugn$  on $\hugn$ via

$$ (a \otimes b) x= bxa^{\tau}, \text{ for all } a,b,x \in \hugn.$$

Let $m:(\hugn \hat{\otimes} \hugn) \hat{\otimes} \hugn \to\hugn$ denote the action map. The set of submodules of  $\hugn$ under this action coincide with the set of two-sided ideals. The group $G \times G$ acts on $\hugn \hat{\otimes} \hugn$ via the adjoint action:

$$ (g_1,g_2) \cdot (x \hat{\otimes} y) =(\Ad(g_1)x \hat{\otimes} \Ad(g_2)y).$$

In particular we get an action of the group $G_d \cong G$. Let $$\hat{\rho}_{\bimod}: \hugn \hat{\otimes} \hugn  \to  \widehat{\mathcal{O}(G)} \hat{\otimes} \hugn \hat{\otimes} \hugn $$ be the corresponding comodule map.

Finally, let $\mathfrak{g}_d=\Lie(G_d)$. It embeds into $ \hugn \hat{\otimes} \hugn$ via $x \mapsto
x \hat{\otimes} 1 + 1 \hat{\otimes} x$ for all $x \in \mathfrak{g}_d$.

\begin{proposition}
\label{d3piclosedidealequiv}

Let $I$ be a $\pi$-closed two-sided ideal in $\hugn$. Then, $I \in \Mod(\huggn, G_d)$.

\end{proposition}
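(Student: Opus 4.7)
The plan is to verify the four conditions in the definition of $\hat{G_d}$-equivariant $\huggn$-module for the quadruple $(I, (I_i), (\alpha_i), (\rho_i))$, where $I_i := I/\pi^i I$, $\alpha_i$ is induced by the bimodule action of $\huggn$, and $\rho_i$ is the reduction modulo $\pi^i$ of the comodule structure supplied by Theorem \ref{d3affinoidcomodulestructure}. First, under the bimodule action the module $\hugn$ is cyclic with generator $1$, since $(1 \otimes x) \cdot 1 = x$ for every $x \in \hugn$; as $\huggn$ is Noetherian, the submodule $I$ is finitely generated over $\huggn$. Since $\hugn/I$ is $\pi$-torsion-free, the sequence $0 \to I/\pi^i I \to \hugn/\pi^i \hugn \to (\hugn/I)/\pi^i(\hugn/I) \to 0$ is exact; passing to inverse limits (all transition maps are surjective, hence Mittag-Leffler holds) and using that $\hugn$ is $\pi$-adically complete yields $I \cong \invlim I_i$ together with $I_i/\pi^{i-1} I_i \cong I_{i-1}$ and $\pi^i I_i = 0$.

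Next, I would transport the comodule structure. By Theorem \ref{d3affinoidcomodulestructure} the map $\hat{\rho}$ restricts to $\hat{\rho}_I: I \to \widehat{\mathcal{O}(G)} \hat{\otimes} I$, making $I$ a $G$-subrepresentation of $\hugn$. Reducing modulo $\pi^i$ and using the canonical isomorphism $\widehat{\mathcal{O}(G)}/\pi^i \cong \mathcal{O}(G)/\pi^i$ endows each $I_i$ with a compatible $G$-representation structure, which I identify with a $G_d$-representation. To verify that $\alpha_i$ is a morphism of $\mathcal{O}(G_d)$-comodules, I would check that for $g \in G$, $a, b \in \hugn$ and $m \in I$ one has $g \cdot ((a \otimes b) \cdot m) = g \cdot (bma^\tau) = (g \cdot b)(g \cdot m)(g \cdot a)^\tau = ((g \cdot a) \otimes (g \cdot b)) \cdot (g \cdot m)$, using that $\Ad(g)$ is an algebra automorphism commuting with the anti-automorphism $\tau$ (both restrict to $\pm \id$ on $\mathfrak{g}$).

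For the strong (not merely weak) equivariance condition, I would verify that the derivative of the $G_d$-action on each $I_i$ coincides with the restriction of the $\huggn$-action to $\pi^n \mathfrak{g}_d$. Under the diagonal embedding $\mathfrak{g}_d \hookrightarrow \mathfrak{g} \times \mathfrak{g}$, the element $\pi^n x$ corresponds to $\pi^n x \otimes 1 + 1 \otimes \pi^n x \in U(\mathfrak{g} \times \mathfrak{g})_n$, whose action on $m \in I$ equals $\pi^n x \cdot m - \pi^n m \cdot x = \pi^n \ad(x)(m)$; this is exactly $\pi^n$ times the derivative of the adjoint $G$-action at $x$, so the two actions of $\pi^n \mathfrak{g}_d$ on each $I_i$ agree.

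I expect the most delicate point to be the compatibility of $\hat{\rho}_I$ with its finite-level reductions — in particular, checking that each $\rho_i$ is compatible with the transition maps $I_i \to I_{i-1}$ and with the cocycle diagram for the comodule structure, so that $(I, (I_i), (\alpha_i), (\rho_i))$ genuinely lies in $\Mod(\huggn, G_d)$. This should follow formally from the isomorphism $\widehat{\mathcal{O}(G)}/\pi^i \cong \mathcal{O}(G)/\pi^i$, the naturality of the global comodule map $\hat{\rho}$, and the $\pi$-adic completeness of $I$.
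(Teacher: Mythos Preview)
Your proof is correct and follows essentially the same route as the paper: invoke Theorem \ref{d3affinoidcomodulestructure} to obtain the comodule map on $I$, verify the compatibility $\Ad(g)\cdot((a\otimes b)\cdot m)=((\Ad(g)a)\otimes(\Ad(g)b))\cdot(\Ad(g)m)$, observe that the derivative of $\Ad$ is $\ad$, and descend everything to the finite levels $I_i=I/\pi^i I$. One small slip: your parenthetical ``both restrict to $\pm\id$ on $\mathfrak{g}$'' is not right, since $\Ad(g)$ is certainly not $\pm\id$ on $\mathfrak{g}$; the correct reason $\Ad(g)$ commutes with $\tau$ is simply that $\tau|_{\mathfrak{g}}=-\id$ and $\Ad(g)$ is $R$-linear, so $\Ad(g)(x^{\tau})=\Ad(g)(-x)=-\Ad(g)(x)=(\Ad(g)x)^{\tau}$ on $\mathfrak{g}$, whence on all of $\hugn$.
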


\begin{proof}

By abuse of notation let $\hat{\rho}: I \to \widehat{\mathcal{O}(G)} \hat{\otimes} I$ be the restriction of $\hat{\rho}$ to $I$ induced by the $\Ad$ action; by Theorem \ref{d3affinoidcomodulestructure} this map is well defined. Furthermore, since the ring $\hugn \hat{\otimes} \hugn$ is Noetherian, $I$ is also finitely generated. Let $$\hat{\rho}_{\tensor}: \hugn \hat{\otimes} \hugn \hat{\otimes} I \to \widehat{\mathcal{O}(G)} \hat{\otimes}  \hugn \hat{\otimes} \hugn \hat{\otimes} I $$

be the comodule map induced by $\hat{\rho}$ and $\hat{\rho}_{\bimod}$. To prove that the multiplication $m$ is a morphism of comodules it is enough to prove that for all $g \in G, x,y \in \hugn, u \in I$.

$$ \Ad(g) \cdot ((x \hat{\otimes} y) \cdot u)= (\Ad(g) x \hat{\otimes} \Ad(g)y) \cdot (\Ad(g) u)$$

We have:

\begin{equation}
\begin{split}
 \Ad(g) \cdot ((x \hat{\otimes} y) \cdot u) &= \Ad(g)( yux^{\tau}) \\
                                       &= \Ad(g)y\Ad(g)u\Ad(g)x^{\tau} \\
                                       &= \Ad(g)y \Ad(g)u (\Ad(g)x)^{\tau}\\
                                       &= (\Ad(g)x \hat{\otimes} \Ad(g) y) \cdot (\Ad(g)u).
\end{split}
\end{equation}

Next, the differentiation of the $\Ad$ action is the ad action which coincides with the action of the Lie Algebra $\mathfrak{g}_d$. (*) 

Now, consider $I_i= I/\pi^i I$. Then it is easy to see that $I_i$ is finitely generated as $U(\mathfrak{g} \times \mathfrak{g})_n \cong U(\mathfrak{g} )_n\otimes U(\mathfrak{g})_n$-module (here $U(\mathfrak{g})_n\otimes U(\mathfrak{g})_n$ acts on $I_i$ via $(x \otimes y) \cdot (u+\pi^i I)=xuy^{\tau}+\pi^i I$), $\pi^iI_i=0$ and $I=\invlim I_i$. The map $\hat{\rho}:I \to \widehat{\mathcal{O}(G)} \hat{\otimes} I$ descends to a map 
$\rho_i: I_i \to \mathcal{O}(G) \otimes I_i$ which is compatible with the action map since $\hat{\rho}_{\tensor}$ is a comodule homomorphism.
 Finally, by (*) the differentiation of the $\Ad$ action descend to $I_i$, so $I_i $ is indeed  a $G_d$-equivariant $U(\mathfrak{g})_n \otimes U(\mathfrak{g})_n$-module. Thus, we have proven all the conditions required to make $I$ a $\hat{G}_d$-equivariant $\huggn$-module.  
\end{proof} 


\begin{corollary}
\label{d3equivarianceoftwosidedideals}
Let $J$ be a two-sided ideal in $\hugnK$. Then $$J \in \Mod_{\fg}(\huggnK, G_d).$$

\end{corollary}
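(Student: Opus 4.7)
The plan is to reduce this statement to Proposition \ref{d3piclosedidealequiv} by finding a $\pi$-closed two-sided ideal $I \subseteq \hugn$ whose base change to $K$ recovers $J$. The point is that the equivariance structure on $J$ is supposed to be induced from a lattice structure on an underlying $\hugn$-ideal, so we need to exhibit a natural such lattice.

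First I would define $I := J \cap \hugn$, viewing the inclusion $\hugn \hookrightarrow \hugnK$ via the natural map (which is injective since $\hugn$ is $\pi$-torsion-free). As the inclusion is a ring homomorphism and $J$ is two-sided in $\hugnK$, it follows that $I$ is two-sided in $\hugn$. To see that $I$ is $\pi$-closed, observe that $\hugn / I$ injects into $\hugnK/J$, which is a $K$-vector space and hence $\pi$-torsion-free.

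Next I would verify $I \uset{R} K = J$. The inclusion $I_K \subseteq J$ is immediate. Conversely, given $j \in J$, using $\hugnK = \hugn \uset{R} K$ we may write $j = \pi^{-n} x$ with $n \geq 0$ and $x \in \hugn$; then $x = \pi^n j \in J \cap \hugn = I$, so $j \in I_K$. In particular $I$ is a lattice for $J$. Now apply Proposition \ref{d3piclosedidealequiv} to obtain $I \in \Mod(\huggn, G_d)$; concretely, this produces an inverse system $(I/\pi^i I, \alpha_i, \rho_i)$ of $G_d$-equivariant, finitely generated $U(\mathfrak{g}\times\mathfrak{g})_n$-modules (with the action $(a \otimes b)\cdot u = bua^{\tau}$) satisfying the compatibility conditions, together with the identification $I \cong \invlim I/\pi^i I$.

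Since $I$ is a lattice for $J$ that is itself an equivariant $\huggn$-module, the quintuple $(J, I, (I/\pi^i I), (\alpha_i), (\rho_i))$ is by definition a $\hat{G}_{d,K}$-equivariant $\huggnK$-module. Finite generation of $J$ over $\huggnK$ follows from the Noetherianity of $\huggnK$, or alternatively from finite generation of $I$ over $\huggn$ (which is part of Proposition \ref{d3piclosedidealequiv}) by base change. This completes the sketch. There is no deep obstacle here; the only point requiring care is the verification that $I := J \cap \hugn$ is simultaneously a two-sided ideal, $\pi$-closed, and a lattice for $J$, so that Proposition \ref{d3piclosedidealequiv} applies and the resulting structure descends to $J$ after inverting $\pi$.
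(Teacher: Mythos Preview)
Your proposal is correct and follows essentially the same route as the paper: define $I = J \cap \hugn$, check it is a $\pi$-closed two-sided ideal, verify $I \uset{R} K = J$, and then invoke Proposition~\ref{d3piclosedidealequiv}. The only minor omission is that the definition of a lattice also requires $\bigcap_{i} \pi^i I = 0$, but this is immediate since $I \subset \hugn$ and $\hugn$ is $\pi$-adically separated.
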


\begin{proof}
Clearly, $J$ is finitely generated since $\huggnK$ is a Noetherian ring. Let $I=J \cap \hugn$. It is easy to see that $I$ is a two sided ideal in $\hugn$; we claim it is $\pi$-closed. Suppose there exists $x \in \hugn$ and $n \in \mathbb{N}^*$ such that $\pi^n(x+I)=0+I$. Then we obtain $\pi^n x \in I \subset J$. Since $J$ is a two-sided ideal in $\hugnK$, we have $x \in J$. By the construction of $I$, we obtain $x \in I$, i.e. $x+I=0+I$, so $I$ is indeed $\pi$-closed. Therefore, by applying Proposition \ref{d3piclosedidealequiv}, we obtain $I \in \Mod(\huggn, G_d)$.

To finish the proof, we need to prove that $I$ is a lattice for $J$. Notice that $I \uset{R} K \subset J$. Let $x \in J$; there exists $n \in \mathbb{N}$ such that $\pi^n x \in \hugn$, so $\pi^n x \in I$. Thus $x= \pi^nx \uset{R} \pi^{-n} \in I \uset{R} K$, so $J = I \uset{R} K$. Finally, since $\cap_{i=1}^{\infty} \hugn/ \pi^i \hugn=0$ we obtain that $\cap_{i=1}^{\infty} I/\pi^i I=0$, so $I$ is indeed a lattice for $J$.
\end{proof}

\subsection{Ideals with a given central character}

We now specialise to ideals in $\hugnK$ with a given central character. The Cartan subalgebra of $\mathfrak{g} \times \mathfrak{g}$ is given by $\mathfrak{h} \times \mathfrak{h}$, so picking a weight $\nu: \pi^n( \mathfrak{h} \times \mathfrak{h}) \to R$ is the same as picking a pair of weights $(\lambda,\mu)$ where $\lambda,\mu: \pi^n \mathfrak{h} \to R$. 

\begin{proposition}
\label{d3huggnKllisotensorhuggnKl}
We have $ \huggnKlm \cong \hugnKl \hat{\uset{K}} \hugnKm$.
\end{proposition}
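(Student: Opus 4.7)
The strategy is to prove the isomorphism before passing to $K$, then tensor with $K$ at the end. The key algebraic input is that $U(\mathfrak{g} \times \mathfrak{g}) \cong U(\mathfrak{g}) \otimes_R U(\mathfrak{g})$, and this isomorphism is compatible with the PBW filtration, hence induces an isomorphism of deformations $U(\mathfrak{g} \times \mathfrak{g})_n \cong U(\mathfrak{g})_n \otimes_R U(\mathfrak{g})_n$. The proof then has three main steps: (i) identify the uncompleted quotient by the central character pair as an algebraic tensor product; (ii) pass to the $\pi$-adic completion; (iii) base change to $K$.

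For step (i), I would observe that under the PBW isomorphism the $G \times G$-invariants decompose: $(U(\mathfrak{g} \times \mathfrak{g})^{G \times G})_n \cong (U(\mathfrak{g})^G)_n \otimes_R (U(\mathfrak{g})^G)_n$, with the map into $U(\mathfrak{h} \times \mathfrak{h})_n \cong U(\mathfrak{h})_n \otimes_R U(\mathfrak{h})_n$ splitting as a tensor product of the two Harish-Chandra type maps from \cite[Section 6.10]{Annals}. The weight $(\lambda, \mu) : \pi^n(\mathfrak{h} \times \mathfrak{h}) \to R$ decomposes accordingly, giving $R_{(\lambda,\mu)} \cong R_\lambda \otimes_R R_\mu$ as a $(U(\mathfrak{g} \times \mathfrak{g})^{G \times G})_n$-module. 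Commuting tensor products then gives an isomorphism of $R$-algebras
\[
U(\mathfrak{g} \times \mathfrak{g})_n^{\lambda, \mu} \;\cong\; U(\mathfrak{g})_n \otimes_R U(\mathfrak{g})_n \otimes_{(U(\mathfrak{g})^G)_n \otimes (U(\mathfrak{g})^G)_n} (R_\lambda \otimes_R R_\mu) \;\cong\; U(\mathfrak{g})_n^\lambda \otimes_R U(\mathfrak{g})_n^\mu.
\]

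For step (ii), the $\pi$-adic completion of this algebraic tensor product must match the completed tensor product of the individual completions. Since $\pi$ is central and $(-) / \pi^i$ commutes with tensor products, we have
\[
(U(\mathfrak{g})_n^\lambda \otimes_R U(\mathfrak{g})_n^\mu)/\pi^i \;\cong\; (U(\mathfrak{g})_n^\lambda/\pi^i) \otimes_R (U(\mathfrak{g})_n^\mu/\pi^i),
\]
and $U(\mathfrak{g})_n^\lambda/\pi^i \cong \widehat{U(\mathfrak{g})_n^\lambda}/\pi^i$ (analogously for $\mu$). Taking the inverse limit and using the definition of the completed tensor product from \cite{StackProject} cited in the conventions yields
\[
\widehat{U(\mathfrak{g} \times \mathfrak{g})_n^{\lambda, \mu}} \;\cong\; \widehat{U(\mathfrak{g})_n^\lambda} \,\hat{\otimes}_R\, \widehat{U(\mathfrak{g})_n^\mu}.
\]

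For step (iii), tensoring both sides with $K$ over $R$ gives the claimed isomorphism after noting that $(A \hat{\otimes}_R B) \otimes_R K \cong (A \otimes_R K) \hat{\otimes}_K (B \otimes_R K)$ for $\pi$-adically complete torsion-free $R$-algebras $A, B$.

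The main obstacle is the bookkeeping in step (ii), specifically justifying the commutation of inverse limits with the tensor product and confirming that the natural map into the completed tensor product is bijective; this is typically handled by the Mittag-Leffler condition, which is satisfied here because the transition maps $U(\mathfrak{g})_n^\lambda/\pi^{i+1} \twoheadrightarrow U(\mathfrak{g})_n^\lambda/\pi^i$ are surjective. A secondary subtlety is verifying the tensor product decomposition of the Harish-Chandra homomorphism for $\mathfrak{g} \times \mathfrak{g}$ in the deformed setting, which should follow from the compatibility of the constructions in \cite[Section 6.10]{Annals} with direct products of Lie algebras.
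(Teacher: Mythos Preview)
Your argument is correct and takes a genuinely different route from the paper. Both proofs begin with the same algebraic observation, namely that the kernel $m_{\lambda,\mu}$ of $\chi_{\lambda,\mu}$ decomposes as $m_\lambda \otimes (U(\mathfrak{g})^G)_n + (U(\mathfrak{g})^G)_n \otimes m_\mu$; this is precisely what underlies your step~(i). The divergence is in how the completion is handled. The paper works directly over $K$: it uses the known identification $\huggnK \cong \hugnK \,\hat{\otimes}_K\, \hugnK$, identifies the ideal generated by $m_{\lambda,\mu}$ inside this completed tensor product, and then invokes a separate Banach-algebraic lemma (Lemma~\ref{d3completedtensorproductquotient}) asserting that $A \,\hat{\otimes}_K\, B / (I \,\hat{\otimes}_K\, B + A \,\hat{\otimes}_K\, J) \cong A/I \,\hat{\otimes}_K\, B/J$ for closed two-sided ideals. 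The proof of that lemma is nontrivial: it uses strictness of quotient maps, a theorem of Bode on strictness of tensor products of strict maps, and the BGR result that completion preserves strict short exact sequences. Your approach sidesteps all of this functional analysis by staying at the integral level: you identify the uncompleted $R$-algebras, complete $\pi$-adically using only elementary properties of inverse limits (surjective transition maps, commutation of $(-)/\pi^i$ with tensor products), and base-change to $K$ at the very end.

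What each buys: the paper's route is shorter once the Banach lemma is available and generalises immediately to other quotients of affinoid enveloping algebras by closed ideals. Your route is more self-contained and avoids importing results on strict morphisms, at the cost of the bookkeeping you flag in steps~(ii) and~(iii). One point to tighten in your write-up: the identification $(U(\mathfrak{g} \times \mathfrak{g})^{G\times G})_n \cong (U(\mathfrak{g})^G)_n \otimes_R (U(\mathfrak{g})^G)_n$ requires checking that the restricted PBW filtration on the tensor product of invariants is the tensor filtration; this follows from flatness of $\gr U(\mathfrak{g})$ and the fact that $U(\mathfrak{g})^G$ is a polynomial algebra (so its associated graded is free), but deserves a sentence. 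Alternatively, you can bypass this by arguing directly with the ideal generated by $m_\lambda \otimes 1 + 1 \otimes m_\mu$ inside $U(\mathfrak{g})_n \otimes_R U(\mathfrak{g})_n$, which is exactly how the paper phrases it.
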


We need the following Lemma:

\begin{lemma}
\label{d3completedtensorproductquotient}

Let $A$ and $B$ be complete normed $K$-algebras and $I,J$ be closed two-sided ideals in $A$ and $B$, respectively. Then

$$ A \hat{\uset{K}} B/(I \hat{\uset{K}} B + A \hat{\uset{K}} J) \cong A/I \hat{\uset{K}} B/J.$$

\end{lemma}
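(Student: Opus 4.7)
The plan is to exhibit mutually inverse continuous $K$-algebra homomorphisms between the two sides, and to verify they act as the identity on the dense algebraic tensor product.

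First I would observe that the quotient maps $p_A : A \twoheadrightarrow A/I$ and $p_B : B \twoheadrightarrow B/J$ are continuous surjections of Banach $K$-algebras (since $I$ and $J$ are closed). By functoriality of the completed tensor product they induce a continuous $K$-algebra homomorphism
\[
  p_A \hat{\otimes} p_B : A \hat{\otimes}_K B \longrightarrow A/I \hat{\otimes}_K B/J.
\]
This map vanishes on pure tensors $i \otimes b$ with $i \in I$ and on $a \otimes j$ with $j \in J$, so by continuity it vanishes on the closures $I \hat{\otimes}_K B$ and $A \hat{\otimes}_K J$. Therefore it factors through the quotient, giving a continuous map
\[
  \Psi : A \hat{\otimes}_K B / (I \hat{\otimes}_K B + A \hat{\otimes}_K J) \longrightarrow A/I \hat{\otimes}_K B/J.
\]

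Next I would construct the inverse. The assignment $(\bar{a}, \bar{b}) \mapsto \overline{a \otimes b}$ gives a bilinear map
\[
  A/I \times B/J \longrightarrow A \hat{\otimes}_K B / (I \hat{\otimes}_K B + A \hat{\otimes}_K J),
\]
which is well-defined: replacing $a$ by $a + i$ with $i \in I$ changes $a \otimes b$ by $i \otimes b \in I \hat{\otimes}_K B$, which is zero in the quotient, and similarly for $b$. This bilinear map is continuous with respect to the quotient norms on $A/I$ and $B/J$, so by the universal property of the completed tensor product it extends to a continuous $K$-algebra homomorphism
\[
  \Phi : A/I \hat{\otimes}_K B/J \longrightarrow A \hat{\otimes}_K B / (I \hat{\otimes}_K B + A \hat{\otimes}_K J).
\]

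To conclude, I would check that both $\Phi \circ \Psi$ and $\Psi \circ \Phi$ act as the identity on pure tensors coming from the algebraic tensor products $A \otimes_K B$ and $A/I \otimes_K B/J$, which are dense in the respective completed tensor products. Since both compositions are continuous, they must then be the identity everywhere, proving that $\Phi$ and $\Psi$ are mutually inverse isomorphisms of Banach $K$-algebras.

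The main obstacle is a topological point: one must verify that $I \hat{\otimes}_K B + A \hat{\otimes}_K J$ is a closed ideal in $A \hat{\otimes}_K B$, so that the quotient is itself a Hausdorff Banach $K$-algebra with the quotient norm. Each summand is closed (being a completed tensor product of closed subspaces), but the sum of two closed subspaces of a Banach space need not be closed in general. To bypass this, I would argue in two steps: first quotient $A \hat{\otimes}_K B$ by the closed subspace $I \hat{\otimes}_K B$, identifying the result with $A/I \hat{\otimes}_K B$ via the short exact sequence induced by flatness; then mod out by $A/I \hat{\otimes}_K J$, which is a closed ideal of $A/I \hat{\otimes}_K B$, yielding $A/I \hat{\otimes}_K B/J$. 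Tracking this composite identification gives the desired isomorphism, and automatically shows that the ideal $I \hat{\otimes}_K B + A \hat{\otimes}_K J$ is closed as the kernel of the continuous surjection $\Psi$.
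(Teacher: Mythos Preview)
Your proposal is correct, and you have put your finger on the one genuine subtlety (closedness of the sum $I \hat{\uset{K}} B + A \hat{\uset{K}} J$). Your route is, however, different from the paper's.

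The paper does not construct explicit inverse maps. Instead it works at the level of the \emph{algebraic} tensor product: since $I$ and $J$ are closed, the projections $\phi_1\colon A \to A/I$ and $\phi_2\colon B \to B/J$ are strict, and \cite[Theorem~2.8]{Bode} shows that $\phi_1 \otimes_K \phi_2$ is again strict. This yields a strict short exact sequence of seminormed $K$-spaces whose completion, by \cite[Corollary~1.1.9/6]{BGR}, remains exact; the kernel on the completed side is then identified with $I \hat{\uset{K}} B + A \hat{\uset{K}} J$.

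Your two-step workaround is valid, but observe that each step --- for instance that $(A \hat{\uset{K}} B)/(I \hat{\uset{K}} B) \cong (A/I)\hat{\uset{K}} B$ --- is itself precisely the statement that $\hat{\uset{K}}$ preserves strict exact sequences of $K$-Banach spaces. This is what you are calling ``flatness'', and it is exactly the content of the references the paper invokes; it is also where the non-archimedean hypothesis on $K$ is essential. So your argument and the paper's rest on the same key input: the paper applies it once to the simultaneous quotient, whereas you apply it twice (once per factor), and your preliminary construction of $\Psi$ and $\Phi$ becomes redundant once the workaround is in place.
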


\begin{proof}

 We call a continuous morphism   $\phi:M \to N$ between two semi-normed $K$-vector spaces \emph{strict} if the natural morphism $\coim \phi \to \im \phi$ is a homeomorphism. If $M$ and $N$ are Banach spaces, then by \cite[Lemma 2.6]{Bode}, $\phi$ is strict if and only if the image of $\phi$ is closed in $N$.

Let $\phi_1:A \to A/I$, $\phi_2:B \to B/J$ be the natural projections. Since $A$ and $B$ are Banach spaces and $I$ and $J$ are closed ideals, we get by the discussion above that $\phi_1$ and $\phi_2$ are strict morphisms. Further, $R$ is mixed characteristic $(0,p)$, so the valuation on $\mathbb{Q} \subset K=\Frac(R)$ is non-trivial. Thus, by \cite[Theorem 2.8]{Bode} the morphism $\phi_1 \uset{K} \phi_2: A \uset{K} B \to A/I \uset{K} B/J$
is also strict. Furthermore, as $I,J$ are closed in $A$ and $B$ respectively, we get that the natural inclusion $I \uset{K} B + A \uset{K} J \to A \uset{K} B$ is also strict. Therefore, we obtain a \emph{strict} short exact sequence

$$ 0 \to I \uset{K} B + A \uset{K} J \to A \uset{K} B \to A/I \uset{K} B/J \to 0.$$

Applying \cite[Corollary 1.1.9/6]{BGR}, we get a strict exact sequence

\begin{equation}
\label{d3completetensorses}
0 \to \overline{I \uset{K} B + A \uset{K} J} \to A \hat{\uset{K}} B \to A/I \hat{\uset{K}} B/J \to 0.
\end{equation}
Since $I$ and $J$ are closed ideals, we have $\overline{I \uset{K} B + A \uset{K} J}= I \hat{\uset{K}} B + A \hat{\uset{K}} J$. The lemma follows from equation \eqref{d3completetensorses}.
\end{proof}

We can now prove Proposition \ref{d3huggnKllisotensorhuggnKl}:

\begin{proof}

For $\lambda:\pi^n \mathfrak{h} \to R$  denote $\chi_{\lambda}: U(\mathfrak{g})^G_n \to R$ the map obtained by composing the map $\lambda$ with the map $U(\mathfrak{g})^G_n \to U(\mathfrak{h})_n$. Recall by Theorem \ref{d3centreofaffinoidenvelopingalgebras}, we have $Z(\hugnK) \cong \widehat{U(\mathfrak{g})^G_{n,K}}$, so $\chi_{\lambda}$ determines a central character of $\hugnK$ which we denote $\chi_{\lambda}$ by abuse of language. Let $m_{\lambda}= \ker \chi_{\lambda}$, so that $\hugnKl=\hugnK/\hugnK m_{\lambda}$.
Further, consider $\chi_{\lambda,\mu}:U(\mathfrak{g} \times \mathfrak{g})^{G \times G}_n \to R$ and let $m_{\lambda,\mu}=\ker \chi_{\lambda,\mu}$ so that $\huggnKlm=\huggnK/m_{\lambda,\mu} \huggnK$.

We have by definition that $m_{\lambda,\mu}=m_{\lambda} \otimes (U(\mathfrak{g})^G)_n + (U(\mathfrak{g})^G)_n \otimes m_{\mu}$, so 

\begin{equation}
m_{\lambda,\mu} \huggnK=m_{\lambda} \hat{\uset{K}} \hugnK+ \hugnK \hat{\otimes} m_{\mu}.
\end{equation}

The claim now follows by applying Lemma \ref{d3completedtensorproductquotient}.
\end{proof}

Recall that $\tau$ denotes the principal anti-automorphism of $U(\mathfrak{g})$. It can be extended to an anti-automorphism of $\hugnK$, which we will also call $\tau$. We have by \cite[Lemma 5.4-Equation 5.5]{BeGi} that $\tau$ induces an isomorphism $U(\mathfrak{g})^{\lambda^{\op}} \cong U(\mathfrak{g})^{-w_{o} \lambda}$; here $w_{o}$ denotes the longest element of $W$. The map $\tau$ extends to an isomorphism $\hugnKl^{\op} \cong \reallywidehat{\U{g}_{n,K}^{-w_{o} \lambda}}.$ From now on, until the end of the document we will use $\lambda^*$ to denote $-w_{o} \lambda$.

Recall that if $I$ is a two-sided ideal in $\hugnK$, we have shown in Corollary \ref{d3equivarianceoftwosidedideals} that $I \in \Mod_{\fg}(\huggnK, G_d)$. Furthermore, if $I$ has central character $\chi_{\lambda}$, i.e. $m_{\lambda} \subset I$, we view $I$ as a two-sided ideal  in $\hugnKl$. We have by Proposition \ref{d3huggnKllisotensorhuggnKl} that  $I$ is a module over the ring $\huggnKlstl \cong \hugnKlst \hat{\uset{K}} \hugnKl$, so we have:

\begin{corollary}
\label{d3equivariancetwosidedidealslambdacentralcharacter}
Let $I$ be a two-sided ideal in $\hugnKl$. Then $$I \in \Mod_{\fg}(\huggnKlstl, G).$$.

\end{corollary}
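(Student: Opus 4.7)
The idea is to reduce to Corollary \ref{d3equivarianceoftwosidedideals} by lifting $I$ to a two-sided ideal in $\hugnK$ and then descending the equivariance structure through the central quotient.

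First, let $q: \hugnK \to \hugnKl$ be the canonical quotient, whose kernel is $\hugnK \cdot m_\lambda$, and set $\tilde{I} := q^{-1}(I)$. Then $\tilde{I}$ is a two-sided ideal of $\hugnK$ containing $\hugnK \cdot m_\lambda$, so by Corollary \ref{d3equivarianceoftwosidedideals}, $\tilde{I}$ is a finitely generated $\hat{G}_d$-equivariant $\huggnK$-module. Moreover, by Lemma \ref{d3twosidedidealsarepreservedbyadjointaction}, the adjoint action of $G$ fixes $Z(\hugnK)$ pointwise, so the central ideal $m_\lambda$ and hence $\hugnK \cdot m_\lambda$ are $G_d$-stable. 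Consequently, the short exact sequence
\[
0 \to \hugnK \cdot m_\lambda \to \tilde{I} \to I \to 0
\]
is a sequence of $\hat{G}_d$-equivariant $\huggnK$-modules, so the $G_d$-equivariant structure on $\tilde{I}$ descends to $I$.

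Second, I would show that the $\huggnK$-action on $I$ factors through $\huggnKlstl$. Recall the bimodule action $(a \hat{\otimes} b)\cdot x = bxa^\tau$. For $b \in \hugnK \cdot m_\lambda$ and $x \in I \subset \hugnKl$, we have $bx = 0$ since $m_\lambda$ annihilates $\hugnKl$ on the left. For $a \in \hugnK \cdot m_{\lambda^*}$, the anti-isomorphism $\tau: \hugnKlst \xrightarrow{\sim} \hugnKl^{\op}$ gives $a^\tau \in \hugnK \cdot m_\lambda$, which annihilates $\hugnKl$ on the right (using centrality of $m_\lambda$). Thus the action factors through
\[
\huggnK / \bigl(\hugnK \cdot m_{\lambda^*} \hat{\otimes}_K \hugnK + \hugnK \hat{\otimes}_K \hugnK \cdot m_\lambda\bigr) \;\cong\; \hugnKlst \hat{\otimes}_K \hugnKl \;\cong\; \huggnKlstl,
\]
where the final isomorphism is Proposition \ref{d3huggnKllisotensorhuggnKl}. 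Since $G_d$ fixes the center $Z(\huggnK)$ pointwise, this induced $\huggnKlstl$-action is still $\hat{G}_d$-equivariant.

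Third, for finite generation: since $\hugnKl$ is a Noetherian ring (being a quotient of the Noetherian ring $\hugnK$), the two-sided ideal $I$ is finitely generated as a left $\hugnKl$-module, and hence \emph{a fortiori} as a left $\huggnKlstl$-module.

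The main technical obstacle will be verifying that the inverse-system data underlying the $\hat{G}_d$-equivariance descends cleanly: one must identify a lattice $I_0 \subset I$ (naturally, the image of the lattice $\tilde{I} \cap \hugn$ of $\tilde{I}$ under $q$), check that $I_0/\pi^i I_0$ forms an inverse system of $G_d$-equivariant $\huggnKlstl$-modules, and verify compatibility with the quotient short exact sequence mod $\pi^i$. This follows by applying the same strategy as in the proof of Corollary \ref{d3equivarianceoftwosidedideals} (cutting out lattices by intersection with $\hugn$) combined with the observation that $\pi$-adic reduction commutes with quotienting by a central ideal preserved by the $G_d$-action.
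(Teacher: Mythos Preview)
Your approach is correct and is precisely the argument the paper intends: the paragraph immediately preceding the corollary sketches exactly this---pull back to a two-sided ideal in $\hugnK$, invoke Corollary \ref{d3equivarianceoftwosidedideals}, and use Proposition \ref{d3huggnKllisotensorhuggnKl} together with the identification $\hugnKl^{\op}\cong\hugnKlst$ via $\tau$ to see that the $\huggnK$-action factors through $\huggnKlstl$. You have simply supplied the details (the quotient short exact sequence, the check that $\tau(m_{\lambda^*})=m_\lambda$, and the lattice bookkeeping) that the paper leaves implicit.
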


We should remark that we have dropped the index $d$ as this should not cause any confusion for the rest of the document. Recall that we view $G$ via its diagonal embedding into $G \times G$. The corollary above is an affinoid version of \cite[Corollary 4.5]{Sta2}.


\section{Global sections under affinoid pullback}
\label{d3sectionglobalsections}

Throughout this section, we aim to compute global sections under the affinoid pullback defined in Remark \ref{d3definitionofaffinoidpullback}. We proceed by developing some machinery.

\subsection{Preliminary lemmas}

Recall that for any sheaf $R$-modules $\mathcal{F}$ on a topological space $Y$, we denote $\hat{\mathcal{F}}:= \invlim \mathcal{F}/\pi^i \mathcal{F}$  its $\pi$-adic completion and $\mathcal{F}_K$ the sheaf defined by $\mathcal{F}_K(U):=F(U) \uset{R} K$ for any $U \subset Y$ open.

Throughout this section we will freely make use of the following easy result: Let $f:Y \to W$ be a map of $R$-schemes and $\mathcal{F}$ be a sheaf of $R$-modules on $Y$, then $(f_* \mathcal{F})_K \cong f_*(\mathcal{F}_K).$

\begin{lemma}
\label{d3completiondirectimage}

Let $j:Y \to W$ be a closed embedding of $R$-varieties and let $(\mathcal{F}_i)$ be an inverse system of sheaves of $R$-modules such that $\mathcal{F}_i/\pi^{i-1} \mathcal{F}_i \cong \mathcal{F}_{i-1}$ for $i \geq 1$. Let $\mathcal{F}:=\invlim \mathcal{F}_i$. Then:

$$ j_*  \mathcal{F} \cong (\invlim j_* \mathcal{F}_i).$$

\end{lemma}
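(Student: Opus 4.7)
The plan is to exploit the fact that the direct image functor $j_*$ is right adjoint to the inverse image functor $j^{-1}$ on sheaves of $R$-modules, hence commutes with arbitrary small limits. In particular, $j_*$ commutes with countable inverse limits, which immediately yields the desired natural isomorphism $j_*(\invlim \mathcal{F}_i) \cong \invlim j_*\mathcal{F}_i$. So the whole lemma is really a piece of abstract nonsense that I would verify directly by unwinding the definitions.

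Concretely, I would recall that inverse limits in the category of sheaves of $R$-modules are computed sectionwise: the presheaf $U \mapsto \invlim \mathcal{F}_i(U)$ is already a sheaf, because the sheaf axiom is itself expressed as an equaliser, and limits commute with limits. Hence for any open $V \subseteq W$,
\[
(j_*\mathcal{F})(V) \;=\; \mathcal{F}(j^{-1}V) \;=\; \invlim \mathcal{F}_i(j^{-1}V) \;=\; \invlim (j_*\mathcal{F}_i)(V) \;=\; \bigl(\invlim j_*\mathcal{F}_i\bigr)(V),
\]
and all identifications are natural in $V$, yielding an isomorphism of sheaves on $W$.

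I do not expect any real obstacle. Note that neither the hypothesis that $j$ is a closed embedding nor the hypothesis $\mathcal{F}_i/\pi^{i-1}\mathcal{F}_i \cong \mathcal{F}_{i-1}$ is actually used in this particular statement: the result holds for arbitrary continuous maps between topological spaces and for arbitrary inverse systems of sheaves of abelian groups. These hypotheses are presumably included because the lemma will be applied in a setting where they are naturally available (for example to subsequently invoke Corollary \ref{d3inversesystemlemma} and reinterpret $\invlim j_*\mathcal{F}_i$ as the $\pi$-adic completion of a coherent module).
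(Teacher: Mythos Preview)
Your proof is correct and uses the same key fact as the paper: $j_*$ is right adjoint to $j^{-1}$, hence commutes with inverse limits. The paper's proof additionally spends a paragraph verifying that the pushed-forward system satisfies $j_*\mathcal{F}_i/\pi^{i-1}j_*\mathcal{F}_i \cong j_*\mathcal{F}_{i-1}$ (this is where the closed-embedding hypothesis enters, via right exactness of $j_*$); as you correctly note, this extra compatibility is not needed for the bare isomorphism in the lemma statement but is useful for downstream applications such as invoking Corollary~\ref{d3inversesystemlemma}.
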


\begin{proof}

Notice that apriori it is not clear that the right-hand side is well defined. However, since $j$ is a closed embedding, it is in particular right exact. Therefore, one can prove in a similar fashion to Lemma \ref{d3preservationofinversesystemlemma} that $$j_* \mathcal{F}_i / \pi^{i-1} j_* \mathcal{F}_i \cong j_* \mathcal{F}_{i-1}.$$ 

Let $\mathcal{G}:= \invlim j_* \mathcal{F}_i$. Since $j_*$ is a right adjoint functor to the inverse image functor, $j_*$ commutes with inverse limits, thus

\begin{equation*}
 j_*\mathcal{F}= j_*( \invlim \mathcal{F}_i) \cong \invlim (j_* \mathcal{F}_i)= \mathcal{G}. \qedhere
\end{equation*}
\end{proof}

\begin{lemma}
\label{d3inverseimagecompletion}

Let $Y$ be an $R$-variety and let $p_r: Y \times Y \to Y$ be the projection on the right factor, and $\mathcal{F}$ a sheaf of $R$-modules. Then: $$\widehat{p_r ^{-1} \mathcal{F}} \cong p_r^{-1} \hat{\mathcal{F}}.$$

\end{lemma}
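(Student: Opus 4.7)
The plan is to construct a natural comparison morphism and then verify that it is an isomorphism by a stalk computation, after first using exactness of $p_r^{-1}$ to exchange it with the quotient by $\pi^i$.

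First, since the inverse image functor $p_r^{-1}$ is exact on sheaves of $R$-modules (it commutes with stalks, hence with finite limits and all colimits), the same argument as in Lemma \ref{d3preservationofinversesystemlemma} applied to the right-exact sequence $\mathcal{F} \xrightarrow{\pi^i} \mathcal{F} \to \mathcal{F}/\pi^i\mathcal{F} \to 0$ yields natural isomorphisms $p_r^{-1}\mathcal{F}/\pi^i p_r^{-1}\mathcal{F} \cong p_r^{-1}(\mathcal{F}/\pi^i\mathcal{F})$ for every $i \geq 1$. Taking inverse limits over $i$, this rewrites the left-hand side of the desired isomorphism as
\[ \widehat{p_r^{-1}\mathcal{F}} \;\cong\; \invlim_{i} \, p_r^{-1}(\mathcal{F}/\pi^i \mathcal{F}). \]

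Next, the canonical projections $\hat{\mathcal{F}} \twoheadrightarrow \mathcal{F}/\pi^i\mathcal{F}$ induce, after applying $p_r^{-1}$ and passing to the inverse limit, a natural morphism
\[ \Phi \colon p_r^{-1}\hat{\mathcal{F}} \longrightarrow \invlim_i p_r^{-1}(\mathcal{F}/\pi^i\mathcal{F}) \cong \widehat{p_r^{-1}\mathcal{F}}. \]
The goal is then to show $\Phi$ is an isomorphism, which I would verify on stalks. For any sheaf $\mathcal{G}$ on $Y$ and any point $(x,y) \in Y \times Y$, one has $(p_r^{-1}\mathcal{G})_{(x,y)} = \mathcal{G}_y$, so the stalk of $p_r^{-1}\hat{\mathcal{F}}$ at $(x,y)$ is $\hat{\mathcal{F}}_y = \invlim_i \mathcal{F}_y/\pi^i\mathcal{F}_y$. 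On the other hand, since $p_r^{-1}$ preserves surjections, the transition maps in the inverse system $(p_r^{-1}(\mathcal{F}/\pi^i\mathcal{F}))$ are epimorphisms of sheaves; the induced system of stalks at $(x,y)$, namely $(\mathcal{F}_y/\pi^i\mathcal{F}_y)$, is therefore Mittag--Leffler. Combined with the fact that $Y \times Y$ is a Noetherian topological space (the point $(x,y)$ admits a cofinal system of quasi-compact open neighbourhoods), this guarantees that the stalk functor at $(x,y)$ commutes with this inverse limit, so that the stalk of the right-hand side is also $\invlim_i \mathcal{F}_y/\pi^i \mathcal{F}_y$. Hence $\Phi$ induces an isomorphism on every stalk and is therefore an isomorphism of sheaves.

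The main obstacle is the final step: interchanging the stalk functor with the inverse limit. This is not automatic for arbitrary inverse systems of sheaves, and I would need to appeal to the Mittag--Leffler condition together with the fact that $p_r$ is a projection (so that $p_r^{-1}\mathcal{G}$ admits a transparent description on the basis of product opens $U_1 \times U_2 \subset Y \times Y$). If the Mittag--Leffler argument proves awkward to cite, an alternative is to work directly on this basis of rectangular opens, where the presheaf inverse image of $\mathcal{G}$ on $U_1 \times U_2$ with $U_1$ nonempty equals $\mathcal{G}(U_2)$; both sides of the claimed isomorphism then clearly evaluate to $\hat{\mathcal{F}}(U_2)$ on such basis opens, and the equivalence is preserved by sheafification.
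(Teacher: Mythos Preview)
Your alternative approach via the basis of rectangular opens is correct and is exactly what the paper does: construct the comparison map $p_r^{-1}\hat{\mathcal{F}} \to \widehat{p_r^{-1}\mathcal{F}}$, then check on affine product opens $U \times V$ that both sides evaluate to $\hat{\mathcal{F}}(V)$ (using $p_r^{-1}\mathcal{G}(U\times V)=\mathcal{G}(V)$), and conclude since the sheaves agree on a cover and there is a global map between them.

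Your primary route through stalks, however, has a genuine gap. Mittag--Leffler controls the exactness of $\invlim$ (vanishing of $\invlim^1$), not its commutation with filtered colimits such as the stalk functor. There is no general principle saying that on a Noetherian space a Mittag--Leffler system of sheaves has $(\invlim_i \mathcal{G}_i)_y \cong \invlim_i (\mathcal{G}_i)_y$; indeed you invoke this commutation twice, once for $\hat{\mathcal{F}}_y$ on $Y$ and once for $\widehat{p_r^{-1}\mathcal{F}}_{(x,y)}$ on $Y\times Y$, and neither instance is justified by the ingredients you cite. The reason the statement is nonetheless true here is precisely the special shape of $p_r$, which lets you bypass stalks entirely by the basis computation. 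So drop the stalk argument and lead directly with the rectangular-opens verification; that is a complete proof and coincides with the paper's.
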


\begin{proof}

First,  notice that there is a map from the right hand side to the left-hand side via the maps $\hat{\mathcal{F}} \to \mathcal{\hat{F}}/\pi^{i} \mathcal{\hat{F}} \cong \mathcal{F}/\pi^{i} \mathcal{F}$. Let $U,V \subset Y$ be affine open, so that $U \times V \subset Y \times Y$ is affine open. Then,  we have:

\begin{equation}
\label{d3projectioncompletionsheafequation}
\begin{split}
\widehat{p_r^{-1} \mathcal{F}}(U \times V) &= \invlim (p_r^{-1} \mathcal{F}/ \pi^i p_r^{-1} \mathcal{F}) ( U \times V) \\
           & \cong \invlim p_r^{-1} \mathcal{F}(U \times V)/ \pi^{i} p_r^{-1} \mathcal{F} (U \times V) \\
           & \cong \invlim \mathcal{F}(V)/ \pi^{i} \mathcal{F}(V) \\
           & \cong \hat{\mathcal{F}}(V) \\
           & \cong p_r^{-1} \hat{\mathcal{F}}(U \times V).
\end{split}
\end{equation}

Now let $(U_i)_{i \in I}$ be an affine open covering of $Y$. Then $\{U_j \times U_k| j,k \in I \}$ is an affine open covering of $Y \times Y$. By equation \eqref{d3projectioncompletionsheafequation}, we have that the sheaves $\widehat{p_r^{-1} \mathcal{F}}$ and $p_r^{-1} \hat{\mathcal{F}}$ agree on affine open cover and since there exists a map between the two, they are isomorphic.
\end{proof}

\subsection{Simplyfing the pullback functor}

We retain the notation from the previous section. Further, for the rest of the section we assume $\lambda,\mu: \pi^n  \mathfrak{h} \to R$ are $R$-linear \emph{dominant weights}. Since $\mathcal{D}_n^{\lambda,\mu}$ is a sheaf of $\pi^n$-deformed $G$-htdo by Corollary \ref{d3DlambdarisrdeformedGhtdo} and the double flag variety $X \times X$ is quasi-compact we have by Theorem \ref{d3alaBorho-Brylinski}, Proposition \ref{d3piadicBorhoBrylinski} and Theorem \ref{d3affinoidBorhoBrylinskitheorem} equivalences of categories:

\begin{equation}
\begin{split}
&i_l^{\#}: \Coh(\mathcal{D}^{\lambda,\mu}_n,G) \to \Coh(\mathcal{D}^{\lambda}_n,B), \\
&\hat{i_l}^{\#}: \Coh(\widehat{\mathcal{D}^{\lambda,\mu}_n},G)  \to \Coh(\widehat{\mathcal{D}^{\lambda}_n},B), \\
&\hat{i}_{l,K}^{\#}: \Coh(\widehat{\mathcal{D}^{\lambda,\mu}_{n,K}},G)  \to \Coh(\widehat{\mathcal{D}^{\lambda}_{n,K}},B).
\end{split}
\end{equation}

We let $i:eB \to X$ and $p_r:X \times X \to X$ denote the natural inclusion and projection onto the right factor, respectively. We also define $\mathcal{M}^{\mu}_n:=i_*R \uset{\mathcal{O}_X} \mathcal{D}^{\mu}_n.$





We can now give a description of the pullback of $G$-equivariant coherent $\widehat{\mathcal{D}^{\lambda,\mu}_{n}}$-modules and $\widehat{\mathcal{D}^{\lambda,\mu}_{n,K}}$-modules. We start by proving $\pi$-adic and affinoid versions of \cite[Corollary 6.4]{Sta2}.
\begin{proposition}
\label{d3piadicpullback}
Let $\mathcal{M} \in \Coh(\widehat{\mathcal{D}^{\lambda,\mu}_n},G) $. Then:

$$i_{l_*}(\hat{i_l}^{\#} \mathcal{M}) \cong p_r^{-1}\widehat{\mathcal{M} ^{\mu}_n}\uset{p_r^{-1} \widehat{\mathcal{D}^{\mu}_n}} \mathcal{M}.$$

\end{proposition}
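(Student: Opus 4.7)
The plan is to reduce the $\pi$-adic statement to the un-completed Borho-Brylinski identification \cite[Corollary 6.4]{Sta2} by applying it to the truncations $\mathcal{M}_i := \mathcal{M}/\pi^i \mathcal{M}$ and then taking inverse limits. Each $\mathcal{M}_i$ lies in $\Coh(\mathcal{D}^{\lambda,\mu}_n, G)$ (being annihilated by $\pi^i$, it is really a module over $\mathcal{D}^{\lambda,\mu}_n / \pi^i \mathcal{D}^{\lambda,\mu}_n$), the transition maps satisfy $\mathcal{M}_i/\pi^{i-1}\mathcal{M}_i \cong \mathcal{M}_{i-1}$, and by definition of the $\pi$-adic pullback together with Corollary \ref{d3inversesystemlemma} one has $\hat{i_l}^{\#} \mathcal{M} \cong \invlim_i i_l^{\#} \mathcal{M}_i$.

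First I would apply the classical result to each $\mathcal{M}_i$ to obtain compatible isomorphisms
$$i_{l_*}(i_l^{\#}\mathcal{M}_i) \cong p_r^{-1}\mathcal{M}^{\mu}_n \uset{p_r^{-1}\mathcal{D}^{\mu}_n} \mathcal{M}_i,$$
and then take the inverse limit of both sides. For the left-hand side, I would use that $i_l$ is a closed embedding, so $i_{l_*}$ is a right adjoint and commutes with inverse limits (this is the content of Lemma \ref{d3completiondirectimage}), giving
$$\invlim_i i_{l_*}(i_l^{\#}\mathcal{M}_i) \;\cong\; i_{l_*}\bigl(\invlim_i i_l^{\#}\mathcal{M}_i\bigr) \;\cong\; i_{l_*}(\hat{i_l}^{\#}\mathcal{M}).$$

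For the right-hand side I would combine Lemma \ref{d3inverseimagecompletion}, which moves the $\pi$-adic completion past $p_r^{-1}$, with a sheafified version of Lemma \ref{d3inversesystemtensor} applied locally on affine opens. This is legitimate because $\mathcal{M}^{\mu}_n = i_*R \uset{\mathcal{O}_X} \mathcal{D}^{\mu}_n$ is locally finitely generated over the sheaf of Noetherian rings $\mathcal{D}^{\mu}_n$ (indeed, generated by the image of $1 \otimes 1$), so on any affine open one is in the situation $A$ finitely generated over a Noetherian ring $B$ and $(C_i)$ an inverse system with $C_i \cong C/\pi^i C$, hence $\invlim(A \otimes_B C_i) \cong \hat{A} \uset{\hat{B}} C$. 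This yields
$$\invlim_i\!\Bigl(p_r^{-1}\mathcal{M}^{\mu}_n \uset{p_r^{-1}\mathcal{D}^{\mu}_n} \mathcal{M}_i\Bigr) \;\cong\; p_r^{-1}\widehat{\mathcal{M}^{\mu}_n} \uset{p_r^{-1}\widehat{\mathcal{D}^{\mu}_n}} \mathcal{M}.$$

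The main obstacle I anticipate is the right-hand side bookkeeping: I first need to verify that the inverse system $p_r^{-1}\mathcal{M}^{\mu}_n \otimes_{p_r^{-1}\mathcal{D}^{\mu}_n} \mathcal{M}_i$ itself satisfies the hypotheses of Corollary \ref{d3inversesystemlemma} (that is, the $i$-th term is the reduction mod $\pi^i$ of the $(i{+}1)$-st), which follows by a right-exactness argument mimicking Lemma \ref{d3preservationofinversesystemlemma}; and second I need to check that the local isomorphisms produced by the Noetherian tensor-product lemma glue, which reduces to observing that the natural map from the inverse limit to the completed tensor product is visibly a morphism of presheaves and is an isomorphism on affines. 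Combining the two limit computations then produces the claimed isomorphism.
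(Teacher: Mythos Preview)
Your proposal is correct and follows essentially the same approach as the paper: reduce to the truncations $\mathcal{M}_i = \mathcal{M}/\pi^i\mathcal{M}$, apply \cite[Corollary 6.4]{Sta2} at each level, then pass to the inverse limit using Lemma~\ref{d3completiondirectimage} on the left and Lemmas~\ref{d3inversesystemtensor} and~\ref{d3inverseimagecompletion} on the right. The paper's proof is slightly terser (it simply chains the isomorphisms without your explicit discussion of the gluing and inverse-system hypotheses), but the logical structure is identical.
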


\begin{proof}
We know by construction that $\mathcal{M}/\pi^i\mathcal{M} \in \Coh(\mathcal{D}^{\lambda,\mu}_n,G)$. We have

\begin{equation}
\begin{split}
i_{l_*}(\hat{i_l}^{\#} \mathcal{M})&=i_{l_*}(\invlim i_l^{\#} (\mathcal{M}/\pi^i \mathcal{M})) \\
&\cong \invlim (i_{l_*} (i_l^{\#} (\mathcal{M}/\pi^i \mathcal{M}))) \text{ (by Lemma \ref{d3completiondirectimage}) }   \\
&\cong \invlim  (p_r^{-1}\mathcal{M}^{\mu}_n \uset{p_r^{-1} \mathcal{D}^{\mu}_n} \mathcal{M}/\pi^i \mathcal{M}) \text{ (by \cite[Corollary 6.4]{Sta2})} \\
&\cong \invlim (p_r^{-1}\mathcal{M}^{\mu}_n/ \pi^i p_r^{-1}\mathcal{M}^{\mu}_n \uset{p_r^{-1} \mathcal{D}^{\mu}_n/\pi^i p_r^{-1} \mathcal{D}^{\mu}_n} \mathcal{M}/\pi^i \mathcal{M}) \\
&\cong \widehat{p_r^{-1} \mathcal{M}^{\mu}_n} \uset{\widehat{p_r^{-1} \mathcal{D}^{\mu}_n}} \mathcal{M} \text{ (by Lemma \ref{d3inversesystemtensor}) } \\
&\cong p_r^{-1}\widehat{\mathcal{M} ^{\mu}_n}\uset{p_r^{-1} \widehat{\mathcal{D}^{\mu}_n} } \mathcal{M} \text{ (by Lemma \ref{d3inverseimagecompletion})}. \qedhere
\end{split}
\end{equation}
\end{proof}

\begin{corollary}
\label{d3affinoidpullback}
Let $\mathcal{M} \in \Coh(\widehat{\mathcal{D}^{\lambda,\mu}_{n,K}},G) $. Then:

$$i_{l_*}(\hat{i}_{l,K}^{\#} \mathcal{M}) \cong (p_r^{-1}\widehat{\mathcal{M} ^{\mu}_n})_K \uset{p_r^{-1} \widehat{\mathcal{D}^{\mu}_{n,K}}} \mathcal{M}. $$

\end{corollary}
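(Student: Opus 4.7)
The plan is to deduce this corollary from Proposition \ref{d3piadicpullback} by base-changing from $R$ to $K$. First I would fix a lattice $\mathcal{M}_0 \in \Coh(\widehat{\mathcal{D}^{\lambda,\mu}_n},G)$ of $\mathcal{M}$; such an $\mathcal{M}_0$ exists by the definition of an object in the affinoid equivariant category, and Remark \ref{d3definitionofaffinoidpullback} tells us that $\hat{i}^{\#}_{l,K} \mathcal{M} \cong (\hat{i_l}^{\#} \mathcal{M}_0) \uset{R} K$.

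Applying Proposition \ref{d3piadicpullback} to $\mathcal{M}_0$ yields
$$i_{l_*}(\hat{i_l}^{\#} \mathcal{M}_0) \cong p_r^{-1}\widehat{\mathcal{M}^{\mu}_n} \uset{p_r^{-1} \widehat{\mathcal{D}^{\mu}_n}} \mathcal{M}_0.$$
I would then tensor both sides with $K$ over $R$ and verify that each functor involved commutes with $(-)_K$. For the left-hand side, since $i_l$ is a closed embedding, the identity $(f_* \mathcal{F})_K \cong f_*(\mathcal{F}_K)$ recorded at the start of this section gives $(i_{l_*}(\hat{i_l}^{\#} \mathcal{M}_0))_K \cong i_{l_*}((\hat{i_l}^{\#} \mathcal{M}_0)_K) \cong i_{l_*}(\hat{i}^{\#}_{l,K} \mathcal{M})$.

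For the right-hand side, $p_r^{-1}$ is a left adjoint, hence it commutes with the filtered colimit $(-)_K = \varinjlim \pi^{-n}(-)$, giving $(p_r^{-1}\mathcal{F})_K \cong p_r^{-1}(\mathcal{F}_K)$ for any sheaf of $R$-modules $\mathcal{F}$. Combining this with the standard compatibility of tensor products with base change, one obtains
$$\left(p_r^{-1}\widehat{\mathcal{M}^{\mu}_n} \uset{p_r^{-1} \widehat{\mathcal{D}^{\mu}_n}} \mathcal{M}_0\right)_K \cong (p_r^{-1}\widehat{\mathcal{M}^{\mu}_n})_K \uset{p_r^{-1} \widehat{\mathcal{D}^{\mu}_{n,K}}} \mathcal{M},$$
where the denominator identification $(p_r^{-1} \widehat{\mathcal{D}^{\mu}_n})_K \cong p_r^{-1} \widehat{\mathcal{D}^{\mu}_{n,K}}$ again follows from $p_r^{-1}$ commuting with $(-)_K$.

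The main obstacle, such as it is, is purely bookkeeping: one must be careful to track the module structures on either side under base change, but no new ideas are required beyond the functorial identities above and the reduction to the $\pi$-adic case Proposition \ref{d3piadicpullback} already provides.
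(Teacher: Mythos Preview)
Your proposal is correct and follows essentially the same approach as the paper's proof: fix a lattice $\mathcal{M}_0$, apply Proposition \ref{d3piadicpullback}, and then base-change to $K$ using the compatibilities $(i_{l_*}\mathcal{F})_K \cong i_{l_*}(\mathcal{F}_K)$ and $(p_r^{-1}\mathcal{F})_K \cong p_r^{-1}(\mathcal{F}_K)$ together with the standard compatibility of tensor products with $(-)_K$. Your justification for why $p_r^{-1}$ commutes with $(-)_K$ (as a left adjoint commuting with filtered colimits) is slightly more explicit than the paper's, but the argument is otherwise identical.
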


\begin{proof}
Let $\mathcal{M}_0$ be the correspoding lattice for $\mathcal{M}$ such that $\mathcal{M}_0 \in \Coh(\widehat{\mathcal{D}^{\lambda,\mu}_n},G)$. By definition $(\hat{i}_{l,K}^{\#} \mathcal{M})=(\hat{i_l}^{\#} \mathcal{M}_0)_K$. We have

\begin{equation}
\begin{split}
i_{l_*}(\hat{i}_{l,K}^{\#} \mathcal{M})&=i_{l_*}((\hat{i_l}^{\#} \mathcal{M}_0)_K) \\
& \cong (i_{l_*} i_l^{\#} \mathcal{M}_0)_K \\
& \cong  (p_r^{-1}\widehat{\mathcal{M} ^{\mu}_n}\uset{p_r^{-1} \widehat{\mathcal{D}^{\mu}_n}} \mathcal{M}_0)_K \text{ (by Proposition \ref{d3piadicpullback}) }\\
& \cong (p_r^{-1}\widehat{\mathcal{M} ^{\mu}_n})_K \uset{(p_r^{-1} \widehat{\mathcal{D}^{\mu}_n})_K} (\mathcal{M}_0)_K \\
& \cong (p_r^{-1}\widehat{\mathcal{M} ^{\mu}_n})_K \uset{(p_r^{-1} \widehat{\mathcal{D}^{\mu}_n})_K} \mathcal{M} \\
& \cong (p_r^{-1}\widehat{\mathcal{M} ^{\mu}_n})_K \uset{p_r^{-1} \widehat{\mathcal{D}^{\mu}_{n,K}}} \mathcal{M}. \qedhere
\end{split}
\end{equation}
\end{proof}

We should remark that the argument above proves that the functor $\hat{i}_{l,K}^{\#}$ is well defined, i.e. it does not depend on the lattice of $\mathcal{M}$.

\subsection{Computation of global sections}

We aim to compute $\Gamma(X, \hat{i}_{l,K}^{\#} \mathcal{M})$ for $\mathcal{M} \in \Coh(\widehat{\mathcal{D}^{\lambda,\mu}_{n,K}},G)$. Recall that $Z= X \times X$ denotes the double flag variety. We start by making some notations.

To simplify the notation, we denote $\mathcal{A}:=\Gamma(X,\widehat{\mathcal{D}^{\mu}_n})$, so that $\mathcal{A}_K=\mathcal{A} \uset{R} K \cong \Gamma(X,\widehat{\mathcal{D}^{\mu}_{n,K}})$. Further, we let $\mathcal{B}:=\Gamma(X,\widehat{\mathcal{D}^{\lambda,\mu}_n})$. We have by combining Proposition \ref{d3huggnKllisotensorhuggnKl} and \cite[Theorem 6.10b)]{Annals} that  $\huggnKlm \cong \mathcal{B}_K \cong \hugnKl \hat{\uset{K}} \mathcal{A}_K$. Given a $\mathcal{B}_K$-module $M$, we may view it as $\mathcal{A}_K$-module via $x.m=(1 \otimes x).m$ for $x \in \mathcal{A}_K$ and $m \in M$.

\begin{proposition}
\label{d3glosectionafpullback}
$\mathcal{M} \in \Coh(\widehat{\mathcal{D}^{\lambda,\mu}_{n,K}},G)$. Then as $\hugnKl$-modules we have an isomorphism:

$$\Gamma(X,\hat{i}_{l,K}^{\#} \mathcal{M}) \cong \Gamma(X, \widehat{\mathcal{M}^{\mu}_{n,K}}) \uset{\mathcal{A}_K} \Gamma(Z, \mathcal{M}).$$
\end{proposition}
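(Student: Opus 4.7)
\emph{Reduction to a projection-formula identity.} Since $i_l : X \hookrightarrow Z$ is a closed immersion, $\Gamma(X,\hat{i}_{l,K}^{\#}\mathcal{M}) = \Gamma(Z, i_{l_*}\hat{i}_{l,K}^{\#}\mathcal{M})$, and by Corollary \ref{d3affinoidpullback} the latter equals $\Gamma(Z, (p_r^{-1}\widehat{\mathcal{M}^{\mu}_n})_K \uset{p_r^{-1}\widehat{\mathcal{D}^{\mu}_{n,K}}} \mathcal{M})$. Hence the proposition is equivalent to a natural isomorphism
\begin{equation*}
\Gamma\bigl(Z, (p_r^{-1}\widehat{\mathcal{M}^{\mu}_n})_K \uset{p_r^{-1}\widehat{\mathcal{D}^{\mu}_{n,K}}} \mathcal{M}\bigr) \;\cong\; \Gamma(X, \widehat{\mathcal{M}^{\mu}_{n,K}}) \uset{\mathcal{A}_K} \Gamma(Z, \mathcal{M}).
\end{equation*}
There is a tautological candidate map from right to left constructed from the adjunction unit $\widehat{\mathcal{M}^{\mu}_{n,K}} \to (p_r)_* p_r^{-1}\widehat{\mathcal{M}^{\mu}_{n,K}}$ and the $p_r^{-1}\widehat{\mathcal{D}^{\mu}_{n,K}}$-action on $\mathcal{M}$; the content of the proof lies in showing this map is an isomorphism.

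\emph{Reduction to the classical case via the lattice.} Fix a lattice $\mathcal{M}_0 \in \Coh(\widehat{\mathcal{D}^{\lambda,\mu}_n},G)$ of $\mathcal{M}$ and set $\mathcal{M}_i := \mathcal{M}_0/\pi^i \mathcal{M}_0$, $\mathcal{M}^{\mu}_{n,i} := \mathcal{M}^{\mu}_n/\pi^i \mathcal{M}^{\mu}_n$ and $\mathcal{D}^{\mu}_{n,i} := \mathcal{D}^{\mu}_n/\pi^i \mathcal{D}^{\mu}_n$. Combining Lemma \ref{d3inverseimagecompletion} with a sheaf-theoretic version of Lemma \ref{d3inversesystemtensor}, one obtains
\begin{equation*}
p_r^{-1}\widehat{\mathcal{M}^{\mu}_n} \uset{p_r^{-1}\widehat{\mathcal{D}^{\mu}_n}} \mathcal{M}_0 \;\cong\; \invlim_i \bigl(p_r^{-1}\mathcal{M}^{\mu}_{n,i} \uset{p_r^{-1}\mathcal{D}^{\mu}_{n,i}} \mathcal{M}_i\bigr),
\end{equation*}
and $\Gamma(Z,-)$, being a right adjoint, commutes with the inverse limit. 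The right-hand side of the target isomorphism admits an analogous inverse-limit description, since $\Gamma(X, \widehat{\mathcal{M}^{\mu}_{n,K}})$, $\mathcal{A}_K$ and $\Gamma(Z,\mathcal{M})$ are each obtained by taking $\invlim_i$ of the corresponding finite-level modules and tensoring with $K$. It therefore suffices to establish, for every $i$, the classical identity
\begin{equation*}
\Gamma\bigl(Z, p_r^{-1}\mathcal{M}^{\mu}_{n,i} \uset{p_r^{-1}\mathcal{D}^{\mu}_{n,i}} \mathcal{M}_i\bigr) \;\cong\; \Gamma(X, \mathcal{M}^{\mu}_{n,i}) \uset{\Gamma(X,\mathcal{D}^{\mu}_{n,i})} \Gamma(Z, \mathcal{M}_i).
\end{equation*}

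\emph{The classical identity and main obstacle.} At level $i$ one picks a local finite presentation $(\mathcal{D}^{\mu}_{n,i})^{\oplus J} \to (\mathcal{D}^{\mu}_{n,i})^{\oplus I} \to \mathcal{M}^{\mu}_{n,i} \to 0$, pulls it back along $p_r$, tensors with $\mathcal{M}_i$ and applies $\Gamma(Z,-)$. Right-exactness of tensor product, combined with the exactness of $\Gamma(Z,-)$ on $\Coh(\mathcal{D}^{\lambda,\mu}_n)$ (classical equivariant Beilinson--Bernstein on $Z = X \times X$ for the dominant weight $(\lambda,\mu)$) and of $\Gamma(X,-)$ on $\Coh(\mathcal{D}^{\mu}_n)$ (for the dominant $\mu$), reduces the identity to the tautological case $\mathcal{M}^{\mu}_{n,i} = \mathcal{D}^{\mu}_{n,i}$. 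The main obstacle lies in the inverse-limit bookkeeping: one must verify that the tower $\bigl(p_r^{-1}\mathcal{M}^{\mu}_{n,i} \uset{p_r^{-1}\mathcal{D}^{\mu}_{n,i}} \mathcal{M}_i\bigr)_i$ satisfies the hypotheses of Corollary \ref{d3inversesystemlemma}, i.e.\ that the transition maps induce isomorphisms $(-)_i/\pi^{i-1}(-)_i \cong (-)_{i-1}$, so that the $\invlim$ is coherent over $p_r^{-1}\widehat{\mathcal{D}^{\mu}_n}$ and interacts correctly with $\Gamma(Z,-)$. This comes down to the coherence of $\mathcal{M}_0$ over $\widehat{\mathcal{D}^{\lambda,\mu}_n}$ and of $\widehat{\mathcal{M}^{\mu}_n}$ over $\widehat{\mathcal{D}^{\mu}_n}$, but requires some delicate sheaf-theoretic manipulation; once this is settled, base-changing from $R$ to $K$ is painless.
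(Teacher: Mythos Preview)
Your opening reduction via Corollary \ref{d3affinoidpullback} matches the paper, but from that point on you take a substantially different and harder route. The paper does \emph{not} pass to the lattice and reduce to finite levels; instead it works directly at the affinoid $K$-level. Two ingredients make this possible and you overlook both of them.

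First, $\mathcal{M}^{\mu}_n$ admits a \emph{global} finite presentation $(\mathcal{D}^{\mu}_{n})^a \to (\mathcal{D}^{\mu}_{n})^b \to \mathcal{M}^{\mu}_{n} \to 0$ (from \cite[Lemma 6.2]{Sta2}), not merely a local one. Since $\pi$-adic completion is exact on coherent modules, this passes to a global presentation $(\widehat{\mathcal{D}^{\mu}_{n,K}})^a \to (\widehat{\mathcal{D}^{\mu}_{n,K}})^b \to \widehat{\mathcal{M}^{\mu}_{n,K}} \to 0$. Your phrase ``local finite presentation'' is a genuine problem for your argument: once you apply $\Gamma(Z,-)$ you need the presentation to be global for the Five Lemma comparison to make sense. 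Second, the exactness of $\Gamma(Z,-)$ on coherent $\widehat{\mathcal{D}^{\lambda,\mu}_{n,K}}$-modules is already available at the $K$-level from the affinoid Beilinson--Bernstein theorem (Theorem \ref{d3affinoidnonequivariantBB}). With these two facts in hand, the paper simply pulls the global presentation back along $p_r$, tensors with $\mathcal{M}$, applies $\Gamma(Z,-)$, and runs the Five Lemma against the sequence $\Gamma(X,\widehat{\mathcal{D}^{\mu}_{n,K}}^a)\uset{\mathcal{A}_K}M \to \Gamma(X,\widehat{\mathcal{D}^{\mu}_{n,K}}^b)\uset{\mathcal{A}_K}M \to \Gamma(X,\widehat{\mathcal{M}^{\mu}_{n,K}})\uset{\mathcal{A}_K}M \to 0$.

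The upshot is that the entire inverse-limit bookkeeping you flag as the ``main obstacle'' --- verifying the tower hypotheses of Corollary \ref{d3inversesystemlemma}, commuting $\Gamma$ with $\invlim$, and reconstructing the $K$-level tensor product from the finite-level ones --- is simply absent from the paper's argument. Your route is not obviously wrong, but it trades a clean one-step Five Lemma for several delicate compatibility checks (including the non-automatic commutation of $\otimes$ with $\invlim$ on the target side), and the local-versus-global presentation issue would still need to be addressed.
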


\begin{proof}

We have $\Gamma(X,\hat{i}_{l,K}^{\#} \mathcal{M}) \cong \Gamma(Z,i_{l_*}(\hat{i}_{l,K}^{\#} \mathcal{M}))$, so using Corollary \ref{d3affinoidpullback}, it is enough to compute $\Gamma(Z,(p_r^{-1}\widehat{\mathcal{M} ^{\mu}_n})_K \uset{p_r^{-1} \widehat{\mathcal{D}^{\mu}_{n,K}}} \mathcal{M})$.

We have by the first part of \cite[Lemma 6.2]{Sta2} that there is an exact sequence $(\mathcal{D}^{\mu}_{n})^a \to (\mathcal{D}^{\mu}_{n})^b \to \mathcal{M}^{\mu}_{n} \to 0$ for some $a,b \in \mathbb{N}^{*}$. Since $\pi$-adic completion is exact on coherent modules, we obtain an exact sequence $(\widehat{\mathcal{D}^{\mu}_{n}})^a \to (\widehat{\mathcal{D}^{\mu}_{n}})^b \to \widehat{\mathcal{M} ^{\mu}_n} \to 0$, so an exact sequence

$$(\widehat{\mathcal{D}^{\mu}_{n,K}})^a \to (\widehat{\mathcal{D}^{\mu}_{n,K}})^b \to \widehat{\mathcal{M} ^{\mu}_{n,K}} \to 0.$$

Since $p_r^{-1} \widehat{\mathcal{M} ^{\mu}_{n,K}} \cong (p_r^{-1}\widehat{\mathcal{M} ^{\mu}_n})_K$ and tensor product is right exact we obtain an exact sequence:

$$  (p_r^{-1}\widehat{\mathcal{D} ^{\mu}_{n,K}})^a \uset{p_r^{-1} \widehat{\mathcal{D}^{\mu}_{n,K}}} \mathcal{M}   \to  (p_r^{-1}\widehat{\mathcal{D} ^{\mu}_{n,K}})^b \uset{p_r^{-1} \widehat{\mathcal{D}^{\mu}_{n,K}}} \mathcal{M} \to                        (p_r^{-1}\widehat{\mathcal{M} ^{\mu}_n})_K \uset{p_r^{-1} \widehat{\mathcal{D}^{\mu}_{n,K}}} \mathcal{M} \to 0.$$

To simplify the notation, we let $\mathcal{E}=p_r^{-1}\widehat{\mathcal{D} ^{\mu}_{n,K}}$ and $M=\Gamma(Z,\mathcal{M})$. The above short sequence fits into the following commutative diagram: \\
\begin{tikzcd}[column sep=small,scale=1.7em]
&\mathcal{E}^a \uset{\mathcal{E}} \mathcal{M}  \arrow[r] \arrow[d] &\mathcal{E}^b \uset{\mathcal{E}} \mathcal{M} \arrow[r] \arrow[d] &(p_r^{-1}\widehat{\mathcal{M} ^{\mu}_n})_K \uset{\mathcal{E}} \mathcal{M} \arrow[r] \arrow[d] &0 \arrow[d]\\
&\mathcal{M}^a \arrow[r] &\mathcal{M}^b \arrow[r] &(p_r^{-1}\widehat{\mathcal{M} ^{\mu}_n})_K \uset{p_r^{-1} \widehat{\mathcal{D}^{\mu}_{n,K}}} \mathcal{M} \arrow[r] &0.
\end{tikzcd}

Since $\Gamma(Z,-)$ is exact on coherent modules by Theorem \ref{d3affinoidnonequivariantBB}  we obtain a commutative diagram:

\begin{center}
\begin{tikzcd}[column sep=tiny]

&\Gamma(Z,\mathcal{E}^a \uset{\mathcal{E}} \mathcal{M})  \arrow[r] \arrow[d] &\Gamma(Z,\mathcal{E}^b \uset{\mathcal{E}} \mathcal{M}) \arrow[r] \arrow[d] &\Gamma(Z,(p_r^{-1}\widehat{\mathcal{M} ^{\mu}_n})_K \uset{\mathcal{E}} \mathcal{M}) \arrow[r] \arrow[d] &0 \arrow[d]\\

&\Gamma(Z,\mathcal{M}^a) \arrow[r] \arrow[d] &\Gamma(Z,\mathcal{M}^b) \arrow[r] \arrow[d] &\Gamma(Z,(p_r^{-1}\widehat{\mathcal{M} ^{\mu}_n})_K \uset{\mathcal{E}} \mathcal{M}) \arrow[r] \arrow[d] &0 \arrow[d] \\

&M^a \arrow[r] \arrow[d] &M^b \arrow[r] \arrow[d] &\Gamma(Z,(p_r^{-1}\widehat{\mathcal{M} ^{\mu}_n})_K \uset{\mathcal{E}} \mathcal{M}) \arrow[r] \arrow[d]  &0 \arrow[d] \\

&\Gamma(X,\widehat{\mathcal{D}^{\mu}_{n,K}}^a) \uset{\mathcal{A}_K} M \arrow[r] &\Gamma(X,\widehat{\mathcal{D}^{\mu}_{n,K}}^b) \uset{\mathcal{A}_K} M \arrow[r] &\Gamma(X, \widehat{\mathcal{M}^{\mu}_{n,K}}) \uset{\mathcal{A}_K} M \arrow[r] &0.
\end{tikzcd}
\end{center}

By construction, we have that the vertical arrows on the first, second and fourth columns are isomorphisms.  Considering the first and fourth row and applying the Five Lemma, we get the desired isomorphism.
\end{proof}

\subsection{Global sections of \texorpdfstring{ $\widehat{\mathcal{M}^{\lambda}_{n,K}}$} {M \textasciicircum  lambda \_ n,K}}

Recall that $\mathfrak{b}^-=\mathfrak{n}^- \oplus \mathfrak{h}$ denotes the negative Borel subalgebra of $\mathfrak{g}$. Let $T(\mu)_0=R_{\mu} \uset{U(\mathfrak{b}^{-})_n}U(\mathfrak{g})_n$ denote the right $U(\mathfrak{g})_n$-module such that $U(\mathfrak{b}^{-})_n$ acts on $R$ via $\mu$. Further, we let $T(\mu):=T(\mu)_0 \uset{R}K \cong K_{\mu} \uset{U(\mathfrak{b}^{-})_K} U(\mathfrak{g})_K$ and $\widehat{T(\mu)}:= \widehat{T(\mu)_0} \uset{R} K \cong T(\mu) \uset{U(\mathfrak{g})_K} \hugnK$. We should remark that $T(\mu)$ is the same as defined in \cite[Lemma 6.2]{Sta2}.

\begin{proposition}
\label{d3globalsectionMlambdan}
We have $\Gamma(X,\widehat{\mathcal{M}^{\mu}_{n,K}}) \cong \widehat{T(\mu)}$ as right $\hugnK$-modules.

\end{proposition}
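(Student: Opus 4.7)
The plan is to compute global sections level-by-level modulo powers of $\pi$, pass to the $\pi$-adic limit, and then base change to $K$. First, I would establish the (uncompleted, integral) analogue $\Gamma(X, \mathcal{M}^{\mu}_n) \cong T(\mu)_0$ as right $U(\mathfrak{g})_n$-modules. This is the $R$-integral version of \cite[Lemma 6.2]{Sta2}, and the same argument applies: as an $\mathcal{O}_X$-module $i_*R \otimes_{\mathcal{O}_X} \mathcal{D}^{\mu}_n$ is a filtered direct limit whose associated graded is computed via the symbol map, and identifying the stabiliser of $eB$ with the opposite Borel gives the induction description $R_{\mu} \otimes_{U(\mathfrak{b}^{-})_n} U(\mathfrak{g})_n = T(\mu)_0$.

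Next, using that $\mu$ is dominant together with the exactness part of Theorem \ref{d3affinoidnonequivariantBB} (in its underlying non-completed form for the $\pi^n$-deformed tdo $\mathcal{D}^{\mu}_n$, see \cite[Section 6]{Annals}), the functor $\Gamma(X,-)$ is exact on coherent $\mathcal{D}^{\mu}_n$-modules. Applying this exactness to the short exact sequence
$$0 \to \mathcal{M}^{\mu}_n \xrightarrow{\pi^i} \mathcal{M}^{\mu}_n \to \mathcal{M}^{\mu}_n/\pi^i \mathcal{M}^{\mu}_n \to 0$$
gives $\Gamma(X, \mathcal{M}^{\mu}_n/\pi^i \mathcal{M}^{\mu}_n) \cong T(\mu)_0/\pi^i T(\mu)_0$ for every $i \geq 1$, compatibly with the transition maps.

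Then I would pass to the inverse limit. Since $\mathcal{M}^{\mu}_n$ is a coherent $\mathcal{D}^{\mu}_n$-module, $\widehat{\mathcal{M}^{\mu}_n} \cong \invlim \mathcal{M}^{\mu}_n/\pi^i \mathcal{M}^{\mu}_n$, and because $\Gamma(X,-)$ is a right adjoint it commutes with inverse limits, yielding
$$\Gamma(X, \widehat{\mathcal{M}^{\mu}_n}) \cong \invlim \Gamma(X, \mathcal{M}^{\mu}_n/\pi^i \mathcal{M}^{\mu}_n) \cong \invlim T(\mu)_0/\pi^i T(\mu)_0 = \widehat{T(\mu)_0}$$
as right $\widehat{U(\mathfrak{g})_n}$-modules.

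Finally, since $\widehat{\mathcal{M}^{\mu}_n}$ is a coherent $\widehat{\mathcal{D}^{\mu}_n}$-module (Corollary \ref{d3inversesystemlemma}), its global sections are finitely generated over $\widehat{U(\mathfrak{g})^{\mu}_n}$, so tensoring with $K$ commutes with $\Gamma(X,-)$, giving
$$\Gamma(X, \widehat{\mathcal{M}^{\mu}_{n,K}}) \cong \Gamma(X, \widehat{\mathcal{M}^{\mu}_n}) \uset{R} K \cong \widehat{T(\mu)_0} \uset{R} K = \widehat{T(\mu)},$$
which is the desired isomorphism. The main technical point that needs careful justification is the exactness of $\Gamma$ on coherent $\mathcal{D}^{\mu}_n$-modules for dominant $\mu$ at the integral (pre-completion) level; once this is in hand, the rest is formal manipulation of inverse limits and flat base change along $R \to K$.
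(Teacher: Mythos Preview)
Your approach differs from the paper's, and the difference is instructive. You try to establish the integral isomorphism $\Gamma(X,\mathcal{M}^{\mu}_n)\cong T(\mu)_0$ on the nose and then push it through $\pi$-adic completion level by level. The paper does \emph{not} claim this integral identification; it only knows $\Gamma(X,\mathcal{M}^{\mu}_n)\uset{R} K\cong T(\mu)$ (this is what \cite[Lemma 6.2]{Sta2} gives, over $K$). The paper then observes that both $\Gamma(X,\mathcal{M}^{\mu}_n)$ and $T(\mu)_0$ are finitely generated $U(\mathfrak{g})_n$-lattices inside the same $K$-vector space $T(\mu)$, hence differ only by bounded $\pi$-torsion; $\pi$-adic completion followed by $\uset{R} K$ kills that torsion and yields the result.

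The gap in your argument is exactly the point you flag: the exactness of $\Gamma(X,-)$ on coherent $\mathcal{D}^{\mu}_n$-modules at the integral, uncompleted level. Theorem \ref{d3affinoidnonequivariantBB} and the results in \cite[Section 6]{Annals} give exactness only after completion and base change to $K$; the integral statement is not available there. Likewise, the assertion that ``the same argument'' as \cite[Lemma 6.2]{Sta2} produces the $R$-integral isomorphism $\Gamma(X,\mathcal{M}^{\mu}_n)\cong T(\mu)_0$ is not immediate: that lemma is formulated over $K$, and over $R$ one would have to control possible $\pi$-torsion in the identification of the stalk of $\mathcal{D}^{\mu}_n$ at $eB$ with an induced module. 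One could try to salvage your line of argument by noting that $\mathcal{M}^{\mu}_n$ is supported at the point $eB$, so its higher cohomology vanishes for elementary reasons (bypassing the need for Beilinson--Bernstein exactness), but you would still need the integral stalk identification. The paper's lattice-and-bounded-torsion trick is precisely designed to avoid both of these integral-level verifications.
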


\begin{proof}

We have by construction that $\Gamma(X,\mathcal{M}^{\mu}_n) \uset{R} K \cong \Gamma(X_K,\mathcal{M}^{\mu}_{n,K|X_K})$ and by \cite[Lemma 6.2]{Sta2} that $\Gamma(X_K,\mathcal{M}^{\mu}_{n,K|X_K})\cong T(\mu)$. Therefore

$$ \Gamma(X,\mathcal{M}^{\mu}_n) \uset{R} K \cong T(\mu)_0 \uset{R} K \cong T(\mu).$$

By \cite[Proposition 5.15b)]{Annals}, $ \Gamma(X,\mathcal{M}^{\mu}_n)$ is finitely generated as a $U(\mathfrak{g})_n$-module, so $\Gamma(X,\mathcal{M}^{\mu}_n)$ and $T(\mu)_0$ are both lattices for $T(\mu)$. Therefore, they agree modulo bounded $\pi$-torsion, i.e. there exists an exact sequence

$$T(\mu)_0 \to  \Gamma(X,\mathcal{M}^{\mu}_n) \to C \to 0,$$

such that $\pi^mC=0$ for some $m \in \mathbb{N}$ and $C$ is a finitely generated $U(\mathfrak{g})_n$-module. Since, $\pi$-adic completion is exact on finitely generated $U(\mathfrak{g})_n$-modules we get an exact sequence

$$\widehat{T(\mu)_0} \to \widehat{\Gamma(X,\mathcal{M}^{\mu}_n)} \to \widehat{C} \to 0,$$

with $\pi^m \widehat{C}=0$. So by tensoring with $K$ we obtain

\begin{equation*}
\widehat{T(\mu)}=\widehat{T(\mu)_0} \uset{R} K \cong  \widehat{\Gamma(X,\mathcal{M}^{\mu}_n)} \uset{R} K \cong \Gamma(X, \widehat{\mathcal{M}^{\mu}_{n,K}}). \qedhere
\end{equation*}
\end{proof}

We may now prove the main theorem of this section, an affinoid version of \cite[Theorem 6.5]{Sta1}.

\begin{theorem}
\label{d3globalsectionsGequivariantDll}
Let $\mathcal{M} \in \Coh(\widehat{\mathcal{D}^{\lambda,\mu}_{n,K}},G)$. Then as $\hugnKl$-modules we have:

$$\Gamma(X,\hat{i}_{l,K}^{\#} \mathcal{M}) \cong \widehat{T(\mu)} \uset{\hugnKm} \Gamma(Z, \mathcal{M}).$$

\end{theorem}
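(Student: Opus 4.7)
The strategy is to chain together the two preceding propositions with the affinoid Beilinson--Bernstein identification of global sections, so the proof is essentially a three-step calculation.

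First I would apply Proposition \ref{d3glosectionafpullback}, which gives
$$\Gamma(X,\hat{i}_{l,K}^{\#} \mathcal{M}) \cong \Gamma(X, \widehat{\mathcal{M}^{\mu}_{n,K}}) \uset{\mathcal{A}_K} \Gamma(Z, \mathcal{M}),$$
where $\mathcal{A}_K = \Gamma(X,\widehat{\mathcal{D}^{\mu}_{n,K}})$. Next, since $\mu:\pi^n\mathfrak{h}\to R$ is dominant, the affinoid Beilinson--Bernstein theorem \cite[Theorem 6.10b)]{Annals} identifies $\mathcal{A}_K \cong \hugnKm$ as $K$-algebras. Finally, Proposition \ref{d3globalsectionMlambdan} identifies $\Gamma(X,\widehat{\mathcal{M}^{\mu}_{n,K}}) \cong \widehat{T(\mu)}$ as a right $\hugnK$-module (and therefore as a right $\hugnKm$-module, by the universal property of the quotient). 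Substituting these two isomorphisms into the first display yields
$$\Gamma(X,\hat{i}_{l,K}^{\#} \mathcal{M}) \cong \widehat{T(\mu)} \uset{\hugnKm} \Gamma(Z, \mathcal{M}),$$
which is the claim.

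The only real thing to check is compatibility of module structures. The tensor product on the right-hand side of Proposition \ref{d3glosectionafpullback} is formed using the right $\mathcal{A}_K$-action on $\Gamma(X,\widehat{\mathcal{M}^{\mu}_{n,K}})$ coming from the right $\widehat{\mathcal{D}^{\mu}_{n,K}}$-module structure on $\mathcal{M}^{\mu}_{n,K}$, together with the left $\mathcal{A}_K$-action on $\Gamma(Z,\mathcal{M})$ obtained by viewing $\mathcal{M}$ as a module over $p_r^{-1}\widehat{\mathcal{D}^{\mu}_{n,K}}$ through the second factor of $\widehat{\mathcal{D}^{\lambda,\mu}_{n,K}}$. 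Under the Beilinson--Bernstein identification $\mathcal{A}_K \cong \hugnKm$ these correspond respectively to the right $\hugnK$-action of the isomorphism in Proposition \ref{d3globalsectionMlambdan}, and to the action of the right tensor factor in $\huggnKlm \cong \hugnKl \hat{\uset{K}} \hugnKm$ (Proposition \ref{d3huggnKllisotensorhuggnKl}) on $\Gamma(Z,\mathcal{M})$. The left $\hugnKl$-module structure on $\Gamma(X,\hat{i}_{l,K}^{\#}\mathcal{M})$ matches the one on $\widehat{T(\mu)} \uset{\hugnKm} \Gamma(Z,\mathcal{M})$ inherited from the left tensor factor in the same way.

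I do not expect a serious obstacle: the genuinely hard computational inputs (Proposition \ref{d3glosectionafpullback} and Proposition \ref{d3globalsectionMlambdan}) have already been handled, and what remains is just bookkeeping for the actions. The one subtlety worth spelling out carefully is that $\mathcal{A}_K$ acts on $\widehat{T(\mu)}$ through the quotient map $\hugnK \twoheadrightarrow \hugnKm$, which is exactly why the right-hand tensor product over $\mathcal{A}_K$ descends to a tensor product over $\hugnKm$ rather than over the full $\hugnK$.
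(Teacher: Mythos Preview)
Your proposal is correct and matches the paper's own argument: the paper's proof is the one-line statement that the theorem follows by combining Proposition~\ref{d3glosectionafpullback} (implicitly, via the equivariant Beilinson--Bernstein input), Proposition~\ref{d3globalsectionMlambdan}, and \cite[Theorem 6.10b)]{Annals}. Your additional discussion of how the module structures line up under these identifications is a welcome elaboration of what the paper leaves implicit.
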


\begin{proof}
This follows by combining Corollary \ref{d3affinoidequivariantBB}, Proposition \ref{d3globalsectionMlambdan} and \cite[Theorem 6.10b)]{Annals}.
\end{proof}

\section{Affinoid Duflo's theorem}
\label{d3sectionduflothm}

Throughout this section $\lambda:\pi^n \mathfrak{h} \to R$ denotes an $R$-linear dominant weight. Recall that we use $\lambda^*$ to denote the weight $-w_{o}\lambda$, where $w_{o}$ is the longest element of the Weyl group. If $\lambda$ is a dominant weight, so is $\lambda^*$. We consider the functor 

\begin{equation*}
\begin{split}
&\mathscr{F}:\Mod_{\fg}(\huggnKlstl,G) \to \Mod_{\fg}(\hugnKlst,B), \\
&\mathscr{F}(M) := \Gamma(X,\hat{i}_{l,K}^{\#} \Loc^{\lambda} (M)).
\end{split}
\end{equation*}

\begin{proposition}
\label{d3computationofF}
The functor $\mathscr{F}$ is exact and $\mathscr{F}(M) \cong \widehat{T(\lambda)} \uset{\hugnKl} M$ as $\hugnKlst$-modules for  $M \in \Mod_{\fg}(\huggnKlstl,G)$.

\end{proposition}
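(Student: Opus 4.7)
The plan is to compute $\mathscr{F}(M)$ explicitly by applying the global-section formula of Theorem \ref{d3globalsectionsGequivariantDll}, and then to deduce exactness by showing that $\mathscr{F}$ factors through the quotient category appearing in the affinoid equivariant Beilinson-Bernstein theorem, where it becomes a composition of an equivalence and an honestly exact functor.

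For the isomorphism, set $\mathcal{M} := \Loc^{\lambda^*,\lambda}(M) \in \Coh(\widehat{\mathcal{D}^{\lambda^*,\lambda}_{n,K}}, G)$. Applying Theorem \ref{d3globalsectionsGequivariantDll} with left weight $\lambda^*$ and right weight $\lambda$ (both dominant, since $\lambda$ is) yields
\[
\Gamma(X, \hat{i}_{l,K}^{\#} \mathcal{M}) \;\cong\; \widehat{T(\lambda)} \uset{\hugnKl} \Gamma(Z, \mathcal{M})
\]
as $\hugnKlst$-modules. The $G$-equivariant version of Theorem \ref{d3affinoidequivariantBB}, applied to $X \times X$ with the diagonal $G$-action and the dominant weight $(\lambda^*,\lambda)$, provides $\Gamma(Z, \Loc^{\lambda^*,\lambda}(M)) \cong M$ (just as in Corollary \ref{d3corlocgloiso}). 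Substituting gives $\mathscr{F}(M) \cong \widehat{T(\lambda)} \uset{\hugnKl} M$, as claimed.

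For exactness, I would argue as follows. The functor $\Gamma(X,\hat{i}_{l,K}^{\#}(-))$ is exact on $\Coh(\widehat{\mathcal{D}^{\lambda^*,\lambda}_{n,K}}, G)$: the pullback $\hat{i}_{l,K}^{\#}$ is an equivalence by Theorem \ref{d3affinoidBorhoBrylinskitheorem}, and $\Gamma(X,-)$ is exact on $\Coh(\widehat{\mathcal{D}^{\lambda^*}_{n,K}})$ by Theorem \ref{d3affinoidnonequivariantBB} since $\lambda^*$ is dominant. Moreover, the global-sections formula above shows that this functor annihilates $\ker \Gamma_Z$: if $\Gamma(Z,\mathcal{M}) = 0$, then $\Gamma(X,\hat{i}_{l,K}^{\#}\mathcal{M}) \cong \widehat{T(\lambda)} \uset{\hugnKl} 0 = 0$. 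Consequently $\Gamma(X,\hat{i}_{l,K}^{\#}(-))$ descends to an exact functor on the quotient $\Coh(\widehat{\mathcal{D}^{\lambda^*,\lambda}_{n,K}}, G)/\ker \Gamma_Z$. By Theorem \ref{d3affinoidequivariantBB}, $\Loc^{\lambda^*,\lambda}$ is a quasi-inverse equivalence $\Mod_{\fg}(\huggnKlstl, G) \xrightarrow{\sim} \Coh(\widehat{\mathcal{D}^{\lambda^*,\lambda}_{n,K}}, G)/\ker \Gamma_Z$, so it is exact. Expressing $\mathscr{F}$ as the composition of these two exact functors completes the proof.

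The only real obstacle is bookkeeping: one must verify that Theorem \ref{d3globalsectionsGequivariantDll} and the equivariant localisation (Theorem \ref{d3affinoidequivariantBB}) are legitimately applicable to the double flag variety $X \times X$ with its diagonal $G$-action and weights $\lambda^*,\lambda$. Since $X \times X$ is a quasi-compact $R$-variety, both $\lambda^*$ and $\lambda$ are dominant, and $\mathcal{D}_n^{\lambda^*,\lambda}$ is a $\pi^n$-deformed $G$-htdo (by Proposition \ref{d3DlambdarisrdeformedGhtdo}), these applications are immediate from the general statements.
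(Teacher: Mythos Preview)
Your proof is correct and follows essentially the same approach as the paper. The isomorphism is obtained identically, via Theorem \ref{d3globalsectionsGequivariantDll} together with $\Gamma(Z,\Loc^{\lambda^*,\lambda}(M))\cong M$. For exactness the paper argues slightly more concretely: it takes a short exact sequence $0\to N\to M\to P\to 0$, applies the right-exact $\Loc^{\lambda^*,\lambda}$ to get a four-term exact sequence $0\to\mathcal{L}\to\mathcal{N}\to\mathcal{M}\to\mathcal{P}\to 0$, applies the exact functor $\Gamma(X,\hat{i}_{l,K}^{\#}(-))$, and then observes that $\Gamma(Z,\mathcal{L})=0$ (since $\Gamma_Z$ is exact and $N\hookrightarrow M$), whence the formula gives $\Gamma(X,\hat{i}_{l,K}^{\#}\mathcal{L})=0$. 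Your quotient-category factorisation encodes exactly this vanishing, so the two arguments are the same computation in different packaging.
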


\begin{proof}
Let $M \in \Mod_{\fg}(\huggnKlstl,G)$ and $\mathcal{M}:=\Loc^{\lambda^*,\lambda} (M)$. We have by Corollary \ref{d3corlocgloiso} that $\Gamma(Z,\mathcal{M}) \cong M$ and by Theorem \ref{d3affinoidequivariantBB} that $\mathcal{M} \in \Coh(\widehat{\mathcal{D}^{\lambda^*,\lambda}_{n,K}},G)$, so the second claim follows from Theorem \ref{d3globalsectionsGequivariantDll}. Consider a short exact sequence in $\Mod_{\fg}(\huggnKlstl,G)$:

$$0 \to N \to M \to P \to 0.$$

We let $\mathcal{N}, \mathcal{M}, \mathcal{P}$ denote the localisation of $N,M$ and $P$ respectively. Further, we denote $\mathcal{L}:=\ker( \mathcal{N} \to \mathcal{M})$. By construction, $\Loc^{\lambda^*,\lambda}$ is right exact, so we obtain an exact sequence:

$$0 \to \mathcal{L} \to \mathcal{N} \to \mathcal{M} \to \mathcal{P} \to 0.$$

Since $\hat{i}_{l,K}^{\#}$ is an equivalence of Abelian categories, it is exact. Furthermore, by Theorem \ref{d3affinoidequivariantBB} the global sections functor is also exact, so we obtain an exact sequence

$$0 \to \Gamma(X,\hat{i}_{l,K}^{\#} \mathcal{L}) \to \Gamma(X,\hat{i}_{l,K}^{\#} \mathcal{N}) \to \Gamma(X,\hat{i}_{l,K}^{\#} \mathcal{M}) \to \Gamma(X,\hat{i}_{l,K}^{\#} \mathcal{P}) \to 0.$$ 

Combining Theorem \ref{d3globalsectionsGequivariantDll} and Corollary \ref{d3corlocgloiso} we obtain an exact sequence

\begin{equation}
\begin{split}
0 &\to \widehat{T(\lambda)} \uset{\hugnKl} \Gamma(Z, \mathcal{L}) \to  \widehat{T(\lambda)} \uset{\hugnKl} N \\
  & \to  \widehat{T(\lambda)} \uset{\hugnKl} M \to  \widehat{T(\lambda)} \uset{\hugnKl} P \to 0.
\end{split}
\end{equation}

The claim follows since $\Gamma(Z, \mathcal{L})=0$ by definition of $\mathcal{L}$ and Corollary \ref{d3corlocgloiso}.
\end{proof}

\begin{lemma}
\label{d3F(M)iszeroMiszero}
Let $M \in \Mod_{\fg}(\huggnKlstl,G)$ and assume $\mathscr{F}(M)=0$. Then $M=0$.
\end{lemma}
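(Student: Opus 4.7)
The plan is to reduce the statement, via the chain of equivalences already built, to an application of the affinoid analog of the Borho--Brylinski vanishing result (Corollary \ref{d3affinoidzeroglosection}), combined with the fact that localisation is a section of global sections on the equivariant category (Corollary \ref{d3corlocgloiso}).

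First, I would unwind the definition of $\mathscr{F}$. Given $M \in \Mod_{\fg}(\huggnKlstl, G)$, set $\mathcal{M} := \Loc^{\lambda^*,\lambda}(M) = \widehat{\mathcal{D}^{\lambda^*,\lambda}_{n,K}} \uset{\huggnKlstl} M$, so that by construction $\mathscr{F}(M) = \Gamma(X, \hat{i}_{l,K}^{\#} \mathcal{M})$. Since $\lambda$ and $\lambda^*$ are both dominant and $X \times X$ is the flag variety of $G \times G$, the equivariant affinoid localisation Theorem \ref{d3affinoidequivariantBB} applied to the diagonal subgroup $G \subset G \times G$ places $\mathcal{M}$ in the category $\Coh(\widehat{\mathcal{D}^{\lambda^*,\lambda}_{n,K}}, G)$.

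Next I would apply the key affinoid geometric input. The hypothesis $\mathscr{F}(M) = 0$ reads exactly as $\Gamma(X, \hat{i}_{l,K}^{\#} \mathcal{M}) = 0$, which is the hypothesis of Corollary \ref{d3affinoidzeroglosection}. That corollary immediately yields
\[
\Gamma(X \times X, \mathcal{M}) = 0.
\]

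Finally, since $\lambda^*$ and $\lambda$ are dominant, the equivariant localisation (Theorem \ref{d3affinoidequivariantBB}) and its consequence Corollary \ref{d3corlocgloiso} give the natural isomorphism $\Gamma(X \times X, \mathcal{M}) = \Gamma(X \times X, \Loc^{\lambda^*,\lambda}(M)) \cong M$. Comparing with the vanishing just obtained forces $M = 0$, completing the argument. There is no real obstacle here: all the work is already encoded in the preceding machinery (the affinoid Borho--Brylinski equivalence, the zero global sections corollary, and the equivariant affinoid Beilinson--Bernstein theorem); the proof is a one-line combination of these three results.
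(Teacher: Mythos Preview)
Your proposal is correct and follows essentially the same approach as the paper's proof: set $\mathcal{M}=\Loc^{\lambda^*,\lambda}(M)$, use Corollary \ref{d3affinoidzeroglosection} to deduce $\Gamma(X\times X,\mathcal{M})=0$ from the hypothesis, and then Corollary \ref{d3corlocgloiso} to conclude $M=0$. The paper's argument is the same, only more terse.
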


\begin{proof}
Let $\mathcal{M}:=\Loc^{\lambda^*,\lambda}(M)$. Then, by assumption, we have that $\Gamma(X,\hat{i}_{l,K}^{\#} \mathcal{M})=0$. By applying Corollary \ref{d3corlocgloiso} and Corollary \ref{d3affinoidzeroglosection}, we obtain $M=0$.
\end{proof}

We now specialise to two sided ideals in $\hugnKl$; recall that a two-sided ideal $I$ can be viewed as a module over $\hugnKlst \otimes \hugnKl$ via $(x \otimes y).i=yi\tau(x)$ for $x \in \hugnKlst,y \in \hugnKl$ and $i \in I$. Further, by Corollary \ref{d3equivarianceoftwosidedideals}, we have $I \in \Mod_{\fg}(\huggnKlstl,G)$, so $\mathscr{F}(I)$ is well-defined. As a corollary, we obtain immediately:

\begin{corollary}
\label{d3Fpreservesstrictinclusionofideals}

Let $I,J$ be two-sided ideals  in $\hugnKl$ such that $I \subseteq J$. Assume that $\mathscr{F}(I) \cong \mathscr{F}(J)$. Then $I=J$.

\end{corollary}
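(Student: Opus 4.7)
The plan is to exploit the exactness of $\mathscr{F}$ established in Proposition \ref{d3computationofF} together with the faithfulness-type statement in Lemma \ref{d3F(M)iszeroMiszero}, applied to the quotient $J/I$.

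First I would verify that the short exact sequence
\begin{equation*}
0 \to I \to J \to J/I \to 0
\end{equation*}
lives inside $\Mod_{\fg}(\huggnKlstl, G)$. The outer terms are in this category by Corollary \ref{d3equivarianceoftwosidedideals} applied to $I$ and $J$. For $J/I$, the $\huggnKlstl$-module structure is inherited from $J$ since $I$ is a sub-bimodule (being a two-sided ideal contained in $J$), and finite generation is preserved by quotients over a Noetherian ring. For the $G$-equivariant structure, one observes that the $G_d$-action on $\hugnKl$ (by the adjoint representation, following the argument in Proposition \ref{d3piclosedidealequiv}) preserves both $I$ and $J$, hence descends to $J/I$ compatibly. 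Thus $J/I \in \Mod_{\fg}(\huggnKlstl, G)$.

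Next I would apply the exact functor $\mathscr{F}$ from Proposition \ref{d3computationofF} to obtain an exact sequence
\begin{equation*}
0 \to \mathscr{F}(I) \to \mathscr{F}(J) \to \mathscr{F}(J/I) \to 0
\end{equation*}
of $\hugnKlst$-modules. The hypothesis $\mathscr{F}(I) \cong \mathscr{F}(J)$, interpreted via the natural map induced by the inclusion $I \hookrightarrow J$, forces $\mathscr{F}(J/I) = 0$.

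Finally, by Lemma \ref{d3F(M)iszeroMiszero} applied to $J/I$, we conclude $J/I = 0$, hence $I = J$. The only subtle point in this argument is checking that $J/I$ genuinely inherits a $G$-equivariant structure so that $\mathscr{F}$ can be applied to it; everything else is a formal consequence of exactness of $\mathscr{F}$ and the vanishing criterion of Lemma \ref{d3F(M)iszeroMiszero} (which in turn rests on Corollary \ref{d3affinoidzeroglosection}, the affinoid analogue of the Borho--Brylinski zero-global-sections principle).
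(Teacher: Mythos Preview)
Your proposal is correct and follows essentially the same approach as the paper: apply the exactness of $\mathscr{F}$ from Proposition \ref{d3computationofF} to the short exact sequence $0 \to I \to J \to J/I \to 0$, deduce $\mathscr{F}(J/I)=0$ from the hypothesis, and conclude via Lemma \ref{d3F(M)iszeroMiszero}. The paper's proof is terser and leaves the membership $J/I \in \Mod_{\fg}(\huggnKlstl,G)$ implicit (it is a quotient in an Abelian category), whereas you spell this out; otherwise the arguments are identical.
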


\begin{proof}

Consider the short exact sequence:

$$0 \to I \to J \to J/I \to 0.$$

By Proposition \ref{d3computationofF} the functor $\mathscr{F}$ is exact, so we obtain an exact sequence

$$0 \to \mathscr{F}(I) \to \mathscr{F}(J) \to \mathscr{F}(J/I) \to 0.$$

Using the assumption, we obtain $\mathscr{F}(J/I)=0$. The claim follows by Lemma \ref{d3F(M)iszeroMiszero}.
\end{proof}

\begin{corollary}
\label{d3Fonideals}
Let $I$ be a two-sided ideal in $\hugnKl$. Then as left $\hugnKlst$-modules we have 

      $$\mathscr{F}(I) \cong \widehat{T(\lambda)}I.$$

\end{corollary}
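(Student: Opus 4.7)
The plan is to combine the tensor-product description $\mathscr{F}(M) \cong \widehat{T(\lambda)} \uset{\hugnKl} M$ supplied by Proposition \ref{d3computationofF} with the exactness of $\mathscr{F}$ to reduce the statement to an ``embedding'' computation inside $\widehat{T(\lambda)}$.

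First, I would apply Corollary \ref{d3equivariancetwosidedidealslambdacentralcharacter} to both the two-sided ideal $I$ and to the improper (unit) ideal $\hugnKl$: each lies in $\Mod_{\fg}(\huggnKlstl, G)$. Since the category is Abelian, the cokernel $\hugnKl/I$ does as well, and I obtain a short exact sequence
$$0 \to I \to \hugnKl \to \hugnKl/I \to 0$$
in $\Mod_{\fg}(\huggnKlstl, G)$. Applying the functor $\mathscr{F}$, which is exact by Proposition \ref{d3computationofF}, then yields the exact sequence
$$0 \to \mathscr{F}(I) \to \mathscr{F}(\hugnKl) \to \mathscr{F}(\hugnKl/I) \to 0.$$

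Next I would identify $\mathscr{F}(\hugnKl)$ with $\widehat{T(\lambda)}$ via the canonical isomorphism $\widehat{T(\lambda)} \uset{\hugnKl} \hugnKl \xrightarrow{\sim} \widehat{T(\lambda)}$ sending $t \otimes a \mapsto ta$, which is a morphism of left $\hugnKlst$-modules. The map $\mathscr{F}(I) \hookrightarrow \mathscr{F}(\hugnKl) \cong \widehat{T(\lambda)}$ is induced functorially from the inclusion $I \hookrightarrow \hugnKl$, so on elementary tensors it sends $t \otimes i$ to $ti$; its image is therefore exactly the left $\hugnKlst$-submodule $\widehat{T(\lambda)}I$ of $\widehat{T(\lambda)}$ spanned by such products. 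Combining injectivity with this description of the image gives the required isomorphism $\mathscr{F}(I) \cong \widehat{T(\lambda)}I$ of left $\hugnKlst$-modules.

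Since every step is a direct application of results established earlier in the paper, I do not anticipate any serious obstacle. The only points requiring mild care are verifying that $\hugnKl$ itself (and consequently $\hugnKl/I$) is an object of $\Mod_{\fg}(\huggnKlstl, G)$, which follows by taking $I = \hugnKl$ in Corollary \ref{d3equivariancetwosidedidealslambdacentralcharacter}, and checking that the $\hugnKlst$-action on $\widehat{T(\lambda)}I$ obtained as a subobject of $\widehat{T(\lambda)}$ agrees with the one transported from $\mathscr{F}(I)$, which is automatic once the identification $\mathscr{F}(\hugnKl) \cong \widehat{T(\lambda)}$ is recognised as $\hugnKlst$-linear.
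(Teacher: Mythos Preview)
Your proposal is correct and follows essentially the same approach as the paper: both apply the exactness of $\mathscr{F}$ (Proposition \ref{d3computationofF}) to the short exact sequence $0 \to I \to \hugnKl \to \hugnKl/I \to 0$, identify $\mathscr{F}(\hugnKl)\cong\widehat{T(\lambda)}$, and recognise the image of $\mathscr{F}(I)$ inside $\widehat{T(\lambda)}$ as $\widehat{T(\lambda)}I$. The paper packages the last step as a small commutative diagram and a Five-Lemma-style argument, whereas you identify the image directly on elementary tensors; these are the same idea.
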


We should remark that $\hugnKlst$ acts on $\widehat{T(\lambda)}I$ via $x.(ti)=t(x.i)=ti\tau(x)$ for $x \in \hugnKlst, t \in \widehat{T(\lambda)}$ and $ i \in I$. Further, the isomorphism is natural in $I$.

\begin{proof}
Consider the following exact sequence:
$$0 \to I \to \hugnKl \to \hugnKl/I \to 0.$$

Applying Proposition \ref{d3computationofF} we obtain a short exact sequence:

$$0 \to \widehat{T(\lambda)} \uset{\hugnKl} I \to \widehat{T(\lambda)}\to \widehat{T(\lambda)} \uset{\hugnKl} \hugnKl/I \to 0.$$

This short exact sequence fits in the following commutative diagram:

\begin{tikzcd}[column sep =small]
&0 \arrow[r] &\widehat{T(\lambda)} \uset{\hugnKl} I \arrow[d] \arrow[r] & \widehat{T(\lambda)}   \arrow[r] \arrow[d] &\widehat{T(\lambda)} \uset{\hugnKl}\hugnKl/I \arrow[r] \arrow[d] &0 \\
&0 \arrow[r] &\widehat{T(\lambda)}  I \arrow[r] & \widehat{T(\lambda)}  \arrow[r] &\widehat{T(\lambda)} \uset{\hugnKl}\hugnKl/I \arrow[r] &0.
\end{tikzcd}

It is easy to see that the first map is a surjection and the second and the third maps are isomorphisms. Furthermore, by the diagram above,  the first map is also injective, so indeed we get $\mathscr{F}(I) \cong \widehat{T(\lambda)} I$.
\end{proof}

Let $\sigma: \mathfrak{g} \to \mathfrak{g}$ denote the Chevalley involution that swaps $\mathfrak{n}^+$ and $\mathfrak{n}^{-}$ and fixes $\mathfrak{h}$ . Then $\sigma$ extends to an anti-automorphism of $U(\mathfrak{g})_K$ that fixes the center by \cite[Exercise 1.10]{Hu1}. Therefore, we obtain that an anti-automorphism $\hat{\sigma}:\hugnKl \to \hugnKl$. Recall that $T(\lambda)_0=R_{\lambda} \uset{U(\mathfrak{b}^{-})_n} U(\mathfrak{g})$, so that
$\widehat{T(\lambda)} \cong K_{\lambda} \uset{\widehat{U(\mathfrak{b}^{-})_{n,K}}} \hugnK$.

\begin{lemma}
\label{d3hatTlambdahatMlambda}
The map

$$\hat{\phi}:\widehat{T(\lambda)} \to \widehat{M(\lambda)}, \qquad \hat{\phi}(k \otimes x)=\hat{\sigma}(x) \otimes k, \qquad k \in K_{\lambda}, x \in \hugnK$$

is a $K$-linear isomorphism of vector spaces satisfying $\hat{\phi}(tu)=\hat{\sigma}(u)\hat{\phi}(t)$ for all $u \in \hugnK$ and $t \in \widehat{T(\lambda)}$. In particular, if $I$ is a two-sided ideal in $\hugnKl$, then $\hat{\phi}(\widehat{T(\lambda)}I)=\hat{\sigma}(I)\widehat{M(\lambda)}$.

\end{lemma}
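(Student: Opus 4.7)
The plan is to verify that $\hat{\phi}$ is well-defined on the completed tensor product, prove it is bijective by exhibiting an explicit inverse, check the twist identity on simple tensors, and deduce the statement about ideals.

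First I would extend $\sigma$ continuously to $\hugnK$; this is straightforward because $\sigma$ preserves the PBW filtration on $U(\mathfrak{g})$, hence preserves $U(\mathfrak{g})_n$, and then induces $\hat{\sigma}$ on the completion. To construct $\hat{\phi}$, I would work from the $K$-bilinear pairing $K_{\lambda} \times \hugnK \to \widehat{M(\lambda)}$, $(k,x) \mapsto \hat{\sigma}(x) \otimes k$, and check that it is $\widehat{U(\mathfrak{b}^-)_{n,K}}$-balanced. Since $\sigma(\mathfrak{b}^-) = \mathfrak{b}^+$, the operator $\hat{\sigma}$ sends $\widehat{U(\mathfrak{b}^-)_{n,K}}$ onto $\widehat{U(\mathfrak{b}^+)_{n,K}}$. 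Writing $\lambda$ also for its two extensions to $\widehat{U(\mathfrak{b}^{\pm})_{n,K}}$ via the projections $\mathfrak{b}^{\pm}\to \mathfrak{h}$, the key identity $\lambda = \lambda \circ \hat{\sigma}$ holds: on $\mathfrak{h}$ both sides agree because $\sigma|_{\mathfrak{h}} = \id$, on $\mathfrak{n}^-$ both vanish, and multiplicativity together with continuity propagate the identity to all of $\widehat{U(\mathfrak{b}^-)_{n,K}}$. Balancing then follows from
$$\hat{\sigma}(bx) \otimes k = \hat{\sigma}(x)\hat{\sigma}(b) \otimes k = \hat{\sigma}(x) \otimes \hat{\sigma}(b)\cdot k = \hat{\sigma}(x) \otimes \lambda(b)\,k,$$
giving the induced map $\hat{\phi}: \widehat{T(\lambda)} \to \widehat{M(\lambda)}$.

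Next I would construct an explicit inverse $\hat{\psi}: \widehat{M(\lambda)} \to \widehat{T(\lambda)}$, $y \otimes k \mapsto k \otimes \hat{\sigma}(y)$, whose well-definedness is analogous. Because $\hat{\sigma}^2 = \id$, both compositions $\hat{\phi} \circ \hat{\psi}$ and $\hat{\psi} \circ \hat{\phi}$ are the identity on simple tensors and hence on the whole space, so $\hat{\phi}$ is a $K$-linear isomorphism. For the twist identity, on a simple tensor $t = k \otimes x$ and $u \in \hugnK$ a direct computation gives
$$\hat{\phi}(tu) = \hat{\phi}(k \otimes xu) = \hat{\sigma}(xu) \otimes k = \hat{\sigma}(u)\hat{\sigma}(x) \otimes k = \hat{\sigma}(u)\hat{\phi}(t),$$
and this extends by $K$-linearity and continuity to all $t \in \widehat{T(\lambda)}$.

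For the ``in particular'' part, let $I \subset \hugnKl$ be a two-sided ideal. The twist identity immediately yields $\hat{\phi}(\widehat{T(\lambda)} I) \subseteq \hat{\sigma}(I)\widehat{M(\lambda)}$. Conversely, any element of $\hat{\sigma}(I)\widehat{M(\lambda)}$ is a sum of terms $\hat{\sigma}(i)m$ with $i \in I$ and $m \in \widehat{M(\lambda)}$; writing $m = \hat{\phi}(t)$ with $t := \hat{\psi}(m)$ gives $\hat{\sigma}(i)m = \hat{\sigma}(i)\hat{\phi}(t) = \hat{\phi}(ti) \in \hat{\phi}(\widehat{T(\lambda)} I)$. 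The main technical hurdle in this argument is the character compatibility $\lambda = \lambda \circ \hat{\sigma}$ on the completions, together with the continuous extension of $\sigma$; once these are in place, everything else is formal manipulation of tensor products and the anti-multiplicativity of $\hat{\sigma}$.
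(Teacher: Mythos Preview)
Your proof is correct. The paper takes a different, shorter route: it simply cites the uncompleted analogue \cite[Lemma 7.6]{Sta2}, where the map $\phi:T(\lambda)\to M(\lambda)$, $\phi(k\otimes x)=\sigma(x)\otimes k$, is already shown to be a $K$-linear isomorphism satisfying $\phi(tu)=\sigma(u)\phi(t)$, and then observes that the claims for $\hat{\phi}$ and $\hat{\sigma}$ follow by passing to completions. Your approach instead verifies everything directly at the completed level --- well-definedness via the balancing condition (using $\sigma|_{\mathfrak h}=\id$ and $\sigma(\mathfrak n^-)=\mathfrak n^+$), bijectivity via the explicit inverse $\hat{\psi}$, and the twist identity on generators. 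Both arguments are equally valid; yours is more self-contained, while the paper's is more economical given the prior work in the series. One minor simplification: since $K_\lambda$ is one-dimensional over $K$, every element of $\widehat{T(\lambda)}\cong K_\lambda\otimes_{\widehat{U(\mathfrak b^-)_{n,K}}}\hugnK$ is already a simple tensor, so your final appeal to continuity is not strictly needed.
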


\begin{proof}
We have by \cite[Lemma 7.6]{Sta2} that the map $\phi:T(\lambda) \to M(\lambda)$, $\phi(k \otimes x)=\sigma(x) \otimes K$ is a $K$-linear isomorphism of vector spaces satisfying $\phi(tu)=\sigma(u)\phi(t)$ for all $t \in T(\lambda)$ and $u \in U(\mathfrak{g})_K$. Therefore, the claims follow from the construction of $\hat{\phi}$ and $\hat{\sigma}$.
\end{proof}

To prove the main theorem, we will need the following corollary:

\begin{corollary}
\label{d3IannihilatesIMl}
Let $I$ be a two-sided ideal in $\hugnKl$. Then:
         $$I=\Ann(\widehat{M(\lambda)}/I\widehat{M(\lambda)}).$$
\end{corollary}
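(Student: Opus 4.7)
The plan is to combine the key tools built in this section. The inclusion $I \subseteq \Ann(\widehat{M(\lambda)}/I\widehat{M(\lambda)})$ is immediate from the definition, since $I\cdot \widehat{M(\lambda)} \subseteq I\widehat{M(\lambda)}$ is zero in the quotient. For the reverse inclusion I set $J := \Ann_{\hugnKl}(\widehat{M(\lambda)}/I\widehat{M(\lambda)})$; then $I \subseteq J$ and, by construction, $J\widehat{M(\lambda)} \subseteq I\widehat{M(\lambda)}$, so that the two submodules of $\widehat{M(\lambda)}$ satisfy
\[ I\widehat{M(\lambda)} = J\widehat{M(\lambda)}. \]

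The next step is to transport this equality from the Verma module side to the $\widehat{T(\lambda)}$ side via the $K$-linear isomorphism $\hat{\phi}:\widehat{T(\lambda)} \to \widehat{M(\lambda)}$ of Lemma \ref{d3hatTlambdahatMlambda}. Since $\hat{\phi}(\widehat{T(\lambda)}K) = \hat{\sigma}(K)\widehat{M(\lambda)}$ for any two-sided ideal $K$, and since $\hat{\sigma}$ fixes the centre (and hence restricts to an anti-automorphism of $\hugnKl$ preserving two-sided ideals), I substitute $K=\hat{\sigma}^{-1}(I)$ and $K=\hat{\sigma}^{-1}(J)$. Using $\hat{\sigma}(\hat{\sigma}^{-1}(I)) = I$ and the analogue for $J$, the displayed equality translates, via the bijection $\hat{\phi}$, into
\[ \widehat{T(\lambda)}\,\hat{\sigma}^{-1}(I) = \widehat{T(\lambda)}\,\hat{\sigma}^{-1}(J). \]

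Naturality of the isomorphism $\mathscr{F}(-) \cong \widehat{T(\lambda)}(-)$ from Corollary \ref{d3Fonideals} then identifies this with the equality $\mathscr{F}(\hat{\sigma}^{-1}(I)) = \mathscr{F}(\hat{\sigma}^{-1}(J))$ of submodules of $\mathscr{F}(\hugnKl)$. Since $\hat{\sigma}^{-1}$ is order preserving on the lattice of two-sided ideals, $\hat{\sigma}^{-1}(I) \subseteq \hat{\sigma}^{-1}(J)$, and Corollary \ref{d3Fpreservesstrictinclusionofideals} applied to this inclusion forces $\hat{\sigma}^{-1}(I) = \hat{\sigma}^{-1}(J)$, whence $I = J$. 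The only conceptual obstacle is the careful bookkeeping around the twist by $\hat{\sigma}$, which is the bridge between the Verma-module formulation of the corollary and the $\widehat{T(\lambda)}$-formulation in which the functor $\mathscr{F}$ is naturally expressed; once that twist is in place, the proof is a formal assembly of Lemma \ref{d3hatTlambdahatMlambda} and Corollaries \ref{d3Fpreservesstrictinclusionofideals} and \ref{d3Fonideals}.
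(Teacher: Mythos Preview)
Your proof is correct and follows essentially the same route as the paper: establish $I\subseteq J$ and $I\widehat{M(\lambda)}=J\widehat{M(\lambda)}$, transport through $\hat{\phi}$ to obtain an equality of right submodules $\widehat{T(\lambda)}\hat{\sigma}^{-1}(I)=\widehat{T(\lambda)}\hat{\sigma}^{-1}(J)$, identify these with $\mathscr{F}$ of the twisted ideals via Corollary~\ref{d3Fonideals}, and conclude by Corollary~\ref{d3Fpreservesstrictinclusionofideals}. The paper writes $\hat{\sigma}$ where you write $\hat{\sigma}^{-1}$, which is harmless since the Chevalley involution has order two; a minor cosmetic point is that you use $K$ as a dummy ideal variable, which clashes with the ground field $K$ already in play.
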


\begin{proof}

Let $J:=\Ann(\widehat{M(\lambda)}/I\widehat{M(\lambda)})$. Since $I(\widehat{M(\lambda)}/I\widehat{M(\lambda)})=0$, we obtain that $I \subseteq J$, so $\hat{\sigma}(I) \subseteq \hat{\sigma}(J)$. We remark that since $\hat{\sigma}$ is an anti-automorphism of $\hugnKl$, $\hat{\sigma}(I)$ and $\hat{\sigma}(J)$ are also two-sided ideals in $\hugnKl$. Since $\mathscr{F}$ is exact, in particular left exact, we obtain $\mathscr{F}(\hat{\sigma}(I)) \subseteq \mathscr{F}(\hat{\sigma}(J))$. Consider the following diagram:

\begin{center}
\begin{tikzcd}
&\mathscr{F}(\hat{\sigma}(I)) \arrow[r] \arrow[d] & \mathscr{F}(\hat{\sigma}(J)) \arrow[d]\\
&\widehat{T(\lambda)}\hat{\sigma}(I) \arrow[r] \arrow[d,"\hat{\phi}"] & \widehat{T(\lambda)} \hat{\sigma}(J) \arrow[d,"\hat{\phi}"]\\
&I\widehat{M(\lambda)} \arrow[r] &J\widehat{M(\lambda)}

\end{tikzcd}
\end{center}

By construction, the bottom diagram commutes and by the definition of $J$, we have $J\widehat{M(\lambda)}=I\widehat{M(\lambda)}$. Using Lemma \ref{d3hatTlambdahatMlambda}, we get $\widehat{T(\lambda)}\hat{\sigma}(I)=\widehat{T(\lambda)}\hat{\sigma}(J)$. Furthermore, since the isomorphism is Lemma 
\ref{d3Fonideals} is natural on ideals, the top diagram is also commutative. Therefore, $\mathscr{F}(\hat{\sigma}(I)) \cong \mathscr{F}(\hat{\sigma}(J))$. The claim follows from Corollary \ref{d3Fpreservesstrictinclusionofideals} since $\hat{\sigma}$ is an anti-automorphism.
\end{proof}

\begin{theorem}
\label{d3affinoidduflotheorem}
Let $I$ be a prime ideal in $\hugnKl$. Then $$I=\Ann(\widehat{L(\mu)}) \text{ for some } \mu:\pi^n \mathfrak{h} \to R.$$

\end{theorem}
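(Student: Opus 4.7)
The plan is to deduce Theorem \ref{d3affinoidduflotheorem} from Corollary \ref{d3IannihilatesIMl} together with the finite length result for affinoid Verma modules (Proposition \ref{d3AffinoidVermafinitelengththeorem}) and the description of their composition factors (Proposition \ref{d3compositionseriesaffinoidsubquotientVerma}). Since $I$ is a prime ideal of $\hugnKl$, and $\hugnKl$ is a quotient of $\hugnK$ by the annihilator of $\widehat{M(\lambda)}$, applying Corollary \ref{d3IannihilatesIMl} immediately yields
\[
I = \Ann\bigl(\widehat{M(\lambda)}/I\widehat{M(\lambda)}\bigr).
\]

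The quotient $N := \widehat{M(\lambda)}/I\widehat{M(\lambda)}$ is a subquotient of $\widehat{M(\lambda)}$, so by Proposition \ref{d3AffinoidVermafinitelengththeorem} it has finite length, and by Proposition \ref{d3compositionseriesaffinoidsubquotientVerma} every simple subquotient is of the form $\widehat{L(\mu_j)}$ for some $\mu_j:\pi^n \mathfrak{h} \to R$. Fixing a composition series $0 = N_0 \subsetneq N_1 \subsetneq \cdots \subsetneq N_k = N$ with $N_j/N_{j-1} \cong \widehat{L(\mu_j)}$, a standard filtration argument shows that
\[
\prod_{j=1}^{k} \Ann\bigl(\widehat{L(\mu_j)}\bigr) \;\subseteq\; \Ann(N) \;=\; I.
\]

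Since $I$ is prime, there exists an index $j$ with $\Ann(\widehat{L(\mu_j)}) \subseteq I$. For the reverse inclusion, observe that $I$ annihilates $N$, hence annihilates every one of its subquotients; in particular $I \subseteq \Ann(\widehat{L(\mu_j)})$. Combining the two inclusions gives $I = \Ann(\widehat{L(\mu_j)})$, which is the desired conclusion with $\mu = \mu_j$.

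The non-trivial work has already been carried out earlier: Corollary \ref{d3IannihilatesIMl} (which encapsulates the affinoid Joseph/Borho--Brylinski mechanism built through Sections \ref{d3sectionBoBr}--\ref{d3sectionglobalsections}) and Proposition \ref{d3compositionseriesaffinoidsubquotientVerma} (which relies on the submodule correspondence Theorem \ref{d3affinoidclassicvermacorrespondenceprop}). Given these inputs, the proof of the theorem itself is a short and essentially formal argument; the only care needed is to ensure the product-of-annihilators filtration argument is valid in a non-commutative setting, which is immediate since the $\Ann$'s here are two-sided ideals.
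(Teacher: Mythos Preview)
Your proof is correct and follows essentially the same approach as the paper's own proof: apply Corollary \ref{d3IannihilatesIMl} to get $I=\Ann(\widehat{M(\lambda)}/I\widehat{M(\lambda)})$, take a composition series of this quotient via Proposition \ref{d3compositionseriesaffinoidsubquotientVerma}, use the product-of-annihilators argument to get some $\Ann(\widehat{L(\mu_j)})\subseteq I$ by primeness, and note the reverse inclusion since $I$ kills every subquotient.
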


\begin{proof}
Since $\widehat{M(\lambda)}/I \widehat{M(\lambda)}$ is a quotient of $\widehat{M(\lambda)}$, we have by Proposition \ref{d3compositionseriesaffinoidsubquotientVerma} that there exists a finite composition series:

$$0=M_0 \subset M_1 \subset \ldots M_l=\widehat{M(\lambda)}/I \widehat{M(\lambda)}.$$

Let $I_i=\Ann(M_i/M_{i-1})$ for $1 \leq i \leq l$. We have

$$I_1I_2 \ldots I_l M_l=I_1 I_2 \ldots I_{l-1}M_{l-1}= \ldots =0,$$

so $I_1I_2 \ldots I_l \subset  \Ann(\widehat{M(\lambda)}/I \widehat{M(\lambda)}) =I$ by Corollary \ref{d3IannihilatesIMl}. Since $I$ is prime, there exists $1 \leq j \leq l$ such that $I_j \subset I$. On the other hand, we have by construction $I \subset I_j$, so $I=I_j=\Ann(M_j/M_{j-1})$. Finally, we have by Proposition \ref{d3compositionseriesaffinoidsubquotientVerma} that $M_j/M_{j-1} \cong \widehat{L(\mu)}$ for some $\mu:\pi^n \mathfrak{h} \to R$.
\end{proof}

As an easy corollary, we obtain a positive answer to \cite[Question A]{Munster}.

\begin{corollary}
\label{d3corollarymunsterduflo}
Every primitive ideal of $\hugnK$ with $K$-rational
infinitesimal central character is the annihilator of a simple affinoid highest weight module.
\end{corollary}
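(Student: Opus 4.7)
The plan is to deduce Corollary \ref{d3corollarymunsterduflo} from Theorem \ref{d3affinoidduflotheorem} by a short reduction. First I would observe that any primitive ideal $P$ in $\hugnK$ is, by definition, the annihilator of a simple $\hugnK$-module, and hence is a prime ideal.

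Next I would use the identification $Z(\hugnK) \cong \widehat{U(\mathfrak{g})_{n,K}^G}$ from Theorem \ref{d3centreofaffinoidenvelopingalgebras}. A $K$-rational infinitesimal central character of $P$ is then an algebra map $\chi: \widehat{U(\mathfrak{g})_{n,K}^G} \to K$. Via an affinoid Harish--Chandra-type description of the center as a completion of a Weyl-invariant polynomial algebra on $\pi^n \mathfrak{h}$, such a $\chi$ corresponds to a Weyl dot-orbit of an $R$-linear map $\lambda: \pi^n\mathfrak{h} \to R$ (the $R$-linearity comes from the continuity of $\chi$ with respect to the $\pi$-adic topology on the deformed center). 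A direct computation using that $\rho$ restricts to an $R$-linear map $\mathfrak{h} \to R$ and that $W$ preserves $\pi^n \mathfrak{h}$ shows that the dot-action stays within $\Hom_R(\pi^n \mathfrak{h}, R)$, so we may choose a dominant representative $\lambda: \pi^n \mathfrak{h} \to R$ within the orbit. Then $\chi = \chi_\lambda$, so $m_\lambda := \ker \chi_\lambda \subseteq P$, and $P$ descends to a prime ideal $\bar P$ in the quotient $\hugnK / (m_\lambda \hugnK) \cong \hugnKl$.

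Finally, applying Theorem \ref{d3affinoidduflotheorem} to $\bar P$ yields $\bar P = \Ann_{\hugnKl}(\widehat{L(\mu)})$ for some $\mu: \pi^n \mathfrak{h} \to R$, and pulling back via the quotient map $\hugnK \twoheadrightarrow \hugnKl$ gives $P = \Ann_{\hugnK}(\widehat{L(\mu)})$, which is the desired conclusion. The only step requiring input beyond Theorem \ref{d3affinoidduflotheorem} itself is the identification of $K$-rational characters of $Z(\hugnK)$ with Weyl dot-orbits of $R$-linear maps $\pi^n \mathfrak{h} \to R$ together with the observation that the orbit contains a dominant representative of this same form; this is the main (and fairly routine) technical point, after which the corollary is entirely formal.
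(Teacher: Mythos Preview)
Your proposal is correct and follows essentially the same approach as the paper: reduce to $\hugnKl$ for a dominant $\lambda$ using the identification of $K$-rational central characters with Weyl dot-orbits of maps $\pi^n\mathfrak{h}\to R$, then apply Theorem~\ref{d3affinoidduflotheorem} together with the fact that primitive ideals are prime. You are slightly more explicit than the paper about why the weight may be taken $R$-linear on $\pi^n\mathfrak{h}$ and why the dot-orbit stays in $\Hom_R(\pi^n\mathfrak{h},R)$, but the underlying argument is the same.
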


\begin{proof}
Any primitive ideal in $\hugnK$ with $K$-rational infinitesimal central character intersects $Z(\hugnK)$ in a maximal ideal of the form $\ker \chi_{\lambda}$; here we view $\ker \chi_{\lambda}$ as a central character of $\hugnK$ via Theorem \ref{d3centreofaffinoidenvelopingalgebras}. Therefore, classifying these ideals reduces to classifying the ideals in $\hugnKl$ for all $\lambda \in \pi^n\mathfrak{h}^*$.
There is an action of the Weyl group $W$ on the set of weights such that for two weights $\lambda$ and $\mu$, $\hugnKl=\widehat{U(\mathfrak{g})^{\mu}_{n,K}}$ if and only if $\lambda$ and $\mu$ are $W$-conjugate. Further, every $W$-conjugacy class contains at least one dominant weight. The claim follows by Theorem \ref{d3affinoidduflotheorem} since every primitive ideal is prime.
\end{proof}

We should remark that in the case $p$ is a very good prime for $G$, we have by Theorem \ref{d3annihilatoraffinoidverma} that the ideals $\{\hat{I}_{\lambda}=\Ann(\widehat{M(\lambda)})| \lambda \in \pi^n \mathfrak{h}^* \}$ form the set of minimal primitive ideals with $K$-rational central character.


\section{ A controller theorem}
\label{d3sectioncontrollertheorem}

We keep the notations and assumptions from the previous section; we further assume that $\lambda:\pi^n \mathfrak{h} \to R$ is also \emph{regular}. We will also make use of the fact that two-sided ideals in $\hugnK$ that contain $\hugnK \ker {\chi_{\lambda}}$ correspond to two-sided ideals in $\hugnKl$.

\begin{lemma}
\label{d3two-sidedidealsgeneratedbyidealinUgK}

Let $I$ be a two-sided ideal in $U(\mathfrak{g}_K)$. Then $\hugnK I$ is a two-sided ideal in $\hugnK I$ and furthermore, $\hugnK I= \hugnK I \hugnK$.

\end{lemma}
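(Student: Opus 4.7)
The plan is to reduce the entire statement to showing the inclusion $I\,\hugnK \subseteq \hugnK I$, and then combine density of $U(\fr{g}_K)$ in $\hugnK$ with closedness of the finitely generated left ideal $\hugnK I$.

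First I would observe that once $I\,\hugnK \subseteq \hugnK I$ is established, one automatically obtains
\[
\hugnK I \hugnK \;=\; \hugnK (I\,\hugnK) \;\subseteq\; \hugnK(\hugnK I)\;=\;\hugnK I,
\]
and the reverse inclusion is immediate. This simultaneously proves the equality $\hugnK I = \hugnK I \hugnK$ and the fact that the left ideal $\hugnK I$ is stable under right multiplication, hence a two-sided ideal in $\hugnK$. So the only real content is the inclusion $I\,\hugnK \subseteq \hugnK I$.

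To prepare the ground, I would exploit left Noetherianity of $U(\fr{g}_K)$ to pick finitely many generators $z_1,\dots,z_N\in I$ of $I$ as a left $U(\fr{g}_K)$-ideal. Writing any $i \in I$ as $\sum_k b_k z_k$ with $b_k \in U(\fr{g}_K) \subseteq \hugnK$ shows
\[
\hugnK I \;=\; \sum_{k=1}^N \hugnK z_k,
\]
exhibiting $\hugnK I$ as a finitely generated left $\hugnK$-submodule of the cyclic left module $\hugnK$. Since $\hugnK$ is a Noetherian Banach $K$-algebra (\cite{Annals}, \cite{Munster}), the standard theory of finitely generated modules over Noetherian affinoid-type algebras (combining the open mapping theorem with Noetherianity) guarantees that every finitely generated submodule of $\hugnK$ is closed in its $K$-Banach topology; in particular $\hugnK I$ is closed.

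For the remaining inclusion, take $z \in I$ and $y \in \hugnK$. Using density of $U(\fr{g}_K)$ in $\hugnK$, write $y = \lim_m y_m$ with $y_m \in U(\fr{g}_K)$. Continuity of multiplication yields $zy = \lim_m z y_m$, and since $I$ is two-sided inside $U(\fr{g}_K)$ we have $z y_m \in I \subseteq \hugnK I$ for every $m$. Closedness of $\hugnK I$ then forces $zy \in \hugnK I$, completing the argument.

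The main obstacle will be the closedness of $\hugnK I$: this rests on the nontrivial fact that finitely generated submodules of finitely generated modules over Noetherian Banach $K$-algebras inherit a canonical topology in which they are automatically closed. Once this closure statement is granted, the rest is routine density and continuity; the rigidity of the finite left presentation of $I$ over the Noetherian base $U(\fr{g}_K)$ is exactly what allows the completion $\hugnK$ to absorb right multiplication.
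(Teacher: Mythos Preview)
Your proposal is correct and follows essentially the same route as the paper: reduce to the inclusion $I\,\hugnK \subseteq \hugnK I$, establish that $\hugnK I$ is a closed left ideal, and then approximate an arbitrary element of $\hugnK$ by elements of the dense subalgebra $U(\fr{g}_K)$ to conclude. The only cosmetic difference is in the justification of closedness: the paper invokes \cite[I.5.5]{LVO} directly (submodules of finitely generated modules over a complete filtered ring are closed), whereas you first exhibit $\hugnK I$ as finitely generated via generators of $I$ in $U(\fr{g}_K)$ and then appeal to the Noetherian Banach algebra framework---but this extra step is not needed, since $\hugnK$ is already Noetherian and any left ideal is automatically finitely generated.
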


\begin{proof}

Clearly, it is enough to prove that $I \hugnK \subset \hugnK I.$

Viewing $\hugnK$ as a left $\hugnK$-module via left multiplication, we have that $\hugnK I$ is a $\hugnK$-submodule. Since the topology $\hugnK$ is complete, we have by \cite[I.5.5]{LVO} that $\hugnK I$ is a closed subset.

Let $i \in I$ and $x \in \hugnK$. Recall that assuming that $u_1,u_2 \ldots u_d$ is a free $R$-basis for $\mathfrak{g}$, we may write as $x=\sum_{\alpha \in \mathbb{N}^d} c_{\alpha}u^{\alpha}$, with $||p^{-n} c_{\alpha}|| \to 0$ as $|\alpha| \to \infty$. For $k \in \mathbb{N}$, let $x_k=\sum_{\alpha \in \mathbb{N}^d,|\alpha| \leq k} c_{\alpha}u^{\alpha}$. Since $I$ is a two-sided ideal, we obtain that $ix_k \in I \subset \hugnK I$.

Finally, we have $ix=i \lim_{k \to \infty} x_k=\lim_{k \to \infty} ix_k$. Since $\hugnK I$ is closed, we obtain $ix \in \hugnK I$ finishing the proof.
\end{proof}

\begin{proposition}
\label{d3hardcontrollerproposition}
Let $I$ be a two-sided ideal in $\hugnK$ such that $\hugnK \ker {\chi_{\lambda}} \subset I$. Then there exists a two-sided ideal $J$ in $U(\mathfrak{g}_K)$ such that $I=\hugnK J$.

\end{proposition}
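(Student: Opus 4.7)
Take $J := I \cap U(\mathfrak{g}_K)$. This is plainly a two-sided ideal of $U(\mathfrak{g}_K)$ containing $\ker\chi_{\lambda}$, and Lemma \ref{d3two-sidedidealsgeneratedbyidealinUgK} makes $\hugnK J$ into a two-sided ideal of $\hugnK$; since $I$ is a left ideal containing $J$, the inclusion $\hugnK J \subseteq I$ is immediate.

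For the reverse inclusion I would regard both $I$ and $\hugnK J$ as two-sided ideals in $\hugnKl$ (they both contain $\hugnK \ker\chi_{\lambda}$) and invoke Corollary \ref{d3IannihilatesIMl}, which recovers every such ideal as the annihilator of its Verma quotient. It therefore suffices to prove $I\widehat{M(\lambda)} = (\hugnK J)\widehat{M(\lambda)}$. Because $J$ is two-sided in $U(\mathfrak{g}_K)$ we have $JM(\lambda) = Jv_{\lambda}$, and Theorem \ref{d3affinoidclassicvermacorrespondenceprop} (together with Lemma \ref{d3two-sidedidealsgeneratedbyidealinUgK}) identifies $(\hugnK J)\widehat{M(\lambda)} = \hugnK\cdot JM(\lambda) = \overline{JM(\lambda)}$. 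A second application of Theorem \ref{d3affinoidclassicvermacorrespondenceprop} reduces the problem to proving the equality of classical submodules of $M(\lambda)$
\[
N \;:=\; I\widehat{M(\lambda)} \cap M(\lambda) \;=\; JM(\lambda),
\]
of which the inclusion $JM(\lambda) \subseteq N$ is trivial.

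The converse $N \subseteq JM(\lambda)$ is where I expect the main obstacle. A clean preliminary step pins down the classical annihilator: writing $J_N := \Ann_{U(\mathfrak{g}_K)^{\lambda}}(M(\lambda)/N)$, clearly $J \subseteq J_N$, while conversely $J_N M(\lambda) \subseteq N$ yields $\overline{J_N M(\lambda)} = (\hugnK J_N)\widehat{M(\lambda)} \subseteq I\widehat{M(\lambda)}$, and Corollary \ref{d3IannihilatesIMl} forces $\hugnK J_N \subseteq I$, hence $J_N \subseteq J$. To upgrade this equality $J = J_N$ to the desired equality $N = JM(\lambda)$, I would then apply Corollary \ref{d3Fpreservesstrictinclusionofideals} to the inclusion $\hugnK J \subseteq I$, which lives in $\Mod_{\fg}(\huggnKlstl,G)$ by Corollary \ref{d3equivariancetwosidedidealslambdacentralcharacter}: equality is equivalent to $\mathscr{F}(\hugnK J) = \mathscr{F}(I)$. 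Using Corollary \ref{d3Fonideals} together with $\widehat{T(\lambda)}\hugnK = \widehat{T(\lambda)}$ this becomes $\widehat{T(\lambda)}J = \widehat{T(\lambda)}I$, and via the isomorphism $\hat\phi$ of Lemma \ref{d3hatTlambdahatMlambda} it transports to the equality $\sigma(J)\widehat{M(\lambda)} = \hat\sigma(I)\widehat{M(\lambda)}$ in $\widehat{M(\lambda)}$. Making this symmetric reformulation effective---exploiting the cyclicity of $M(\lambda)/N$ with two-sided annihilator $J$ together with the $G$-equivariant bimodule structure of $I$---is where the genuine technical work of the proof will concentrate.
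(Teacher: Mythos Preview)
Your proposal has a genuine gap at exactly the point you flag as ``the main obstacle'': you never establish $N = JM(\lambda)$, and your attempt to bypass this via Corollary~\ref{d3Fpreservesstrictinclusionofideals} is circular. Transporting through $\hat\phi$ as you describe reduces $\mathscr{F}(\hugnK J) = \mathscr{F}(I)$ to $\sigma(J)\widehat{M(\lambda)} = \hat\sigma(I)\widehat{M(\lambda)}$; but $\sigma(J) = \sigma(I \cap U(\mathfrak{g}_K)) = \hat\sigma(I) \cap U(\mathfrak{g}_K)$, so this is precisely the original problem with $I$ replaced by the two-sided ideal $\hat\sigma(I)$. Nothing has been gained, and the ``genuine technical work'' you anticipate cannot be carried out with the affinoid machinery alone.

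The missing input is the classical Bernstein--Gelfand result \cite[Theorem 4.3]{BG}: for $\lambda$ dominant and regular, the map $J' \mapsto J'M(\lambda)$ from two-sided ideals of $U(\mathfrak{g}_K)^{\lambda}$ to submodules of $M(\lambda)$ is \emph{surjective}. The paper's proof invokes this directly: set $M := I\widehat{M(\lambda)} \cap M(\lambda)$ via Theorem~\ref{d3affinoidclassicvermacorrespondenceprop}, use \cite[Theorem 4.3]{BG} to produce a two-sided $J$ in $U(\mathfrak{g}_K)$ with $JM(\lambda) = M$, and then use the affinoid injectivity machinery (Corollaries~\ref{d3Fpreservesstrictinclusionofideals} and~\ref{d3Fonideals}, Lemma~\ref{d3hatTlambdahatMlambda}, essentially as packaged in Corollary~\ref{d3IannihilatesIMl}) to deduce $\hugnK J = I$ from $(\hugnK J)\widehat{M(\lambda)} = I\widehat{M(\lambda)}$. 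Your pleasant observation $J = J_N$ would also finish the job once \cite[Theorem 4.3]{BG} is in hand: if $N = J'M(\lambda)$ for some two-sided $J'$, then $J' \subseteq J_N = J$, whence $N = J'M(\lambda) \subseteq JM(\lambda) \subseteq N$, and your route via Corollary~\ref{d3IannihilatesIMl} then concludes cleanly.
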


\begin{proof}

Let $\hat{M}=I\widehat{M(\lambda)}.$ We have by Theorem \ref{d3affinoidclassicvermacorrespondenceprop} that there exists $M$ a submodule of $M(\lambda)$ such that $\hat{M}=\hugnK.M$. Further, we have by \cite[Theorem 4.3]{BG} that there exists $J$ a two-sided ideal $U(\mathfrak{g}_K)$ such that $JM(\lambda)=M$. By applying Lemma \ref{d3two-sidedidealsgeneratedbyidealinUgK} we obtain:

\begin{equation}
\begin{split}
(\hugnK J).\widehat{M(\lambda)}&=(\hugnK J) \hugnK. M(\lambda)\\
&=(\hugnK J). M(\lambda)=\hugnK. M=\hat{M},
\end{split}
\end{equation}
so $(\hugnK J)\widehat{M(\lambda)}=I\widehat{M(\lambda)}$. Let $J'=\hugnK J + I$, so that $J' \widehat{M(\lambda)}=I \widehat{M(\lambda)}$ and $I \subset J'$. We have by Lemma \ref{d3two-sidedidealsgeneratedbyidealinUgK} that $J'$ is also a two-sided ideal. Further, by combining Corollary \ref{d3Fpreservesstrictinclusionofideals}, Corollary \ref{d3Fonideals} and Lemma \ref{d3hatTlambdahatMlambda} we get $J'=I$, so $\hugnK J \subset I$. Applying the same strategy again, we obtain $\hugnK J=I$.  
\end{proof}

To finish the proof, we need one more lemma. This is probably well-known among the experts, but we have not been able to locate a reference.

\begin{lemma}
\label{d3easycontrollerlemma}
Let $S \subset T$ two rings and let $I$ be a left ideal of $T$ generated by $X \subset S$. Then $I=T(I \cap S)$.
\end{lemma}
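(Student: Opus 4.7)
The plan is to verify both inclusions directly from the definitions, which makes this essentially a one-line argument. Nothing substantive beyond unwinding what ``left ideal generated by a set'' means is required.

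For the inclusion $T(I\cap S)\subseteq I$: since $I\cap S\subseteq I$ and $I$ is a left ideal of $T$, we have $T(I\cap S)\subseteq TI\subseteq I$.

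For the reverse inclusion $I\subseteq T(I\cap S)$: by hypothesis, $I$ is the left ideal of $T$ generated by $X$, so every element of $I$ is a finite sum $\sum t_i x_i$ with $t_i\in T$ and $x_i\in X$. Now $X\subseteq S$ by hypothesis and $X\subseteq I$ since $X$ generates $I$, so $X\subseteq I\cap S$. Therefore $I=TX\subseteq T(I\cap S)$, which closes the argument.

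There is no real obstacle here; the only subtlety to flag, if any, is a convention about whether $T$ is assumed unital (so that $X\subseteq TX$ automatically). If $T$ lacks a unit one should instead write $I=TX+\mathbb{Z}X$ and note $\mathbb{Z}X\subseteq I\cap S$ as well. In the intended application $T=\widehat{U(\mathfrak{g})_{n,K}}$ is unital, so this point does not arise. The lemma is then combined with Proposition \ref{d3hardcontrollerproposition} to conclude that any two-sided ideal $I$ of $\widehat{U(\mathfrak{g})_{n,K}}$ containing $\widehat{U(\mathfrak{g})_{n,K}}\ker\chi_\lambda$ satisfies $I=\widehat{U(\mathfrak{g})_{n,K}}\,(I\cap U(\mathfrak{g}_K))$, giving the desired controller statement.
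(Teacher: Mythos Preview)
Your proof is correct and follows essentially the same argument as the paper's own proof: both verify $T(I\cap S)\subseteq I$ trivially and then use $X\subseteq I\cap S$ to get $I=TX\subseteq T(I\cap S)$. Your remark about unitality is a harmless addition not present in the paper.
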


\begin{proof}

Let $J=I \cap S$. Obviously, we have $TJ \subset I$. On the other hand, we have $X \subset S$, $X \subset I$, so $X \subset J$. Therefore, $I=T.X \subset TJ$. The claim follows.
\end{proof}

\begin{theorem}
\label{d3controllertheorem}
Let $\lambda:\pi^n\mathfrak{h} \to R$ be a $R$-linear dominant regular weight. Let $I$ be a two-sided ideal in $\hugnK$ with $\chi_{\lambda}$-central character. Then:

$$I=\hugnK(I \cap U(\mathfrak{g}_K)).$$
\end{theorem}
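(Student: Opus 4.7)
The plan is to combine Proposition \ref{d3hardcontrollerproposition} with Lemma \ref{d3easycontrollerlemma} in a straightforward two-step argument; essentially all the real work has already been done.

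First I would unpack the hypothesis that $I$ has $\chi_\lambda$-central character. By definition, this means that $\ker\chi_\lambda$ acts as zero on $\hugnK/I$, equivalently $\ker\chi_\lambda \subset I$. Since $I$ is a two-sided ideal of $\hugnK$, this immediately yields $\hugnK\ker\chi_\lambda \subset I$, placing us in the setting of Proposition \ref{d3hardcontrollerproposition}. Applying that proposition (whose proof already uses the dominance and regularity of $\lambda$ through its reliance on Theorem \ref{d3affinoidclassicvermacorrespondenceprop} and the classical result \cite[Theorem 4.3]{BG}) we obtain a two-sided ideal $J \subset U(\fr{g}_K)$ with $I = \hugnK J$.

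Next I would invoke Lemma \ref{d3easycontrollerlemma} with $S = U(\fr{g}_K)$, $T = \hugnK$, and the left ideal $I = \hugnK J$, which by construction is generated as a left ideal by $J \subset U(\fr{g}_K)$. The lemma then gives
\[
I = \hugnK(I \cap U(\fr{g}_K)),
\]
which is precisely the desired conclusion.

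There is no real obstacle here: the content of the theorem is already encoded in Proposition \ref{d3hardcontrollerproposition}, and Lemma \ref{d3easycontrollerlemma} is the purely formal observation needed to pass from ``$I$ is generated by \emph{some} classical ideal'' to ``$I$ is generated by the canonical classical ideal $I \cap U(\fr{g}_K)$.'' The only point worth double-checking is that the hypothesis of Lemma \ref{d3easycontrollerlemma} is met, namely that $\hugnK J$ really is generated as a \emph{left} $\hugnK$-module by the subset $J$ of $U(\fr{g}_K)$; this is clear from the very definition of $\hugnK J$, so the argument is complete.
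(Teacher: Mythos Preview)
Your proposal is correct and follows exactly the paper's approach: the paper's proof simply states that the result follows immediately from Proposition~\ref{d3hardcontrollerproposition} and Lemma~\ref{d3easycontrollerlemma}, which is precisely the two-step argument you have spelled out.
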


\begin{proof}

This follows immediately from Proposition \ref{d3hardcontrollerproposition} and Lemma \ref{d3easycontrollerlemma}.
\end{proof}

As a corollary, we obtain immediately:

\begin{corollary}
\label{d3twosidedidealscorrespondecebijection}
Let $\lambda:\pi^n\mathfrak{h} \to R$ be a $R$-linear dominant regular weight. The maps:

\begin{equation}
\begin{split}
I &\mapsto I \cap U(\mathfrak{g}_K) \\
J & \mapsto \hugnK J
\end{split}
\end{equation}
induce inverse bijections between the set of two sided ideals in $\hugnK$ with $\chi_{\lambda}$ central character and the set of two sided ideals in $\hugnK$ with $\chi_{\lambda}$ central character.
\end{corollary}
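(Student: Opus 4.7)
The plan is to check that the two displayed assignments are mutual inverses, since all the hard work has been done in Theorem \ref{d3controllertheorem}. I first need to verify that both maps are well-defined. If $I$ is a two-sided ideal of $\hugnK$ with central character $\chi_\lambda$, then $I \cap U(\fr{g}_K)$ is visibly a two-sided ideal of $U(\fr{g}_K)$; it contains $\ker \chi_\lambda$ because $\hugnK \ker \chi_\lambda \subset I$ and $\ker \chi_\lambda \subset U(\fr{g}_K)$. Conversely, if $J \subset U(\fr{g}_K)$ is a two-sided ideal with $\ker \chi_\lambda \subset J$, then $\hugnK J$ is two-sided by Lemma \ref{d3two-sidedidealsgeneratedbyidealinUgK}, and contains $\hugnK \ker \chi_\lambda$, so it has central character $\chi_\lambda$.

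For the composite $I \mapsto I \cap U(\fr{g}_K) \mapsto \hugnK (I \cap U(\fr{g}_K))$, Theorem \ref{d3controllertheorem} delivers exactly the identity. The remaining and only substantive step is the opposite composite $J \mapsto \hugnK J \mapsto (\hugnK J) \cap U(\fr{g}_K)$. Setting $I = \hugnK J$ and $J' = I \cap U(\fr{g}_K)$, the inclusion $J \subset J'$ is trivial, while Theorem \ref{d3controllertheorem} gives $\hugnK J' = I = \hugnK J$.

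To upgrade $\hugnK J = \hugnK J'$ to $J = J'$, I plan to pass to the Verma module. Applying both sides to $\widehat{M(\lambda)}$ yields $J \widehat{M(\lambda)} = J' \widehat{M(\lambda)}$, since $\hugnK$ acts transitively from $\widehat{M(\lambda)}$ onto itself. Now $J\widehat{M(\lambda)}$ is the closure of $J M(\lambda)$ inside $\widehat{M(\lambda)}$ (and similarly for $J'$), so the correspondence in Theorem \ref{d3affinoidclassicvermacorrespondenceprop} forces $J M(\lambda) = J' M(\lambda)$ as submodules of $M(\lambda)$. The classical Duflo/Joseph result (Proposition \ref{classicalJosfun}), valid because $\lambda$ is dominant, asserts that the assignment $J \mapsto J M(\lambda)$ from two-sided ideals of $U(\fr{g}_K)^{\lambda}$ to submodules of $M(\lambda)$ is injective; therefore $J = J'$, as required.

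The main conceptual obstacle is nothing new here; it has already been absorbed into Theorem \ref{d3controllertheorem} (and its own backbone, Corollary \ref{d3IannihilatesIMl}, which rested on the affinoid Duflo theorem). What this corollary adds is merely the identification of the fiber over $J$: both the classical injectivity of $J \mapsto JM(\lambda)$ and the submodule correspondence between $M(\lambda)$ and $\widehat{M(\lambda)}$ are invoked to show that no two distinct classical ideals can generate the same affinoid ideal after completion.
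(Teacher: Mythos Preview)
Your proof is correct. The paper offers no explicit argument here, only the phrase ``As a corollary, we obtain immediately,'' so your write-up actually supplies what the paper leaves implicit. Theorem \ref{d3controllertheorem} gives only the composite $I \mapsto I \cap U(\fr{g}_K) \mapsto \hugnK(I \cap U(\fr{g}_K)) = I$; the injectivity of $J \mapsto \hugnK J$ is a genuine additional step, and your route through $I\widehat{M(\lambda)} = \overline{JM(\lambda)}$, then Theorem \ref{d3affinoidclassicvermacorrespondenceprop}, then Proposition \ref{classicalJosfun}, is exactly the combination of tools the paper already uses inside the proof of Proposition \ref{d3hardcontrollerproposition} (where \cite[Theorem 4.3]{BG} plays the role of the Joseph map). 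So your approach is in the same spirit as the paper and simply makes explicit what the author regarded as routine.

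One small point of presentation: the phrase ``$J\widehat{M(\lambda)} = J'\widehat{M(\lambda)}$'' is a shorthand for $(\hugnK J)\widehat{M(\lambda)} = (\hugnK J')\widehat{M(\lambda)}$, and the identification of this with the closure $\overline{JM(\lambda)}$ uses Lemma \ref{d3two-sidedidealsgeneratedbyidealinUgK} (to rewrite $j \cdot y v_\lambda$ with $y \in \hugnK$ as an element of $\hugnK \cdot J v_\lambda$). You clearly have this in mind, but spelling it out would make the passage from $\hugnK J = \hugnK J'$ to $JM(\lambda) = J'M(\lambda)$ airtight.
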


We may also prove which ideal controls the annihilator of the simple affinoid module $\widehat{L(\mu)}$.

\begin{proposition}
\label{d3controllerfinalproposition}

Let $\mu:\pi^n \mathfrak{h} \to R$ and assume that $\mu$ is $W$-linked to $\lambda$. Then:

$$\Ann(\widehat{L(\mu)})=\hugnK \Ann L(\mu).$$

\end{proposition}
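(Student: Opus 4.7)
The plan is to set $I := \Ann(\widehat{L(\mu)})$ and $J := \Ann L(\mu)$, and establish both inclusions $\hugnK J \subseteq I$ and $I \subseteq \hugnK J$ by combining the cyclicity of $\widehat{L(\mu)}$ with Theorem \ref{d3controllertheorem}. Since $\mu$ is $W$-linked to $\lambda$, Harish-Chandra's theorem gives $\chi_\mu = \chi_\lambda$, so $I$ has $\chi_\lambda$-central character and may be viewed as a two-sided ideal in $\hugnKl$; in particular Theorem \ref{d3controllertheorem} applies directly to $I$.

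For the inclusion $\hugnK J \subseteq I$, I would argue as follows. By Corollary \ref{d3affinoidsimplequotientcorollary}, $\widehat{L(\mu)} \cong \hugnK \uset{U(\mathfrak{g}_K)} L(\mu)$ is a cyclic $\hugnK$-module generated by $\bar v_\mu := 1 \otimes v_\mu$. By Lemma \ref{d3two-sidedidealsgeneratedbyidealinUgK}, $\hugnK J$ is a two-sided ideal in $\hugnK$ satisfying $\hugnK J \hugnK = \hugnK J$, and hence
\[
(\hugnK J) \cdot \widehat{L(\mu)} = (\hugnK J \hugnK) \bar v_\mu = \hugnK J \bar v_\mu = 0,
\]
where $J \bar v_\mu = 0$ since $\bar v_\mu$ lies in the image of $L(\mu)$ inside $\widehat{L(\mu)}$ and $J = \Ann L(\mu)$.

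For the reverse direction, the key point to verify is that the natural map $L(\mu) \to \widehat{L(\mu)}$ is \emph{injective}. Indeed, $M(\mu)$ sits inside $\widehat{M(\mu)}$ as the dense $K$-subspace spanned by the basis $\{f^B v_\mu\}$, and by Theorem \ref{d3affinoidclassicvermacorrespondenceprop} the unique maximal submodule $\widehat{N(\mu)}$ of $\widehat{M(\mu)}$ satisfies $\widehat{N(\mu)} \cap M(\mu) = N(\mu)$; therefore $L(\mu) = M(\mu)/N(\mu)$ embeds into $\widehat{M(\mu)}/\widehat{N(\mu)} = \widehat{L(\mu)}$. Consequently, any $x \in I \cap U(\mathfrak{g}_K)$ must annihilate every element of $L(\mu)$ viewed inside $\widehat{L(\mu)}$, so $I \cap U(\mathfrak{g}_K) \subseteq J$ and hence $\hugnK (I \cap U(\mathfrak{g}_K)) \subseteq \hugnK J$.

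Applying Theorem \ref{d3controllertheorem} to $I$ yields $I = \hugnK(I \cap U(\mathfrak{g}_K)) \subseteq \hugnK J \subseteq I$, giving the desired equality $I = \hugnK J$. The only non-formal step is establishing the embedding $L(\mu) \hookrightarrow \widehat{L(\mu)}$, which is the main obstacle; once this is in hand, the remainder of the argument is a routine assembly of the cyclicity of $\widehat{L(\mu)}$, Lemma \ref{d3two-sidedidealsgeneratedbyidealinUgK}, and the controller theorem.
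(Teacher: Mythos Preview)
Your proof is correct and follows essentially the same route as the paper: both show $\hugnK \Ann L(\mu) \subseteq \Ann \widehat{L(\mu)}$ via Lemma \ref{d3two-sidedidealsgeneratedbyidealinUgK} and then close the loop with the controller theorem (Theorem \ref{d3controllertheorem} / Corollary \ref{d3twosidedidealscorrespondecebijection}). Your explicit verification of the embedding $L(\mu) \hookrightarrow \widehat{L(\mu)}$ clarifies a step the paper handles tersely; note that a shorter argument is available here, since $L(\mu)$ is simple and $\widehat{L(\mu)} \neq 0$ by Corollary \ref{d3affinoidsimplequotientcorollary}, so the $U(\mathfrak{g}_K)$-linear map $l \mapsto 1 \otimes l$ automatically has trivial kernel.
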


\begin{proof}
Let $\hat{P}:= \Ann(\widehat{L(\mu)})$, $P=\Ann(L(\mu))$ and $J=\hat{P} \cap U(\mathfrak{g}_K)$. Then, we have by construction $P \subset J$.

We claim that $\hugnK P$ contains the annihilator of $\widehat{L(\mu)}$. We have by the proof of Lemma y\ref{d3two-sidedidealsgeneratedbyidealinUgK} that for all $p \in P$ and $x \in \hugnK$, there exist $q \in P$ and $y \in \hugnK$ such that $px=yq$. Therefore, we have for all $z \in \hugnK$ and $x \otimes l \in \hugnK \uset{U(\mathfrak{g}_K)} L$ that

$$zp.(x \otimes l)=z(px \otimes l)=zy(q \otimes l)=zy(1 \otimes q.l)=0,$$

so the claim is proven. Therefore, we obtain using Corollary \ref{d3twosidedidealscorrespondecebijection} that $P=J$ and so, $\hat{P}=\hugnK P$.
\end{proof}

\subsection{Characterisation of all two-sided ideals}

We may now use an affinoid version of Quillen's Lemma to prove that a large class of two-sided ideals in $\hugnK$ is controlled by two-sided ideals in the classical enveloping algebra $U(\mathfrak{g})_K$.

We are now able to characterise all the primitive ideals in $\hugnK$ for any $n \in \mathbb{N}$.

\begin{theorem}
\label{d3alltheprimitiveideals}
Let $I$ be a primitive ideal in $\hugnK$. Then there exists a finite extension $L/K$ and a primitive ideal $J \in \hugnK \uset{K} L$ with $L$-rational central character such that:

$$I=J \cap \hugnK.$$

Further, this ideal $J$ is of the form $\Ann(\widehat{L(\lambda)})$ for some suitable $\lambda$.

\end{theorem}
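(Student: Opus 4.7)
The plan is to combine the affinoid Quillen Lemma cited from \cite[Corollary 8.6]{Annals} and \cite[Theorem 6.4.6]{StaPhd} with the affinoid Duflo theorem (Corollary \ref{d3corollarymunsterduflo}), then perform a Galois-type descent to recover $I$ from $J$. First I would write $I = \Ann_{\hugnK}(M)$ for some simple $\hugnK$-module $M$. By Schur's lemma in the appropriate affinoid setting, the action of $Z(\hugnK)$ on $M$ factors through an integral domain $Z(\hugnK)/(I\cap Z(\hugnK))$, which by the affinoid Quillen Lemma embeds into a finite field extension $L$ of $K$. This yields an $L$-rational central character $\chi_\lambda\colon Z(\hugnK) \to L$, and by Theorem \ref{d3centreofaffinoidenvelopingalgebras} we may take $\lambda\colon \pi^n \mathfrak{h}_L \to \mathcal{O}_L$ to be a weight after possibly enlarging $L$ (using that every $W$-conjugacy class contains a dominant weight, as in the proof of Corollary \ref{d3corollarymunsterduflo}).

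Next I would base-change to $L$. Set $A := \hugnK$ and $A_L := A \otimes_K L \cong \widehat{U(\mathfrak{g})_{n,L}}$, noting that $A_L$ is a faithfully flat extension of $A$. The module $M_L := M \otimes_K L$ is a finitely generated $A_L$-module on which $Z(A_L)$ acts via $\chi_\lambda$; let $N$ be any simple $A_L$-subquotient of $M_L$ and put $J := \Ann_{A_L}(N)$. Since $N$ has $L$-rational central character, Corollary \ref{d3corollarymunsterduflo} identifies $J$ as $\Ann(\widehat{L(\mu)})$ for some weight $\mu$ that is $W$-linked to $\lambda$, yielding the second assertion.

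For the identity $I = J \cap A$, the inclusion $I \subseteq J \cap A$ is clear: $I \cdot M = 0$ implies $I \cdot M_L = 0$, hence $I$ annihilates every subquotient of $M_L$. For the reverse inclusion, the standard Galois-descent argument applies. The Galois group $\Gamma := \Gal(L^{\mathrm{sep}}/K)$ acts on $A_L$ through the second tensor factor, and this action permutes the finitely many simple subquotients $N_1, \dots, N_r$ of a composition series of $M_L$ transitively (since $M$ was simple over $A$). Consequently, $\Ann_{A_L}(M_L) = \bigcap_\gamma \gamma \cdot J$, and by faithful flatness together with the fact that $\Ann_{A_L}(M_L) = I \otimes_K L$, taking $\Gamma$-invariants gives $(J \cap A) \subseteq (I \otimes_K L)^\Gamma \cap A = I$.

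The main obstacle, as I see it, is the Galois-descent step in the final paragraph: one must justify both that the composition series of $M_L$ is $\Gamma$-stable and that the descent of annihilators behaves correctly under the $\pi$-adic completion—the extension $A_L/A$ is finite free and faithfully flat, so the argument should go through, but care is needed because the completed tensor product and ordinary tensor product coincide here (as $L/K$ is finite) which is precisely what makes the descent tractable.
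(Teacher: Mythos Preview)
Your overall strategy—produce a central character over a finite extension via the affinoid Quillen Lemma, then invoke the affinoid Duflo theorem over $L$—matches the paper's. The paper, however, obtains $J$ by a single citation: since $\hugnK \subset \hugnK \otimes_K L$ is a finite centralizing extension, \cite[Theorem 10.2.9]{ConRob} (lying-over for such extensions) immediately supplies a prime $J$ of $\hugnL$ with $J\cap\hugnK = I$; one then notes $I\otimes_K L \subset J$, so $J$ has $L$-rational central character, and Theorem~\ref{d3affinoidduflotheorem} finishes. There is no need to choose a simple module, pass to a subquotient, or perform Galois descent.

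Your hands-on route via $M_L$ and Galois descent can be made to work, but as written it has real gaps. First, you must replace $L$ by its Galois closure over $K$ (your notation $\Gal(L^{\mathrm{sep}}/K)$ is unclear). More importantly, the assertion that $\Gamma$ permutes the simple subquotients of $M_L$ \emph{transitively}, and hence that $\Ann_{A_L}(M_L)=\bigcap_\gamma \gamma J$, is precisely the nontrivial content of lying-over/incomparability for finite centralizing extensions. Concretely, one needs that $I\otimes_K L$ is semiprime (true here since $\mathrm{char}\,K=0$), that its minimal primes form a single $\Gamma$-orbit (this uses primeness of $I$ together with Galois descent for $\Gamma$-stable ideals), and that your chosen $J=\Ann(N)$ is actually one of those minimal primes rather than a prime strictly above one. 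None of these steps is automatic from ``$M$ was simple over $A$''. Once you unpack them you are essentially reproving \cite[Theorem 10.2.9]{ConRob} in this special case, so the cleaner fix is to cite that result directly, as the paper does, and then apply Theorem~\ref{d3affinoidduflotheorem} over $L$.
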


\begin{proof}

We have by Theorem \ref{d3centreofaffinoidenvelopingalgebras} that $Z(\hugnK)$ is isomorphic with a Tate algebra. Further, we have by \cite[Theorem 6.4.6]{StaPhd} that $I$ has some central character. Therefore, there exists $L/K$ finite extension such that $I \uset{K} L \subset \hugnK \uset{R} K$ has $L$-rational central character. 

Let $e$ be the ramification index of $L/K$, $\mathcal{O}_L$ the ring of integers of $L$, $\pi'$ the uniformiser of $\mathcal{O}_L$ and $\mathfrak{g}'=\mathfrak{g} \uset{R} \mathcal{O}_L$. We have by \cite[Lemma 3.9 c)]{Annals} that $\hugnK \otimes L \cong \hugnL$. Finally, we have by \cite[Theorem 10.2.9]{ConRob} that there exists a prime ideal $J \in \hugnL$ such that $I=J \cap \hugnK$. Thus $J$ contains $I \uset{K} L$, so in particular it has an $L$-rational central character. The claim follows from Theorem \ref{d3affinoidduflotheorem}.
\end{proof}

\begin{theorem}
\label{d3finaltheorem}
Let $I$ be a two-sided ideal in $\hugnK$ and assume it has a central character $\chi:Z(\hugnK) \to \overline{K}$ generated by a dominant regular weight. Then $I$ is controlled by a two-sided ideal in $U(\mathfrak{g})_K$.
\end{theorem}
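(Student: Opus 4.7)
The plan is to reduce to a setting where Theorem \ref{d3controllertheorem} directly applies by passing to a finite extension $L/K$ over which $\chi$ becomes rational, and then descending via faithful flatness. Since $\chi$ is determined by its finitely many values on an $R$-basis of $\pi^n \mathfrak{h}$, one may choose a finite extension $L/K$ such that $\chi$ factors through $L$. Let $e$ be the ramification index of $L/K$, let $\pi'$ be a uniformizer of $\mathcal{O}_L$, and set $\mathfrak{g}':= \mathfrak{g} \otimes_R \mathcal{O}_L$. By \cite[Lemma 3.9c)]{Annals} there is an isomorphism $\hugnK \otimes_K L \cong \hugnL$, under which the extended central character $\chi_L$ of $\hugnL$ is realized by an $\mathcal{O}_L$-linear dominant regular weight $\lambda_L:\pi'^{en}\mathfrak{h}' \to \mathcal{O}_L$; this uses the identification of Theorem \ref{d3centreofaffinoidenvelopingalgebras} together with the Weyl-invariance of dominance and regularity.

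Set $I_L := I \otimes_K L$, a two-sided ideal of $\hugnL$ with $\chi_{\lambda_L}$-central character. By Theorem \ref{d3controllertheorem} applied over $L$ one has
$$I_L = \hugnL \cdot (I_L \cap U(\mathfrak{g}')_L).$$
Let $J := I \cap U(\mathfrak{g})_K$. Since $J$ is the kernel of the composite $U(\mathfrak{g})_K \hookrightarrow \hugnK \twoheadrightarrow \hugnK/I$, the flatness of $L/K$ identifies $J \otimes_K L$ with $I_L \cap (U(\mathfrak{g})_K \otimes_K L) = I_L \cap U(\mathfrak{g}')_L$. Substituting, one obtains
$$I \otimes_K L \;=\; I_L \;=\; \hugnL(J \otimes_K L) \;=\; (\hugnK J) \otimes_K L.$$
Since $\hugnK J \subseteq I$, faithful flatness of $L/K$ forces $I = \hugnK J$, which is the desired controller statement.

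The main obstacle will be the careful implementation of the first step, namely verifying that $\chi_L$ is indeed realized by an $\mathcal{O}_L$-linear dominant regular weight $\pi'^{en}\mathfrak{h}' \to \mathcal{O}_L$ in the precise sense required by Theorem \ref{d3controllertheorem}. This demands unpacking the isomorphism $Z(\hugnK) \cong \widehat{U(\mathfrak{g})^G_{n,K}}$ of Theorem \ref{d3centreofaffinoidenvelopingalgebras} after base change to $L$, and tracking how $L$-points of the associated affinoid rigid space correspond to $W$-orbits of such $\mathcal{O}_L$-linear weights. Given the machinery already developed in the paper, this verification should be routine rather than deep.
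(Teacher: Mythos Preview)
Your argument is correct and in fact streamlines the paper's proof. Both proofs begin identically: pass to a finite extension $L/K$ over which the central character becomes rational, identify $\hugnK \otimes_K L$ with $\hugnL$ via \cite[Lemma 3.9c)]{Annals}, and invoke the controller result over $L$ (the paper cites Proposition~\ref{d3hardcontrollerproposition}, you cite the equivalent Theorem~\ref{d3controllertheorem}). The divergence is in the descent step. The paper first enlarges $L$ to a Galois extension with group $\mathcal{G}$, observes that the two-sided ideal $J \subset U(\mathfrak{g}')_L$ produced over $L$ is $\mathcal{G}$-stable, and then applies Galois descent for two-sided ideals to write $J = J_0 \, U(\mathfrak{g}')_L$ for some $J_0 \subset U(\mathfrak{g})_K$; taking $\mathcal{G}$-invariants then yields $I = J_0 \hugnK$. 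You instead set $J := I \cap U(\mathfrak{g})_K$ from the outset, use flatness of $L/K$ to identify $J \otimes_K L$ with $I_L \cap U(\mathfrak{g}')_L$, and conclude $I = \hugnK J$ by faithful flatness. Your route avoids both the passage to the Galois closure and the descent argument for ideals, and it pins down the controlling ideal as $I \cap U(\mathfrak{g})_K$ directly rather than as an abstract $J_0$. The residual verification you flag---that $\chi_L$ is realised by a dominant regular $\mathcal{O}_L$-linear weight on $\pi'^{en}\mathfrak{h}'$---is needed equally in the paper's proof and is, as you say, a routine unpacking of Theorem~\ref{d3centreofaffinoidenvelopingalgebras} after base change.
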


\begin{proof}

We have by the proof of Theorem \ref{d3alltheprimitiveideals} that there exists a finite extension $L/K$ such that $I \uset{K} L$ has a $L$-rational central character. By passing to the Galois closure, if necessary, we may assume that $L/K$ is a Galois extension; let $\mathcal{G}=\Gal(L/K)$ denote the corresponding Galois group. Again, let $e$ be the ramification index of $L/K$,  $\mathcal{O}_L$ the ring of integers of $L$, $\pi'$ the uniformiser of $\mathcal{O}_L$ and $\mathfrak{g}'=\mathfrak{g} \uset{R} \mathcal{O}_L$. As before, it follows from \cite[Lemma 3.9 c)]{Annals} that $\hugnK \uset{K} L \cong \hugnL$.

Since $I$ is a two-sided ideal in $\hugnK$ and $L/K$ is finite, we get that $I \uset{K} L$ is a two-sided ideal in $\hugnK \uset{K} L \cong \hugnL$. Therefore, we have by Proposition \ref{d3hardcontrollerproposition} that there exists $J$, a two-sided ideal in $U(\mathfrak{g'})_L$, such that $I \uset{K} L=\hugnL J$. 

By construction, $I \uset{K} L$ is closed under the natural action of $\mathcal{G}$, therefore $J$ is also closed under the corresponding $\mathcal{G}$-action on $U(\mathfrak{g'})_L$. We have 

\begin{equation}
U(\mathfrak{g'})_L \cong U(\mathfrak{g} \uset{R} \mathcal{O}_L) \uset{\mathcal{O}_L} L \cong U(\mathfrak{g}) \uset{R} \mathcal{O}_L \uset{ \mathcal{O}_L} L \cong U(\mathfrak{g}) \uset{R} L \cong U(\mathfrak{g})_K \uset{K} L.
\end{equation}

Since $L/K$ is a finite Galois extension, we have by Galois descent and the same arguments as in \cite[Theorem 4.2]{ConradGal} that $J=J_0 U(\mathfrak{g'})_L$ for some two-sided ideal $J_0$ in $U(\mathfrak{g})_K$. We have

\begin{equation}
I \hugnL= I \uset{K} L= J \hugnL = (J_0 U(\mathfrak{g'})_L) \hugnL=J_0 \hugnL=(J_0 \hugnK) \uset{K} L.    
\end{equation}

Therefore, by taking $\mathcal{G}$-invariants and taking into account that $I$ and $J_0 \hugnK$ are fixed by the $\mathcal{G}$-action, we obtain

\begin{equation}
I=(I \uset{K} L)^{\mathcal{G}}=((J_0 \hugnK) \uset{K} L)^{\mathcal{G}}=J_0 \hugnK.    
\end{equation}

In conclusion, $I$ is controlled by $J_0$ and the claim is proven.
\end{proof}

\bibliography{duflo3}
\bibliographystyle{plain}

\end{document}